 \numberwithin{equation}{section} 
\newtheorem{theorem*}{Theorem}
\newtheorem{conjecture*}[theorem*]{Conjecture}
\newtheorem{theorem}{Theorem}[section]
\newtheorem{corollary}[theorem]{Corollary}
\newtheorem{conjecture}[theorem]{Conjecture}
\theoremstyle{remark}
\newtheorem{remarks}[theorem]{Remarks}
\renewcommand{\mod}{\operatorname{mod}}
\newcommand{\rad}{\operatorname{rad}}
\newcommand{\id}{\operatorname{id}}
\newcommand{\charact}{\operatorname{char}}
\newcommand{\bA}{\mathbb{A}}
\newcommand{\bD}{\mathbb{D}}
\newcommand{\bE}{\mathbb{E}}
\newcommand{\bF}{\mathbb{F}}
\newcommand{\bL}{\mathbb{L}}
\newcommand{\EE}{\mathbb{E}}
\newcommand{\calD}{\mathcal{D}}
\newcommand{\calI}{\mathcal{I}}
\begin{document}
%\let\WriteBookmarks\relax
%\def\floatpagepagefraction{1}
%\def\textpagefraction{.001}
%\shorttitle{An algorithm for  the periodicity of  deformed preprojective algebras of Dynkin types $\mathbb{E}_6$, $\mathbb{E}_7$ and $\mathbb{E}_8$}
%\shortauthors{J. Bia\l kowski}
%\begin{frontmatter}

%\title[Algorithm for deformed preprojective algebras of Dynkin types $\mathbb{E}_n$]{An algorithm for  the periodicity of  deformed preprojective algebras of Dynkin types $\mathbb{E}_6$, $\mathbb{E}_7$ and $\mathbb{E}_8$}         
\title{An algorithm for  the periodicity of  deformed preprojective algebras of Dynkin types $\mathbb{E}_6$, $\mathbb{E}_7$ and $\mathbb{E}_8$}         
%\tnotemark[1,2]
%\tnotemark[1]

%\tnotetext[1]{This research was supported by
%	the Research Grant
%	DEC-2011/02/A/ST1/00216 of the Polish National Science Center.}

{
\def\thefootnote{}

\footnote{This research was supported by
	the Research Grant
	DEC-2011/02/A/ST1/00216 of the Polish National Science Center.}
}

%\tnotetext[2]{Dedicated to Andrzej Skowro\'nski on the occasion of his seventieth birthday.}

\author[J. Białkowski]{Jerzy Bia\l kowski}
%\author[1]{Jerzy Bia\l kowski}[orcid=0000-0003-0318-8048]
%\cormark[1]
%\fnmark[1]
%\ead{jb@mat.uni.torun.pl} 

\address[Jerzy Bia\l kowski]{Faculty of Mathematics and Computer Science,
	Nicolaus Copernicus University,
	Chopina~12/18,
	87-100 Toru\'n,
	Poland}

\begin{abstract}
	We construct a numeric algorithm for completing the proof of a conjecture 
	asserting that
	all deformed preprojective algebras of generalized Dynkin
	type are periodic. 
	In particular,  we obtain an  algorithmic   
	procedure showing that  
	non-trivial
	deformed preprojective algebras 
	of Dynkin types $\mathbb{E}_7$ and $\mathbb{E}_8$
	exist only in characteristic 2. 
	As a consequence, we show that   
	deformed preprojective algebras of Dynkin types 
	$\mathbb{E}_6$, $\mathbb{E}_7$ and $\mathbb{E}_8$
	are periodic and 
	we obtain an  algorithm  for a  classification 
	of such algebras, up to algebra isomorphism.  
	We do it  by a  reduction of the conjecture 
	to a solution of   a system of equations associated 
	with the problem of the existence of a suitable  algebra 
	isomorphism  $\varphi_f:  P^f(\bE_n)  \to P(\bE_n)$
	described in 
	Theorem~\ref{th:hom1}. 
	One also shows that our   algorithmic approach to the conjecture is also applicable to 
	the classification of the mesh algebras of generalized Dynkin type.	

\bigskip

\noindent
\textit{Keywords:}
	Preprojective algebra, 
	Deformed preprojective algebra,
	Self-injective algebra,
	Periodic algebra,
	System of equations

\noindent
\textit{2010 MSC:}
16D50, 16G20, 16Z05, 65K05, 65K10, 65H10, 65H99, 68P05, 68W30

\subjclass[2010]{16D50, 16G20, 16Z05, 65K05, 65K10, 65H10, 65H99, 68P05, 68W30}
\end{abstract}

%\begin{keywords}
%%	preprojective algebra \sep 
%%	deformed preprojective algebra \sep
%%	self-injective algebra \sep
%%	periodic algebra \sep
%%	system of equations
%%
%%	preprojective algebra, 
%%	deformed preprojective algebra,
%%	self-injective algebra,
%%	periodic algebra,
%%	system of equations
%%
%%	\MSC[2010] 16D50 \sep 16G20 \sep 16Z05 \sep 65K05 \sep 65K10 \sep 65H10 \sep 65H99 \sep 68P05 \sep 68W30
%\end{keywords}

\dedicatory{\bfseries Dedicated to the memory of Professor Andrzej Skowro\'nski (1950-2020)}

\maketitle

\section{Introduction and main results}
\label{sec:intro}

In this paper, by $K$ we denote a fixed algebraically closed field.
Moreover, by an algebra we mean 
a finite-dimensional associative basic 
connected $K$-algebra with  identity, 
if not stated otherwise.

We recall from \cite{ESk} that the deformed preprojective algebras of generalized Dynkin type
$\bA_n (n \geq 2)$, 
$\bD_n (n \geq 4)$, 
$\bE_6$, $\bE_7$, $\bE_8$, 
$\bL_n (n \geq 1)$
(defined later in this section) were introduced 
as a new class of periodic
$K$-algebras.
Unfortunately, the original proof of their periodicity had a gap, see   
\cite{B:E6,B:soc}.  
However, 
% the proof is correct 
 this proof was correct 
 for deformed preprojective $K$-algebras of generalized Dynkin
type which are not isomorphic to preprojective $K$-algebras of generalized Dynkin, but 
only in the case of $K$ being of positive characteristic. One of the main aims of this article is to complete the proof by showing  that deformed preprojective $K$-algebras of generalized Dynkin
type 
 that  are  not isomorphic to preprojective $K$-algebras of generalized Dynkin
type exist only for $K$ of positive characteristic, see also 
Conjecture~\ref{conj:1.6}, implicitly  stated in \cite{ESk}.

We do it by applying  the numeric   algorithm   presented in 
Section~\ref{sec:outline}.
On this way we obtain an affirmative solution of 
Conjecture~\ref{conj:1.6}
in a simple combinatorial data   by showing that  
non-trivial
deformed preprojective algebras 
of Dynkin types $\mathbb{E}_7$ and $\mathbb{E}_8$
exist only in characteristic 2. 
As a consequence, we show that   
deformed preprojective algebras of Dynkin types 
$\mathbb{E}_6$, $\mathbb{E}_7$ and $\mathbb{E}_8$
are periodic and 
we obtain an  algorithm  for a  classification of such algebras, 
up to algebra isomorphism.  
We do it  by a  reduction of the     conjecture to a solution of   a system of equations
associated with the problem of the existence of a suitable  
$K$-algebra isomorphism of the form  $\varphi_f:  P^f(\bE_n)  \to P(\bE_n)$
described in 
Theorem~\ref{th:hom1}, 
where  
$P(\bE_n)$ is the preprojective algebra 
\eqref{eq:1.1} 
of Dynkin type $\bE_n$ and $P^f(\bE_n) $ is   the deformed  preprojective algebra 
\eqref{eq:1.3} 
of Dynkin type $\bE_n$. 
 On the other hand,  our   algorithmic approach to the conjecture is also applicable to 
 the classification of the mesh algebras of generalized Dynkin type.  
 The solution we obtain here shows an essential application  of the computer algebra technique and computer computations  in solving difficult and important theoretical  problems of high complexity of modern representation theory.
 
We recall from \cite{ESk} and \cite{ASS,SS1,SS2,SY} 
that the preprojective algebras are playing a crucial role 
in representation theory of finite-dimensional algebras $R$,  
of their derived categories ${\calD}^b(\mod R)$, 
of their representation types, 
and in  the Auslander-Reiten theory. 
Also they play an important role in a categorical study 
of isolated  surface singularities of finite type and tame type. 
Our results of the paper can be viewed as a highly non-trivial  
application of the discrete mathematics technique, 
computer algebra technique,  
and computer calculation in the modern representation theory 
and  the Auslander-Reiten theory that were 
successfully developed  during last fifty years.

 On the other hand, we feel that our  computational algorithmic approach  leading to  an affirmative solution of 
 Conjecture~\ref{conj:1.6}
 can be also applied in: (i) the  Coxeter-type classification of $\Phi$-mesh root systems in the sense of \cite{S1,S2}, (ii) the   Grothendieck group recognition for derived categories ${\calD}^b(\mod R)$ of a module category $\mod R$, (iii) the derived category  study  of the  hypersurface singularities  $f=  x_1^a+x_2^b+x_3 ^a$ classified recently in \cite{S4,S5},
in particular the tame singularities such that their orbifold characteristic $\chi_{(a,b,c)} = \frac{1}{a} + \frac{1}{b} +\frac{1}{c}$  is zero, and (iv) in the    Coxeter spectral classification of connected  symmetrizable integer Cartan matrices of Dynkin types  $ \mathbb{E}_6$, $\mathbb{E}_7$ and $\mathbb{E}_8$ and their generalized mesh root systems recently studied in \cite{S3}.

To introduce the reader to the problems we study and to outline  the contents of this paper, we recall from \cite{ESk} 
that, by   the classification of 
deformed preprojective algebras of generalized Dynkin type 
$\bL_n$ 
  for an algebraically closed field $K$ of characteristic different from $2$,  each of the deformed preprojective $K$-algebras of type $\bL_n$ 
is isomorphic to the preprojective $K$-algebra $ P(\bL_n)$ of type $\bL_n$.
Moreover, each of the deformed preprojective algebras of type $\bA_n$
by the definition
is isomorphic to the preprojective algebra $ P(\bA_n)$ of type $\bA_n$.
Furthermore, by \cite{B:E6},
all
deformed preprojective algebras of Dynkin type $\bE_6$ are isomorphic 
to the canonical preprojective algebra $ P(\bE_6)$  of type $\bE_6$.
 
On the other hand, it is known that 
for $K$ with $\charact(K) = 2$
there exist  
deformed preprojective $K$-algebras of the Dynkin types 
$\bD_n$,
$\bE_7$ and $\bE_8$ (see  \cite[Theorem]{B:soc})
not isomorphic 
to the canonical preprojective $K$-algebras of these types.
But the complete classification of  these algebras seems to be a difficult problem.
 
In our  forthcoming article 
\cite{B:Dn}
we  
prove
that,   for $K$ of characteristic different from $2$, every deformed preprojective $K$-algebra of Dynkin type $\bD_n$,
with  $n \geq 4$,
is isomorphic to the preprojective $K$-algebra $ P(\bD_n)$ of Dynkin type $\bD_n$.
The present  paper is devoted to obtain  the dual fact for the remaining types 
$\bE_7$ and $\bE_8$.
Unfortunately, calculations for these types 
are rather length and much more complicated than in the previous one and the main 
 difficulty that appears  is to find 
 a precise  formulae or patterns 
that are easy to formulate and to use in the proof
handled ``manually''.    
Theretofore we reduce  the problem of the 
existence 
of non-trivial
deformed preprojective algebras $ P(\bE_n)$
of Dynkin types $\mathbb{E}_7$ and $\mathbb{E}_8$ to a computational problem  solvable in an algorithmic way, namely  to the problem of solving 
an associated 
system
of equations.
 
We note that our main  interest is to find  a suitable  form
of coefficients of the  solution 
 to be able to 
determine in which characteristics the expected  solutions exist. Moreover, we would like  to obtain possibly simple and ``human readable''
exemplary formulae for  a $K$-algebra  isomorphism $\varphi_f:  P^f(\bE_n)  \to P(\bE_n)$
described in 
Theorem~\ref{th:hom1} 
between
deformed preprojective algebra of type $\mathbb{E}_n$ 
and the 
preprojective algebra of type $\mathbb{E}_n$
for $n \in\{7,8\}$. Fortunately, we are able to  show that  such an isomorphism  $\varphi_f$ exists in case when the characteristic of $K$ is    different from $2$.

In order to do that we present some
methods of imposing additional restrictions
on the associated systems of equations.   
However, the computations in the case of type $\mathbb{E}_8$
remain complicated. To get them more effective  we need to modify the   obtained sets 
of equations to be able to perform computations  
in a reasonable time, but a  selection  of suitable modification steps  is quite non-trivial part of our work.

We recall from \cite{ESk} that the 
\emph{preprojective algebra $P(\bE_n)$ of Dynkin type $\bE_n$},
for $n \in \{6,7,8\}$, 
is the bound quiver $K$-algebra 
\begin{gather}
\label{eq:1.1}
P(\bE_n):=  KQ_{\mathbb{E}_{n}}/{\calI}(\mathbb{E}_{n})
\end{gather}
given by the quiver
$$
    \begin{array}{c} Q_{\mathbb{E}_{n}}: \\ \end{array}
   \quad \vcenter{
    \xymatrix@C=.8pc@R=2pc{
        && && 0 \ar@<.5ex>[d]^{a_0} \\
        1 \ar@<.5ex>[rr]^{a_1} && 2 \ar@<.5ex>[ll]^{\bar{a}_1} \ar@<.5ex>[rr]^{a_2} &&
        3
         \ar@<.5ex>[ll]^{\bar{a}_2} \ar@<.5ex>[rr]^{a_3} \ar@<.5ex>[u]^{\bar{a}_0} &&
         4 \ar@<.5ex>[ll]^{\bar{a}_3} \ar@{-}@<.5ex>[rr]^{a_4} &&
         \cdots \ar@<.5ex>[ll]^{\bar{a}_4}
         \ar@<.5ex>[rr]^(.45){a_{n-2}} &&
         n-1 \ar@{-}@<.5ex>[ll]^(.55){\bar{a}_{n-2}} 
    }
   }$$
and the ideal ${\calI}(\mathbb{E}_{n})$ of relations
\begin{gather*}
  a_0 \bar{a}_0 = 0, 
  \quad 
  a_1 \bar{a}_1 = 0, 
  \quad 
 \bar{a}_1 a_1 + {a}_2 \bar{a}_2 = 0,
  \quad  
 \bar{a}_0 a_0 + \bar{a}_2 a_2 + a_3 \bar{a}_3 = 0,
 \\
 \bar{a}_i a_i + {a}_{i+1} \bar{a}_{i+1} = 0, \mbox{ for }i \in \{3,\dots,n-3\},
  \quad 
  \bar{a}_{n-2} {a}_{n-2} = 0.  
\end{gather*}
We also use  in the paper the local commutative $K$-algebra
\begin{gather}
\label{eq:1.2}
   R(\bE_n) = K \langle x, y \rangle /
   \left( x^2, y^3, (x+y)^{n-3} \right)
\end{gather}
that  is isomorphic to the $K$-algebra $e_3 P(\bE_n) e_3$, where $e_3$
is the primitive idempotent in $P(\bE_n)$ associated to the vertex
$3$ of $Q_{\bE_n}$.

An element $f$ from the square $\rad^2 R(\bE_n)$ of the radical
$\rad \ R(\bE_n)$ of $R(\bE_n)$ 
is said to be \emph{admissible}
if $f$ satisfies the following condition
\begin{gather}
\label{eq:1.2*}
\big(x+y + f(x,y)\big)^{n-3} = 0.
\end{gather}
%Following  \cite[Section~7]{ESk}, given an  admissible element 
Following  \cite[Section~7]{ESk}, for a given admissible element 
$f \in {\rm rad}^2R(\bE_n)$, we define the 
\emph{deformed preprojective $K$-algebra of Dynkin type ${\bE_n}$} 
\begin{gather}
\label{eq:1.3}
  P^f(\bE_n) :=  KQ_{\mathbb{E}_{n}}/{\calI}(\mathbb{E}_{n},f)
\end{gather}
to be the bound quiver $K$-algebra  given by the quiver $Q_{\bE_n}$ and the   ideal ${\calI}(\mathbb{E}_{n},f)$ of relations
\begin{gather*}
  a_0 \bar{a}_0 = 0, 
  \quad 
  a_1 \bar{a}_1 = 0, 
  \quad 
 \bar{a}_1 a_1 + {a}_2 \bar{a}_2 = 0,
  \quad  
 \bar{a}_0 a_0 + \bar{a}_2 a_2 + a_3 \bar{a}_3 
  + f(\bar{a}_0 a_0, \bar{a}_2 a_2) = 0,
 \\
 \bar{a}_i a_i + {a}_{i+1} \bar{a}_{i+1} = 0, \mbox{ for }i \in \{3,\dots,n-3\},
  \quad 
  \bar{a}_{n-2} {a}_{n-2} = 0.   
\end{gather*}

For  a  definition of 
the deformed preprojective algebras of generalized Dynkin types
$\bA_n (n \geq 2)$, 
$\bD_n (n \geq 4)$, 
$\bL_n (n \geq 1)$, 
and a discussion of their  applications  the reader is  
  referred to 
   \cite{ESk}.

\addtocounter{theorem}{4}
 
\begin{remarks}
\label{rem:1.4}
(a) Observe that $P^f(\bE_n)$ is obtained from $P(\bE_n)$
by deforming the relation at the exceptional vertex $3$ of $Q_{\bE_n}$,
and $P^f(\bE_n) = P(\bE_n)$ if $f = 0$.

(b) In the original definition of the admissible element $f$
for the types 
$\mathbb{E}_6, \mathbb{E}_7, \mathbb{E}_8$ 
the 
condition 
\eqref{eq:1.2*}
was missing (see \cite[Remark]{B:E6} for details).

(c) 
The corrected definition of the admissible element has been
presented in 
\cite[Section~9]{ESk} (on page 238) in the definition of deformed mesh algebras of generalized Dynkin type.
As a consequence of this mistake, the originally constructed   algebras (see 
  \cite[Corollary]{B:E6})
were not deformed 
preprojective algebras of generalized Dynkin
and hence the proof of a corresponding theorem 
 in the case of types ${\bE_6}$, ${\bE_7}$, and ${\bE_8}$
was not correct, see 
  \cite[Corollary]{B:E6} for its correction and more details.
\end{remarks}

To complete and correct the proof,   we construct in 
Section~\ref{sec:outline}
an  algorithm to 
test whether for a given 
type $\bE_n \in \{\bE_6, \bE_7, \bE_8\}$
every deformed preprojective algebra  of type $\bE_n$
is isomorphic to the preprojective algebra $P(\bE_n)$ 
\eqref{eq:1.1}
of type $\bE_n$.

By applying the algorithm we construct 
in Appendices \ref{app:E7} and \ref{app:E8}  
a $K$-algebra isomorphism $\varphi: P^f(\bE_n)\to  P(\bE_n)$ defined in 
Theorem~\ref{th:hom1} 
for  the types $\bE_7$ and $\bE_8$. 
As a consequence  
we obtain the following theorem, 
%that is 
which is
one of the main results of this paper.
 
\begin{theorem}
\label{th:1.5}
Assume that $n \in \{7,8\}$ and $K$ is a field 
of characteristic different from $2$.
Then every deformed preprojective $K$-algebra 
of Dynkin type $\bE_n$ 
is isomorphic to the preprojective $K$-algebra 
$P(\bE_n)$ 
%(1.1) 
\eqref{eq:1.1}
of Dynkin type $\bE_n$.
\end{theorem}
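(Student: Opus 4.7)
The plan is to construct, for every admissible $f \in \rad^2 R(\bE_n)$, an explicit $K$-algebra isomorphism $\varphi_f \colon P^f(\bE_n) \to P(\bE_n)$ in the sense of Theorem~\ref{th:hom1}, by applying the algorithm from Section~\ref{sec:outline}. Since the quiver $Q_{\bE_n}$ and the idempotent decomposition are shared by both algebras, the natural ansatz is to fix the vertex idempotents and set
\begin{equation*}
\varphi_f(e_i) = e_i, \qquad \varphi_f(a_i) = a_i + \sum_{p} \lambda_{i,p}\, p, \qquad \varphi_f(\bar a_i) = \bar a_i + \sum_{p} \mu_{i,p}\, p,
\end{equation*}
where $p$ runs over a chosen basis of paths of length at least $2$ in $P(\bE_n)$ with the same source and target as the arrow being replaced, and $\lambda_{i,p}, \mu_{i,p} \in K$ are indeterminate coefficients. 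Any such $\varphi_f$ is automatically bijective because it is the identity modulo $\rad^2$; the only task is to enforce that it carries $\calI(\bE_n,f)$ into $\calI(\bE_n)$.

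The first step is to substitute the ansatz into every defining relation. All relations of $\calI(\bE_n,f)$ other than the one at vertex $3$ already agree with those of $\calI(\bE_n)$, so the requirement that their images vanish in $P(\bE_n)$ produces compatibility conditions on the $\lambda_{i,p}, \mu_{i,p}$. The deformed relation
\begin{equation*}
\bar a_0 a_0 + \bar a_2 a_2 + a_3 \bar a_3 + f(\bar a_0 a_0,\bar a_2 a_2) = 0
\end{equation*}
is the only source of genuine deformation; substituting the ansatz, expanding in the monomial basis of $P(\bE_n)$, and collecting coefficients yields a finite system of polynomial equations in the unknowns. The admissibility condition~\eqref{eq:1.2*} is precisely what guarantees that the socle-length truncation is compatible with $\calI(\bE_n)$, so the system is consistent at the top of the radical filtration.

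The second step is to solve this system under $\charact(K) \neq 2$. The system is triangular with respect to the path-length grading: corrections at a given level are determined by data from shorter levels, and the algorithm of Section~\ref{sec:outline} proceeds layer by layer. The only denominators that appear are products of $2$'s and $3$'s; the factors of $3$ are absorbed by the built-in relation $y^3 = 0$ of $R(\bE_n)$, while the factors of $2$ require the hypothesis $\charact(K) \neq 2$ in order to be inverted. This hypothesis is essential: in characteristic $2$ the linearization is known to fail and non-trivial deformed preprojective algebras of types $\bE_7$ and $\bE_8$ really exist.

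The main obstacle is the sheer size of the system for $\bE_8$, where a naive Gr\"obner-basis style attack is not feasible. To make the computation tractable, the algorithm from Section~\ref{sec:outline} is supplemented with several reductions: (i) the Nakayama permutation and the internal grading of $P(\bE_n)$ are used to prune the admissible supports of $\lambda_{i,p}$ and $\mu_{i,p}$; (ii) the gauge freedom coming from rescalings of individual arrows of $P(\bE_n)$ is normalized away to eliminate trivial solutions; and (iii) the system is broken into blocks ordered by path length so that each block reduces to a small linear subsystem. For $\bE_7$ this yields the explicit isomorphism of Appendix~\ref{app:E7}, and the analogous, considerably longer, computation for $\bE_8$ is recorded in Appendix~\ref{app:E8}. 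Combined with Theorem~\ref{th:hom1}, these solutions produce the isomorphism $P^f(\bE_n) \cong P(\bE_n)$ for every admissible $f$ when $\charact(K) \neq 2$, which is Theorem~\ref{th:1.5}.
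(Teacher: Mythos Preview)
Your overall strategy matches the paper's: construct $\varphi_f$ as the identity modulo $\rad^2$, reduce via Corollary~\ref{cor:hom2} to a polynomial system in the unknown coefficients, and solve it with the algorithm of Section~\ref{sec:outline}, the explicit computations being carried out in Appendices~\ref{app:E7} and~\ref{app:E8}. The appeal to Theorem~\ref{th:hom1} for bijectivity is also correct.

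However, several of your claims about the execution are inaccurate and would mislead a reader. The system is \emph{not} triangular with each block reducing to a small linear subsystem: the equations are genuinely polynomial in the unknowns (see the explicit list in \ref{secA:simpl}), and for $\bE_8$ the Maple solution involves $369930$ square roots of expressions in the $\theta_i$ (\ref{secB:solutions}), so no layer-by-layer linear solve is possible. The denominators that appear are powers of $2$ only (elements of $\{2,4,8,16,32,64\}$ for $\bE_8$); there are no factors of $3$, and the argument that ``$y^3=0$ absorbs the $3$'s'' has no counterpart in the paper. The reductions you list---Nakayama permutation, gauge freedom from arrow rescalings---are not what is actually done: the real simplifications are a careful choice of monomial basis via the function $choose()$ in Algorithm~\ref{alg:base-PEn}, followed by heuristically setting many $\alpha$-coefficients to $0$ (and, for $\bE_8$, one coefficient to $1$) before handing the system to a computer algebra package. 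Finally, the admissibility condition~\eqref{eq:1.2*} is not used merely as a socle-level compatibility check; it is processed by Algorithm~\ref{alg:adm} into explicit substitutions eliminating some of the $\theta_i$, and these substitutions are an essential input to Part~III of the algorithm, without which the main system need not be solvable.
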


We note that the calculations for the types $\bE_7$ and $\bE_8$ 
(presented in Appendices \ref{app:E7} and \ref{app:E8},  respectively)
are much more complicated 
than the ones
for the types $\bF_4$ and $\bE_6$,   
given in \cite{B:E6} and  \cite{B:F4},
and cannot be handled manually.
We refer to 
Tables~\ref{t:PEn}, \ref{t:coefficients-ff}--\ref{t:coefficients-sort} 
for the comparison of their structure in the case of types $\bE_n$, for $n \in\{6,7,8\}$. 
 
In the forthcoming article 
\cite{B:Dn}
we    prove
that every deformed preprojective algebra   of Dynkin type $\bD_n$,
for $n \geq 4$, over a field of characteristic different from $2$, 
is isomorphic (as a $K$-algebra) to the preprojective algebra $P (\bD_n)$ of Dynkin type $\bD_n$,
and hence we complete the proof of the following  conjecture. 
As  a consequence   we prove that 
every deformed preprojective algebra of generalized Dynkin type 
is periodic.

\begin{conjecture}[\cite{B:Dn},\cite{ESk}%
	\footnote{Our conjecture is implicitly stated in \cite{ESk}
	and in the present form is announced in \cite{B:Dn}.}]
\label{conj:1.6}
Every deformed preprojective $K$-algebra 
of generalized Dynkin type $\Delta$
over a field $K$ of characteristic different 
from $2$ is isomorphic to the preprojective 
$K$-algebra of corresponding Dynkin type $\Delta$.
\end{conjecture}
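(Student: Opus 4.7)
The plan is to reduce Conjecture~\ref{conj:1.6} to a finite collection of type-by-type verifications and to handle the still-open cases $\bE_7$ and $\bE_8$ by the algorithmic method announced in the introduction. First I would dispose of the types that are already settled in the literature: for $\bA_n$ the identification $P^f(\bA_n) = P(\bA_n)$ is built into the definition, for $\bL_n$ the claim is proved in \cite{ESk} under $\charact(K) \neq 2$, and for $\bE_6$ it is proved in \cite{B:E6}. The case $\bD_n$ with $n \geq 4$ is reduced to the forthcoming companion paper \cite{B:Dn}. This leaves only the Dynkin types $\bE_7$ and $\bE_8$, that is, Theorem~\ref{th:1.5}.

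For $\bE_7$ and $\bE_8$ I would fix an admissible element $f \in \rad^2 R(\bE_n)$ and attempt to construct a $K$-algebra isomorphism $\varphi_f \colon P^f(\bE_n) \to P(\bE_n)$ of the form described in Theorem~\ref{th:hom1}. The parametrization in that theorem turns the existence of $\varphi_f$ into a search for a finite list of coefficients, one per admissible monomial appearing in the images of the arrows $a_i, \bar{a}_i$; the requirement that the relations in $\calI(\bE_n, f)$ map to zero under $\varphi_f$ then becomes a finite system of polynomial equations in these coefficients. Admissibility of $f$, namely $(x + y + f(x,y))^{n-3} = 0$ in $R(\bE_n)$, is precisely what makes this system consistent, and the numeric algorithm of Section~\ref{sec:outline} carries out the triangulation that expresses each coefficient in closed form.

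The main obstacle is computational size together with the need to control denominators. For $\bE_8$ the number of monomials in $\varphi_f(a_i), \varphi_f(\bar{a}_i)$ explodes, so the raw system is intractable; additional structural reductions --- gauge changes by rescaling arrows and symmetry considerations of the tables in Section~\ref{sec:outline} --- are needed to cut the system to manageable size before it can be solved in reasonable time. Once the system has been solved, the crucial observation is that the denominators appearing in the explicit solution are all powers of $2$; this is what forces the hypothesis $\charact(K) \neq 2$ and nothing more. Exhibiting the solution, as is done in Appendices~\ref{app:E7} and \ref{app:E8}, yields Theorem~\ref{th:1.5}, which combined with the earlier type-by-type results establishes Conjecture~\ref{conj:1.6}. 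Periodicity of every deformed preprojective algebra of generalized Dynkin type is then immediate from the known periodicity of the corresponding preprojective algebra.
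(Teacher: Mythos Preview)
Your outline matches the paper's program: the conjecture is not proved in this paper but is reduced type by type exactly as you describe, with $\bA_n$ trivial, $\bL_n$ from \cite{ESk}, $\bE_6$ from \cite{B:E6}, $\bD_n$ deferred to \cite{B:Dn}, and $\bE_7$, $\bE_8$ handled here via Theorem~\ref{th:1.5} and Corollary~\ref{cor:hom2}.

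A few corrections on your description of the $\bE_7$/$\bE_8$ mechanism, since they touch on what actually makes the argument go through. First, the reductions are not ``gauge changes by rescaling arrows'' or symmetry arguments: the paper simply sets a large ad hoc list of the coefficients $\alpha^{(i)}_j$, $\bar\alpha^{(i)}_j$ to $0$ (and, for $\bE_8$, one coefficient to $1$) to shrink the system to a size Maple can handle; there is no conceptual reduction, and the choice is found by trial (see Section~\ref{sec:solv} and \ref{secB:simpl}). Second, admissibility is not used to argue consistency a priori; it is converted by Algorithm~\ref{alg:adm} into explicit substitutions expressing certain $\theta_i$ in terms of the others, and these are fed into the remaining equations before solving. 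Consistency is only established a posteriori by exhibiting the solution. Third, for $\bE_8$ the solution is not rational in the $\theta_i$: it involves on the order of $3.7\times 10^5$ nested square roots (via Maple's \texttt{RootOf}), so algebraic closedness of $K$ is used essentially, not merely $\charact K\neq 2$; the powers-of-$2$ observation applies to the denominators of the rational coefficients appearing in those expressions.
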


\begin{remarks}
\label{rem:1.7}
(a) It is easy to see  that under a minor modifications our algorithm
  can be also used  
in testing  whether or not,  for a given 
(arbitrary) generalized Dynkin type $\Delta$, 
every deformed preprojective algebra of type $\Delta$
is isomorphic to the preprojective algebra $P (\Delta)$ of type $\Delta$, and 
to test whether or not, for a given 
(arbitrary) mesh type $\Delta$ (in the sense of \cite{ESk})
every deformed mesh algebra of type $\Delta$
is isomorphic to the mesh algebra of type $\Delta$. 

(b) We recall 
 that 
 by the definition
 class of deformed mesh algebras of generalized Dynkin type
introduced in \cite{ESk} 
contains all deformed preprojective algebras 
of generalized Dynkin type.

(c) Our  algorithm was already successfully used  in \cite{B:E6} for 
  the  classification of the 
deformed preprojective $K$-algebras of Dynkin type $\bE_6$
and 
the 
deformed mesh $K$-algebras of 
%Dynkin 
type $\bF_4$ in \cite{B:F4}.
\end{remarks}

The article is organized as follows. 
Section~\ref{sec:hom} contains 
mathematical background 
for the algorithmic approach of the problem
and theorems proving the correctness of the algorithm
presented in the next sections.
Section~\ref{sec:outline}
presents an outline of the algorithm used in testing  if  
$P(\mathbb{E}_n)$ and $P^f(\mathbb{E}_n)$ are isomorphic as $K$-algebras.
Section~\ref{sec:data} contains a detailed 
description of 
the 
data structures used in the calculations.
In Section~\ref{sec:adm}
we describe the calculations associated with the admissible condition
for the algebras $P^f(\mathbb{E}_n)$.
Section~\ref{sec:baseEn} contains
an algorithm   constructing   a suitable $K$-basis $B$ of the preprojective  $P(\mathbb{E}_n)$ and for a suitable  presentation
of elements of $P(\mathbb{E}_n)$ in the basis $B$.
In Section~\ref{sec:eq} we construct
 a  required set of equations
and we discuss 
 a relationship between  the  complexity of 
 its equations  
 and 
the choice of the base $B$ of the algebra $P(\mathbb{E}_n)$.
The final Section~\ref{sec:solv}
contains  a discussion of  solving 
the obtained system of equations.

In Appendices \ref{app:E7} and \ref{app:E8} we 
present 
 applications of our  algorithms to the types $\bE_7$ and $\bE_8$, as well as we discuss 
details of the calculations 
(performed according to our  algorithmic procedures)
for the types $\bE_7$ and $\bE_8$, respectively.

The reader is referred to the monographs 
 \cite{ASS,SS1,SS2,SY} for a general background on the representation theory and selfinjective algebras. We also refer   to \cite{S1}--\cite{S5} for a discussion of  potential areas of application   mentioned  earlier in this section of the algorithmic procedures constructed in our paper.

\section{On algebra homomorphisms from $P^f(\mathbb{E}_n)$ to $P(\mathbb{E}_n)$}
\label{sec:hom}

From now on we assume that $n\in\{6,7,8\}$  
and $f$ is 
a fixed
admissible element from $\rad^2 R(\bE_n)$.
We usually identify the bound quiver $Q_{\bE_n}$
with the quadruple  $Q_{\bE_n} = (Q_0, Q_1, s, t)$, where
$Q_0 = \{0,\dots,n-1\}$ is the set of vertices of $Q_{\bE_n}$,
$Q_1 = \{a_0,\dots,a_{n-2},\bar{a}_0,\dots$, $\bar{a}_{n-2}\}$ 
is the set of arrows of $Q_{\bE_n}$, and
$s,t : Q_1 \to Q_0$ are the functions 
assigning to every arrow its source and target, respectively.

To simplify the notation we set in this section
$a_{n+i-1} = \bar{a_i}$ for $i \in \{ 0, \dots, n-2\}$.
We also denote by $m$ the length of the maximal non-zero path
in $P(\bE_n)$.
Hence we set
\[
    m = \left\{\begin{array}{r@{\mbox{\,\ for }}c}
        10 & n = 6,\\
        16 & n = 7,\\
        28 & n = 8.
    \end{array}\right.
\]

An important role in our study is played by 
 a $K$-algebra 
homomorphism 
\begin{gather}
\label{eq:hom0}
\varphi : P^f(\mathbb{E}_n) \to P(\mathbb{E}_n)
\end{gather}
such that 
for every arrow $a$ we have
$\varphi(a) = a + w_a$  
for some element $w_a \in e_{s(a)} \rad^2 P(\mathbb{E}_n) e_{t(a)}$.

Observe that in that case these $w_a$, for $a \in Q_1$, in fact induce the
homomorphism $\varphi$.
Hence each of these homomorphisms is induced by 
the equalities
\begin{gather}
\label{eq:hom}
  \varphi(a_k) = a_k +
  \sum_{l = 2}^m \sum_{j = 0}^{i(k,l) -1} \alpha(k,l,j) a_{i(k,l,j,0)} \dots a_{i(k,l,j,l -1)},
  \mbox{\ \  for } k = 0,\dots,2n-3,
\end{gather}
for some coefficients 
$\alpha(k,l,j) \in K$
with
$k = 0,\dots,2n-3$, $l = 2,\dots,m$, $j = 0,\dots,{i(k,l) -1}$,
where 
for $k \in \{0,\dots,2n-3\}$
\[
  \{ a_k \} \cup \Big\{ a_{i(k,l,j,0)} \dots a_{i(k,l,j,l -1)} \,\big|\, 
l \in \{2,\dots,m\}, j \in \{0,\dots, i(k,l) -1\}\} \Big\}
\]
form a basis of $e_{s(a_k)} P(\mathbb{E}_n) e_{t(a_k)}$
with some 
$i(k,l) \in \mathbb{N}$ 
(for $k \in \{0,\dots,2n-3\}$, $l \in \{2,\dots,m\}$), 
and $i(k,l,j,s) \in \{ 0, \dots, 2n-3\}$, 
(for $k,s \in \{0,\dots,2n-3\}$, $l \in \{2,\dots,m\}$, 
$j \in \{0,\dots, i(k,l) -1\}$).

We note that assumption of the existence of a homomorphism 
\eqref{eq:hom0}
of the above form is in fact very strong, 
and in the general case, finding such a homomorphism is not easy.
The conditions for the existence of such a homomorphism are 
described by 
Corollary~\ref{cor:hom2}.
In the subsequent sections we construct 
the algorithm that checks them.

The following theorem explains the importance of these homomorphisms.

\begin{theorem}
\label{th:hom1}
\label{th:2.1}
If 
$\varphi : P^f(\mathbb{E}_n) \to P(\mathbb{E}_n)$
is  a $K$-algebra homomorphism 
given by the equalities \eqref{eq:hom} 
then $\varphi$ is invertible, and hence is a $K$-algebra isomorphism.
\end{theorem}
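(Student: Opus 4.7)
The plan is to establish that $\varphi$ is a bijection of underlying $K$-vector spaces; since $\varphi$ is already a $K$-algebra homomorphism, this automatically promotes the inverse to an algebra homomorphism, yielding the claimed isomorphism. The argument rests on the fact that $\varphi$ respects the radical filtration of the two algebras and behaves as the identity on the associated graded pieces, so that with respect to bases ordered by path length it is ``upper unitriangular''.

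The first step is to record filtration compatibility. From $\varphi(a_k)=a_k+w_{a_k}$ with $w_{a_k}\in\rad^2 P(\bE_n)$ and the multiplicativity of $\varphi$, one obtains, for every length-$l$ product of arrows,
\[
\varphi(a_{k_1}\cdots a_{k_l}) \;=\; a_{k_1}\cdots a_{k_l} + r_{k_1,\dots,k_l},
\qquad r_{k_1,\dots,k_l}\in \rad^{l+1}P(\bE_n),
\]
so that $\varphi(\rad^l P^f(\bE_n))\subseteq\rad^l P(\bE_n)$ for every $l\geq 0$.

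The second step proves surjectivity by iterating along the filtration. Setting $L:=\Im(\varphi)$, the displayed identity shows that every length-$l$ path, viewed as a generator of $\rad^l P(\bE_n)/\rad^{l+1}P(\bE_n)$, lies in $L+\rad^{l+1}P(\bE_n)$; since such paths span $\rad^l P(\bE_n)$ modulo $\rad^{l+1}P(\bE_n)$, one gets
\[
\rad^l P(\bE_n)\;\subseteq\; L + \rad^{l+1}P(\bE_n) \qquad\text{for every } l\geq 0.
\]
Unrolling this inclusion from $l=0$ and using that $\rad^{m+1}P(\bE_n)=0$ by the definition of $m$, one concludes $P(\bE_n)=\rad^0 P(\bE_n)\subseteq L$, so $\varphi$ is surjective.

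The final step promotes surjectivity to bijectivity using the equality $\dim_K P^f(\bE_n) = \dim_K P(\bE_n)$. Because the deformation term $f(\bar a_0 a_0,\bar a_2 a_2)$ lies in $\rad^4 P(\bE_n)$ (as $f\in\rad^2 R(\bE_n)$ and each of $\bar a_0 a_0,\bar a_2 a_2$ belongs to $\rad^2 P(\bE_n)$), the relations of $P^f(\bE_n)$ agree with those of $P(\bE_n)$ to leading order; combined with the admissibility condition \eqref{eq:1.2*}, this ensures that $P^f(\bE_n)$ shares the Loewy length and $K$-dimension of $P(\bE_n)$. A surjective $K$-linear map between finite-dimensional spaces of equal dimension is injective, so $\varphi$ is a linear bijection and its inverse is automatically an algebra homomorphism. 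The main obstacle is precisely this dimension equality: establishing it cleanly (either by citation from \cite{ESk} or by an inductive construction of $\varphi^{-1}$ correcting $\varphi$ order-by-order along the radical filtration) is the only nontrivial ingredient, as the rest of the argument is formal manipulation with the filtration.
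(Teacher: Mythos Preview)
Your surjectivity argument is essentially the paper's, compressed: the paper carries out an explicit induction along the radical filtration to produce, for each arrow $a_k$, an element $a_k + \sum \beta(k,l,j)\,a_{i(k,l,j,0)}\cdots a_{i(k,l,j,l-1)}$ in $P^f(\bE_n)$ that $\varphi$ sends to $a_k$, which is exactly your ``$\rad^l \subseteq L + \rad^{l+1}$'' unrolled arrow by arrow.

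The genuine difference is in how injectivity is obtained. You invoke $\dim_K P^f(\bE_n)=\dim_K P(\bE_n)$; the paper instead \emph{constructs} the inverse. Having found the preimages of the $a_k$, it defines $\psi:P(\bE_n)\to P^f(\bE_n)$ by sending $a_k$ to that preimage, checks $\psi$ is a well-defined algebra map (because $\varphi$ respects the relations and $\varphi\psi(a_k)=a_k$), and then reruns the same filtration induction on $\psi$ to show $\psi$ is surjective too. This makes the proof entirely self-contained and, in particular, \emph{yields} the dimension equality rather than consuming it. Your route is shorter provided the equality is available; the paper's route avoids the dependency, which matters here because the admissibility condition in \cite{ESk} was originally misstated (Remarks~\ref{rem:1.4}), so one may prefer not to lean on external structural facts about $P^f(\bE_n)$.

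On your sketch of the dimension equality: the inequality $\dim P^f(\bE_n)\le \dim P(\bE_n)$ does follow from the leading-term argument (the associated graded of $P^f(\bE_n)$ is a quotient of $P(\bE_n)$), but the reverse inequality does not come for free from ``agreement to leading order plus admissibility''; one still has to argue that no new relations appear, which is precisely the work your second suggested option---the order-by-order construction of $\varphi^{-1}$---performs. That option is exactly what the paper does.
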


\begin{proof}
We construct inductively elements
$\beta(k,l,j) \in K$
for $k = 0,\dots,2n-3$, $l = 2,\dots,m$, $j = 0,\dots,{i(k,l) -1}$,
such that 
\begin{gather}
\label{eq:ind0}
 \varphi\bigg(a_k + 
  \sum_{l = 2}^m \sum_{j = 0}^{i(k,l) -1} \beta(k,l,j) a_{i(k,l,j,0)} \dots a_{i(k,l,j,l -1)}
 \bigg) = a_k,
%  .
\end{gather}
for $k \in \{0,\dots,2n-3\}$.
We set
$R_{k,l} = e_{s(a_k)} \rad^{m-l-1} P(\mathbb{E}_n) e_{t(a_k)}$
and
observe that 
$$ \varphi(a_k) + R_{k,1} = a_k + R_{k,1}.$$
Assume now that
\begin{gather}
\label{eq:ind1}
 \varphi\bigg(a_k + 
  \sum_{l = 2}^{t-1} \sum_{j = 0}^{i(k,l) -1} \beta(k,l,j) a_{i(k,l,j,0)} \dots a_{i(k,l,j,l -1)}
 \bigg) 
 +R_{k,t-1} = a_k + R_{k,t-1},
\end{gather}
for some $t \in \{2,\dots,m-1\}$.
Now we construct 
$\beta(k,t,j) \in K$, 
for $k \in \{0,\dots,2n-3\}$, $j \in \{0,\dots,i(k,l) -1\}$,
such that
\begin{gather}
\label{eq:ind2}
 \varphi\bigg(a_k + 
  \sum_{l = 2}^{t} \sum_{j = 0}^{i(k,l) -1} \beta(k,l,j) a_{i(k,l,j,0)} \dots a_{i(k,l,j,l -1)}
 \bigg) 
 +R_{k,t} = a_k + R_{k,t}.
\end{gather}
%%%%%%%%%%%%%%
%
%%%%%%%%%%%%%%
Observe that   \eqref{eq:ind1} yields
\begin{gather*}
\label{eq:ind3pre}
 \varphi\bigg(a_k + 
  \sum_{l = 2}^{t-1} \sum_{j = 0}^{i(k,l) -1} \beta(k,l,j) a_{i(k,l,j,0)} \dots a_{i(k,l,j,l -1)}
 \bigg) 
 - a_k
 +R_{k,t-1} = 0 + R_{k,t-1},
\end{gather*}
for $k \in \{0,\dots,2n-3\}$,
and hence
there exist elements $\gamma_{k,t,j} \in K$, 
%for 
with
$k \in \{0,\dots,2n-3\}$, $j \in \{0,\dots,i(k,l) -1\}$,
such that
\begin{align*}
%\label{eq:ind3}
 \varphi\bigg(a_k + 
  \sum_{l = 2}^{t-1} \sum_{j = 0}^{i(k,l) -1} \beta(k,l,j) a_{i(k,l,j,0)} \dots a_{i(k,l,j,l -1)}
 \bigg) 
 - a_k
 +R_{k,t} 
%\\\qquad
= 
 \sum_{j = 0}^{i(k,t) -1} \gamma_{k,t,j} a_{i(k,t,j,0)} \dots a_{i(k,t,j,t -1)} 
 + R_{k,t},
\end{align*}
for $k \in \{0,\dots,2n-3\}$.
Therefore, if we set  
$\beta(k,t,j) = -  \gamma_{k,t,j}$, 
for $k \in \{0,\dots,2n-3\}$, $j \in \{0,\dots,i(k,l) -1\}$,
then  \eqref{eq:ind2} is satisfied. 
Hence, by induction we prove  that 
\eqref{eq:ind2} is satisfied also for $t = m$.
But in this case \eqref{eq:ind2} is equivalent to
\eqref{eq:ind0}.

Consequently,  \eqref{eq:ind0} is satisfied and  we obtain
\begin{gather}
%\label{eq:ind4}
 a_k = 
 \varphi(a_k) + 
  \sum_{l = 2}^m \sum_{j = 0}^{i(k,l) -1} \beta(k,l,j) \varphi(a_{i(k,l,j,0)}) \dots \varphi(a_{i(k,l,j,l -1)}),
%  .
\end{gather}
for $k \in \{0,\dots,2n-3\}$, and hence the set 
$\{\varphi(a_i)\}_{i \in \{0,\dots,2n-3\}}$ generates the $K$-algebra 
$P(\mathbb{E}_n)$.

Now we show  that  the $K$-linear map
$\psi : P(\mathbb{E}_n) \to P^f(\mathbb{E}_n)$  
defined  by 
\begin{gather}
\label{eq:ind5}
 \psi (a_k) = a_k + 
  \sum_{l = 2}^m \sum_{j = 0}^{i(k,l) -1} \beta(k,l,j) a_{i(k,l,j,0)} \dots a_{i(k,l,j,l -1)}
,
\end{gather}
for $k \in \{0,\dots,2n-3\}$, is a $K$-algebra  homomorphism that is inverse to $\varphi$.
Indeed, $\psi$ is well defined,
%.
because $\varphi$ is a homomorphism
and \eqref{eq:ind0} is satisfied.
Moreover, we have
$\varphi(\psi(a_k)) = a_k$
for $k \in \{0,\dots,2n-3\}$.
Hence $\varphi \circ \psi= \id_{P(\mathbb{E}_n)}$.

To complete the proof that $\varphi$ (and hence also $\psi$)
is an isomorphism it suffices to show that 
$P^f(\mathbb{E}_n)$ is generated by
elements
$\{\psi(a_i)\}_{i \in \{0,\dots,2n-3\}}$. 
Recall that 
$P^f(\mathbb{E}_n)$ is generated by elements
$\{a_k\}_{k \in \{0,\dots,2n-3\}}$,
so it suffices show that 
for each $k \in \{0,\dots,2n-3\}$
element $a_k \in P^f(\mathbb{E}_n)$
is generated by the elements
$\{\psi(a_i)\}_{i \in \{0,\dots,2n-3\}}$. 
In particular, we may dually construct inductively elements
$\alpha'(k,l,j) \in K$
for $k = 0,\dots,2n-3$, $l = 2,\dots,m$, $j = 0,\dots,{i(k,l) -1}$,
such that 
\begin{gather*}
%\label{eq:ind0}
 \psi\bigg(a_k + 
  \sum_{l = 2}^m \sum_{j = 0}^{i(k,l) -1} \alpha'(k,l,j) a_{i(k,l,j,0)} \dots a_{i(k,l,j,l -1)}
 \bigg) = a_k,
%  .
\end{gather*}
for $k \in \{0,\dots,2n-3\}$.
\end{proof}

To apply above theorem to the computer algorithm
we need a tool to look for a homomorphism
given by the equalities of the form presented in \eqref{eq:hom}.
Hence we need to formulate the conditions for 
the existence of such a homomorphism 
in a form that can be checked by such an algorithm.
The following consequence of Theorem~\ref{th:hom1}
%will be useful.
%is being useful.
is being useful in the construction of that algorithm.

%\begin{theorem}
%\label{th:hom2}
\begin{corollary}
\label{cor:hom2}
Let $i(k,l) \in \mathbb{N}$ 
for $k \in \{0,\dots,2n-3\}$, $l \in \{2,\dots,m\}$, 
and $i(k,l,j,s) \in \{ 0, \dots, 2n-3\}$, 
for $k,s \in \{0,\dots,2n-3\}$, $l \in \{2,\dots,m\}$, 
$j \in \{0,\dots, i(k,l) -1\}$
such that 
$$\{ a_k \} \cup \Big\{ a_{i(k,l,j,0)} \dots a_{i(k,l,j,l -1)} \,\big|\, 
l \in \{2,\dots,m\}, j \in \{0,\dots, i(k,l) -1\}\} \Big\}$$
form a basis of $e_{s(a_k)} P(\mathbb{E}_n) e_{t(a_k)}$
for each
$k \in \{0,\dots,2n-3\}$.
Assume that there exist coefficients 
$\alpha(k,l,j) \in K$
for $k = 0,\dots,2n-3$, $l = 2,\dots,m$, $j = 0,\dots,{i(k,l) -1}$
satisfying the equalities:
\begin{gather*}
\delta(k_1,k_2) =0
\mbox{ for } (k_1,k_2) \in \{ (0,n-1), (1,n), (2n-3, n-2)\};
\\
\delta(k+n-2,k-1)+\delta(k,k+n-1) =0
\mbox{ for } k \in \{ 2, 4, \dots, n-2 \};
\\
\delta(n-1,0)+\delta(n+1,2)+\delta(2n-3,n-2) +
f\big(\delta(n-1,0),\delta(n+1,2)\big) = 0  
\end{gather*}
where $\delta : \{0,\dots,n-3\}^2 \rightarrow P(\mathbb{E}_n)$
%where $\delta : Q_0^2 \rightarrow P(\mathbb{E}_n)$
are defined as follow
\begin{align*}
  \delta(k_1,k_2) &=
   \sum_{l = 2}^m \sum_{j = 0}^{i(k_1,l) -1} \alpha(k_1,l,j)   a_{i(k_1,l,j,0)} \dots a_{i(k_1,l,j,l -1)} a_{k_2}
% \\&\quad
  + 
   \sum_{l = 2}^m \sum_{j = 0}^{i(k_2,l) -1} \alpha(k_2,l,j)   a_{k_1} a_{i(k_2,l,j,0)} \dots a_{i(k_2,l,j,l -1)}
 \\&\quad
  + 
   \bigg( \sum_{l = 2}^m \sum_{j = 0}^{i(k_1,l) -1} \alpha(k_1,l,j)  a_{i(k_1,l,j,0)} \dots a_{i(k_1,l,j,l -1)}\bigg)  
% \\&\qquad
   \,
   \bigg(\sum_{l = 2}^m \sum_{j = 0}^{i(k_2,l) -1} \alpha(k_2,l,j) a_{i(k_2,l,j,0)} \dots a_{i(k_2,l,j,l -1)}\bigg) .
\end{align*}
Then the map
$\varphi : P^f(\mathbb{E}_n) \to P(\mathbb{E}_n)$
defined by setting 
$$
  \varphi(a_k) = a_k +
  \sum_{l = 2}^m \sum_{j = 0}^{i(k,l) -1} \alpha(k,l,j) a_{i(k,l,j,0)} \dots a_{i(k,l,j,l -1)}
$$
for $k = 0,\dots,2n-3$,
is a  $K$-algebra isomorphism.
%\end{theorem}
\end{corollary}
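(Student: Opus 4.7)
The plan is to deduce the corollary from Theorem~\ref{th:hom1} by checking that the displayed equations encode exactly the fact that $\varphi$ descends to a well-defined $K$-algebra homomorphism $P^f(\mathbb{E}_n)\to P(\mathbb{E}_n)$; invertibility then follows at once from Theorem~\ref{th:hom1}. The starting observation is a direct multiplication: for any two arrows $a_{k_1},a_{k_2}$ in $Q_{\bE_n}$ with $t(a_{k_1})=s(a_{k_2})$, the definitions of $\varphi$ and of $\delta$ yield the identity
\begin{gather*}
\varphi(a_{k_1})\,\varphi(a_{k_2})=a_{k_1}a_{k_2}+\delta(k_1,k_2)
\end{gather*}
in $P(\mathbb{E}_n)$, so $\delta(k_1,k_2)$ is precisely the $\varphi$-correction produced on a length-two path.

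With this bookkeeping identity in hand, I would proceed relation by relation through the generators of $\mathcal{I}(\mathbb{E}_n,f)$. For the monomial relations $a_0\bar a_0=a_1\bar a_1=\bar a_{n-2}a_{n-2}=0$, applying $\varphi$ leaves a single $\delta$-term, giving the first group of equations. For each undeformed mesh relation $\bar a_{k-1}a_{k-1}+a_k\bar a_k=0$ the original relation already holds in $P(\mathbb{E}_n)$ and cancels, so the correction is a sum of two $\delta$'s, giving the second group. Both of these steps are purely formal once the identity above is established; in particular, no further use of the admissibility of $f$ is needed here.

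The main obstacle is the deformed relation at the exceptional vertex~$3$, whose image under $\varphi$ contains three $\delta$-corrections together with the non-linear term $f(\varphi(\bar a_0a_0),\varphi(\bar a_2a_2))$. Here I would compute inside the local subalgebra $e_3P(\mathbb{E}_n)e_3\cong R(\bE_n)$, in which $x=\bar a_0a_0$ and $y=\bar a_2a_2$ satisfy $x^2=y^3=(x+y)^{n-3}=0$; expanding $f\bigl(x+\delta(n-1,0),\,y+\delta(n+1,2)\bigr)$ and using both these nilpotency relations and the admissibility condition \eqref{eq:1.2*} controlling the filtration degree of the deformation, the mixed cross-terms collapse, and together with the undeformed identity $\bar a_0a_0+\bar a_2a_2+a_3\bar a_3=0$ in $P(\mathbb{E}_n)$ the residual obstruction reduces to exactly the third displayed equation. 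Once all three groups are verified, $\varphi$ is a well-defined $K$-algebra homomorphism of the shape \eqref{eq:hom}, and Theorem~\ref{th:hom1} supplies the inverse, completing the proof.
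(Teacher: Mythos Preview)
Your overall strategy is precisely the paper's: the $\delta$-equations are meant to encode that $\varphi$ respects the defining relations of $P^f(\mathbb{E}_n)$, hence descends to a $K$-algebra homomorphism, after which Theorem~\ref{th:hom1} gives invertibility. The paper's proof is a two-line assertion of exactly this; you are supplying the relation-by-relation verification that the paper omits. Your key identity $\varphi(a_{k_1})\varphi(a_{k_2})=a_{k_1}a_{k_2}+\delta(k_1,k_2)$ and the treatment of the monomial and undeformed mesh relations are correct.

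The one genuine gap is in your handling of the deformed relation at vertex~$3$. Applying $\varphi$ to that relation and using $\bar a_0a_0+\bar a_2a_2+a_3\bar a_3=0$ in $P(\mathbb{E}_n)$, the actual obstruction is
\[
\delta(n-1,0)+\delta(n+1,2)+\delta(3,n+2)+f\bigl(\bar a_0a_0+\delta(n-1,0),\ \bar a_2a_2+\delta(n+1,2)\bigr)=0,
\]
with $f$ evaluated at the full $\varphi$-images $\varphi(\bar a_0a_0)$ and $\varphi(\bar a_2a_2)$. Your claim that the mixed cross-terms in $f(x+u,\,y+v)$ ``collapse'' via nilpotency and admissibility is not correct: those terms lie in $\operatorname{rad}^4 e_3P(\mathbb{E}_n)e_3$ but need not vanish (already for $f=xy$ one has the surviving terms $xv+uy$), and the admissibility condition \eqref{eq:1.2*} constrains $(x+y+f)^{n-3}$, not this expansion. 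The third displayed equation in the corollary, as written with $f(\delta,\delta')$ and with the index pair $(2n-3,n-2)$ recurring from the first line, appears to be a notational slip in the paper; the condition actually used in the appendices is the one above with $f$ at the $\varphi$-images. So keep your outline, but drop the collapse argument and record the vertex-$3$ condition with $f\bigl(\varphi(\bar a_0a_0),\varphi(\bar a_2a_2)\bigr)$ rather than trying to force it into the literal shape displayed.
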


\begin{proof}
It follows from the assumption on $\delta$
that $\varphi : P^f(\mathbb{E}_n) \to P(\mathbb{E}_n)$ 
defined on arrows as in the claim 
is a well defined homomorphism.
Hence the corollary is an immediate consequence of  Theorem~\ref{th:hom1}.
\end{proof}

We note that 
Corollary~\ref{cor:hom2}
says that 
for a given admissible element $f \in \rad^2 R(\bE_n)$
we can find an isomorphism
$\varphi : P^f(\mathbb{E}_n) \to P(\mathbb{E}_n)$
if we can solve a particular system of
$\sum_{i=0}^{n-1} \dim \rad^3 e_{i} P(\mathbb{E}_n) e_{i}$
equations over $K$
with
$\sum_{k=0}^{2n-3} \dim \rad^2 e_{s(a_k)} P(\mathbb{E}_n) e_{t(a_k)}$
variables.
Details of  the construction of these equations are presented 
in Sections~\ref{sec:baseEn} and \ref{sec:eq}.
Dimensions of $e_i P(E_n) e_j$ over $K$, for $i,j \in \{0,\dots,n-1\}$,
are shown in Table~\ref{t:dimeiAej}.

On the other hand, if we want
to show that such a system of equations
is solvable for all admissible elements
$f \in \rad^2 R(\bE_n)$, then we also need
some tool (preferable equations) to determine
if a given element $f \in \rad^2 R(\bE_n)$
is admissible.
We note 
that not all elements $f \in \rad^2 R(\bE_n)$
are admissible (see \cite[Remark]{B:E6} for details). 
Observe, that if
 we denote by $B$ the basis of $\rad^2 R(\bE_n)$
then we can identify element $f \in \rad^2 R(\bE_n)$
with coefficients $\theta_b \in K$, for $b\in B$, such that
$f = \sum_{b \in B} \theta_b b$.
We show later in Section~\ref{sec:adm} how to
construct equations for $\theta_b$, $b\in B$, over $K$
equivalent with 
$\sum_{b \in B} \theta_b b$ 
being an admissible element. 

We note also that in the above approach we may in fact 
%limit 
reduce 
the scope of the calculations to
$P(\mathbb{E}_n)$.
Obviously, it is convenient,
because 
%the 
calculations in $P^f(\mathbb{E}_n)$
are much more complicated.

\section{Alghoritm}
\label{sec:outline}

We present here an  outline of the constructed algorithm.  Details of the particular steps of
this algorithm are presented in the  subsequent
sections.

We split the  calculations  into following three parts:

\begin{enumerate}[I.]
\setlength{\itemsep}{0pt}
 \item
 Computing the equations corresponding to the admissibility condition. 
 \item
 Computing the equations equivalent to 
 the existence of coefficients from 
 Corollary~\ref{cor:hom2}.
 \item
 Solving the system composed of equations obtained in Parts I and II.
\end{enumerate}

\noindent
In Part I we can extinguish the following steps:
\begin{enumerate}[1.]
\setlength{\itemsep}{0pt}
 \item
Choosing  
a base $B$ of $R(\mathbb{E}_n)$, 
and
a base $B'$ of $\rad^2 R(\mathbb{E}_n)$ 
such that $B' \subset B$,
and computing 
the presentation of (all) non-zero elements of $\rad^2 R(\mathbb{E}_n)$ in $B$.

 \item
Constructing the presentation of 
an arbitrary element $f \in \rad^2 R(\mathbb{E}_n)$
through the base elements.
We are identifying $f \in \rad^2 R(\mathbb{E}_n)$ with 
%variables 
elements
$\theta_b \in K$, for $b \in B'$,
such that
$f= \sum_{b \in B'} \theta_b b$.

 \item
 Calculating  $(x+y+f(x,y))^{n-3}$.
In other words, we are constructing terms 
$\omega_b$ over $K$, for ${b \in B'}$,
such that 
$\sum_{b \in B'} \omega_b b = 
\big( \sum_{b \in B\setminus B'} b + \sum_{b \in B'} \theta_b b\big)^{n-3}$.

 \item
From the previous step we obtain
the set of equations $\{\omega_b = 0 \}_{b \in B'}$.
We reduce it to obtain the set $\Omega$ of independent equations.

 \item
 Further reduction of the obtained system of equations.
For each equation of $\Omega$ we chose 
the variable to be substituted in the 
equations obtained in Part~II.
\end{enumerate}

\noindent
In Part II we have the following steps:
\begin{enumerate}[1.]
\setlength{\itemsep}{0pt}
 \item
 Computation of 
\begin{itemize}
 \item
a base $B$ of $P(\mathbb{E}_n)$, 
 \item
the set $E$ of non-zero paths of $P(\mathbb{E}_n)$,
 \item
the presentation of elements from $E$ in $B$.
\end{itemize}

 \item
Constructing the presentation of arbitrary homomorphism 
$\varphi : P^f(\mathbb{E}_n) \to P(\mathbb{E}_n)$ 
defined on arrows.
We identify $\varphi$ with the
set of variables 
%$$\{\alpha_{a,b} \in K \}_{a \in Q_1,b \in B, b \mbox{ is a path from $s(a)$ to $t(a)$}}$$
\[
  \big\{\alpha_{a,b} \in K\,|\,  
   a \in Q_1,b \in B, b \mbox{ is a path from $s(a)$ to $t(a)$} \big\}
.
\]
%(see ...).

 \item
Calculating the actions of $\varphi$
on the paths from the relations of $P^f(\mathbb{E}_n)$.

 \item
 Deriving from the previous step 
 the equations in $P(\mathbb{E}_n)$ 
 corresponding
 to the relations of $P^f(\mathbb{E}_n)$.
\end{enumerate}

\section{Data structures}
\label{sec:data}

In this section we consider the choice of data structures.
Our priority is 
to reduce the 
computational and memory complexity of the necessary calculations.

We note that we need to choose the data structure in such a way,
so we can be able to
 \begin{itemize}
\setlength{\itemsep}{0pt}
 \item
  quickly perform operations 
  such as sum, product, multiplication by scalar,
  on elements from 
  $P(\mathbb{E}_n)$ (respectively, on elements from $R(\mathbb{E}_n)$); 
 \item
  quickly determine if the result of such an operation
  is non-zero;
 \item
  effectively store the presentation of elements 
  from $P(\mathbb{E}_n)$ (respectively, elements from $R(\mathbb{E}_n)$) 
  in the limited amount of memory.
\end{itemize}
We recall that in $P(\mathbb{E}_8)$ we have 
14 different arrows
and
non-zero paths of length 28,
so 
``brutal force'' approach by
computing and storing coefficients for all 
(not necessarily non-zero) paths of 
length less or equal 28 would require 
a big amount of memory.

%\begin{table}%[htbp]%\small
%\begin{table}[width=.45\linewidth,cols=4,pos=h]
%\begin{table}[width=.30\linewidth,pos=h]
%\begin{table}[\linewidth,pos=h]
\begin{table}[htbp]
\small
%\begin{table}[h]%\small
  \caption{Dimensions of $e_i A e_j$ over $K$ for $A \in \{P(\mathbb{E}_6),P(\mathbb{E}_7),P(\mathbb{E}_8)\}$}
  \label{t:dimeiAej}
  \begin{center}
%\begin{tabular*}{.3\linewidth}{@{} C|CCCCCC@{} }
%\begin{tabular*}{.25\linewidth}{@{\ \ } C|CCCCCC}
%\begin{tabular*}{.25\linewidth}{@{\ \ } c|cccccc}
\begin{tabular}{@{\ \ } c|cccccc}
\multicolumn{7}{@{}c@{}}{$\dim e_i P(\mathbb{E}_6) e_j$} \\
%\toprule
\midrule
%% = E6 ==============================================
    & \multicolumn{6}{c}{$j\ \,$} \\
%    & \multicolumn{6}{c}{$j$} \\
  $i$  & 0 & 1 & 2 & 3 & 4 & 5 \\%\midrule
%\cmidrule(l{-.9em}r{0em}){2-7}
\cmidrule{2-7}
$0$&     4 &     2 &     4 &     6 &     4 &     2 \\
$1$&     2 &     2 &     3 &     4 &     3 &     2 \\
$2$&     4 &     3 &     6 &     8 &     6 &     3 \\
$3$&     6 &     4 &     8 &    \!12\! &     8 &     4 \\
$4$&     4 &     3 &     6 &     8 &     6 &     3 \\
$5$&     2 &     2 &     3 &     4 &     3 &     2 \\\bottomrule
\multicolumn{7}{c}{}\\
\multicolumn{7}{c}{}\\
\end{tabular}\quad
%\begin{tabular*}{\tblwidth}{@{} C|CCCCCCC@{} }
%\begin{tabular*}{.3\linewidth}{@{\ \ } C|CCCCCCC}
%\begin{tabular*}{.3\linewidth}{@{\ \ } c|ccccccc}
\begin{tabular}{@{\ \ } c|ccccccc}
\multicolumn{8}{@{}c@{}}{$\dim e_i P(\mathbb{E}_7) e_j$} \\
%\toprule
\midrule
%% = E7 ==============================================
    & \multicolumn{7}{c}{$j\ \,$} \\
 $i$  & 0 & 1 & 2 & 3 & 4 & 5 & 6 \\%\midrule
%\cmidrule(l{-1em}r{0em}){2-8}
\cmidrule{2-8}
$0$&     7 &     4 &     8 &    12 &     9 &     6 &     3 \\
$1$&     4 &     4 &     6 &     8 &     6 &     4 &     2 \\
$2$&     8 &     6 &    12 &    16 &    12 &     8 &     4 \\
$3$&    12 &     8 &    16 &    24 &    18 &    12 &     6 \\
$4$&     9 &     6 &    12 &    18 &    15 &    10 &     5 \\
$5$&     6 &     4 &     8 &    12 &    10 &     8 &     4 \\
$6$&     3 &     2 &     4 &     6 &     5 &     4 &     3 \\\bottomrule
\multicolumn{8}{c}{}\\
\end{tabular}\quad
%%\begin{tabular*}{.3\linewidth}{@{} C|CCCCCCCC@{} }
%\begin{tabular*}{.35\linewidth}{@{\ \ } C|CCCCCCCC@{\ \ } }
%\begin{tabular*}{.35}{@{\ \ } c|cccccccc@{\ \ } }
%\begin{tabular}{@{\ \ } c|cccccccc@{\ \ } }
\begin{tabular}{@{\ \ } c|cccccccc@{\ \ } }
%\begin{tabular*}{.35\linewidth}{@{\ \ } c|cccccccc@{\ \ } }
\multicolumn{9}{@{}c@{}}{$\dim e_i P(\mathbb{E}_8) e_j$} \\
%\toprule
\midrule
%% = E8 ==============================================
    & \multicolumn{8}{c}{$j\ \,$} \\
 $i$  & 0 & 1 & 2 & 3 & 4 & 5 & 6 & 7 \\
%\midrule
%\cmidrule{2-9}
%\cmidrule(lr){2-9}
%\cmidrule(l{-.95em}r{0em}){2-9}
\cmidrule{2-9}
$0$&    16 &    10 &    20 &    30 &    24 &    18 &    12 &     6 \\
$1$&    10 &     8 &    14 &    20 &    16 &    12 &     8 &     4 \\
$2$&    20 &    14 &    28 &    40 &    32 &    24 &    16 &     8 \\
$3$&    30 &    20 &    40 &    60 &    48 &    36 &    24 &    12 \\
$4$&    24 &    16 &    32 &    48 &    40 &    30 &    20 &    10 \\
$5$&    18 &    12 &    24 &    36 &    30 &    24 &    16 &     8 \\
$6$&    12 &     8 &    16 &    24 &    20 &    16 &    12 &     6 \\
$7$&     6 &     4 &     8 &    12 &    10 &     8 &     6 &     4 \\\bottomrule
\end{tabular}
\end{center}
\end{table}

We note that the basis 
(of both $R(\mathbb{E}_n)$ and $P(\mathbb{E}_n)$) 
consist of relatively small amount of elements
(recall that $\dim_K R(\mathbb{E}_n) = \dim_K e_3 P(\mathbb{E}_n) e_3$, 
and see Table~\ref{t:dimeiAej} for the dimensions
of $e_i P(E_n) e_j$ over $K$, for $i,j \in \{0,\dots,n-1\}$).
Hence we can store them in 
(respectively)
two and four-dimensional tables:
\begin{itemize}
\setlength{\itemsep}{0pt}
 \item
 we store the basis of $R(\mathbb{E}_n)$ in the 2-dimensional 
 table, where the dimensions corresponds to
 \begin{itemize}
\setlength{\itemsep}{0pt}
  \item
   length $m$ of the element,
  \item
   number $r$ of the elements of length $m$;   
 \end{itemize}
 \item
 we store the basis of $P(\mathbb{E}_n)$ in the 4-dimensional 
 table, where the dimensions corresponds to
 \begin{itemize}
\setlength{\itemsep}{0pt}
  \item
   length $m$ of the path,
  \item
   source $s$ of the path,
  \item
   target $t$ of the path,
  \item
   number $r$ of the elements from $s$ to $t$ of length $m$.   
 \end{itemize}
\end{itemize}
We note that the last dimension in the case of both bases has
variable number of indices and we identify 
these tables
with compositions of one-dimensional tables.

The presentation of elements of $R(\mathbb{E}_n)$
we compute as a binary search tree,
where 
\begin{itemize}
\setlength{\itemsep}{0pt}
 \item
left branches corresponds to the element $x$,
 \item
right branches corresponds to the element $y$,
 \item
each of the nodes
correspond to the element being a composition of 
elements corresponding to branches on the path 
(from root)
to that node,
 \item
all leaves and only leaves correspond to zero elements,
 \item
with every node which is not a leaf we associate the vector
of coefficients of the corresponding element in the base 
of $R(\mathbb{E}_n)$.
\end{itemize}
We identify the leaves with null pointers and denote
them by symbol ``$\boxtimes$''.
%We note that if we wanted our trees to be compatible with 
%the generally accepted definition of a tree, then we should
We note that if we want our trees to be compatible with 
the commonly accepted definition of a tree, then we should
remove all nodes ``$\boxtimes$''.
In our case we
keep them in order to emphasize their role
to indicate all zero-paths.
We note also that in the construction of such a tree
(see Algorithm~\ref{alg:base-REn})
there appear some similar nodes to ``$\boxtimes$'',
of a special
role such 
as 
%``in computation''.
``under computation''.
%``$\Box$''

For $R(\mathbb{E}_6)$ such a tree is of the  shape presented on Figure~\ref{fig:E6-paths}.
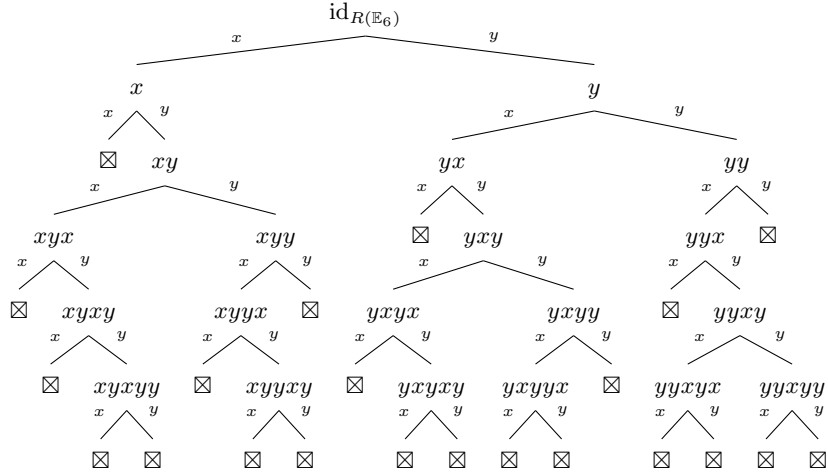
\begin{figure}[h]
\center
\begin{forest}
  [$\id_{R(\mathbb{E}_6)}$
    [$x$,edge label={node[midway,above left,font=\scriptsize]{$x$}}
      [$\boxtimes$,edge label={node[midway,above left,font=\scriptsize]{$x$}}]
      [$xy$,edge label={node[midway,above right,font=\scriptsize]{$y$}}
        [$xyx$,edge label={node[midway,above left,font=\scriptsize]{$x$}}
          [$\boxtimes$,edge label={node[midway,above left,font=\scriptsize]{$x$}}]
          [$xyxy$,edge label={node[midway,above right,font=\scriptsize]{$y$}}
            [$\boxtimes$,edge label={node[midway,above left,font=\scriptsize]{$x$}}]
            [$xyxyy$,edge label={node[midway,above right,font=\scriptsize]{$y$}}
%              [$\boxtimes$][$\boxtimes$]
              [$\boxtimes$,edge label={node[midway,above left,font=\scriptsize]{$x$}}]
              [$\boxtimes$,edge label={node[midway,above right,font=\scriptsize]{$y$}}]
            ]]]
        [$xyy$,edge label={node[midway,above right,font=\scriptsize]{$y$}}
          [$xyyx$,edge label={node[midway,above left,font=\scriptsize]{$x$}}
            [$\boxtimes$,edge label={node[midway,above left,font=\scriptsize]{$x$}}]
            [$xyyxy$,edge label={node[midway,above right,font=\scriptsize]{$y$}}
%              [$\boxtimes$][$\boxtimes$]
              [$\boxtimes$,edge label={node[midway,above left,font=\scriptsize]{$x$}}]
              [$\boxtimes$,edge label={node[midway,above right,font=\scriptsize]{$y$}}]
            ]]
          [$\boxtimes$,edge label={node[midway,above right,font=\scriptsize]{$y$}}]]]]
    [$y$,edge label={node[midway,above right,font=\scriptsize]{$y$}}
      [$yx$,edge label={node[midway,above left,font=\scriptsize]{$x$}}
        [$\boxtimes$,edge label={node[midway,above left,font=\scriptsize]{$x$}}]
        [$yxy$,edge label={node[midway,above right,font=\scriptsize]{$y$}}
          [$yxyx$,edge label={node[midway,above left,font=\scriptsize]{$x$}}
            [$\boxtimes$,edge label={node[midway,above left,font=\scriptsize]{$x$}}]
            [$yxyxy$,edge label={node[midway,above right,font=\scriptsize]{$y$}}
%              [$\boxtimes$][$\boxtimes$]
              [$\boxtimes$,edge label={node[midway,above left,font=\scriptsize]{$x$}}]
              [$\boxtimes$,edge label={node[midway,above right,font=\scriptsize]{$y$}}]
          ]]
          [$yxyy$,edge label={node[midway,above right,font=\scriptsize]{$y$}}
            [$yxyyx$,edge label={node[midway,above left,font=\scriptsize]{$x$}}
%              [$\boxtimes$][$\boxtimes$]
              [$\boxtimes$,edge label={node[midway,above left,font=\scriptsize]{$x$}}]
              [$\boxtimes$,edge label={node[midway,above right,font=\scriptsize]{$y$}}]
            ]
            [$\boxtimes$,edge label={node[midway,above right,font=\scriptsize]{$y$}}]]]]
      [$yy$,edge label={node[midway,above right,font=\scriptsize]{$y$}}
        [$yyx$,edge label={node[midway,above left,font=\scriptsize]{$x$}}
          [$\boxtimes$,edge label={node[midway,above left,font=\scriptsize]{$x$}}]
          [$yyxy$,edge label={node[midway,above right,font=\scriptsize]{$y$}}
            [$yyxyx$,edge label={node[midway,above left,font=\scriptsize]{$x$}}
%              [$\boxtimes$][$\boxtimes$]
              [$\boxtimes$,edge label={node[midway,above left,font=\scriptsize]{$x$}}]
              [$\boxtimes$,edge label={node[midway,above right,font=\scriptsize]{$y$}}]
            ]
            [$yyxyy$,edge label={node[midway,above right,font=\scriptsize]{$y$}}
%              [$\boxtimes$][$\boxtimes$]
              [$\boxtimes$,edge label={node[midway,above left,font=\scriptsize]{$x$}}]
              [$\boxtimes$,edge label={node[midway,above right,font=\scriptsize]{$y$}}]
            ]]]
        [$\boxtimes$,edge label={node[midway,above right,font=\scriptsize]{$y$}}]]]
  ]
\end{forest}
  \caption{Tree with non-zero paths of $R(\mathbb{E}_6)$}
  \label{fig:E6-paths}
\end{figure}
On this figure we may observe the association of nodes of this tree
with compositions of generators $x$, $y$,
where $\boxtimes$ denote the leaves. 

The basis of $R(\EE_6)$ over $K$ is stored in the following table 
$$B = \big\{ 
  \{1_{R(\mathbb{E}_6)}\},
  \{x,y\},
  \{xy,  yx, yy\},
  \{xyx,  xyy,  yxy\},
  \{xyxy,  yxyy\},
  \{xyxyy\}
\big\} .$$

Presentation of elements in the above basis is stored in the tree
illustrated on Figure~\ref{fig:E6-pres}.
\begin{figure}[h]
\center
\begin{forest}
%  [$\emptyset$
%  [$\emptyset$
  [\mbox{$[1]$}
    [\mbox{$[1,0]$},edge label={node[midway,above left,font=\scriptsize]{$x$}}
      [$\boxtimes$,edge label={node[midway,above left,font=\scriptsize]{$x$}}]
      [\mbox{$[1,0,0]$},edge label={node[midway,above right,font=\scriptsize]{$y$}}
        [\mbox{$[1,0,0]$},edge label={node[midway,above left,font=\scriptsize]{$x$}}
          [$\boxtimes$,edge label={node[midway,above left,font=\scriptsize]{$x$}}]
          [\mbox{$[1,0]$},edge label={node[midway,above right,font=\scriptsize]{$y$}}
            [$\boxtimes$,edge label={node[midway,above left,font=\scriptsize]{$x$}}]
            [\mbox{$[1]$},edge label={node[midway,above right,font=\scriptsize]{$y$}}
%              [$\boxtimes$][$\boxtimes$]
              [$\boxtimes$,edge label={node[midway,above left,font=\scriptsize]{$x$}}]
              [$\boxtimes$,edge label={node[midway,above right,font=\scriptsize]{$y$}}]
            ]]]
        [\mbox{$[0,1,0]$},edge label={node[midway,above right,font=\scriptsize]{$y$}}
          [\mbox{$[-1,0]$},edge label={node[midway,above left,font=\scriptsize]{$x$}}
            [$\boxtimes$,edge label={node[midway,above left,font=\scriptsize]{$x$}}]
            [\mbox{$[-1]$},edge label={node[midway,above right,font=\scriptsize]{$y$}}
%              [$\boxtimes$][$\boxtimes$]
              [$\boxtimes$,edge label={node[midway,above left,font=\scriptsize]{$x$}}]
              [$\boxtimes$,edge label={node[midway,above right,font=\scriptsize]{$y$}}]
          ]]
          [$\boxtimes$,edge label={node[midway,above right,font=\scriptsize]{$y$}}]]]]
    [\mbox{$[0,1]$},edge label={node[midway,above right,font=\scriptsize]{$y$}}
      [\mbox{$[0,1,0]$},edge label={node[midway,above left,font=\scriptsize]{$x$}}
        [$\boxtimes$,edge label={node[midway,above left,font=\scriptsize]{$x$}}]
        [\mbox{$[0,0,1]$},edge label={node[midway,above right,font=\scriptsize]{$y$}}
          [\mbox{$[1,0]$},edge label={node[midway,above left,font=\scriptsize]{$x$}}
            [$\boxtimes$,edge label={node[midway,above left,font=\scriptsize]{$x$}}]
            [\mbox{$[1]$},edge label={node[midway,above right,font=\scriptsize]{$y$}}
%              [$\boxtimes$][$\boxtimes$]
              [$\boxtimes$,edge label={node[midway,above left,font=\scriptsize]{$x$}}]
              [$\boxtimes$,edge label={node[midway,above right,font=\scriptsize]{$y$}}]
            ]]
          [\mbox{$[0,1]$},edge label={node[midway,above right,font=\scriptsize]{$y$}}
            [\mbox{$[-1]$},edge label={node[midway,above left,font=\scriptsize]{$x$}}
%              [$\boxtimes$][$\boxtimes$]
              [$\boxtimes$,edge label={node[midway,above left,font=\scriptsize]{$x$}}]
              [$\boxtimes$,edge label={node[midway,above right,font=\scriptsize]{$y$}}]
            ]
            [$\boxtimes$,edge label={node[midway,above right,font=\scriptsize]{$y$}}]]]]
      [\mbox{$[0,0,1]$},edge label={node[midway,above right,font=\scriptsize]{$y$}}
        [\mbox{$[-1,-1,-1]$},edge label={node[midway,above left,font=\scriptsize]{$x$}}
          [$\boxtimes$,edge label={node[midway,above left,font=\scriptsize]{$x$}}]
          [\mbox{$[-1,-1]$},edge label={node[midway,above right,font=\scriptsize]{$y$}}
            [\mbox{$[1]$},edge label={node[midway,above left,font=\scriptsize]{$x$}}
%              [$\boxtimes$][$\boxtimes$]
              [$\boxtimes$,edge label={node[midway,above left,font=\scriptsize]{$x$}}]
              [$\boxtimes$,edge label={node[midway,above right,font=\scriptsize]{$y$}}]
            ]
            [\mbox{$[-1]$},edge label={node[midway,above right,font=\scriptsize]{$y$}}
%              [$\boxtimes$][$\boxtimes$]
              [$\boxtimes$,edge label={node[midway,above left,font=\scriptsize]{$x$}}]
              [$\boxtimes$,edge label={node[midway,above right,font=\scriptsize]{$y$}}]
              ]]]
        [$\boxtimes$,edge label={node[midway,above right,font=\scriptsize]{$y$}}]]]
  ]
\end{forest}
  \caption{Presentation of elements of $R(\mathbb{E}_6)$}
  \label{fig:E6-pres}
\end{figure}
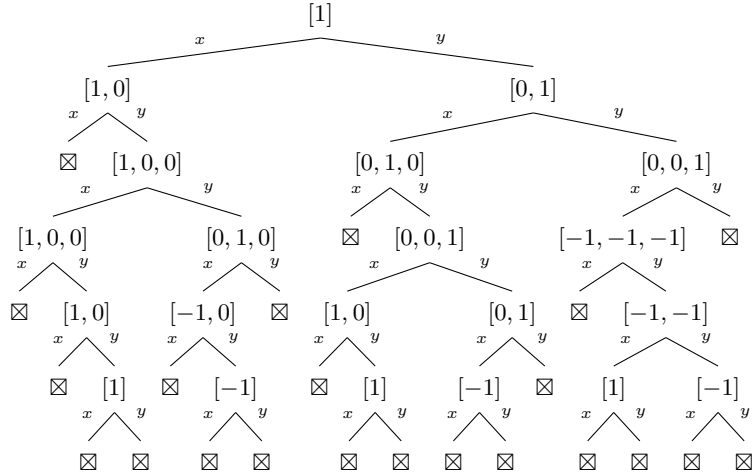

We note that both finding the basis $B$ of $R(\mathbb{E}_n)$
and the presentation of elements of $R(\mathbb{E}_n)$ in $B$,
even in the case of $n=8$, has 
a relatively low 
computational and memory complexity.
The basic properties of such trees has been presented in Table~\ref{t:REn}.
\begin{table}[htbp]\small
  \caption{Complexity of the trees for $R(\mathbb{E}_n)$}
  \label{t:REn}
  \begin{center}
\begin{tabular}{|l|c|c|c|}\hline
Dynkin type & $\mathbb{E}_6$ & $\mathbb{E}_7$ & $\mathbb{E}_8$  \\\hline
number of elements of basis $B$ & 12 & 24 & 60 \\\hline
depth of the tree & 6 & 9 & 15 \\\hline
number of nodes & 43 & 103 & 537  \\\hline
number of non-leaf and non-root nodes & 20 & 50 & 267  \\\hline
\end{tabular}
\end{center}
\end{table}

Much more computational complexity has the step
in which we compute $(x + y + f(x,y))^{n-3}$.
We recall that $f(x,y) = \sum_{b \in B'} \theta_b b$
for some coefficients $\theta_b \in K$.
Hence the powers of $x + y + f(x,y)$ are sums of the
elements $\omega$ of the form
\[
   \omega = k_{\omega} \prod_{b' \in B} \theta_{b'}^{\omega_{b'}} b
\]
for some integer $k_{\omega} \in \mathbb{Z}$,
natural numbers $\omega_{b'} \in \mathbb{N}$,
and $b \in B$.

Hence we may associate each of these summands with the
triple 
$(k_{\omega}, \underline{d}_{\omega}, b) \in \mathbb{Z} \times \mathbb{N}^{|B|} \times B$,
where $\underline{d}_{\omega} = [\omega_{b'}]_{b' \in B}$.
We note also that in fact in our calculation vector $\underline{d}_{\omega}$
can be treated as a sparse matrix, and detailed analysis show that 
even in the case of $\mathbb{E}_8$
it can be stored as a 128-bit vector.

On the other hand 
$b$ can be associated with natural number 
less then $|B|$ and stored as a $6$-bit value.
Observe that we can easily multiply such elements.
Indeed, for
$(k_1, \underline{d}_1, b_1), (k_2, \underline{d}_2, b_2) \in \mathbb{Z} \times \mathbb{N}^{|B|} \times B$,
with $b_1 b_2 = \sum_{b \in B} c_b b$ for some $c_b \in \mathbb{Z}$, $b \in B$.
We obtain then
\[
  (k_1, \underline{d}_1, b_1) (k_2, \underline{d}_2, b_2)
  = \sum_{b \in B} (c_b k_1 k_2, \underline{d}_1 + \underline{d}_2, b) .
\]
We note also that if $b_1 b_2 = 0$, then we may simply set 
$(k_1, \underline{d}_1, b_1) (k_2, \underline{d}_2, b_2) = 0$.
Moreover, in the above computations it is highly recommended 
to perform normalizations, 
that means, 
to replace pairs
of elements of the form $(k_1, \underline{d}, b)$, $(k_2, \underline{d}, b)$
by $(k_1+k_2, \underline{d}, b)$, 
and remove elements of the form $(0, \underline{d}, b)$.
Otherwise computational complexity of computing 
$(x + y + f(x,y))^{n-3}$
in the case of type  $\mathbb{E}_8$ would be very high.
We want to emphasize, that although keeping
elements of the form $(0, \underline{d}, b)$ may 
reduce amount of operations of allocating and freeing memory
and in the case of low number of such elements (i.e. in the case
of type $\bE_6$) can be useful, in the case of type $\bE_8$
it can results 
in allocating 
much
more elements
than necessary (even thousands times more), 
and in the consequence cause a big impact
on memory and time complexity of the algorithm.
We recall that 
the product of a two basis elements is a combination
of (often many) basis elements.

Dually 
(to the computation of the presentation $R(\mathbb{E}_n)$),
we compute 
the presentation of elements of $P(\mathbb{E}_n)$
as the forest of search threes,
where 
\begin{itemize}
\setlength{\itemsep}{0pt}
 \item
 roots corresponds to the (source) vertices,
 \item
 branches corresponds to arrows,
 \item
every node correspond to the path consisting of arrows
corresponding to branches on the path (in the tree) from the root to that node,
 \item
all leaves and only leaves correspond to zero elements,
 \item
with every node 
which is not a leaf we associate the vector
of coefficients of the corresponding element in the base 
of $P(\mathbb{E}_n)$.
\end{itemize}
We note that in general, trees in such a forest are not binary trees.
In particular the tree with root associated with the exceptional
vertex $3$ have three branches coming from root 
and is of the shape presented on Figure~\ref{fig:P-v3}.
\begin{figure}[h]
\center
\begin{forest}
%\makeatletter
%\newcommand{\edg}[1]{edge label={node[midway,above right,font=\scriptsize]{#1}}}
%\makeatother
    [$e_3$
%      [$\boxtimes$,\edg{$\bar{a}_0$}]
%      [$\boxtimes$,\edg{0}]
      [$\bar{a}_2$,edge label={node[midway,above left,font=\scriptsize]{$\bar{a}_2$}}
         [$\bar{a}_2 \bar{a}_1$,edge label={node[midway,above left,font=\scriptsize]{$\bar{a}_1$}}
            [$\vdots$,edge label={node[midway,left,font=\scriptsize]{${a}_1$}}
            ]]
         [$\bar{a}_2 {a}_2$,edge label={node[midway,above right,font=\scriptsize]{${a}_2$}}
            [$\vdots$,edge label={node[midway,above left,font=\scriptsize]{$\bar{a}_2$}}]
            [$\vdots$,edge label={node[midway,below right,font=\scriptsize]{$\bar{a}_0$}}]
            [$\vdots$,edge label={node[midway,above right,font=\scriptsize]{${a}_3$}}
              ]]]
%      [$\boxtimes$,edge label={node[midway,right,font=\scriptsize]{$\bar{a}_0$}}]
      [$\bar{a}_0$,edge label={node[midway,below left,font=\scriptsize]{$\bar{a}_0$}}     
%          [$\qquad\vdots\qquad$,edge label={node[midway,right,font=\scriptsize]{${a}_0$}}]
          [$\qquad\vdots\qquad$,edge label={node[midway,right,font=\scriptsize]{${a}_0$}}]
      ]
%      [$\vdots$,edge label={node[midway,above right,font=\scriptsize]{${a}_3$}}
      [${a}_3$,edge label={node[midway,above right,font=\scriptsize]{${a}_3$}}
         [$\ \ \vdots\ \ $,edge label={node[midway,above left,font=\scriptsize]{$\bar{a}_3$}}]
         [$\ \ \vdots\ \  $,edge label={node[midway,above right,font=\scriptsize]{${a}_4$}}]         
      ]]
\end{forest}
  \caption{Shape of a tree associated with vertex $3$ for $P(\mathbb{E}_n)$}
  \label{fig:P-v3}
\end{figure}
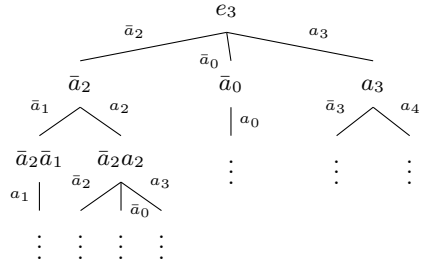

On Figure~\ref{fig:P-v0} we present also more detailed   shape of the tree 
for $P(\mathbb{E}_6)$ associated with the vertex $0$.
\begin{figure}[h]
\center
\begin{forest}
%\makeatletter
%\newcommand{\edg}[1]{edge label={node[midway,above right,font=\scriptsize]{#1}}}
%\makeatother
  [$e_0$
    [$a_0$,edge label={node[midway,left,font=\scriptsize]{$a_0$}}
%      [$\boxtimes$,\edg{$\bar{a}_0$}]
%      [$\boxtimes$,\edg{0}]
      [$a_0 \bar{a}_2$,edge label={node[midway,above left,font=\scriptsize]{$\bar{a}_2$}}
         [$a_0 \bar{a}_2 \bar{a}_1$,edge label={node[midway,above left,font=\scriptsize]{$\bar{a}_1$}}
            [$a_0 \bar{a}_2 \bar{a}_1 a_1$,edge label={node[midway,left,font=\scriptsize]{${a}_1$}}
              [$\boxtimes$,edge label={node[midway,above left,font=\scriptsize]{$\bar{a}_1$}}]
              [$a_0 \bar{a}_2 \bar{a}_1 a_1 a_2$,edge label={node[midway,above right,font=\scriptsize]{${a}_2$}}
%                 [$\vdots$][$\boxtimes$][$\vdots$]
                  [$\vdots$,edge label={node[midway,above left,font=\scriptsize]{$\bar{a}_2$}}]
                  [$\boxtimes$,edge label={node[midway,below right,font=\scriptsize]{$\bar{a}_0$}}]
                  [$\vdots$,edge label={node[midway,above right,font=\scriptsize]{$\bar{a}_3$}}]
               ]]]
         [$a_0 \bar{a}_2 {a}_2$,edge label={node[midway,above right,font=\scriptsize]{${a}_2$}}
            [$a_0 \bar{a}_2 a_2 \bar{a}_2$,edge label={node[midway,above left,font=\scriptsize]{$\bar{a}_2$}}
              [$\boxtimes$,edge label={node[midway,above left,font=\scriptsize]{$\bar{a}_1$}}]
              [$a_0 \bar{a}_2  a_2 \bar{a}_2 a_2$,edge label={node[midway,above right,font=\scriptsize]{${a}_2$}}
%                 [$\vdots$][$\boxtimes$][$\vdots$]
                  [$\vdots$,edge label={node[midway,above left,font=\scriptsize]{$\bar{a}_2$}}]
                  [$\boxtimes$,edge label={node[midway,below right,font=\scriptsize]{$\bar{a}_0$}}]
                  [$\vdots$,edge label={node[midway,above right,font=\scriptsize]{$\bar{a}_3$}}]
                 ]]
            [$\boxtimes$,edge label={node[midway,below right,font=\scriptsize]{$\bar{a}_0$}}]
            [$a_0 \bar{a}_2 {a}_2 {a}_3$,edge label={node[midway,above right,font=\scriptsize]{${a}_3$}}
              [$a_0 \bar{a}_2  a_2 a_3 \bar{a}_3$,edge label={node[midway,above left,font=\scriptsize]{$\bar{a}_3$}}
%                 [$\vdots$][$\boxtimes$][$\vdots$]
                  [$\vdots$,edge label={node[midway,above left,font=\scriptsize]{$\bar{a}_2$}}]
                  [$\boxtimes$,edge label={node[midway,below right,font=\scriptsize]{$\bar{a}_0$}}]
                  [$\vdots$,edge label={node[midway,above right,font=\scriptsize]{$\bar{a}_3$}}]
              ]
              [$a_0 \bar{a}_2  a_2 a_3 a_4$,edge label={node[midway,above right,font=\scriptsize]{${a}_4$}}
                [$\vdots$,edge label={node[midway,right,font=\scriptsize]{$\bar{a}_4$}}]
                 ]
              ]]]
%      [$\boxtimes$,edge label={node[midway,right,font=\scriptsize]{$\bar{a}_0$}}]
      [$\qquad\boxtimes\qquad$,edge label={node[midway,below right,font=\scriptsize]{$\bar{a}_0$}}]
%      [$\vdots$,edge label={node[midway,above right,font=\scriptsize]{${a}_3$}}
      [$a_0 {a}_3$,edge label={node[midway,above right,font=\scriptsize]{${a}_3$}}
         [$\ \ \vdots\ \ $,edge label={node[midway,above left,font=\scriptsize]{$\bar{a}_3$}}]
         [$\ \ \vdots\ \  $,edge label={node[midway,above right,font=\scriptsize]{${a}_4$}}]         
      ]]
  ]
\end{forest}
  \caption{Shape of the tree associated with vertex $0$ for $P(\mathbb{E}_6)$}
  \label{fig:P-v0}
\end{figure}

For such a forest and the chosen basis $B$ of $P(\mathbb{E}_n)$
we construct the corresponding forest describing the presentation
of elements of $P(\mathbb{E}_n)$ in $B$ (consisting of the trees
of structure dual to the structure of trees from Figure~\ref{fig:E6-pres}).

We note that the structure of such a forest is much more complicated
than the structure of the tree for $R(\mathbb{E}_n)$ 
(see Table~\ref{t:PEn} for details and compare with Table~\ref{t:REn}).
\begin{table}[htbp]\small
  \caption{Complexity of the forests for $P(\mathbb{E}_n)$}
  \label{t:PEn}
  \begin{center}
\begin{tabular}{|l|c|c|c|}\hline
Dynkin type & $\mathbb{E}_6$ & $\mathbb{E}_7$ & $\mathbb{E}_8$  \\\hline
number elements of the basis $B$ of $P(\mathbb{E}_n)$ & 156 & 399 &1240 \\\hline
depth of the trees & 11 & 17 & 29 \\\hline
number of non-leaf and non-root nodes & 1551 &48887  & 47137364   \\\hline
number of tree nodes in total & 3395 & 108322 & 104753728  \\\hline
\end{tabular}
\end{center}
\end{table}

\section{Computation of admissibility condition}
\label{sec:adm}

We recall that we have the following computations
related to admissibility condition:
\begin{enumerate}[\ \  1.]
\setlength{\itemsep}{0pt}
 \item
Computation of a base of $R(\mathbb{E}_n)$ 
and presentation of elements of $R(\mathbb{E}_n)$ 
in that base.
 \item
Presentation of the function $f$ through the base elements.
 \item
Calculating the corresponding power of the function $f$.
 \item
Derivation of equations from the calculated power of $f$.
 \item
Reduction of the obtained system of equations.
 \item
Extraction of the substitutions from the reduced system of equation.
\end{enumerate}

We start with computation of a base of $R(\mathbb{E}_n)$ 
and the presentation of elements of $R(\mathbb{E}_n)$ in that base.
Outline of these computation is presented as Algorithm~\ref{alg:base-REn}.

\begin{algorithm}
%  \caption{Computing the base of $R(\mathbb{E}_n$)}
  \caption{Finding the basis of $R(\mathbb{E}_n)$}
  \label{alg:base-REn}
\begin{algorithmic}[1]
\Require
 $S \gets \{x,y\}$,
 $R \gets relations$,
 \State  
 $i \gets 1$,
 $E_1 \gets S$,
 $B \gets S \cup \{ \mathbf{1}_{R(\bE_n)}\}$,
 $Pres[] \gets \{ (s,\delta_s) \,|\, s \in S\}$
 \While{$E_i \neq \emptyset$}
  \ForAll{$a \in E_{i}, s \in S$}  
    \State
     $Pres(a s) \gets \Box$ 
  \EndFor
  \State  $i \gets i+1$
  \While{$U \neq \emptyset$}
    \While{$\exists_{u \in U} \exists_{\sim_r \in R} \exists_{P \subseteq S^i\setminus U}
       \forall_{c \in P} \exists_{\alpha_c \in \{-1,+1\}}
   u\sim_r\sum_{c \in P} \alpha_c c$
       }
        \If {$\sum_{c \in P \cap E_i} \alpha_c Pres[c] \neq 0$}
          \State  
           $Pres[u] \gets \sum_{c \in P \cap E_i} \alpha_c Pres[c]$            
        \Else
          \State  
           $Pres[u] \gets \boxtimes$                   
        \EndIf
    \EndWhile
     \If {$U \neq \emptyset$}
        \State          
        $b \gets choose(U)$
        \State          
        $B \gets B \cup \{ b\}$
        \State          
        $Pres[b] \gets \delta_b$
      \EndIf
  \EndWhile
  \EndWhile
        \State          
  \Return $B$, $B \setminus (S \cup \{ \mathbf{1}_{R(\bE_n)}\})$, $Pres$, $i-1$
\end{algorithmic}
\end{algorithm}

In Algorithm~\ref{alg:base-REn} we denote by $B$ the computed 
set of basis elements of $R(\mathbb{E}_n)$
and by $Pres$ the tree of the structure presented
in Section~\ref{sec:data}.
We recall that every node of $Pres$ is associated to 
the element of $R(\mathbb{E}_n)$
corresponding to the path from the root to that node.
To every node of $Pres$ 
we assign either a mapping from $B$ to $K$
(the presentation of the associated element in the basis $B$)
or $\boxtimes$ indicating zero-path or $\Box$ indicating ``calculation in progress''.
If in $Pres$ there is the node induced by 
a path $a$, then we 
set $Pres[a]$ to the value assigned to that node
and we set $Pres[a]= \emptyset$ otherwise.

To simplify the notation we denote by $U$ the set of paths
corresponding to the nodes with value $\boxtimes$
($U = \{a \,|\, Pres[a] = \Box\}$)
and by $E_i \subseteq \{x,y \}^i$ 
the set of paths of length $i$ 
associated with nodes of $Pres$
of values different from $\boxtimes$ and $\Box$.
Moreover, for each element $b$ of $B$ we denote by
by $\delta_b$ the mapping $\delta_b : B \to K$
such that $\delta_b(b) = 1$ and $\delta_b(b') = 0$ 
for $b' \in B\setminus \{b\}$.

The function $choose()$ chooses some path from 
the given set of paths. 
Although this choice does not affect the correctness 
of the algorithm, it may have an impact
on the later calculations 
(and their computational complexity)
and 
we deal with the appropriate selection 
of this function in the next section.

We note that the statement
``$U \neq \emptyset$''
from Lines 7 and 15
is equivalent to the statement
``$\exists_{a} Pres[a] = \Box$''.
The statement from Line~8
means that there exist a path $u$
for which presentation still were not computed,
but which is in the relation $\sim_r$ with some
paths (forming the set $P$)
with known presentation in the base $B$.
Moreover, 
the statement from Line~9
means, that the combination of these
paths in the relation $\sim_r$ is non-zero.
We note, that we use the fact that all relations
are of a special form.
In particular in each of the relations 
all coefficients belong to the set $\{-1,+1\}$
and all paths are of the same length.

Observe also that the operation
$Pres[u] \gets \boxtimes$ 
from Line~12
acts as 
$U \gets U \setminus \{ u \}$,
and the operation from Line~10
acts as 
$U \gets U \setminus \{ u \}$ 
and 
$E_i \gets E_i \cup \{ u \}$.
Similar side-effects on the sets $U$ and $E_i$
are also performed in Lines 4 and 18.

We also note that above algorithm
contains some simplifications.
For example in the real implementation
$Pres[a]$ should contain only mappings on
the elements of $B$ of the same length
as $a$ (see Section~\ref{sec:data}).
We may also reduce a little the
time complexity of the calculations
when we replace
Lines 4--6
by setting 
$Pres[u] \gets \Box$
for 
$u \in \{a s\,|\, a \in E_{i-1}, s \in S \} 
   \cap \{s a\,|\, a \in E_{i-1}, s \in S \}$,
and 
$Pres[u] \gets \boxtimes$
for 
$u \in \{a s\,|\, a \in E_{i-1}, s \in S \} 
   \setminus \{s a\,|\, a \in E_{i-1}, s \in S \}$.

In the next step we 
compute $(x+y+f(x,y))^{n-3}$.
We start with some preparation.

Let $B$ be the basis of $R(\mathbb{E}_n)$,
$B' \subseteq B$ the basis of $\rad^2 R(\mathbb{E}_n)$,
and $f$ the fixed admissible element of $\rad^2 R(\mathbb{E}_n)$.
We recall that $f(x,y) = \sum_{b \in B'} \theta_b b$ 
for some fixed elements $\theta_b \in K$,
and hence 
$(x + y + f(x,y))^{n-3}$ is the sum of
elements $\omega$ of the form
\[
   \omega = k_{\omega} \prod_{b' \in B} \theta_{b'}^{\omega_{b'}} b
\]
for some integer $k \in \mathbb{Z}$, 
natural numbers $\omega_{b'} \in \mathbb{N}$,
and $b \in B'$.
We may divide these summands depending on $b$.
We denote by $\Omega_b$ the set of such summands 
indicated by the basis element $b$.
So we have 
\[
   \big(x + y + f(x,y)\big)^{n-3}
   = \sum_{b \in B} \sum_{\omega \in \Omega_b}  \omega
   = \sum_{b \in B} \bigg( \sum_{\omega \in \Omega_b}  
    k_{\omega} \prod_{b' \in B} \theta_{b'}^{\omega_{b'}}
    \bigg) b .
\]
Hence the admissible condition
$$\big(x + y + f(x,y)\big)^{n-3} = 0$$
is equivalent to the set of equations
\begin{align}
\label{eq:*}
%\tag{$\dagger$}
%\tag{$\dagger$}
   \bigg\{ \sum_{\omega \in \Omega_b}  
    k_{\omega} \prod_{b' \in B} \theta_{b'}^{\omega_{b'}}
 = 0
    \bigg\}_{b \in B} .
\end{align}
Clearly, the coefficients 
$k_{\omega}$, $\omega_{b'}$, for
$b' \in B$, $\omega \in \sum_{b \in B} \Omega_B$,
can be easily computed with the methods
introduced in Section~\ref{sec:data}.

Finally, we need to apply some changes 
to the admissibility condition
to reduce set of equations \eqref{eq:*} 
%and modify it in such a way (see equations \eqref{eq:**}), 
and modify it in such a way (see \eqref{eq:**}), 
so 
%its 
obtained
equations
could be easily used in the final computation.
We start with removing trivial equations.
Further, we sort these equations in the ascending order
of ``lengths'' of the base elements $b$.
We denote by $E_{start}$ the set of these equations.
We want to obtain a minimal set $Subst$ of  equivalent 
``substitutions'' of the form
\begin{align}
\label{eq:**}
%\tag{$\dagger\dagger$}
%\tag{$\dagger\dagger$}
    \bigg\{ 
   \theta_b =  \sum_{\omega \in \Omega'_b}  
    k'_{\omega} \prod_{b' \in B \setminus B''} \theta_{b'}^{\omega_{b'}}
    \bigg\}_{b \in B''} 
\end{align}
with $B'' \subseteq B$, 
$k'_{\omega}$ quotient numbers with denominator being a power of $2$,
and natural numbers $\omega_{b'} \in \mathbb{N}$.

\begin{algorithm}
%  \caption{Computing the base of $R(\mathbb{E}_n$)}
  \caption{Finding ``substitutions'' equivalent to the admissibility condition}
  \label{alg:adm}
\begin{algorithmic}[1]
%  \Repeat
\Require
 $Eq \gets E_{start}$,
 \State  
 $Subst \gets \emptyset$,
 \While{$Eq \neq \emptyset$}
  \State $eq \gets min(Eq)$ 
  \State $Eq \gets Eq \setminus \{ eq \}$ 
  \If {$app(eq,Subst)$ is non-trivial}
     \State $eqsubst \gets app(eq,Subst)$
     \If {$\exists_{u \in max(eqsubst)} u_{base(u)} \in \{-1,1\}$}
        \State $Subst \gets Subst \cup \{ \mbox{substitution of $\theta_{base(u)}$ derived from } eqsubst \}$      
     \ElsIf {$\exists_{u \in max(eqsubst)} u_{base(u)} \in \{-2,2\}$}
        \State $Subst \gets Subst \cup \{ \mbox{substitution of $\theta_{base(u)}$ derived from } eqsubst \}$      
     \Else 
        \State \textbf{throw error}
     \EndIf
  \EndIf
  \EndWhile
%        \State          
%  $MaxLength \gets i-1$
        \State          
  \Return $Subst$
\end{algorithmic}
\end{algorithm}

We denote by $min(E)$ a function which 
choose the equation associated with the element
$b \in B$ of minimal length from some set of equations
$E \subseteq E_{start}$.
(We note that this function is not necessary unique.)
Moreover by $appl(eq,S)$ we 
mean the equation obtained from the equation $eq \in E_{start}$ 
by applying
all substitutions from the set $S \subseteq Subst$.
We also denote by $max(eq)$ 
the set of coefficients 
$u = \prod_{a \in B} \theta_{a}^{u_{a}}$ 
of the equation $eq$ 
\[
% \sum_{u \in \Omega} u  = 
 \sum_{u \in \Omega} k_{u} 
  \prod_{a \in B} \theta_{a}^{u_{a}} = 0
\]
(with $k_u \in \mathbb{Q}$, $u_a \in \mathbb{N}$),
such that there
exists $a \in B$ satisfying 
$u_a \neq 0$,
$u_{a'} = 0$ for all $a' \in B \setminus \{a\}$,
and $a$ is of maximal length of that property.
By $base(u)$ we denote then such an element $a \in B$.
Then
Algorithm~\ref{alg:adm}
describes the procedure 
of construction of a sequence
of substitutions of selected coefficients
$\theta_b$
of $f$
which are equivalent to the admissibility condition. 

We note that in general the result of the above steps 
may depend on the characteristic of the field $K$.
In particular, in the case of type $\mathbb{F}_4$
we obtain the equation which is trivial 
if and only if the field $K$ is of characteristic $2$
(see the the proof of \cite[Lemma]{B:F4}).
We recall that in our computations 
(for the types $\mathbb{E}_7$ and $\mathbb{E}_8$)
we assume
that $K$ is not of characteristic $2$.
We note that in the case of the types 
$\mathbb{E}_7$ and $\mathbb{E}_8$
choosing element $u$ satisfying 
the condition from line $7$ 
or
the condition from line $9$ 
of the above algorithm
it always is possible 
(we refer for the details to
%\ref{secA:adm}
%and
%\ref{secB:adm}).
Sections
\ref{secA:adm}
and
\ref{secB:adm}).

We note that above computations in the case of Dynkin types
$\mathbb{E}_6$ and $\mathbb{F}_4$ are trivial,
because in these cases we have very few equations with very few coefficients,
and all of them being of simple form
(see \cite[Lemma]{B:E6} and \cite[Lemma]{B:F4} for details).
But on the other hand, it is much more complicated in the case
of type $\bE_8$.
In the 
Table~\ref{t:adm-cond}
we comparing the complexity of computations
for types 
$\mathbb{E}_6$, $\mathbb{E}_7$ and $\mathbb{E}_8$.
\begin{table}[htbp]\small
  \caption{Complexity of the computation of the admissibility condition}
  \label{t:adm-cond}
  \begin{center}
\begin{tabular}{|l|c|c|c|}\hline
Dynkin type & $\mathbb{E}_6$ & $\mathbb{E}_7$ & $\mathbb{E}_8$  \\\hline
number of elements of the basis $B$ & 12 & 24 & 60 \\\hline
number of equations in $E_{start}$ & 2 & 6 & 30 \\\hline
number of substitutions in $Subst$ & 2 & 3 & 10 \\\hline
maximal number of coefficients of single equation & 7 & 28 & 1333  \\\hline
maximal number of coefficients of single substitution & 5 & 10 & 824  \\\hline
\end{tabular}
\end{center}
\end{table}
The last row of 
this 
table contains numbers corresponding to the
choices of the calculations described in
\cite[Lemma]{B:E6},
\ref{secA:adm}
and
\ref{secB:adm},
respectively.
We note that the coefficients of equations 
for the type $\mathbb{E}_8$
are usually also of much more complicated form.

\section{Computation of a base of $P(\mathbb{E}_n)$}
\label{sec:baseEn}

In this section we construct  Algorithm~\ref{alg:base-PEn}  computing 
 a suitable  $K$-basis $B$ of the $K$-algebra $P(\bE_n)$ and  a $K$-linear presentation
of   elements of $P(\bE_n)$ in the basis $B$. 
We note that it is similar to the dual Algorithm~\ref{alg:base-REn}
presented in Section~\ref{sec:adm}.

Let $Q_{\bE_n}$ be the Gabriel quiver of $P(\bE_n)$ and 
$I$ be the set of relations of $K Q_{\bE_n}$ defined in
Section~\ref{sec:intro}. 
Then $P(\bE_n) \cong K Q_{\bE_n} / I$
for the ideal $I$ of $K Q_{\bE_n}$ generated by $R$. 
Following Section~\ref{sec:hom} we denote
$Q_{\bE_n}=(Q_0,Q_1,s,t)$, 
where $Q_0$ is the set of vertices of $Q$,
where $Q_1$ is the set of arrows of $Q$,
and $s, t : Q_1 \to Q_0$
assigns to every arrow its source and target, respectively.

In our algorithm $B$ denote the set of basis elements of $P(\mathbb{E}_n)$
and $Pres$ denote the forest of trees of the structure presented
in Section~\ref{sec:data}.
We recall that each node of $Pres$ is associated to 
some uniquely determined path of $K Q_{\bE_n}$
(corresponding to the path from root of the tree to that node).
To each node of $Pres$ we assign either a mapping from $B$ to $K$
(the presentation of the associated element in the basis $B$)
or $\boxtimes$ indicating zero-path or $\Box$ indicating ``calculation in progress''.
If in $Pres$ there is the node induced by 
a path $\omega$ of $K Q_{\bE_n}$, then we 
set $Pres[\omega]$ to the value assigned to that node
and otherwise we set $Pres[\omega]= \emptyset$.

To simplify the notation we denote  as before by $U$ the set of paths
corresponding to the nodes with value $\boxtimes$
($U = \{\omega \,|\, Pres[\omega] = \Box\}$)
and by $E_i \subseteq Q_1^i$ 
the set of paths of length $i$ 
associated with nodes of $Pres$ of values different from $\boxtimes$ and $\Box$.

Moreover, for each element $b$ of $B$, we denote by
by $\delta_b$ the mapping $\delta_b : B \to K$
such that $\delta_b(b) = 1$ and $\delta_b(b') = 0$ 
for $b' \in B\setminus \{b\}$.

\begin{algorithm}
  \caption{Finding the basis of $P(\mathbb{E}_n)$}
  \label{alg:base-PEn}
\begin{algorithmic}[1]
\Require
$Q_{\bE_n}=(Q_0,Q_1,s,t)$, 
$R$ 
such that
$K Q_{\bE_n} / \langle R \rangle \cong P(\bE_n)$
 \State  
 $i \gets 1$,
 $E_1 \gets Q_1$,
 $B \gets Q_1 \cup \{ \mathbf{1}_{KQ/\langle R \rangle}\}$,
 $Pres[] \gets \{ (\alpha,\delta_{\alpha}) \,|\, \alpha \in Q_1\}$
 \While{$E_i \neq \emptyset$}
  \ForAll{$\omega \in E_{i}, \alpha \in Q_1$ with $t(\omega) = s(\alpha)$}  
    \State
     $Pres(\omega \alpha) \gets \Box$ 
  \EndFor
  \State  $i \gets i+1$
  \While{$U \neq \emptyset$}
    \While{$\exists_{\omega \in U} \exists_{\sim \in R} \exists_{P \subseteq Q_1^i\setminus U}
       \forall_{c \in P} \exists_{\alpha_c \in \{-1,+1\}}
   u\sim \sum_{c \in P} \alpha_c c$
       }
        \If {$\sum_{c \in P \cap E_i} \alpha_c Pres[c] \neq 0$}
          \State  
           $Pres[\omega] \gets \sum_{c \in P \cap E_i} \alpha_c Pres[c]$            
        \Else
          \State  
           $Pres[\omega] \gets \boxtimes$                   
        \EndIf
    \EndWhile
     \If {$U \neq \emptyset$}
        \State          
        $b \gets choose(U)$
        \State          
        $B \gets B \cup \{ b\}$
        \State          
        $Pres[b] \gets \delta_b$
      \EndIf
  \EndWhile
  \EndWhile
        \State          
  \Return $B$, $Pres$, $i-1$
\end{algorithmic}
\end{algorithm}

We again made some simplifications.
We omitted details on the structure of $B$
and simplified the initialization of $Pres$, as before.
It also should be mention that 
in a practical implementation 
the calculations of the loop \emph{while} from Lines 7--20
should be divided in such a way,
so the calculations on the paths with different
source (respectively, target) could be performed
separately.

We note that the choice of the basis generators of $R(\bE_n)$
and $P(\bE_n)$ 
(performed by the function $chose()$ in 
Algorithms \ref{alg:base-REn} and~\ref{alg:base-PEn})
should be synchronized.
Moreover, this choice have a significant impact on the complexity
of generated equations 
and in the consequence on the computational complexity
of the final calculations.
In Tables~\ref{t:coefficients-ff} and \ref{t:coefficients-sort} 
we comparing number of the coefficients in the obtained equations
for different implementations of the function $chose()$.

In our implementation of the function $choice()$
we threat the paths as words 
over the alphabet consisting of the arrows
with the order defined by the following
sequence
$a_0, \bar{a}_0, a_2, \bar{a}_2, \bar{a}_3, a_3, a_1, \bar{a}_1$, 
$\bar{a}_4, a_4, \dots, \bar{a}_{n-2}, {a}_{n-2}$,
and we chose the first path in the lexicographical order.
In other words we use the rule
``closest to the exceptional vertex first''
and then the rule
``from the shortest branch first''.
We omit the detailed explanation of why this choice
is expected to be ``good''.
But in the next section we compare the complexity of the
formulas
obtained 
in this way
with the complexity of formulas
obtained
with the implementation of the ``first found'' approach
(see Tables \ref{t:coefficients-ff} and \ref{t:coefficients-sort}).

\section{Construction of the system of equations}
\label{sec:eq}

We keep the notation from Corollary~\ref{cor:hom2}.
We recall that by Corollary~\ref{cor:hom2} 
to prove that  for a given admissible $f \in \rad^2 R(\mathbb{E}_n)$ 
$K$-algebras $P^f(\mathbb{E}_n)$ and $P(\mathbb{E}_n)$ 
are isomorphic 
it suffices to 
find coefficients $\alpha(k,l,j) \in K$
such that 
there are satisfied the following $n$ equalities in $P(\mathbb{E}_n)$
\begin{gather*}
\delta(k_1,k_2) =0
\mbox{ for } (k_1,k_2) \in \{ (0,n-1), (1,n), (2n-3, n-2)\};
\\
\delta(k+n-2,k-1)+\delta(k,k+n-1) =0
\mbox{ for } k \in \{ 2, 4, \dots, n-2 \};
\\
\delta(n-1,0)+\delta(n+1,2)+\delta(2n-3,n-2) +
f\big(\delta(n-1,0),\delta(n+1,2)\big) = 0  
\end{gather*}
(one equality for each of the vertices)
where $\delta : \{0,\dots,n-3\}^2 \rightarrow P(\mathbb{E}_n)$
are defined as follows
\begin{align*}
  \delta(k_1,k_2) &=
   \sum_{l = 2}^m \sum_{j = 0}^{i(k_1,l) -1} \alpha(k_1,l,j)   a_{i(k_1,l,j,0)} \dots a_{i(k_1,l,j,l -1)} a_{k_2}
% \\&\quad
  + 
   \sum_{l = 2}^m \sum_{j = 0}^{i(k_2,l) -1} \alpha(k_2,l,j)   a_{k_1} a_{i(k_2,l,j,0)} \dots a_{i(k_2,l,j,l -1)}
 \\&\quad
  + 
   \bigg( \sum_{l = 2}^m \sum_{j = 0}^{i(k_1,l) -1} \alpha(k_1,l,j)  a_{i(k_1,l,j,0)} \dots a_{i(k_1,l,j,l -1)}\bigg)  
% \\&\qquad
\,
   \bigg(\sum_{l = 2}^m \sum_{j = 0}^{i(k_2,l) -1} \alpha(k_2,l,j) a_{i(k_2,l,j,0)} \dots a_{i(k_2,l,j,l -1)}\bigg) .
\end{align*}

We recall also that the basis elements $P(\mathbb{E}_n)$
can be determined by Algorithm~\ref{alg:base-PEn} 
and the operations in $P(\mathbb{E}_n)$
were described in Section~\ref{sec:data} (with the use of the
presentation also provided by Algorithm~\ref{alg:base-PEn}).
Moreover, $f$ can be presented as a polynomial with
coefficients bound by the relations determined
by the admissibility condition, 
and hence satisfying the equations $Subst$
obtained by Algorithm~\ref{alg:adm}.
So in the consequence we should find
the coefficients  $\alpha(k,l,j) \in K$
for each set of coefficients of $f$
satisfying $Subst$.

For every non-exceptional vertex $k \in \{0,\dots,n-1\} \setminus \{3\}$
the corresponding equality in $P(\mathbb{E}_n)$
can be presented in the form of $\dim \rad^3 e_k P(\mathbb{E}_n) e_k$
equations in $K$ (one equation for each of the basis elements of 
$\rad^3 e_k P(\mathbb{E}_n) e_k$).
These equations can be easily obtained by the symbolic
calculations.

In the case of exceptional vertex $3$ we need
to calculate $f(\delta(n-1,0),\delta(n+1,2))$,
and apply to it the equations (substitutions) from $Subst$.
Then we may observe that the equation
\[\delta(n-1,0)+\delta(n+1,2)+\delta(2n-3,n-2) +
f\big(\delta(n-1,0),\delta(n+1,2)\big) = 0\]
can be presented as 
the set of
$\dim \rad^3 e_3 P(\mathbb{E}_n) e_3$
equations over $K$
with 
$\dim \rad^2 R(\mathbb{E}_n) - \# Subst$
invariables.
We also note that in some cases
(in particular in the case of $\mathbb{E}_8$)
it is convenient to 
refrain from applying equations from $Subst$
directly to $\dim \rad^3 e_3 P(\mathbb{E}_n) e_3$
(as it can significantly enlarge its notation)
and instead to apply them in the final stage
(see \ref {secB:solutions}  and \cite[\texttt{commands.txt}]{B:E8zip}
for details).

In Tables~\ref{t:coefficients-ff} and \ref{t:coefficients-sort} 
we compare the numbers of   coefficients in the obtained equations
for different implementations of the function $chose()$
(see the notes from the previous section)
before applying the 
equations implied by the admissibility condition
(see Algorithm~\ref{alg:adm}).

\begin{table}[htbp]\small
  \caption{Number of coefficients for ``first found'' approach}
  \label{t:coefficients-ff}
  \begin{center}
\begin{tabular}{|c|*{8}{c|}}\hline
%Dynkin & \multicolumn{8}{c|}{number of the coefficients for the vertex} \\\cline{2-9}
Dynkin & \multicolumn{8}{c|}{number of the coefficients for the vertex} \\
type & 0 & 1 & 2 & 3 & 4 & 5 & 6 & 7 \\\hline\hline 
$\mathbb{E}_6$ & 11 & 2 & 25 & 351 & 25 & 2 & -- & -- \\\hline 
$\mathbb{E}_7$ & 42 & 11 & 116 & 12892 & 176 & 43 & 6  & --  \\\hline 
$\mathbb{E}_8$ & 243 & 48 & 590 & 7155925 & 1173 & 429 & 96  & 11  \\\hline 
\end{tabular}\end{center}
\end{table}

\begin{table}[htbp]\small
  \caption{Number of coefficients in equations for the choosen approach}
  \label{t:coefficients-sort}
  \begin{center}
\begin{tabular}{|c|*{8}{c|}}\hline
%Dynkin & \multicolumn{8}{c|}{number of the coefficients for the vertex} \\\cline{2-9}
Dynkin & \multicolumn{8}{c|}{number of the coefficients for the vertex} \\
type & 0 & 1 & 2 & 3 & 4 & 5 & 6 & 7 \\\hline\hline 
$\mathbb{E}_6$ & 11 & 2 & 21 & 347 & 27 & 2 & -- & -- \\\hline 
$\mathbb{E}_7$ & 39 & 11 & 100 & 7173 & 168 & 43 & 6  & -- \\\hline 
$\mathbb{E}_8$ & 238 & 53 & 539 & 3790572 & 1190 & 455 & 92  & 11  \\\hline
\end{tabular}
\end{center}
\end{table}

We note that
the chosen approach 
%allow to 
let us
not only to reduce the total 
number of coefficients for the equations, but also 
significantly reduce the maximal number of coefficients peer equation.
Indeed, observe that 
$\dim \rad^3 e_3 P(\mathbb{E}_8) e_3 = \dim \rad^2 R(\mathbb{E}_8) = 57$,
so in the case of $\mathbb{E}_8$ we 
obtain $57$ equations associated with vertex $3$.
For the ``first found'' approach they have respectively 
\begin{align*}
&
6, 5, 7, 12, 13, 18, 17, 59, 85, 31, 34, 56, 126, 143, 154, 99, 
149, 110, 311, 
320, 
415, 581, 440, 545, 868, 
980, 
\\&
2050, 1309, 848, 684, 
2506, 
3986, 
4405, 
1888, 3842, 2536, 6485, 11025, 12346, 4985, 6581, 11260, 
32466,
\\&
34351, 
14875, 22074, 25469, 59074, 97983, 103449, 
57475, 
295418, 213548,
180430, 756019, 399349, 1738127
\end{align*}
%number of 
coefficients.
In contrary, for the chosen approach they have
following number of coefficients:
\begin{align*}
&
5, 5, 5, 9, 11, 11, 11, 21, 22, 21, 23, 23, 83, 89, 94, 89, 95, 
90, 185, 
344, 
344, 184, 145, 149, 381, 
612, 
\\&
299, 576, 613, 600, 
1172, 1745,
1195, 1166, 
1182, 608, 2294, 2464, 2458, 2877, 3534, 2475, 
11010, 
\\&
12931, 9515,
9337, 
7187, 37259, 14598, 16878, 
14513, 
52420,  26975, 
51522, 127237, 86634,
295466.
\end{align*}
Moreover, in the latter approach we obtain more equations 
with ``small'' number of coefficients. 
We expect that this should simplify the substitutions
and in result reduce the complexity of solving the set of
equations.%
\footnote{We estimate, that in the case of $\mathbb{E}_7$,
where the difference in the complexity of equations is smaller
than in the case of $\mathbb{E}_8$,
solving the set of equations obtained with the chosen approach
(without applying the equations obtained from the admissibility condition)
is about 7--8 times faster than 
solving the set of equations obtained with the ``first found'' approach.
Unfortunately, the complexity of this problem in the case of $\mathbb{E}_8$
is to high to be able to make the similar estimations.}

\section{On solutions of the obtained system of equations}
\label{sec:solv}

We note that obtained systems of equations 
in the case of types $\mathbb{E}_7$ and $\mathbb{E}_8$ 
are to complicated to be solved manually.
So natural approach is to use
some dedicated software package
in order to try to solve them.

Simple comparing number of equations to number of variables
make us expect 
that if such a system of equations has a solution,
then the space of solutions is rather big.
On the other hand, we are not interested in finding
all solutions but 
%only
we only want
\begin{itemize}
\setlength{\itemsep}{0pt}
 \item
  to determine if the obtained system of equations has a solution,
 \item
  if it is the case, then to find one, 
  possibly ``simple'' exemplary solution.
\end{itemize}

If problem of solving the obtained
system of equation is still of acceptable computational complexity,
then we may use the following approach:
\begin{itemize}
\setlength{\itemsep}{0pt}
 \item
  obtain the general form of the solution,
 \item
  replace some of the ``not-bounded'' variables by ``0'' 
  until we obtain a single solution (a 0-dimensional space of solutions).
\end{itemize}
It works fine in the case of type $\mathbb{E}_7$
(see \ref{secA:simpl}).

Unfortunately, in the case of type $\mathbb{E}_8$ 
computational complexity of the problem of
solving the original obtained system of equations 
seems to be a little too high.
In that case we may try to act differently:
\begin{enumerate}
\setlength{\itemsep}{0pt}
 \item
 \label{reduce:0}
  Try to replace some of the variables by ``0'' 
  in order to reduce the
  computational complexity 
  of the problem 
of solving such a system
  and check if the obtained system of equations
  is solvable and if we can find its solution.
 \item
 \label{reduce:01}
  If we obtain a system for which we can find 
  some non-empty space of solutions, then 
%  we
  replace some of the (remaining) ``not-bounded'' 
  variables by ``0'' and ``1''
  until we obtain a 0-dimensional space of solutions.
\end{enumerate}
This approach seems to work in the case of $\mathbb{E}_8$
(see  \ref{secB:simpl} and  \ref{secB:solutions}),
but it has some disadvantages:
\begin{itemize}
\setlength{\itemsep}{0pt}
 \item
  The process of finding the ``proper''
  variables to be replaced by ``0'' 
  in the step \eqref{reduce:0}
  is unintuitive and verifying 
  its effects takes a lot of time
  (but still less then trying to solve
   the initial system of equations).
 \item
  The process of finding the ``proper''
  variables to be replaced by ``0'' and ``1''
  in the step \eqref{reduce:01}
  requires prior analysis 
  of big amount of data
  and verifying its correctness 
  takes a lot of computation (and time).
 \item
  The structure of the obtained
  solution 
  seems to be 
  a little artificial and
  more complicated
  then necessary. 
  We note that it can be the result
  of not optimal choices made in the step 
  \eqref{reduce:0}.
\end{itemize}

\begin{appendix}
%\input{appA-small.tex}
%%%%%%%%%%%%%%%%%%%%%%%%%%%%%%%%%%%%%%%%%%%%%%%%%%%%%%%%%%%%%%%%%%%%%%%%%%%%%%%%%%%%%%%%%%%
\section{Calculations for type $\mathbb{E}_7$}
\label{app:E7}

%\subsection{Introduction}
In this appendix 
we present 
construction of 
a $K$-algebra isomorphism 
$\varphi : P^f(\mathbb{E}_7) \to P(\mathbb{E}_7)$
for a given admissible deforming element $f$
according to the algorithm described in the main part of the article.
This construction is organized as follows.
In \ref{secA:adm} we derive the equations
for the coefficients of $f$ indicated by 
the admissibility condition of $f$.
In \ref{secA:hom} we present the general
form of the homomorphisms from
$P^f(\mathbb{E}_7)$ to $P(\mathbb{E}_7)$
(in particular we give the chosen basis elements
of $P(\mathbb{E}_7)$).
In \ref{secA:simpl} we describe the
chosen method 
(see Section~\ref{sec:solv} for details)
of simplifying the system of equations
for the coefficients of an isomorphism from
$P^f(\mathbb{E}_7)$ to $P(\mathbb{E}_7)$.
Finally, in \ref{secA:coeff} 
we present obtained coefficients of a (simplified) 
isomorphism from
$P^f(\mathbb{E}_7)$ to $P(\mathbb{E}_7)$.

We refer to the main part of the article for description of
applied algorithms
as well as for theoretical background.
We recall that 
considered case
is much more complicated than the case of type $\mathbb{E}_6$ 
(described in \cite{B:E6}) and it is not possible to perform
manually calculations described in 
the main part of the article.
But on the other hand the case of type is type $\mathbb{E}_8$ 
is even more complicated and need a little different approach
(see Section~\ref{sec:solv} and \ref{app:E8} for details).

\subsection{Equations derived from the admissibility condition}
\label{secA:adm}

We carry out the steps described in Section~\ref{sec:adm}.
%In the first step we 
%computing the base of 
First we compute a base of 
$R(\mathbb{E}_7)$
(see Algorithm~\ref{alg:adm} for details).
We 
%may 
take as a basis the set
$\{
1,
x, y, 
xy, yx, yy,
xyx, xyy, yxy, yyx,
xyxy,
%\linebreak
xyyx,
%\linebreak
yxyx,
\linebreak
yxyy,
xyxyx,
%\linebreak
xyxyy,
%\linebreak
yxyxy,
%\linebreak
yxyyx,
xyxyxy,
%\linebreak
xyxyyx,
yxyxyx,
%\linebreak
xyxyxyx,
yxyxyxy,
%\linebreak
xyxyxyxy
\}$. 
%\linebreak
Hen\-ce we know that each element $f \in \rad R(\mathbb{E}_7)$ is of the form
\begin{align*}
f(x,y) &= 
\theta_1 xy + \theta_2 yx + \theta_3 yy
+ \theta_4 xyx + \theta_5 xyy + \theta_6 yxy + \theta_7 yyx
%\\&\quad
+ \theta_8 xyxy
+ \theta_9 xyyx
\\&\quad
+ \theta_{10} yxyx
+ \theta_{11} yxyy
%\\&\quad
+ \theta_{12} xyxyx
+ \theta_{13} xyxyy
+ \theta_{14} yxyxy
+ \theta_{15} yxyyx
% \\&\quad
+ \theta_{16} xyxyxy
\\&\quad
+ \theta_{17} xyxyyx
+ \theta_{18} yxyxyx
% \\&\quad
+ \theta_{19} xyxyxyx
+ \theta_{20} yxyxyxy
+ \theta_{21} xyxyxyxy
\end{align*}
for some $\theta_1, \dots, \theta_{21} \in K$.
We fix these coefficients.
Further, we compute 
\begin{align*}
\big(x+y&+f(x,y)\big)^4 = 
(\theta_{2}-2\theta_{3}+\theta_{1})yxyxy
% \\&
+(3\theta_{4}-2\theta_{5}+\theta_{6}-2\theta_{7}+2\theta_{1}^2
-2\theta_{1}\theta_{3}+\theta_{2}^2-\theta_{3}^2)xyxyxy
\\&
+(3\theta_{4}-2\theta_{5}+\theta_{6}-2\theta_{7}+\theta_{1}^2
+2\theta_{2}^2-2\theta_{2}\theta_{3}-\theta_{3}^2)yxyxyx
% \\&
+ (3\theta_{8}-2\theta_{9}+3\theta_{10}-2\theta_{11}
+3\theta_{1}\theta_{4}
\\&\ \quad
-2\theta_{1}\theta_{5}+3\theta_{2}\theta_{4}
-2\theta_{2}\theta_{7}
% \\&\ \quad
-2\theta_{3}\theta_{5}
+2\theta_{3}\theta_{6}
-2\theta_{3}\theta_{7}+\theta_{1}^2\theta_{2}+\theta_{1}\theta_{2}^2
-4\theta_{1}\theta_{2}\theta_{3}+\theta_{2}^3+\theta_{1}^3)xyxyxyx
\\&
+(3\theta_{8}-2\theta_{9}+3\theta_{10}-2\theta_{11}+2\theta_{1}\theta_{4}
-2\theta_{1}\theta_{5}+\theta_{1}\theta_{6}+2\theta_{2}\theta_{4}
+\theta_{2}\theta_{6}
-2\theta_{2}\theta_{7}
\\&\ \quad
+2\theta_{3}\theta_{4}
-2\theta_{3}\theta_{5}-2\theta_{3}\theta_{7}+\theta_{1}^2\theta_{2}
+\theta_{1}\theta_{2}^2-4\theta_{1}\theta_{2}\theta_{3}+\theta_{2}^3
+\theta_{1}^3)yxyxyxy
\\&
+ (4\theta_{1}\theta_{8}-2\theta_{1}\theta_{9}+2\theta_{1}\theta_{10}
-2\theta_{1}\theta_{11}-2\theta_{2}\theta_{8}+2\theta_{2}\theta_{9}
-4\theta_{2}\theta_{10}+2\theta_{2}\theta_{11}
% \\&\ \quad
-2\theta_{3}\theta_{8}
+2\theta_{3}\theta_{10}-2\theta_{1}\theta_{2}\theta_{7}
\\&\ \quad
+2\theta_{1}\theta_{3}\theta_{6}+2\theta_{1}^2\theta_{4}
-2\theta_{1}^2\theta_{5}-2\theta_{2}^2\theta_{4}+2\theta_{2}^2\theta_{7}
% \\&\ \quad
-2\theta_{2}\theta_{3}\theta_{4}+2\theta_{2}\theta_{3}\theta_{5}
+2\theta_{1}\theta_{2}\theta_{5}+2\theta_{1}\theta_{3}\theta_{4}
-2\theta_{1}\theta_{3}\theta_{5}
\\&\ \quad
-2\theta_{2}\theta_{3}\theta_{6}
% \\&\ \quad
-2\theta_{1}\theta_{3}\theta_{7}+2\theta_{2}\theta_{3}\theta_{7}
+\theta_{1}^4-4\theta_{1}^2\theta_{2}\theta_{3}
+4\theta_{1}\theta_{2}^2\theta_{3}
-\theta_{2}^4)xyxyxyxy
%.
\end{align*}
(see Sections \ref{sec:adm} and \ref{sec:baseEn} for details).
Hence $f$ satisfy the admissibility condition if and only if the following
equalities are satisfied:
\begin{align*}
\theta_{2}-2\theta_{3}+\theta_{1}
&= 0, \\
3\theta_{4}-2\theta_{5}+\theta_{6}-2\theta_{7}+2\theta_{1}^2
-2\theta_{1}\theta_{3}+\theta_{2}^2-\theta_{3}^2
&= 0, \\
3\theta_{4}-2\theta_{5}+\theta_{6}-2\theta_{7}+\theta_{1}^2
+2\theta_{2}^2-2\theta_{2}\theta_{3}-\theta_{3}^2
&= 0, \\
3\theta_{8}-2\theta_{9}+3\theta_{10}-2\theta_{11}
+3\theta_{1}\theta_{4}-2\theta_{1}\theta_{5}+3\theta_{2}\theta_{4}
-2\theta_{2}\theta_{7}
\\
-2\theta_{3}\theta_{5}
+2\theta_{3}\theta_{6}
-2\theta_{3}\theta_{7}+\theta_{1}^2\theta_{2}+\theta_{1}\theta_{2}^2
-4\theta_{1}\theta_{2}\theta_{3}+\theta_{2}^3+\theta_{1}^3
&= 0, \\
3\theta_{8}-2\theta_{9}+3\theta_{10}-2\theta_{11}+2\theta_{1}\theta_{4}
-2\theta_{1}\theta_{5}+\theta_{1}\theta_{6}+2\theta_{2}\theta_{4}
+\theta_{2}\theta_{6}
\\
-2\theta_{2}\theta_{7}
+2\theta_{3}\theta_{4}
-2\theta_{3}\theta_{5}-2\theta_{3}\theta_{7}+\theta_{1}^2\theta_{2}
+\theta_{1}\theta_{2}^2-4\theta_{1}\theta_{2}\theta_{3}+\theta_{2}^3
+\theta_{1}^3
&= 0, \\
4\theta_{1}\theta_{8}-2\theta_{1}\theta_{9}+2\theta_{1}\theta_{10}
-2\theta_{1}\theta_{11}-2\theta_{2}\theta_{8}+2\theta_{2}\theta_{9}
-4\theta_{2}\theta_{10}+2\theta_{2}\theta_{11}
% \\
-2\theta_{3}\theta_{8}
+2\theta_{3}\theta_{10}
\\
%\end{align*}
%\begin{align*}
-2\theta_{1}\theta_{2}\theta_{7}
+2\theta_{1}\theta_{3}\theta_{6}+2\theta_{1}^2\theta_{4}
-2\theta_{1}^2\theta_{5}-2\theta_{2}^2\theta_{4}
% \\
+2\theta_{2}^2\theta_{7}
-2\theta_{2}\theta_{3}\theta_{4}+2\theta_{2}\theta_{3}\theta_{5}
+2\theta_{1}\theta_{2}\theta_{5}
\\
+2\theta_{1}\theta_{3}\theta_{4}
-2\theta_{1}\theta_{3}\theta_{5}
% \\
-2\theta_{2}\theta_{3}\theta_{6}
-2\theta_{1}\theta_{3}\theta_{7}+2\theta_{2}\theta_{3}\theta_{7}
+\theta_{1}^4-4\theta_{1}^2\theta_{2}\theta_{3}
+4\theta_{1}\theta_{2}^2\theta_{3}
-\theta_{2}^4
&= 0
.
\end{align*}
We recall that following 
\cite[Theorem]{B:soc}
there is a non-isomorphic deformation of $P(\mathbb{E}_7)$
in characteristic $2$.
Hence we may assume that $K$ is of characteristic different from $2$.
Then, applying Algorithm~\ref{alg:adm} to these equations
we obtain the following equivalent set of equations:
\begin{align*}
\theta_2&=2\theta_3-\theta_1,
\\
\theta_6&=-3\theta_1^2+6\theta_1\theta_3-3\theta_3^2-3\theta_4+2\theta_5+2\theta_7,
\\
\theta_{11}&=\theta_1^2\theta_3-2\theta_1\theta_3^2+\theta_3^3
-\theta_1\theta_5+\theta_1\theta_7+\theta_3\theta_5
-\theta_3\theta_7-\theta_9+\tfrac{3}{2}(\theta_{10}+\theta_8)
.
\end{align*}

\subsection{General form of homomorphism}
\label{secA:hom}

Let $f$ be a given admissible element 
of the structure described in 
\ref{secA:adm}.
In order to construct an isomorphism 
$\varphi : P^f(\mathbb{E}_7) \to P(\mathbb{E}_7)$
we want to  
find the coefficients satisfying the assumptions
of Corollary~\ref{cor:hom2}.
We start with 
computing the base of $P(\mathbb{E}_7)$,
according to Algorithm~\ref{alg:base-PEn}.
In particular, for each arrow $\alpha \in Q_1$ we
compute a basis of 
$e_{s(\alpha)} P(\mathbb{E}_7) e_{t(\alpha)}$.
Then we conclude that our constructed isomorphism
should be given by the following formulas
%\begin{small}
\begin{footnotesize}
\setlength{\jot}{0pt}
\begin{align*}
\varphi(a_{0}) &= 
% \alpha^{(0)}_{0} 
a_{0}
+ \alpha^{(0)}_{1} a_{0} \bar{a}_{2} a_{2}
+ \alpha^{(0)}_{2} a_{0} \bar{a}_{2} a_{2} \bar{a}_{0} a_{0}
+ \alpha^{(0)}_{3} a_{0} \bar{a}_{2} a_{2} \bar{a}_{2} a_{2}
+ \alpha^{(0)}_{4} a_{0} \bar{a}_{2} a_{2} \bar{a}_{0} a_{0} \bar{a}_{2} a_{2}
+ \alpha^{(0)}_{5} a_{0} \bar{a}_{2} a_{2} \bar{a}_{2} a_{2} \bar{a}_{0} a_{0}
\\&\quad
+ \alpha^{(0)}_{6} a_{0} \bar{a}_{2} a_{2} \bar{a}_{0} a_{0} \bar{a}_{2} a_{2} \bar{a}_{0} a_{0}
+ \alpha^{(0)}_{7} a_{0} \bar{a}_{2} a_{2} \bar{a}_{0} a_{0} \bar{a}_{2} a_{2} \bar{a}_{2} a_{2}
+ \alpha^{(0)}_{8} a_{0} \bar{a}_{2} a_{2} \bar{a}_{0} a_{0} \bar{a}_{2} a_{2} \bar{a}_{0} a_{0} \bar{a}_{2} a_{2}
\\&\quad
+ \alpha^{(0)}_{9} a_{0} \bar{a}_{2} a_{2} \bar{a}_{0} a_{0} \bar{a}_{2} a_{2} \bar{a}_{2} a_{2} \bar{a}_{0} a_{0}
+ \alpha^{(0)}_{10} a_{0} \bar{a}_{2} a_{2} \bar{a}_{0} a_{0} \bar{a}_{2} a_{2} \bar{a}_{0} a_{0} \bar{a}_{2} a_{2} \bar{a}_{0} a_{0}
%\\&\quad
+ \alpha^{(0)}_{11} a_{0} \bar{a}_{2} a_{2} \bar{a}_{0} a_{0} \bar{a}_{2} a_{2} \bar{a}_{0} a_{0} \bar{a}_{2} a_{2} \bar{a}_{0} a_{0} \bar{a}_{2} a_{2}
,
\\
\varphi(\bar{a}_{0}) &= 
% \bar{\alpha}^{(0)}_{0}
\bar{a}_{0}
+ \bar{\alpha}^{(0)}_{1} \bar{a}_{2} a_{2} \bar{a}_{0}
+ \bar{\alpha}^{(0)}_{2} \bar{a}_{0} a_{0} \bar{a}_{2} a_{2} \bar{a}_{0}
+ \bar{\alpha}^{(0)}_{3} \bar{a}_{2} a_{2} \bar{a}_{2} a_{2} \bar{a}_{0}
+ \bar{\alpha}^{(0)}_{4} \bar{a}_{0} a_{0} \bar{a}_{2} a_{2} \bar{a}_{2} a_{2} \bar{a}_{0}
+ \bar{\alpha}^{(0)}_{5} \bar{a}_{2} a_{2} \bar{a}_{0} a_{0} \bar{a}_{2} a_{2} \bar{a}_{0}
\\&\quad
+ \bar{\alpha}^{(0)}_{6} \bar{a}_{0} a_{0} \bar{a}_{2} a_{2} \bar{a}_{0} a_{0} \bar{a}_{2} a_{2} \bar{a}_{0}
+ \bar{\alpha}^{(0)}_{7} \bar{a}_{2} a_{2} \bar{a}_{0} a_{0} \bar{a}_{2} a_{2} \bar{a}_{2} a_{2} \bar{a}_{0}
+ \bar{\alpha}^{(0)}_{8} \bar{a}_{0} a_{0} \bar{a}_{2} a_{2} \bar{a}_{0} a_{0} \bar{a}_{2} a_{2} \bar{a}_{2} a_{2} \bar{a}_{0}
\\&\quad
+ \bar{\alpha}^{(0)}_{9} \bar{a}_{2} a_{2} \bar{a}_{0} a_{0} \bar{a}_{2} a_{2} \bar{a}_{0} a_{0} \bar{a}_{2} a_{2} \bar{a}_{0}
% \\&\quad
+ \bar{\alpha}^{(0)}_{10} \bar{a}_{0} a_{0} \bar{a}_{2} a_{2} \bar{a}_{0} a_{0} \bar{a}_{2} a_{2} \bar{a}_{0} a_{0} \bar{a}_{2} a_{2} \bar{a}_{0}
% \\&\quad
+ \bar{\alpha}^{(0)}_{11} \bar{a}_{2} a_{2} \bar{a}_{0} a_{0} \bar{a}_{2} a_{2} \bar{a}_{0} a_{0} \bar{a}_{2} a_{2} \bar{a}_{0} a_{0} \bar{a}_{2} a_{2} \bar{a}_{0}
,
\\
\varphi(a_{1}) &= 
% \alpha^{(1)}_{0}
a_{1}
+ \alpha^{(1)}_{1} a_{1} a_{2} \bar{a}_{0} a_{0} \bar{a}_{2}
+ \alpha^{(1)}_{2} a_{1} a_{2} \bar{a}_{0} a_{0} \bar{a}_{2} a_{2} \bar{a}_{2}
+ \alpha^{(1)}_{3} a_{1} a_{2} \bar{a}_{0} a_{0} \bar{a}_{2} a_{2} \bar{a}_{0} a_{0} \bar{a}_{2}
%\\&\quad
+ \alpha^{(1)}_{4} a_{1} a_{2} \bar{a}_{0} a_{0} \bar{a}_{2} a_{2} \bar{a}_{0} a_{0} \bar{a}_{2} a_{2} \bar{a}_{2}
\\&\quad
+ \alpha^{(1)}_{5} a_{1} a_{2} \bar{a}_{0} a_{0} \bar{a}_{2} a_{2} \bar{a}_{0} a_{0} \bar{a}_{2} a_{2} \bar{a}_{2} a_{2} \bar{a}_{0} a_{0} \bar{a}_{2}
,
\\
\varphi(\bar{a}_{1}) &= 
% \bar{\alpha}^{(1)}_{0}
\bar{a}_{1}
+ \bar{\alpha}^{(1)}_{1} a_{2} \bar{a}_{0} a_{0} \bar{a}_{2} \bar{a}_{1}
+ \bar{\alpha}^{(1)}_{2} a_{2} \bar{a}_{2} a_{2} \bar{a}_{0} a_{0} \bar{a}_{2} \bar{a}_{1}
+ \bar{\alpha}^{(1)}_{3} a_{2} \bar{a}_{0} a_{0} \bar{a}_{2} a_{2} \bar{a}_{0} a_{0} \bar{a}_{2} \bar{a}_{1}
%\\&\quad
+ \bar{\alpha}^{(1)}_{4} a_{2} \bar{a}_{0} a_{0} \bar{a}_{2} a_{2} \bar{a}_{2} a_{2} \bar{a}_{0} a_{0} \bar{a}_{2} \bar{a}_{1}
\\&\quad
+ \bar{\alpha}^{(1)}_{5} a_{2} \bar{a}_{0} a_{0} \bar{a}_{2} a_{2} \bar{a}_{0} a_{0} \bar{a}_{2} a_{2} \bar{a}_{2} a_{2} \bar{a}_{0} a_{0} \bar{a}_{2} \bar{a}_{1}
,
\\
%\end{align*}
%\begin{align*}
\varphi(a_{2}) &= 
% \alpha^{(2)}_{0}
a_{2}
+ \alpha^{(2)}_{1} a_{2} \bar{a}_{0} a_{0}
+ \alpha^{(2)}_{2} a_{2} \bar{a}_{2} a_{2}
+ \alpha^{(2)}_{3} a_{2} \bar{a}_{0} a_{0} \bar{a}_{2} a_{2}
+ \alpha^{(2)}_{4} a_{2} \bar{a}_{2} a_{2} \bar{a}_{0} a_{0}
% \\&\quad
+ \alpha^{(2)}_{5} a_{2} \bar{a}_{0} a_{0} \bar{a}_{2} a_{2} \bar{a}_{0} a_{0}
%\\&\quad
+ \alpha^{(2)}_{6} a_{2} \bar{a}_{0} a_{0} \bar{a}_{2} a_{2} \bar{a}_{2} a_{2}
\\&\quad
+ \alpha^{(2)}_{7} a_{2} \bar{a}_{2} a_{2} \bar{a}_{0} a_{0} \bar{a}_{2} a_{2}
%\\&\quad
+ \alpha^{(2)}_{8} a_{2} \bar{a}_{0} a_{0} \bar{a}_{2} a_{2} \bar{a}_{0} a_{0} \bar{a}_{2} a_{2}
+ \alpha^{(2)}_{9} a_{2} \bar{a}_{0} a_{0} \bar{a}_{2} a_{2} \bar{a}_{2} a_{2} \bar{a}_{0} a_{0}
%\\&\quad
+ \alpha^{(2)}_{10} a_{2} \bar{a}_{2} a_{2} \bar{a}_{0} a_{0} \bar{a}_{2} a_{2} \bar{a}_{0} a_{0}
\\&\quad
+ \alpha^{(2)}_{11} a_{2} \bar{a}_{0} a_{0} \bar{a}_{2} a_{2} \bar{a}_{0} a_{0} \bar{a}_{2} a_{2} \bar{a}_{0} a_{0}
+ \alpha^{(2)}_{12} a_{2} \bar{a}_{0} a_{0} \bar{a}_{2} a_{2} \bar{a}_{0} a_{0} \bar{a}_{2} a_{2} \bar{a}_{2} a_{2}
%\\&\quad
+ \alpha^{(2)}_{13} a_{2} \bar{a}_{0} a_{0} \bar{a}_{2} a_{2} \bar{a}_{0} a_{0} \bar{a}_{2} a_{2} \bar{a}_{0} a_{0} \bar{a}_{2} a_{2}
\\&\quad
%\end{align*}
%\begin{align*}
%\\&\quad
%\\&\quad
+ \alpha^{(2)}_{14} a_{2} \bar{a}_{0} a_{0} \bar{a}_{2} a_{2} \bar{a}_{0} a_{0} \bar{a}_{2} a_{2} \bar{a}_{2} a_{2} \bar{a}_{0} a_{0}
%\\&\quad
+ \alpha^{(2)}_{15} a_{2} \bar{a}_{0} a_{0} \bar{a}_{2} a_{2} \bar{a}_{0} a_{0} \bar{a}_{2} a_{2} \bar{a}_{0} a_{0} \bar{a}_{2} a_{2} \bar{a}_{0} a_{0}
,
\\
%\end{align*}
%\begin{align*}
\varphi(\bar{a}_{2}) &= 
% \bar{\alpha}^{(2)}_{0}
\bar{a}_{2}
+ \bar{\alpha}^{(2)}_{1} \bar{a}_{0} a_{0} \bar{a}_{2}
+ \bar{\alpha}^{(2)}_{2} \bar{a}_{2} a_{2} \bar{a}_{2}
+ \bar{\alpha}^{(2)}_{3} \bar{a}_{0} a_{0} \bar{a}_{2} a_{2} \bar{a}_{2}
+ \bar{\alpha}^{(2)}_{4} \bar{a}_{2} a_{2} \bar{a}_{0} a_{0} \bar{a}_{2}
+ \bar{\alpha}^{(2)}_{5} \bar{a}_{0} a_{0} \bar{a}_{2} a_{2} \bar{a}_{0} a_{0} \bar{a}_{2}
%\\&\quad
+ \bar{\alpha}^{(2)}_{6} \bar{a}_{2} a_{2} \bar{a}_{0} a_{0} \bar{a}_{2} a_{2} \bar{a}_{2}
\\&\quad
+ \bar{\alpha}^{(2)}_{7} \bar{a}_{2} a_{2} \bar{a}_{2} a_{2} \bar{a}_{0} a_{0} \bar{a}_{2}
+ \bar{\alpha}^{(2)}_{8} \bar{a}_{0} a_{0} \bar{a}_{2} a_{2} \bar{a}_{0} a_{0} \bar{a}_{2} a_{2} \bar{a}_{2}
+ \bar{\alpha}^{(2)}_{9} \bar{a}_{0} a_{0} \bar{a}_{2} a_{2} \bar{a}_{2} a_{2} \bar{a}_{0} a_{0} \bar{a}_{2}
%\\&\quad
+ \bar{\alpha}^{(2)}_{10} \bar{a}_{2} a_{2} \bar{a}_{0} a_{0} \bar{a}_{2} a_{2} \bar{a}_{0} a_{0} \bar{a}_{2}
\\&\quad
+ \bar{\alpha}^{(2)}_{11} \bar{a}_{0} a_{0} \bar{a}_{2} a_{2} \bar{a}_{0} a_{0} \bar{a}_{2} a_{2} \bar{a}_{0} a_{0} \bar{a}_{2}
%\\&\quad
+ \bar{\alpha}^{(2)}_{12} \bar{a}_{2} a_{2} \bar{a}_{0} a_{0} \bar{a}_{2} a_{2} \bar{a}_{2} a_{2} \bar{a}_{0} a_{0} \bar{a}_{2}
%\\&\quad
+ \bar{\alpha}^{(2)}_{13} \bar{a}_{0} a_{0} \bar{a}_{2} a_{2} \bar{a}_{0} a_{0} \bar{a}_{2} a_{2} \bar{a}_{2} a_{2} \bar{a}_{0} a_{0} \bar{a}_{2}
\\&\quad
+ \bar{\alpha}^{(2)}_{14} \bar{a}_{2} a_{2} \bar{a}_{0} a_{0} \bar{a}_{2} a_{2} \bar{a}_{0} a_{0} \bar{a}_{2} a_{2} \bar{a}_{0} a_{0} \bar{a}_{2}
%\\&\quad
+ \bar{\alpha}^{(2)}_{15} \bar{a}_{0} a_{0} \bar{a}_{2} a_{2} \bar{a}_{0} a_{0} \bar{a}_{2} a_{2} \bar{a}_{0} a_{0} \bar{a}_{2} a_{2} \bar{a}_{0} a_{0} \bar{a}_{2}
,
\\
\varphi(a_{3}) &= 
% \alpha^{(3)}_{0}
a_{3}
+ \alpha^{(3)}_{1} \bar{a}_{0} a_{0} a_{3}
+ \alpha^{(3)}_{2} \bar{a}_{2} a_{2} a_{3}
+ \alpha^{(3)}_{3} \bar{a}_{0} a_{0} \bar{a}_{2} a_{2} a_{3}
+ \alpha^{(3)}_{4} \bar{a}_{2} a_{2} \bar{a}_{0} a_{0} a_{3}
+ \alpha^{(3)}_{5} \bar{a}_{2} a_{2} \bar{a}_{2} a_{2} a_{3}
%\\&\quad
+ \alpha^{(3)}_{6} \bar{a}_{0} a_{0} \bar{a}_{2} a_{2} \bar{a}_{0} a_{0} a_{3}
\\&\quad
+ \alpha^{(3)}_{7} \bar{a}_{0} a_{0} \bar{a}_{2} a_{2} \bar{a}_{2} a_{2} a_{3}
+ \alpha^{(3)}_{8} \bar{a}_{2} a_{2} \bar{a}_{0} a_{0} \bar{a}_{2} a_{2} a_{3}
%\\&\quad
+ \alpha^{(3)}_{9} \bar{a}_{0} a_{0} \bar{a}_{2} a_{2} \bar{a}_{0} a_{0} \bar{a}_{2} a_{2} a_{3}
%\\&\quad
+ \alpha^{(3)}_{10} \bar{a}_{2} a_{2} \bar{a}_{0} a_{0} \bar{a}_{2} a_{2} \bar{a}_{0} a_{0} a_{3}
\\&\quad
+ \alpha^{(3)}_{11} \bar{a}_{2} a_{2} \bar{a}_{0} a_{0} \bar{a}_{2} a_{2} \bar{a}_{2} a_{2} a_{3}
+ \alpha^{(3)}_{12} \bar{a}_{0} a_{0} \bar{a}_{2} a_{2} \bar{a}_{0} a_{0} \bar{a}_{2} a_{2} \bar{a}_{0} a_{0} a_{3}
%\\&\quad
+ \alpha^{(3)}_{13} \bar{a}_{0} a_{0} \bar{a}_{2} a_{2} \bar{a}_{0} a_{0} \bar{a}_{2} a_{2} \bar{a}_{2} a_{2} a_{3}
%\\&\quad
+ \alpha^{(3)}_{14} \bar{a}_{2} a_{2} \bar{a}_{0} a_{0} \bar{a}_{2} a_{2} \bar{a}_{0} a_{0} \bar{a}_{2} a_{2} a_{3}
\\&\quad
+ \alpha^{(3)}_{15} \bar{a}_{0} a_{0} \bar{a}_{2} a_{2} \bar{a}_{0} a_{0} \bar{a}_{2} a_{2} \bar{a}_{0} a_{0} \bar{a}_{2} a_{2} a_{3}
%\\&\quad
+ \alpha^{(3)}_{16} \bar{a}_{2} a_{2} \bar{a}_{0} a_{0} \bar{a}_{2} a_{2} \bar{a}_{0} a_{0} \bar{a}_{2} a_{2} \bar{a}_{0} a_{0} a_{3}
%\\&\quad
+ \alpha^{(3)}_{17} \bar{a}_{0} a_{0} \bar{a}_{2} a_{2} \bar{a}_{0} a_{0} \bar{a}_{2} a_{2} \bar{a}_{0} a_{0} \bar{a}_{2} a_{2} \bar{a}_{0} a_{0} a_{3}
,
\\
\varphi(\bar{a}_{3}) &= 
% \bar{\alpha}^{(3)}_{0}
\bar{a}_{3}
+ \bar{\alpha}^{(3)}_{1} \bar{a}_{3} \bar{a}_{0} a_{0}
+ \bar{\alpha}^{(3)}_{2} \bar{a}_{3} \bar{a}_{2} a_{2}
+ \bar{\alpha}^{(3)}_{3} \bar{a}_{3} \bar{a}_{0} a_{0} \bar{a}_{2} a_{2}
+ \bar{\alpha}^{(3)}_{4} \bar{a}_{3} \bar{a}_{2} a_{2} \bar{a}_{0} a_{0}
+ \bar{\alpha}^{(3)}_{5} \bar{a}_{3} \bar{a}_{2} a_{2} \bar{a}_{2} a_{2}
%\\&\quad
+ \bar{\alpha}^{(3)}_{6} \bar{a}_{3} \bar{a}_{0} a_{0} \bar{a}_{2} a_{2} \bar{a}_{0} a_{0}
\\&\quad
+ \bar{\alpha}^{(3)}_{7} \bar{a}_{3} \bar{a}_{0} a_{0} \bar{a}_{2} a_{2} \bar{a}_{2} a_{2}
%\\&\quad
+ \bar{\alpha}^{(3)}_{8} \bar{a}_{3} \bar{a}_{2} a_{2} \bar{a}_{0} a_{0} \bar{a}_{2} a_{2}
%\\&\quad
+ \bar{\alpha}^{(3)}_{9} \bar{a}_{3} \bar{a}_{0} a_{0} \bar{a}_{2} a_{2} \bar{a}_{0} a_{0} \bar{a}_{2} a_{2}
%\\&\quad
+ \bar{\alpha}^{(3)}_{10} \bar{a}_{3} \bar{a}_{0} a_{0} \bar{a}_{2} a_{2} \bar{a}_{2} a_{2} \bar{a}_{0} a_{0}
%\\
%%\end{align*}
%%\begin{align*}
%&\quad
\\&\quad
+ \bar{\alpha}^{(3)}_{11} \bar{a}_{3} \bar{a}_{2} a_{2} \bar{a}_{0} a_{0} \bar{a}_{2} a_{2} \bar{a}_{2} a_{2}
+ \bar{\alpha}^{(3)}_{12} \bar{a}_{3} \bar{a}_{0} a_{0} \bar{a}_{2} a_{2} \bar{a}_{0} a_{0} \bar{a}_{2} a_{2} \bar{a}_{0} a_{0}
%\\&\quad
+ \bar{\alpha}^{(3)}_{13} \bar{a}_{3} \bar{a}_{0} a_{0} \bar{a}_{2} a_{2} \bar{a}_{0} a_{0} \bar{a}_{2} a_{2} \bar{a}_{2} a_{2}
%\\&\quad
+ \bar{\alpha}^{(3)}_{14} \bar{a}_{3} \bar{a}_{2} a_{2} \bar{a}_{0} a_{0} \bar{a}_{2} a_{2} \bar{a}_{2} a_{2} \bar{a}_{0} a_{0}
\\&\quad
+ \bar{\alpha}^{(3)}_{15} \bar{a}_{3} \bar{a}_{0} a_{0} \bar{a}_{2} a_{2} \bar{a}_{0} a_{0} \bar{a}_{2} a_{2} \bar{a}_{0} a_{0} \bar{a}_{2} a_{2}
%\\&\quad
+ \bar{\alpha}^{(3)}_{16} \bar{a}_{3} \bar{a}_{0} a_{0} \bar{a}_{2} a_{2} \bar{a}_{0} a_{0} \bar{a}_{2} a_{2} \bar{a}_{2} a_{2} \bar{a}_{0} a_{0}
%\\&\quad
+ \bar{\alpha}^{(3)}_{17} \bar{a}_{3} \bar{a}_{0} a_{0} \bar{a}_{2} a_{2} \bar{a}_{0} a_{0} \bar{a}_{2} a_{2} \bar{a}_{0} a_{0} \bar{a}_{2} a_{2} \bar{a}_{0} a_{0}
,
\\
%\end{align*}
%\begin{align*}
\varphi(a_{4}) &= 
% \alpha^{(4)}_{0}
a_{4}
+ \alpha^{(4)}_{1} \bar{a}_{3} a_{3} a_{4}
+ \alpha^{(4)}_{2} \bar{a}_{3} \bar{a}_{0} a_{0} a_{3} a_{4}
+ \alpha^{(4)}_{3} \bar{a}_{3} \bar{a}_{0} a_{0} \bar{a}_{2} a_{2} a_{3} a_{4}
%\\&\quad
+ \alpha^{(4)}_{4} \bar{a}_{3} \bar{a}_{2} a_{2} \bar{a}_{0} a_{0} a_{3} a_{4}
%\\&\quad
+ \alpha^{(4)}_{5} \bar{a}_{3} \bar{a}_{0} a_{0} \bar{a}_{2} a_{2} \bar{a}_{0} a_{0} a_{3} a_{4}
\\&\quad
+ \alpha^{(4)}_{6} \bar{a}_{3} \bar{a}_{2} a_{2} \bar{a}_{0} a_{0} \bar{a}_{2} a_{2} a_{3} a_{4}
%\\&\quad
+ \alpha^{(4)}_{7} \bar{a}_{3} \bar{a}_{0} a_{0} \bar{a}_{2} a_{2} \bar{a}_{0} a_{0} \bar{a}_{2} a_{2} a_{3} a_{4}
%\\&\quad
%\end{align*}
%\begin{align*}
%&\quad
+ \alpha^{(4)}_{8} \bar{a}_{3} \bar{a}_{0} a_{0} \bar{a}_{2} a_{2} \bar{a}_{0} a_{0} \bar{a}_{2} a_{2} \bar{a}_{0} a_{0} a_{3} a_{4}
\\&\quad
+ \alpha^{(4)}_{9} \bar{a}_{3} \bar{a}_{0} a_{0} \bar{a}_{2} a_{2} \bar{a}_{0} a_{0} \bar{a}_{2} a_{2} \bar{a}_{0} a_{0} \bar{a}_{2} a_{2} a_{3} a_{4}
,
\\
%\end{align*}
%\begin{align*}
\varphi(\bar{a}_{4}) &= 
% \bar{\alpha}^{(4)}_{0}
\bar{a}_{4}
+ \bar{\alpha}^{(4)}_{1} \bar{a}_{4} \bar{a}_{3} a_{3}
+ \bar{\alpha}^{(4)}_{2} \bar{a}_{4} \bar{a}_{3} \bar{a}_{0} a_{0} a_{3}
+ \bar{\alpha}^{(4)}_{3} \bar{a}_{4} \bar{a}_{3} \bar{a}_{0} a_{0} \bar{a}_{2} a_{2} a_{3}
+ \bar{\alpha}^{(4)}_{4} \bar{a}_{4} \bar{a}_{3} \bar{a}_{2} a_{2} \bar{a}_{0} a_{0} a_{3}
%\\&\quad
+ \bar{\alpha}^{(4)}_{5} \bar{a}_{4} \bar{a}_{3} \bar{a}_{0} a_{0} \bar{a}_{2} a_{2} \bar{a}_{0} a_{0} a_{3}
\\&\quad
+ \bar{\alpha}^{(4)}_{6} \bar{a}_{4} \bar{a}_{3} \bar{a}_{0} a_{0} \bar{a}_{2} a_{2} \bar{a}_{2} a_{2} a_{3}
%\\&\quad
+ \bar{\alpha}^{(4)}_{7} \bar{a}_{4} \bar{a}_{3} \bar{a}_{0} a_{0} \bar{a}_{2} a_{2} \bar{a}_{0} a_{0} \bar{a}_{2} a_{2} a_{3}
%\\&\quad
+ \bar{\alpha}^{(4)}_{8} \bar{a}_{4} \bar{a}_{3} \bar{a}_{0} a_{0} \bar{a}_{2} a_{2} \bar{a}_{0} a_{0} \bar{a}_{2} a_{2} \bar{a}_{0} a_{0} a_{3}
\\&\quad
+ \bar{\alpha}^{(4)}_{9} \bar{a}_{4} \bar{a}_{3} \bar{a}_{0} a_{0} \bar{a}_{2} a_{2} \bar{a}_{0} a_{0} \bar{a}_{2} a_{2} \bar{a}_{0} a_{0} \bar{a}_{2} a_{2} a_{3}
,
\\
%\end{align*}
%\begin{align*}
\varphi(a_{5}) &= 
% \alpha^{(5)}_{0}
a_{5}
+ \alpha^{(5)}_{1} \bar{a}_{4} \bar{a}_{3} \bar{a}_{0} a_{0} a_{3} a_{4} a_{5}
+ \alpha^{(5)}_{2} \bar{a}_{4} \bar{a}_{3} \bar{a}_{2} a_{2} \bar{a}_{0} a_{0} a_{3} a_{4} a_{5}
%\\&\quad
+ \alpha^{(5)}_{3} \bar{a}_{4} \bar{a}_{3} \bar{a}_{0} a_{0} \bar{a}_{2} a_{2} \bar{a}_{0} a_{0} \bar{a}_{2} a_{2} \bar{a}_{0} a_{0} a_{3} a_{4} a_{5}
,
\\
%\end{align*}
%\begin{align*}
\varphi(\bar{a}_{5}) &= 
% \bar{\alpha}^{(5)}_{0}
\bar{a}_{5}
+ \bar{\alpha}^{(5)}_{1} \bar{a}_{5} \bar{a}_{4} \bar{a}_{3} \bar{a}_{0} a_{0} a_{3} a_{4}
+ \bar{\alpha}^{(5)}_{2} \bar{a}_{5} \bar{a}_{4} \bar{a}_{3} \bar{a}_{0} a_{0} \bar{a}_{2} a_{2} a_{3} a_{4}
%\\&\quad
+ \bar{\alpha}^{(5)}_{3} \bar{a}_{5} \bar{a}_{4} \bar{a}_{3} \bar{a}_{0} a_{0} \bar{a}_{2} a_{2} \bar{a}_{0} a_{0} \bar{a}_{2} a_{2} \bar{a}_{0} a_{0} a_{3} a_{4}
,
%\\
\end{align*}%
%\end{small}%
\end{footnotesize}%
with coefficients ${\alpha}^{(i)}_{j_i}, \bar{\alpha}^{(i)}_{j_i} \in K$,
for $i=0,\dots,5$, $j = 1, \dots,  j_i$, 
$j_0 = 11, j_1 = 5, j_2 = 15, j_3 = 17, j_4 = 9, j_5 = 3$.

%\section{Substitutions reducing the number of cases}
\subsection{Simplified system of equations}
\label{secA:simpl}

In order to obtain the simplified system of equations
(see Section~\ref{sec:eq} for details)
we will assume that in the above definition of $\varphi$ the following
coefficients are equal zero:
\begin{small}
\setlength{\jot}{2pt}
\begin{gather*}
\alpha^{(0)}_{9},
\alpha^{(0)}_{10},
\alpha^{(0)}_{11},
\bar{\alpha}^{(0)}_{2},
\bar{\alpha}^{(0)}_{3},
\bar{\alpha}^{(0)}_{4},
\bar{\alpha}^{(0)}_{5},
\bar{\alpha}^{(0)}_{6},
\bar{\alpha}^{(0)}_{7},
\bar{\alpha}^{(0)}_{8},
\bar{\alpha}^{(0)}_{9},
\bar{\alpha}^{(0)}_{10},
\bar{\alpha}^{(0)}_{11},
\alpha^{(1)}_{1},
\alpha^{(1)}_{3},
\alpha^{(1)}_{5},
\bar{\alpha}^{(1)}_{1},
%\\
\bar{\alpha}^{(1)}_{2},
\bar{\alpha}^{(1)}_{3},
\bar{\alpha}^{(1)}_{4},
\bar{\alpha}^{(1)}_{5},
\alpha^{(2)}_{1},
\alpha^{(2)}_{2},
\\
\alpha^{(2)}_{5},
%\\
\alpha^{(2)}_{6},
\alpha^{(2)}_{8},
\alpha^{(2)}_{9},
\alpha^{(2)}_{11},
\alpha^{(2)}_{12},
\bar{\alpha}^{(2)}_{1},
\bar{\alpha}^{(2)}_{2},
\bar{\alpha}^{(2)}_{5},
\bar{\alpha}^{(2)}_{7},
\bar{\alpha}^{(2)}_{9},
%\\
\bar{\alpha}^{(2)}_{10},
\bar{\alpha}^{(2)}_{12},
\bar{\alpha}^{(2)}_{14},
\alpha^{(3)}_{2},
\alpha^{(3)}_{3},
\alpha^{(3)}_{4},
\alpha^{(3)}_{5},
\alpha^{(3)}_{6},
\alpha^{(3)}_{7},
\alpha^{(3)}_{8},
\alpha^{(3)}_{9},
\alpha^{(3)}_{10},
\\
\alpha^{(3)}_{11},
\alpha^{(3)}_{12},
%\\
\alpha^{(3)}_{13},
\alpha^{(3)}_{14},
\alpha^{(3)}_{15},
%\\
\alpha^{(3)}_{16},
\alpha^{(3)}_{17},
\bar{\alpha}^{(3)}_{1},
\bar{\alpha}^{(3)}_{6},
\bar{\alpha}^{(3)}_{15},
\alpha^{(4)}_{2},
\alpha^{(4)}_{3},
\alpha^{(4)}_{4},
\alpha^{(4)}_{5},
\alpha^{(4)}_{6},
\alpha^{(4)}_{7},
\alpha^{(4)}_{8},
\alpha^{(4)}_{9},
\bar{\alpha}^{(4)}_{1},
\bar{\alpha}^{(4)}_{2},
\alpha^{(5)}_{2},
\alpha^{(5)}_{3}.
\end{gather*}%
\end{small}%
%We note that this choice is a key point
%in order 
We note that this choice is crucial
%in order 
to obtain the system of equation with 
the solution of ``acceptable'' size.
Then, 
%Further,
applying the relations of $P(\mathbb{E}_7)$
we obtain the following equations
%\begin{small}
\begin{footnotesize}
\setlength{\jot}{0pt}
\begin{align*}
-\bar{\alpha}^{(3)}_{2}
-\alpha^{(3)}_{1}
+\alpha^{(0)}_{1}
+\theta_{1}
&= 0
,\\
\bar{\alpha}^{(0)}_{1}
+\theta_{2}
&= 0
,\\
-\bar{\alpha}^{(3)}_{2}
+\theta_{3}
&= 0
,\\
-\bar{\alpha}^{(3)}_{4}
+\alpha^{(0)}_{2}
+\theta_{4}
&= 0
,\end{align*}
\begin{align*}
%\\
-\bar{\alpha}^{(3)}_{5}
-\alpha^{(3)}_{1}\bar{\alpha}^{(3)}_{2}
+\bar{\alpha}^{(2)}_{3}
+\alpha^{(0)}_{3}
+\alpha^{(0)}_{1}\theta_{1}
+\theta_{5}
&= 0
,
\\
-\bar{\alpha}^{(3)}_{3}
+\alpha^{(2)}_{3}
+\bar{\alpha}^{(2)}_{4}
+\bar{\alpha}^{(0)}_{1}\alpha^{(0)}_{1}
+\bar{\alpha}^{(0)}_{1}\theta_{1}
+\alpha^{(0)}_{1}\theta_{2}
+\theta_{6}
&= 0
,\\
-\bar{\alpha}^{(3)}_{4}
+\alpha^{(2)}_{4}
+\bar{\alpha}^{(0)}_{1}\theta_{2}
+\theta_{7}
&= 0
,
\\
%\end{align*}
%\begin{align*}
-\alpha^{(3)}_{1}\bar{\alpha}^{(3)}_{3}
-\alpha^{(2)}_{7}
+\alpha^{(0)}_{4}
+\alpha^{(2)}_{3}\theta_{1}
+\bar{\alpha}^{(2)}_{4}\theta_{1}
+\alpha^{(0)}_{2}\theta_{1}
-\bar{\alpha}^{(0)}_{1}\alpha^{(0)}_{1}\theta_{2}
%\\
-\alpha^{(2)}_{3}\theta_{3}
-\bar{\alpha}^{(2)}_{4}\theta_{3}
-\alpha^{(2)}_{4}\theta_{3}
+\alpha^{(0)}_{1}\theta_{4}
-\bar{\alpha}^{(0)}_{1}\theta_{6}
-\alpha^{(0)}_{1}\theta_{7}
+\theta_{8}
&= 0
,
\\
%\end{align*}
%\begin{align*}
\bar{\alpha}^{(3)}_{8}
-\alpha^{(3)}_{1}\bar{\alpha}^{(3)}_{4}
-\alpha^{(2)}_{7}
+\alpha^{(0)}_{5}
+\alpha^{(2)}_{4}\theta_{1}
-\bar{\alpha}^{(0)}_{1}\alpha^{(0)}_{1}\theta_{2}
+\bar{\alpha}^{(2)}_{3}\theta_{2}
%\\
-\alpha^{(2)}_{3}\theta_{3}
-\bar{\alpha}^{(2)}_{4}\theta_{3}
-\alpha^{(2)}_{4}\theta_{3}
+\bar{\alpha}^{(0)}_{1}\theta_{4}
+\alpha^{(0)}_{1}\theta_{4}
-\bar{\alpha}^{(0)}_{1}\theta_{6}
-\alpha^{(0)}_{1}\theta_{7}
%\\
+\theta_{9}
&= 0
,
\\
%\end{align*}
%\begin{align*}
\bar{\alpha}^{(3)}_{8}
-\alpha^{(2)}_{7}
+\bar{\alpha}^{(0)}_{1}\alpha^{(0)}_{2}
+\alpha^{(0)}_{2}\theta_{2}
-\bar{\alpha}^{(0)}_{1}\alpha^{(0)}_{1}\theta_{2}
+\alpha^{(2)}_{3}\theta_{2}
+\bar{\alpha}^{(2)}_{4}\theta_{2}
%\\
-\alpha^{(2)}_{3}\theta_{3}
-\bar{\alpha}^{(2)}_{4}\theta_{3}
-\alpha^{(2)}_{4}\theta_{3}
+\bar{\alpha}^{(0)}_{1}\theta_{4}
-\bar{\alpha}^{(0)}_{1}\theta_{6}
-\alpha^{(0)}_{1}\theta_{7}
+\theta_{10}
&= 0
,
\\
%\end{align*}
%\begin{align*}
-\bar{\alpha}^{(3)}_{7}
+\bar{\alpha}^{(3)}_{8}
-\alpha^{(2)}_{7}
+\bar{\alpha}^{(2)}_{6}
+\bar{\alpha}^{(0)}_{1}\alpha^{(0)}_{3}
+\bar{\alpha}^{(0)}_{1}\alpha^{(0)}_{1}\theta_{1}
+\alpha^{(0)}_{3}\theta_{2}
%\\
-\bar{\alpha}^{(0)}_{1}\alpha^{(0)}_{1}\theta_{2}
-\alpha^{(2)}_{3}\theta_{3}
%\\
+\bar{\alpha}^{(2)}_{3}\theta_{3}
-\bar{\alpha}^{(2)}_{4}\theta_{3}
+\alpha^{(2)}_{3}\theta_{3}
-\alpha^{(2)}_{4}\theta_{3}
\\
+\bar{\alpha}^{(2)}_{4}\theta_{3}
+\bar{\alpha}^{(0)}_{1}\theta_{5}
+\alpha^{(0)}_{1}\theta_{6}
-\bar{\alpha}^{(0)}_{1}\theta_{6}
-\alpha^{(0)}_{1}\theta_{7}
+\theta_{11}
&= 0
,
\\
%\end{align*}
%\begin{align*}
-\bar{\alpha}^{(3)}_{11}
+\alpha^{(3)}_{1}\bar{\alpha}^{(3)}_{8}
-\alpha^{(2)}_{10}
-\bar{\alpha}^{(2)}_{3}\alpha^{(2)}_{3}
+\alpha^{(0)}_{6}
-\alpha^{(2)}_{7}\theta_{1}
-\alpha^{(0)}_{1}\alpha^{(2)}_{3}\theta_{1}
-\alpha^{(0)}_{1}\bar{\alpha}^{(2)}_{4}\theta_{1}
%\\
-\alpha^{(0)}_{5}\theta_{1}
-\bar{\alpha}^{(0)}_{1}\alpha^{(0)}_{2}\theta_{2}
%\\
+\bar{\alpha}^{(0)}_{1}\alpha^{(0)}_{3}\theta_{2}
%\\
-\alpha^{(2)}_{4}\bar{\alpha}^{(0)}_{1}\theta_{2}
\\
-\alpha^{(2)}_{7}\theta_{2}
-\bar{\alpha}^{(2)}_{3}\alpha^{(0)}_{1}\theta_{2}
+\bar{\alpha}^{(2)}_{6}\theta_{3}
%\\
+\alpha^{(2)}_{7}\theta_{3}
+\alpha^{(0)}_{2}\theta_{4}
-\bar{\alpha}^{(0)}_{1}\alpha^{(0)}_{1}\theta_{4}
+\alpha^{(2)}_{3}\theta_{4}
%\\
+\bar{\alpha}^{(2)}_{4}\theta_{4}
-\alpha^{(0)}_{1}\alpha^{(0)}_{1}\theta_{4}
%\\
+\alpha^{(0)}_{2}\theta_{4}
%\\
-\alpha^{(2)}_{3}\theta_{5}
-\bar{\alpha}^{(2)}_{4}\theta_{5}
\\
-\alpha^{(2)}_{4}\theta_{5}
%\\
+\bar{\alpha}^{(0)}_{1}\alpha^{(0)}_{1}\theta_{6}
-\bar{\alpha}^{(2)}_{3}\theta_{6}
-\alpha^{(0)}_{2}\theta_{7}
%\\
+\alpha^{(0)}_{3}\theta_{7}
-\alpha^{(2)}_{3}\theta_{7}
%\\
-\bar{\alpha}^{(2)}_{4}\theta_{7}
-\alpha^{(2)}_{4}\theta_{7}
-\bar{\alpha}^{(0)}_{1}\theta_{8}
%\\
-\alpha^{(0)}_{1}\theta_{8}
-\alpha^{(0)}_{1}\theta_{9}
-\bar{\alpha}^{(0)}_{1}\theta_{10}
+\bar{\alpha}^{(0)}_{1}\theta_{11}
%\\
+\theta_{12}
&= 0
,
\\
%\end{align*}
%\begin{align*}
-\bar{\alpha}^{(3)}_{11}
-\alpha^{(3)}_{1}\bar{\alpha}^{(3)}_{7}
+\alpha^{(3)}_{1}\bar{\alpha}^{(3)}_{8}
-\bar{\alpha}^{(2)}_{3}\alpha^{(2)}_{3}
+\bar{\alpha}^{(2)}_{8}
+\alpha^{(0)}_{7}
-\alpha^{(2)}_{7}\theta_{1}
+\bar{\alpha}^{(2)}_{6}\theta_{1}
%\\
-\alpha^{(0)}_{1}\alpha^{(2)}_{3}\theta_{1}
+\alpha^{(0)}_{1}\bar{\alpha}^{(2)}_{3}\theta_{1}
%\\
-\alpha^{(0)}_{1}\bar{\alpha}^{(2)}_{4}\theta_{1}
+\alpha^{(0)}_{4}\theta_{1}
\\
-\alpha^{(0)}_{5}\theta_{1}
-\bar{\alpha}^{(2)}_{3}\alpha^{(0)}_{1}\theta_{2}
%\\
+\alpha^{(0)}_{3}\theta_{4}
-\bar{\alpha}^{(0)}_{1}\alpha^{(0)}_{1}\theta_{4}
-\alpha^{(0)}_{1}\alpha^{(0)}_{1}\theta_{4}
-\alpha^{(2)}_{3}\theta_{5}
+\bar{\alpha}^{(2)}_{3}\theta_{5}
%\\
-\bar{\alpha}^{(2)}_{4}\theta_{5}
+\alpha^{(2)}_{3}\theta_{5}
%\\
-\alpha^{(2)}_{4}\theta_{5}
+\bar{\alpha}^{(2)}_{4}\theta_{5}
%\\
+\alpha^{(0)}_{2}\theta_{5}
\\
-\bar{\alpha}^{(2)}_{3}\theta_{6}
+\alpha^{(0)}_{1}\theta_{8}
-\bar{\alpha}^{(0)}_{1}\theta_{8}
-\alpha^{(0)}_{1}\theta_{8}
-\alpha^{(0)}_{1}\theta_{9}
+\theta_{13}
&= 0
,
\\
%\end{align*}
%\begin{align*}
-\bar{\alpha}^{(3)}_{9}
+\bar{\alpha}^{(3)}_{11}
+\bar{\alpha}^{(2)}_{4}\alpha^{(2)}_{3}
+\bar{\alpha}^{(0)}_{1}\alpha^{(0)}_{4}
+\bar{\alpha}^{(0)}_{1}\alpha^{(2)}_{3}\theta_{1}
+\bar{\alpha}^{(0)}_{1}\bar{\alpha}^{(2)}_{4}\theta_{1}
%\\
+\bar{\alpha}^{(0)}_{1}\alpha^{(0)}_{2}\theta_{1}
+\alpha^{(0)}_{4}\theta_{2}
%\\
-\bar{\alpha}^{(0)}_{1}\alpha^{(0)}_{3}\theta_{2}
%\\
+\alpha^{(2)}_{3}\alpha^{(0)}_{1}\theta_{2}
+\bar{\alpha}^{(2)}_{4}\alpha^{(0)}_{1}\theta_{2}
%\\
-\bar{\alpha}^{(2)}_{6}\theta_{3}
\\
-\alpha^{(2)}_{7}\theta_{3}
+\bar{\alpha}^{(0)}_{1}\alpha^{(0)}_{1}\theta_{4}
%\\
+\alpha^{(2)}_{3}\theta_{6}
%\\
+\bar{\alpha}^{(2)}_{4}\theta_{6}
%\\
+\alpha^{(0)}_{2}\theta_{6}
-\bar{\alpha}^{(0)}_{1}\alpha^{(0)}_{1}\theta_{6}
%\\
+\alpha^{(2)}_{3}\theta_{6}
+\bar{\alpha}^{(2)}_{4}\theta_{6}
%\\
-\alpha^{(0)}_{3}\theta_{7}
+\bar{\alpha}^{(0)}_{1}\theta_{8}
+\alpha^{(0)}_{1}\theta_{10}
%\\
-\bar{\alpha}^{(0)}_{1}\theta_{11}
+\theta_{14}
&= 0
,
\\
%\end{align*}
%\begin{align*}
-\bar{\alpha}^{(3)}_{10}
-\alpha^{(2)}_{10}
+\bar{\alpha}^{(2)}_{4}\alpha^{(2)}_{4}
+\bar{\alpha}^{(0)}_{1}\alpha^{(0)}_{5}
+\bar{\alpha}^{(0)}_{1}\alpha^{(2)}_{4}\theta_{1}
+\alpha^{(0)}_{5}\theta_{2}
-\bar{\alpha}^{(0)}_{1}\alpha^{(0)}_{2}\theta_{2}
+\alpha^{(2)}_{3}\bar{\alpha}^{(0)}_{1}\theta_{2}
-\alpha^{(2)}_{4}\bar{\alpha}^{(0)}_{1}\theta_{2}
%\\
-\alpha^{(2)}_{7}\theta_{2}
+\bar{\alpha}^{(2)}_{4}\bar{\alpha}^{(0)}_{1}\theta_{2}
%\\
+\bar{\alpha}^{(2)}_{6}\theta_{2}
\\
+\bar{\alpha}^{(0)}_{1}\bar{\alpha}^{(0)}_{1}\theta_{4}
+\bar{\alpha}^{(0)}_{1}\alpha^{(0)}_{1}\theta_{4}
+\alpha^{(2)}_{4}\theta_{6}
-\alpha^{(0)}_{2}\theta_{7}
-\alpha^{(2)}_{3}\theta_{7}
%\\
+\bar{\alpha}^{(2)}_{3}\theta_{7}
-\bar{\alpha}^{(2)}_{4}\theta_{7}
+\alpha^{(2)}_{3}\theta_{7}
%\\
-\alpha^{(2)}_{4}\theta_{7}
+\bar{\alpha}^{(2)}_{4}\theta_{7}
+\bar{\alpha}^{(0)}_{1}\theta_{9}
%\\
+\bar{\alpha}^{(0)}_{1}\theta_{10}
+\alpha^{(0)}_{1}\theta_{10}
\\
-\bar{\alpha}^{(0)}_{1}\theta_{10}
+\theta_{15}
&= 0
,
\\
%\end{align*}
%\begin{align*}
-\bar{\alpha}^{(3)}_{13}
-\alpha^{(3)}_{1}\bar{\alpha}^{(3)}_{9}
+\alpha^{(3)}_{1}\bar{\alpha}^{(3)}_{11}
-\bar{\alpha}^{(2)}_{4}\alpha^{(2)}_{7}
-\bar{\alpha}^{(2)}_{6}\alpha^{(2)}_{3}
+\bar{\alpha}^{(2)}_{11}
+\alpha^{(0)}_{8}
+\bar{\alpha}^{(0)}_{1}\alpha^{(0)}_{7}
%\\
+\bar{\alpha}^{(2)}_{4}\alpha^{(2)}_{3}\theta_{1}
-\alpha^{(0)}_{1}\bar{\alpha}^{(2)}_{6}\theta_{1}
+\alpha^{(0)}_{2}\alpha^{(2)}_{3}\theta_{1}
%\\
+\alpha^{(0)}_{2}\bar{\alpha}^{(2)}_{4}\theta_{1}
\\
-\alpha^{(0)}_{3}\bar{\alpha}^{(2)}_{3}\theta_{1}
+\alpha^{(0)}_{6}\theta_{1}
%\\
-\bar{\alpha}^{(0)}_{1}\alpha^{(2)}_{7}\theta_{1}
+\bar{\alpha}^{(0)}_{1}\bar{\alpha}^{(2)}_{6}\theta_{1}
-\bar{\alpha}^{(0)}_{1}\alpha^{(0)}_{1}\alpha^{(2)}_{3}\theta_{1}
+\bar{\alpha}^{(0)}_{1}\alpha^{(0)}_{1}\bar{\alpha}^{(2)}_{3}\theta_{1}
-\bar{\alpha}^{(0)}_{1}\alpha^{(0)}_{1}\bar{\alpha}^{(2)}_{4}\theta_{1}
%\\
+\bar{\alpha}^{(0)}_{1}\alpha^{(0)}_{4}\theta_{1}
-\bar{\alpha}^{(0)}_{1}\alpha^{(0)}_{5}\theta_{1}
%\\
+\alpha^{(0)}_{7}\theta_{2}
\\
+\alpha^{(2)}_{3}\alpha^{(0)}_{3}\theta_{2}
-\alpha^{(2)}_{3}\bar{\alpha}^{(0)}_{1}\alpha^{(0)}_{1}\theta_{2}
-\bar{\alpha}^{(2)}_{3}\alpha^{(0)}_{3}\theta_{2}
%\\
+\bar{\alpha}^{(2)}_{4}\alpha^{(0)}_{3}\theta_{2}
-\bar{\alpha}^{(2)}_{4}\bar{\alpha}^{(0)}_{1}\alpha^{(0)}_{1}\theta_{2}
%\\
-\bar{\alpha}^{(2)}_{6}\alpha^{(0)}_{1}\theta_{2}
%\\
-\bar{\alpha}^{(2)}_{3}\alpha^{(2)}_{3}\theta_{3}
+\bar{\alpha}^{(2)}_{8}\theta_{3}
%\\
-\alpha^{(2)}_{3}\alpha^{(2)}_{3}\theta_{3}
%\\
+\alpha^{(2)}_{3}\bar{\alpha}^{(2)}_{3}\theta_{3}
\\
-\alpha^{(2)}_{3}\bar{\alpha}^{(2)}_{4}\theta_{3}
-\bar{\alpha}^{(2)}_{3}\bar{\alpha}^{(2)}_{3}\theta_{3}
-\bar{\alpha}^{(2)}_{3}\alpha^{(2)}_{3}\theta_{3}
-\bar{\alpha}^{(2)}_{4}\alpha^{(2)}_{3}\theta_{3}
%\\
+\bar{\alpha}^{(2)}_{4}\bar{\alpha}^{(2)}_{3}\theta_{3}
-\bar{\alpha}^{(2)}_{4}\bar{\alpha}^{(2)}_{4}\theta_{3}
+\bar{\alpha}^{(2)}_{4}\alpha^{(2)}_{3}\theta_{3}
%\\
-\bar{\alpha}^{(2)}_{4}\alpha^{(2)}_{4}\theta_{3}
+\alpha^{(0)}_{4}\theta_{4}
-\bar{\alpha}^{(0)}_{1}\alpha^{(0)}_{3}\theta_{4}
\\
+\alpha^{(2)}_{3}\alpha^{(0)}_{1}\theta_{4}
%\\
+\bar{\alpha}^{(2)}_{4}\alpha^{(0)}_{1}\theta_{4}
-\alpha^{(0)}_{1}\alpha^{(0)}_{3}\theta_{4}
%\\
+\alpha^{(0)}_{2}\alpha^{(0)}_{1}\theta_{4}
+\bar{\alpha}^{(0)}_{1}\alpha^{(0)}_{3}\theta_{4}
-\bar{\alpha}^{(0)}_{1}\bar{\alpha}^{(0)}_{1}\alpha^{(0)}_{1}\theta_{4}
%\\
-\bar{\alpha}^{(0)}_{1}\alpha^{(0)}_{1}\alpha^{(0)}_{1}\theta_{4}
-\bar{\alpha}^{(2)}_{6}\theta_{5}
-\alpha^{(2)}_{7}\theta_{5}
%\\
-\alpha^{(0)}_{1}\bar{\alpha}^{(2)}_{3}\theta_{5}
\\
-\alpha^{(0)}_{1}\alpha^{(2)}_{3}\theta_{5}
-\alpha^{(0)}_{1}\bar{\alpha}^{(2)}_{4}\theta_{5}
%\\
%%\end{align*}
%%\begin{align*}
-\alpha^{(0)}_{5}\theta_{5}
-\bar{\alpha}^{(0)}_{1}\alpha^{(2)}_{3}\theta_{5}
%\\
+\bar{\alpha}^{(0)}_{1}\bar{\alpha}^{(2)}_{3}\theta_{5}
-\bar{\alpha}^{(0)}_{1}\bar{\alpha}^{(2)}_{4}\theta_{5}
%\\
+\bar{\alpha}^{(0)}_{1}\alpha^{(2)}_{3}\theta_{5}
-\bar{\alpha}^{(0)}_{1}\alpha^{(2)}_{4}\theta_{5}
%\\
+\bar{\alpha}^{(0)}_{1}\bar{\alpha}^{(2)}_{4}\theta_{5}
+\bar{\alpha}^{(0)}_{1}\alpha^{(0)}_{2}\theta_{5}
%%%%%%%%%%%%%%%%%%%
%\\
-\alpha^{(2)}_{7}\theta_{6}
\\
+\bar{\alpha}^{(2)}_{6}\theta_{6}
-\alpha^{(0)}_{1}\alpha^{(2)}_{3}\theta_{6}
+\alpha^{(0)}_{1}\bar{\alpha}^{(2)}_{3}\theta_{6}
%\\
-\alpha^{(0)}_{1}\bar{\alpha}^{(2)}_{4}\theta_{6}
+\alpha^{(0)}_{4}\theta_{6}
%\\
-\alpha^{(0)}_{5}\theta_{6}
+\alpha^{(2)}_{3}\alpha^{(0)}_{1}\theta_{6}
-\alpha^{(2)}_{3}\bar{\alpha}^{(0)}_{1}\theta_{6}
%\\
-\bar{\alpha}^{(2)}_{3}\alpha^{(0)}_{1}\theta_{6}
%\\
+\bar{\alpha}^{(2)}_{4}\alpha^{(0)}_{1}\theta_{6}
-\bar{\alpha}^{(2)}_{4}\bar{\alpha}^{(0)}_{1}\theta_{6}
\\
-\bar{\alpha}^{(2)}_{6}\theta_{6}
-\bar{\alpha}^{(2)}_{3}\alpha^{(0)}_{1}\theta_{7}
-\alpha^{(2)}_{3}\alpha^{(0)}_{1}\theta_{7}
-\bar{\alpha}^{(2)}_{4}\alpha^{(0)}_{1}\theta_{7}
%\\
+\alpha^{(2)}_{3}\theta_{8}
+\bar{\alpha}^{(2)}_{4}\theta_{8}
+\alpha^{(0)}_{2}\theta_{8}
%\\ 
-\bar{\alpha}^{(0)}_{1}\alpha^{(0)}_{1}\theta_{8}
%\\
+\alpha^{(2)}_{3}\theta_{8}
+\bar{\alpha}^{(2)}_{4}\theta_{8}
-\alpha^{(0)}_{1}\alpha^{(0)}_{1}\theta_{8}
%\\
+\alpha^{(0)}_{2}\theta_{8}
\\
+\bar{\alpha}^{(0)}_{1}\alpha^{(0)}_{1}\theta_{8}
-\bar{\alpha}^{(0)}_{1}\bar{\alpha}^{(0)}_{1}\theta_{8}
-\bar{\alpha}^{(0)}_{1}\alpha^{(0)}_{1}\theta_{8}
%\\
-\alpha^{(0)}_{3}\theta_{9}
%\\
-\bar{\alpha}^{(0)}_{1}\alpha^{(0)}_{1}\theta_{9}
%\\
+\alpha^{(0)}_{3}\theta_{10}
%\\
-\bar{\alpha}^{(0)}_{1}\alpha^{(0)}_{1}\theta_{10}
-\alpha^{(0)}_{1}\alpha^{(0)}_{1}\theta_{10}
-\alpha^{(2)}_{3}\theta_{11}
%\\
+\bar{\alpha}^{(2)}_{3}\theta_{11}
-\bar{\alpha}^{(2)}_{4}\theta_{11}
\\
+\alpha^{(2)}_{3}\theta_{11}
%\\
-\alpha^{(2)}_{4}\theta_{11}
+\bar{\alpha}^{(2)}_{4}\theta_{11}
%\\
+\alpha^{(0)}_{2}\theta_{11}
+\alpha^{(2)}_{3}\theta_{11}
%\\
-\bar{\alpha}^{(2)}_{3}\theta_{11}
+\bar{\alpha}^{(2)}_{4}\theta_{11}
+\alpha^{(0)}_{1}\theta_{12}
%\\
-\bar{\alpha}^{(0)}_{1}\theta_{13}
-\alpha^{(0)}_{1}\theta_{13}
%\\
+\bar{\alpha}^{(0)}_{1}\theta_{13}
+\alpha^{(0)}_{1}\theta_{14}
-\bar{\alpha}^{(0)}_{1}\theta_{14}
\\
-\alpha^{(0)}_{1}\theta_{14}
-\alpha^{(0)}_{1}\theta_{15}
+\theta_{16}
&= 0
,
\\
%\end{align*}
%\begin{align*}
-\bar{\alpha}^{(3)}_{14}
-\alpha^{(3)}_{1}\bar{\alpha}^{(3)}_{10}
-\alpha^{(2)}_{10}\theta_{1}
+\bar{\alpha}^{(2)}_{4}\alpha^{(2)}_{4}\theta_{1}
+\alpha^{(0)}_{2}\alpha^{(2)}_{4}\theta_{1}
-\bar{\alpha}^{(2)}_{3}\alpha^{(0)}_{2}\theta_{2}
%\\
-\bar{\alpha}^{(2)}_{3}\alpha^{(2)}_{3}\theta_{2}
+\bar{\alpha}^{(2)}_{8}\theta_{2}
+\alpha^{(0)}_{5}\theta_{4}
%\\
-\bar{\alpha}^{(0)}_{1}\alpha^{(0)}_{2}\theta_{4}
%\\
+\alpha^{(2)}_{3}\bar{\alpha}^{(0)}_{1}\theta_{4}
%\\
-\alpha^{(2)}_{4}\bar{\alpha}^{(0)}_{1}\theta_{4}
\\
-\alpha^{(2)}_{7}\theta_{4}
+\bar{\alpha}^{(2)}_{4}\bar{\alpha}^{(0)}_{1}\theta_{4}
+\bar{\alpha}^{(2)}_{6}\theta_{4}
-\alpha^{(0)}_{1}\alpha^{(0)}_{2}\theta_{4}
-\alpha^{(0)}_{1}\alpha^{(2)}_{3}\theta_{4}
%\\
+\alpha^{(0)}_{1}\bar{\alpha}^{(2)}_{3}\theta_{4}
-\alpha^{(0)}_{1}\bar{\alpha}^{(2)}_{4}\theta_{4}
%\\
+\alpha^{(0)}_{2}\bar{\alpha}^{(0)}_{1}\theta_{4}
%\\
+\alpha^{(0)}_{4}\theta_{4}
%\\
-\alpha^{(0)}_{5}\theta_{4}
+\alpha^{(2)}_{4}\theta_{8}
\\
-\alpha^{(0)}_{2}\theta_{9}
-\alpha^{(2)}_{3}\theta_{9}
+\bar{\alpha}^{(2)}_{3}\theta_{9}
-\bar{\alpha}^{(2)}_{4}\theta_{9}
%\\
+\alpha^{(2)}_{3}\theta_{9}
-\alpha^{(2)}_{4}\theta_{9}
+\bar{\alpha}^{(2)}_{4}\theta_{9}
%\\
+\alpha^{(0)}_{2}\theta_{9}
-\bar{\alpha}^{(2)}_{3}\theta_{10}
+\bar{\alpha}^{(0)}_{1}\theta_{12}
+\alpha^{(0)}_{1}\theta_{12}
%\\
-\bar{\alpha}^{(0)}_{1}\theta_{12}
-\alpha^{(0)}_{1}\theta_{12}
+\theta_{17}
&= 0
,
\\
%\end{align*}
%\begin{align*}
-\bar{\alpha}^{(3)}_{12}
+\bar{\alpha}^{(3)}_{14}
-\bar{\alpha}^{(2)}_{4}\alpha^{(2)}_{7}
-\bar{\alpha}^{(2)}_{6}\alpha^{(2)}_{3}
+\bar{\alpha}^{(0)}_{1}\alpha^{(0)}_{6}
-\bar{\alpha}^{(0)}_{1}\alpha^{(2)}_{7}\theta_{1}
-\bar{\alpha}^{(0)}_{1}\alpha^{(0)}_{1}\alpha^{(2)}_{3}\theta_{1}
%\\
-\bar{\alpha}^{(0)}_{1}\alpha^{(0)}_{1}\bar{\alpha}^{(2)}_{4}\theta_{1}
-\bar{\alpha}^{(0)}_{1}\alpha^{(0)}_{5}\theta_{1}
+\alpha^{(0)}_{6}\theta_{2}
%\\
+\bar{\alpha}^{(0)}_{1}\alpha^{(0)}_{4}\theta_{2}
\\
-\bar{\alpha}^{(0)}_{1}\alpha^{(0)}_{5}\theta_{2}
+\alpha^{(2)}_{3}\alpha^{(0)}_{2}\theta_{2}
%\\
-\alpha^{(2)}_{3}\bar{\alpha}^{(0)}_{1}\alpha^{(0)}_{1}\theta_{2}
%\\
+\alpha^{(2)}_{4}\bar{\alpha}^{(0)}_{1}\alpha^{(0)}_{1}\theta_{2}
+\alpha^{(2)}_{7}\alpha^{(0)}_{1}\theta_{2}
-\alpha^{(2)}_{7}\bar{\alpha}^{(0)}_{1}\theta_{2}
+\bar{\alpha}^{(2)}_{4}\alpha^{(0)}_{2}\theta_{2}
%\\
-\bar{\alpha}^{(2)}_{4}\bar{\alpha}^{(0)}_{1}\alpha^{(0)}_{1}\theta_{2}
+\bar{\alpha}^{(2)}_{4}\alpha^{(2)}_{3}\theta_{2}
\\
-\bar{\alpha}^{(2)}_{6}\alpha^{(0)}_{1}\theta_{2}
-\bar{\alpha}^{(2)}_{3}\alpha^{(2)}_{3}\theta_{3}
%\\
+\bar{\alpha}^{(2)}_{4}\alpha^{(2)}_{3}\theta_{3}
%\\
-\bar{\alpha}^{(2)}_{4}\alpha^{(2)}_{4}\theta_{3}
-\alpha^{(2)}_{3}\alpha^{(2)}_{3}\theta_{3}
-\alpha^{(2)}_{3}\bar{\alpha}^{(2)}_{4}\theta_{3}
%\\
+\alpha^{(2)}_{4}\alpha^{(2)}_{3}\theta_{3}
%\\
-\alpha^{(2)}_{4}\alpha^{(2)}_{4}\theta_{3}
+\alpha^{(2)}_{4}\bar{\alpha}^{(2)}_{4}\theta_{3}
%\\
+\alpha^{(2)}_{10}\theta_{3}
\\
-\bar{\alpha}^{(2)}_{4}\alpha^{(2)}_{3}\theta_{3}
-\bar{\alpha}^{(2)}_{4}\bar{\alpha}^{(2)}_{4}\theta_{3}
%\\
-\bar{\alpha}^{(2)}_{4}\alpha^{(2)}_{4}\theta_{3}
+\bar{\alpha}^{(0)}_{1}\alpha^{(0)}_{2}\theta_{4}
-\bar{\alpha}^{(0)}_{1}\bar{\alpha}^{(0)}_{1}\alpha^{(0)}_{1}\theta_{4}
%\\
+\bar{\alpha}^{(0)}_{1}\alpha^{(2)}_{3}\theta_{4}
%\\
+\bar{\alpha}^{(0)}_{1}\bar{\alpha}^{(2)}_{4}\theta_{4}
-\bar{\alpha}^{(0)}_{1}\alpha^{(0)}_{1}\alpha^{(0)}_{1}\theta_{4}
+\bar{\alpha}^{(0)}_{1}\alpha^{(0)}_{2}\theta_{4}
\\
-\bar{\alpha}^{(0)}_{1}\alpha^{(2)}_{3}\theta_{5}
%\\
-\bar{\alpha}^{(0)}_{1}\bar{\alpha}^{(2)}_{4}\theta_{5}
-\bar{\alpha}^{(0)}_{1}\alpha^{(2)}_{4}\theta_{5}
-\alpha^{(2)}_{7}\theta_{6}
-\alpha^{(0)}_{1}\alpha^{(2)}_{3}\theta_{6}
%\\
-\alpha^{(0)}_{1}\bar{\alpha}^{(2)}_{4}\theta_{6}
%\\
-\alpha^{(0)}_{5}\theta_{6}
%\\
+\bar{\alpha}^{(0)}_{1}\alpha^{(2)}_{3}\theta_{6}
-\bar{\alpha}^{(0)}_{1}\alpha^{(2)}_{4}\theta_{6}
+\bar{\alpha}^{(0)}_{1}\bar{\alpha}^{(2)}_{4}\theta_{6}
\\
+\bar{\alpha}^{(0)}_{1}\alpha^{(0)}_{2}\theta_{6}
%\\
-\alpha^{(2)}_{3}\bar{\alpha}^{(0)}_{1}\theta_{6}
%\\
+\alpha^{(2)}_{4}\bar{\alpha}^{(0)}_{1}\theta_{6}
%\\
+\alpha^{(2)}_{7}\theta_{6}
%\\
-\bar{\alpha}^{(2)}_{4}\bar{\alpha}^{(0)}_{1}\theta_{6}
-\bar{\alpha}^{(2)}_{6}\theta_{6}
%\\
+\alpha^{(0)}_{4}\theta_{7}
-\alpha^{(0)}_{5}\theta_{7}
+\alpha^{(2)}_{3}\alpha^{(0)}_{1}\theta_{7}
-\alpha^{(2)}_{3}\bar{\alpha}^{(0)}_{1}\theta_{7}
%\\
-\bar{\alpha}^{(2)}_{3}\alpha^{(0)}_{1}\theta_{7}
\\
+\bar{\alpha}^{(2)}_{4}\alpha^{(0)}_{1}\theta_{7}
-\bar{\alpha}^{(2)}_{4}\bar{\alpha}^{(0)}_{1}\theta_{7}
-\bar{\alpha}^{(2)}_{6}\theta_{7}
%\\
-\alpha^{(2)}_{3}\alpha^{(0)}_{1}\theta_{7}
+\alpha^{(2)}_{4}\alpha^{(0)}_{1}\theta_{7}
%\\
-\alpha^{(2)}_{4}\bar{\alpha}^{(0)}_{1}\theta_{7}
-\alpha^{(2)}_{7}\theta_{7}
-\bar{\alpha}^{(2)}_{4}\alpha^{(0)}_{1}\theta_{7}
-\bar{\alpha}^{(0)}_{1}\bar{\alpha}^{(0)}_{1}\theta_{8}
-\bar{\alpha}^{(0)}_{1}\alpha^{(0)}_{1}\theta_{8}
\\
-\bar{\alpha}^{(0)}_{1}\alpha^{(0)}_{1}\theta_{9}
%\\
+\alpha^{(0)}_{2}\theta_{10}
%\\
-\bar{\alpha}^{(0)}_{1}\alpha^{(0)}_{1}\theta_{10}
+\alpha^{(2)}_{3}\theta_{10}
+\bar{\alpha}^{(2)}_{4}\theta_{10}
%\\
-\alpha^{(0)}_{1}\alpha^{(0)}_{1}\theta_{10}
+\alpha^{(0)}_{2}\theta_{10}
%\\
+\bar{\alpha}^{(0)}_{1}\alpha^{(0)}_{1}\theta_{10}
-\bar{\alpha}^{(0)}_{1}\bar{\alpha}^{(0)}_{1}\theta_{10}
-\bar{\alpha}^{(0)}_{1}\alpha^{(0)}_{1}\theta_{10}
\\
+\alpha^{(2)}_{3}\theta_{10}
%\\
+\bar{\alpha}^{(2)}_{4}\theta_{10}
-\alpha^{(2)}_{3}\theta_{11}
%\\
-\bar{\alpha}^{(2)}_{4}\theta_{11}
-\alpha^{(2)}_{4}\theta_{11}
+\bar{\alpha}^{(0)}_{1}\theta_{12}
%\\
-\bar{\alpha}^{(0)}_{1}\theta_{14}
-\alpha^{(0)}_{1}\theta_{14}
+\bar{\alpha}^{(0)}_{1}\theta_{14}
-\alpha^{(0)}_{1}\theta_{15}
%\\
-\bar{\alpha}^{(0)}_{1}\theta_{15}
+\theta_{18}
&= 0
,
\\
%\end{align*}
%\begin{align*}
-\bar{\alpha}^{(3)}_{16}
-\alpha^{(3)}_{1}\bar{\alpha}^{(3)}_{12}
+\alpha^{(3)}_{1}\bar{\alpha}^{(3)}_{14}
+\alpha^{(2)}_{14}
-\bar{\alpha}^{(2)}_{4}\alpha^{(2)}_{10}
-\bar{\alpha}^{(2)}_{8}\alpha^{(2)}_{3}
-\bar{\alpha}^{(2)}_{13}
-\bar{\alpha}^{(2)}_{4}\alpha^{(2)}_{7}\theta_{1}
%\\
-\bar{\alpha}^{(2)}_{6}\alpha^{(2)}_{3}\theta_{1}
-\alpha^{(0)}_{1}\bar{\alpha}^{(2)}_{3}\alpha^{(2)}_{3}\theta_{1}
%\\
%\end{align*}
%\begin{align*}
+\alpha^{(0)}_{1}\bar{\alpha}^{(2)}_{4}\alpha^{(2)}_{3}\theta_{1}
\\
-\alpha^{(0)}_{1}\bar{\alpha}^{(2)}_{4}\alpha^{(2)}_{4}\theta_{1}
-\alpha^{(0)}_{2}\alpha^{(2)}_{7}\theta_{1}
%\\
-\alpha^{(0)}_{4}\alpha^{(2)}_{3}\theta_{1}
-\alpha^{(0)}_{4}\bar{\alpha}^{(2)}_{4}\theta_{1}
%\\
+\alpha^{(0)}_{5}\alpha^{(2)}_{3}\theta_{1}
-\alpha^{(0)}_{5}\alpha^{(2)}_{4}\theta_{1}
+\alpha^{(0)}_{5}\bar{\alpha}^{(2)}_{4}\theta_{1}
%\\
-\bar{\alpha}^{(0)}_{1}\alpha^{(2)}_{10}\theta_{1}
+\bar{\alpha}^{(0)}_{1}\bar{\alpha}^{(2)}_{4}\alpha^{(2)}_{4}\theta_{1}
\\
+\bar{\alpha}^{(0)}_{1}\alpha^{(0)}_{2}\alpha^{(2)}_{4}\theta_{1}
+\alpha^{(2)}_{3}\alpha^{(0)}_{5}\theta_{2}
-\alpha^{(2)}_{3}\bar{\alpha}^{(0)}_{1}\alpha^{(0)}_{2}\theta_{2}
%\\
+\bar{\alpha}^{(2)}_{3}\alpha^{(0)}_{4}\theta_{2}
%\\
-\bar{\alpha}^{(2)}_{3}\alpha^{(0)}_{5}\theta_{2}
%\\
+\bar{\alpha}^{(2)}_{3}\alpha^{(2)}_{3}\alpha^{(0)}_{1}\theta_{2}
-\bar{\alpha}^{(2)}_{3}\alpha^{(2)}_{3}\bar{\alpha}^{(0)}_{1}\theta_{2}
%\\
+\bar{\alpha}^{(2)}_{4}\alpha^{(0)}_{5}\theta_{2}
%%%%%%%%%%%%%%%%%%%%%%%%
%\\
-\bar{\alpha}^{(2)}_{4}\bar{\alpha}^{(0)}_{1}\alpha^{(0)}_{2}\theta_{2}
\\
+\bar{\alpha}^{(2)}_{4}\alpha^{(2)}_{3}\bar{\alpha}^{(0)}_{1}\theta_{2}
-\bar{\alpha}^{(2)}_{4}\alpha^{(2)}_{4}\bar{\alpha}^{(0)}_{1}\theta_{2}
%\\
-\bar{\alpha}^{(2)}_{4}\alpha^{(2)}_{7}\theta_{2}
%\\
-\bar{\alpha}^{(2)}_{6}\alpha^{(0)}_{2}\theta_{2}
-\bar{\alpha}^{(2)}_{6}\alpha^{(2)}_{3}\theta_{2}
-\bar{\alpha}^{(2)}_{8}\alpha^{(0)}_{1}\theta_{2}
+\bar{\alpha}^{(2)}_{11}\theta_{2}
%\\
+\alpha^{(0)}_{6}\theta_{4}
%\\
+\bar{\alpha}^{(0)}_{1}\alpha^{(0)}_{4}\theta_{4}
-\bar{\alpha}^{(0)}_{1}\alpha^{(0)}_{5}\theta_{4}
\\
+\alpha^{(2)}_{3}\alpha^{(0)}_{2}\theta_{4}
%\\
-\alpha^{(2)}_{3}\bar{\alpha}^{(0)}_{1}\alpha^{(0)}_{1}\theta_{4}
%\\
+\alpha^{(2)}_{4}\bar{\alpha}^{(0)}_{1}\alpha^{(0)}_{1}\theta_{4}
+\alpha^{(2)}_{7}\alpha^{(0)}_{1}\theta_{4}
-\alpha^{(2)}_{7}\bar{\alpha}^{(0)}_{1}\theta_{4}
+\bar{\alpha}^{(2)}_{4}\alpha^{(0)}_{2}\theta_{4}
%\\
-\bar{\alpha}^{(2)}_{4}\bar{\alpha}^{(0)}_{1}\alpha^{(0)}_{1}\theta_{4}
%\\
+\bar{\alpha}^{(2)}_{4}\alpha^{(2)}_{3}\theta_{4}
-\bar{\alpha}^{(2)}_{6}\alpha^{(0)}_{1}\theta_{4}
\\
+\alpha^{(0)}_{1}\alpha^{(0)}_{4}\theta_{4}
%\\
-\alpha^{(0)}_{1}\alpha^{(0)}_{5}\theta_{4}
%\\
%\end{align*}
%\begin{align*}
+\alpha^{(0)}_{1}\alpha^{(2)}_{3}\alpha^{(0)}_{1}\theta_{4}
-\alpha^{(0)}_{1}\alpha^{(2)}_{3}\bar{\alpha}^{(0)}_{1}\theta_{4}
-\alpha^{(0)}_{1}\bar{\alpha}^{(2)}_{3}\alpha^{(0)}_{1}\theta_{4}
+\alpha^{(0)}_{1}\bar{\alpha}^{(2)}_{4}\alpha^{(0)}_{1}\theta_{4}
%\\
-\alpha^{(0)}_{1}\bar{\alpha}^{(2)}_{4}\bar{\alpha}^{(0)}_{1}\theta_{4}
-\alpha^{(0)}_{1}\bar{\alpha}^{(2)}_{6}\theta_{4}
%\\
+\alpha^{(0)}_{2}\alpha^{(0)}_{2}\theta_{4}
\\
-\alpha^{(0)}_{2}\bar{\alpha}^{(0)}_{1}\alpha^{(0)}_{1}\theta_{4}
+\alpha^{(0)}_{2}\alpha^{(2)}_{3}\theta_{4}
%\\
%\end{align*}
%\begin{align*}
+\alpha^{(0)}_{2}\bar{\alpha}^{(2)}_{4}\theta_{4}
-\alpha^{(0)}_{3}\bar{\alpha}^{(2)}_{3}\theta_{4}
-\alpha^{(0)}_{4}\alpha^{(0)}_{1}\theta_{4}
+\alpha^{(0)}_{5}\alpha^{(0)}_{1}\theta_{4}
%\\
-\alpha^{(0)}_{5}\bar{\alpha}^{(0)}_{1}\theta_{4}
%\\
+\alpha^{(0)}_{6}\theta_{4}
%\\
+\bar{\alpha}^{(0)}_{1}\alpha^{(0)}_{5}\theta_{4}
-\bar{\alpha}^{(0)}_{1}\bar{\alpha}^{(0)}_{1}\alpha^{(0)}_{2}\theta_{4}
\\
+\bar{\alpha}^{(0)}_{1}\alpha^{(2)}_{3}\bar{\alpha}^{(0)}_{1}\theta_{4}
%\\
-\bar{\alpha}^{(0)}_{1}\alpha^{(2)}_{4}\bar{\alpha}^{(0)}_{1}\theta_{4}
-\bar{\alpha}^{(0)}_{1}\alpha^{(2)}_{7}\theta_{4}
%\\
+\bar{\alpha}^{(0)}_{1}\bar{\alpha}^{(2)}_{4}\bar{\alpha}^{(0)}_{1}\theta_{4}
%\\
+\bar{\alpha}^{(0)}_{1}\bar{\alpha}^{(2)}_{6}\theta_{4}
%\\
-\bar{\alpha}^{(0)}_{1}\alpha^{(0)}_{1}\alpha^{(0)}_{2}\theta_{4}
-\bar{\alpha}^{(0)}_{1}\alpha^{(0)}_{1}\alpha^{(2)}_{3}\theta_{4}
%\\
+\bar{\alpha}^{(0)}_{1}\alpha^{(0)}_{1}\bar{\alpha}^{(2)}_{3}\theta_{4}
\\
-\bar{\alpha}^{(0)}_{1}\alpha^{(0)}_{1}\bar{\alpha}^{(2)}_{4}\theta_{4}
%\\
+\bar{\alpha}^{(0)}_{1}\alpha^{(0)}_{2}\bar{\alpha}^{(0)}_{1}\theta_{4}
+\bar{\alpha}^{(0)}_{1}\alpha^{(0)}_{4}\theta_{4}
%\\
-\bar{\alpha}^{(0)}_{1}\alpha^{(0)}_{5}\theta_{4}
%\\
-\bar{\alpha}^{(2)}_{3}\alpha^{(2)}_{3}\theta_{5}
%\\
+\bar{\alpha}^{(2)}_{4}\alpha^{(2)}_{3}\theta_{5}
%\\
-\bar{\alpha}^{(2)}_{4}\alpha^{(2)}_{4}\theta_{5}
-\alpha^{(2)}_{3}\alpha^{(2)}_{3}\theta_{5}
-\alpha^{(2)}_{3}\bar{\alpha}^{(2)}_{4}\theta_{5}
\\
+\alpha^{(2)}_{4}\alpha^{(2)}_{3}\theta_{5}
%\\
-\alpha^{(2)}_{4}\alpha^{(2)}_{4}\theta_{5}
%\\
+\alpha^{(2)}_{4}\bar{\alpha}^{(2)}_{4}\theta_{5}
+\alpha^{(2)}_{10}\theta_{5}
-\bar{\alpha}^{(2)}_{4}\alpha^{(2)}_{3}\theta_{5}
-\bar{\alpha}^{(2)}_{4}\bar{\alpha}^{(2)}_{4}\theta_{5}
%\\
-\bar{\alpha}^{(2)}_{4}\alpha^{(2)}_{4}\theta_{5}
%\\
-\alpha^{(0)}_{2}\alpha^{(2)}_{3}\theta_{5}
-\alpha^{(0)}_{2}\bar{\alpha}^{(2)}_{4}\theta_{5}
-\alpha^{(0)}_{2}\alpha^{(2)}_{4}\theta_{5}
%\\
-\alpha^{(2)}_{10}\theta_{6}
\\
+\bar{\alpha}^{(2)}_{4}\alpha^{(2)}_{4}\theta_{6}
%\\
+\alpha^{(0)}_{2}\alpha^{(2)}_{4}\theta_{6}
%\\
+\alpha^{(2)}_{3}\alpha^{(2)}_{4}\theta_{6}
+\bar{\alpha}^{(2)}_{3}\alpha^{(2)}_{3}\theta_{6}
-\bar{\alpha}^{(2)}_{3}\alpha^{(2)}_{4}\theta_{6}
%\\
+\bar{\alpha}^{(2)}_{3}\bar{\alpha}^{(2)}_{4}\theta_{6}
%\\
+\bar{\alpha}^{(2)}_{3}\alpha^{(0)}_{2}\theta_{6}
+\bar{\alpha}^{(2)}_{3}\alpha^{(2)}_{3}\theta_{6}
%\\
+\bar{\alpha}^{(2)}_{4}\alpha^{(2)}_{4}\theta_{6}
%\\
-\bar{\alpha}^{(2)}_{8}\theta_{6}
\\
-\bar{\alpha}^{(2)}_{3}\alpha^{(0)}_{2}\theta_{7}
%\\
-\bar{\alpha}^{(2)}_{3}\alpha^{(2)}_{3}\theta_{7}
+\bar{\alpha}^{(2)}_{8}\theta_{7}
-\alpha^{(2)}_{3}\alpha^{(0)}_{2}\theta_{7}
%\\
-\alpha^{(2)}_{3}\alpha^{(2)}_{3}\theta_{7}
%\\
+\alpha^{(2)}_{3}\bar{\alpha}^{(2)}_{3}\theta_{7}
%\\
-\alpha^{(2)}_{3}\bar{\alpha}^{(2)}_{4}\theta_{7}
%\\
-\bar{\alpha}^{(2)}_{3}\bar{\alpha}^{(2)}_{3}\theta_{7}
-\bar{\alpha}^{(2)}_{3}\alpha^{(2)}_{3}\theta_{7}
-\bar{\alpha}^{(2)}_{4}\alpha^{(0)}_{2}\theta_{7}
\\
-\bar{\alpha}^{(2)}_{4}\alpha^{(2)}_{3}\theta_{7}
%\\
+\bar{\alpha}^{(2)}_{4}\bar{\alpha}^{(2)}_{3}\theta_{7}
-\bar{\alpha}^{(2)}_{4}\bar{\alpha}^{(2)}_{4}\theta_{7}
+\bar{\alpha}^{(2)}_{4}\alpha^{(2)}_{3}\theta_{7}
%\\
-\bar{\alpha}^{(2)}_{4}\alpha^{(2)}_{4}\theta_{7}
%\\
-\alpha^{(2)}_{7}\theta_{8}
%\\
-\alpha^{(0)}_{1}\alpha^{(2)}_{3}\theta_{8}
-\alpha^{(0)}_{1}\bar{\alpha}^{(2)}_{4}\theta_{8}
-\alpha^{(0)}_{5}\theta_{8}
+\bar{\alpha}^{(0)}_{1}\alpha^{(2)}_{3}\theta_{8}
-\bar{\alpha}^{(0)}_{1}\alpha^{(2)}_{4}\theta_{8}
\\
+\bar{\alpha}^{(0)}_{1}\bar{\alpha}^{(2)}_{4}\theta_{8}
%\\
+\bar{\alpha}^{(0)}_{1}\alpha^{(0)}_{2}\theta_{8}
-\alpha^{(2)}_{3}\bar{\alpha}^{(0)}_{1}\theta_{8}
%\\
+\alpha^{(2)}_{4}\bar{\alpha}^{(0)}_{1}\theta_{8}
%\\
+\alpha^{(2)}_{7}\theta_{8}
-\bar{\alpha}^{(2)}_{4}\bar{\alpha}^{(0)}_{1}\theta_{8}
-\bar{\alpha}^{(2)}_{6}\theta_{8}
+\alpha^{(0)}_{1}\alpha^{(2)}_{3}\theta_{8}
-\alpha^{(0)}_{1}\alpha^{(2)}_{4}\theta_{8}
+\alpha^{(0)}_{1}\bar{\alpha}^{(2)}_{4}\theta_{8}
%\\
+\alpha^{(0)}_{1}\alpha^{(0)}_{2}\theta_{8}
\\
+\alpha^{(0)}_{1}\alpha^{(2)}_{3}\theta_{8}
%\\
-\alpha^{(0)}_{1}\bar{\alpha}^{(2)}_{3}\theta_{8}
%\\
+\alpha^{(0)}_{1}\bar{\alpha}^{(2)}_{4}\theta_{8}
-\alpha^{(0)}_{2}\bar{\alpha}^{(0)}_{1}\theta_{8}
%\\
-\alpha^{(0)}_{4}\theta_{8}
+\alpha^{(0)}_{5}\theta_{8}
+\bar{\alpha}^{(0)}_{1}\alpha^{(2)}_{4}\theta_{8}
+\alpha^{(0)}_{4}\theta_{9}
-\alpha^{(0)}_{5}\theta_{9}
%\\
+\alpha^{(2)}_{3}\alpha^{(0)}_{1}\theta_{9}
%\\
-\alpha^{(2)}_{3}\bar{\alpha}^{(0)}_{1}\theta_{9}
\\
-\bar{\alpha}^{(2)}_{3}\alpha^{(0)}_{1}\theta_{9}
%\\
+\bar{\alpha}^{(2)}_{4}\alpha^{(0)}_{1}\theta_{9}
%\\
-\bar{\alpha}^{(2)}_{4}\bar{\alpha}^{(0)}_{1}\theta_{9}
-\bar{\alpha}^{(2)}_{6}\theta_{9}
%\\
-\alpha^{(2)}_{3}\alpha^{(0)}_{1}\theta_{9}
+\alpha^{(2)}_{4}\alpha^{(0)}_{1}\theta_{9}
-\alpha^{(2)}_{4}\bar{\alpha}^{(0)}_{1}\theta_{9}
-\alpha^{(2)}_{7}\theta_{9}
-\bar{\alpha}^{(2)}_{4}\alpha^{(0)}_{1}\theta_{9}
-\alpha^{(0)}_{1}\bar{\alpha}^{(2)}_{3}\theta_{9}
%\\
%%\end{align*}
%%\begin{align*}
-\alpha^{(0)}_{1}\alpha^{(2)}_{3}\theta_{9}
\\
-\alpha^{(0)}_{1}\bar{\alpha}^{(2)}_{4}\theta_{9}
%\\
-\alpha^{(0)}_{2}\alpha^{(0)}_{1}\theta_{9}
-\alpha^{(0)}_{5}\theta_{9}
-\bar{\alpha}^{(0)}_{1}\alpha^{(0)}_{2}\theta_{9}
-\bar{\alpha}^{(0)}_{1}\alpha^{(2)}_{3}\theta_{9}
%\\
+\bar{\alpha}^{(0)}_{1}\bar{\alpha}^{(2)}_{3}\theta_{9}
-\bar{\alpha}^{(0)}_{1}\bar{\alpha}^{(2)}_{4}\theta_{9}
+\bar{\alpha}^{(0)}_{1}\alpha^{(2)}_{3}\theta_{9}
%\\
-\bar{\alpha}^{(0)}_{1}\alpha^{(2)}_{4}\theta_{9}
+\bar{\alpha}^{(0)}_{1}\bar{\alpha}^{(2)}_{4}\theta_{9}
\\
+\bar{\alpha}^{(0)}_{1}\alpha^{(0)}_{2}\theta_{9}
%\\
+\alpha^{(0)}_{5}\theta_{10}
-\bar{\alpha}^{(0)}_{1}\alpha^{(0)}_{2}\theta_{10}
+\alpha^{(2)}_{3}\bar{\alpha}^{(0)}_{1}\theta_{10}
-\alpha^{(2)}_{4}\bar{\alpha}^{(0)}_{1}\theta_{10}
-\alpha^{(2)}_{7}\theta_{10}
%\\
+\bar{\alpha}^{(2)}_{4}\bar{\alpha}^{(0)}_{1}\theta_{10}
%\\
+\bar{\alpha}^{(2)}_{6}\theta_{10}
-\alpha^{(0)}_{1}\alpha^{(0)}_{2}\theta_{10}
-\alpha^{(0)}_{1}\alpha^{(2)}_{3}\theta_{10}
\\
+\alpha^{(0)}_{1}\bar{\alpha}^{(2)}_{3}\theta_{10}
%\\
-\alpha^{(0)}_{1}\bar{\alpha}^{(2)}_{4}\theta_{10}
+\alpha^{(0)}_{2}\bar{\alpha}^{(0)}_{1}\theta_{10}
+\alpha^{(0)}_{4}\theta_{10}
%\\
-\alpha^{(0)}_{5}\theta_{10}
+\alpha^{(2)}_{3}\bar{\alpha}^{(0)}_{1}\theta_{10}
%\\
+\alpha^{(2)}_{3}\alpha^{(0)}_{1}\theta_{10}
%\\
-\alpha^{(2)}_{3}\bar{\alpha}^{(0)}_{1}\theta_{10}
+\bar{\alpha}^{(2)}_{3}\alpha^{(0)}_{1}\theta_{10}
-\bar{\alpha}^{(2)}_{3}\bar{\alpha}^{(0)}_{1}\theta_{10}
\end{align*}
\begin{align*}%\\
-\bar{\alpha}^{(2)}_{3}\alpha^{(0)}_{1}\theta_{10}
+\bar{\alpha}^{(2)}_{4}\bar{\alpha}^{(0)}_{1}\theta_{10}
%\\
+\bar{\alpha}^{(2)}_{4}\alpha^{(0)}_{1}\theta_{10}
-\bar{\alpha}^{(2)}_{4}\bar{\alpha}^{(0)}_{1}\theta_{10}
%\\
-\bar{\alpha}^{(2)}_{6}\theta_{10}
+\alpha^{(0)}_{2}\theta_{12}
-\bar{\alpha}^{(0)}_{1}\alpha^{(0)}_{1}\theta_{12}
+\alpha^{(2)}_{3}\theta_{12}
%\\
+\bar{\alpha}^{(2)}_{4}\theta_{12}
-\alpha^{(0)}_{1}\alpha^{(0)}_{1}\theta_{12}
%\\
+\alpha^{(0)}_{2}\theta_{12}
\\
+\bar{\alpha}^{(0)}_{1}\alpha^{(0)}_{1}\theta_{12}
-\bar{\alpha}^{(0)}_{1}\bar{\alpha}^{(0)}_{1}\theta_{12}
%\\
-\bar{\alpha}^{(0)}_{1}\alpha^{(0)}_{1}\theta_{12}
%\\
+\alpha^{(2)}_{3}\theta_{12}
+\bar{\alpha}^{(2)}_{4}\theta_{12}
+\alpha^{(0)}_{1}\alpha^{(0)}_{1}\theta_{12}
-\alpha^{(0)}_{1}\bar{\alpha}^{(0)}_{1}\theta_{12}
-\alpha^{(0)}_{1}\alpha^{(0)}_{1}\theta_{12}
+\alpha^{(0)}_{2}\theta_{12}
%\\
+\bar{\alpha}^{(0)}_{1}\bar{\alpha}^{(0)}_{1}\theta_{12}
\\
+\bar{\alpha}^{(0)}_{1}\alpha^{(0)}_{1}\theta_{12}
-\bar{\alpha}^{(0)}_{1}\bar{\alpha}^{(0)}_{1}\theta_{12}
-\bar{\alpha}^{(0)}_{1}\alpha^{(0)}_{1}\theta_{12}
-\alpha^{(2)}_{3}\theta_{13}
-\bar{\alpha}^{(2)}_{4}\theta_{13}
%\\
-\alpha^{(2)}_{4}\theta_{13}
+\alpha^{(2)}_{4}\theta_{14}
+\bar{\alpha}^{(2)}_{3}\theta_{14}
%\\
-\alpha^{(0)}_{2}\theta_{15}
-\alpha^{(2)}_{3}\theta_{15}
%\\
+\bar{\alpha}^{(2)}_{3}\theta_{15}
-\bar{\alpha}^{(2)}_{4}\theta_{15}
\\
+\alpha^{(2)}_{3}\theta_{15}
-\alpha^{(2)}_{4}\theta_{15}
+\bar{\alpha}^{(2)}_{4}\theta_{15}
+\alpha^{(0)}_{2}\theta_{15}
+\alpha^{(2)}_{3}\theta_{15}
-\bar{\alpha}^{(2)}_{3}\theta_{15}
%\\
+\bar{\alpha}^{(2)}_{4}\theta_{15}
%\\
-\bar{\alpha}^{(0)}_{1}\theta_{16}
-\alpha^{(0)}_{1}\theta_{16}
%\\
+\bar{\alpha}^{(0)}_{1}\theta_{16}
%\\
+\alpha^{(0)}_{1}\theta_{16}
%\\
-\alpha^{(0)}_{1}\theta_{17}
-\bar{\alpha}^{(0)}_{1}\theta_{17}
\\
-\alpha^{(0)}_{1}\theta_{17}
%\\
+\bar{\alpha}^{(0)}_{1}\theta_{17}
+\bar{\alpha}^{(0)}_{1}\theta_{18}
+\alpha^{(0)}_{1}\theta_{18}
-\bar{\alpha}^{(0)}_{1}\theta_{18}
-\alpha^{(0)}_{1}\theta_{18}
+\theta_{19}
&= 0
,
\\
%\end{align*}
%\begin{align*}%
\alpha^{(2)}_{13}
+\bar{\alpha}^{(0)}_{1}\alpha^{(0)}_{8}
+\bar{\alpha}^{(0)}_{1}\bar{\alpha}^{(2)}_{4}\alpha^{(2)}_{3}\theta_{1}
-\bar{\alpha}^{(0)}_{1}\alpha^{(0)}_{1}\bar{\alpha}^{(2)}_{6}\theta_{1}
+\bar{\alpha}^{(0)}_{1}\alpha^{(0)}_{2}\alpha^{(2)}_{3}\theta_{1}
%\\
+\bar{\alpha}^{(0)}_{1}\alpha^{(0)}_{2}\bar{\alpha}^{(2)}_{4}\theta_{1}
-\bar{\alpha}^{(0)}_{1}\alpha^{(0)}_{3}\bar{\alpha}^{(2)}_{3}\theta_{1}
+\bar{\alpha}^{(0)}_{1}\alpha^{(0)}_{6}\theta_{1}
%\\
+\alpha^{(0)}_{8}\theta_{2}
%\\
+\bar{\alpha}^{(0)}_{1}\alpha^{(0)}_{7}\theta_{2}
\\
+\alpha^{(2)}_{3}\alpha^{(0)}_{4}\theta_{2}
-\alpha^{(2)}_{3}\bar{\alpha}^{(0)}_{1}\alpha^{(0)}_{3}\theta_{2}
+\alpha^{(2)}_{4}\bar{\alpha}^{(0)}_{1}\alpha^{(0)}_{3}\theta_{2}
+\alpha^{(2)}_{7}\alpha^{(0)}_{3}\theta_{2}
-\alpha^{(2)}_{7}\bar{\alpha}^{(0)}_{1}\alpha^{(0)}_{1}\theta_{2}
%\\
+\bar{\alpha}^{(2)}_{4}\alpha^{(0)}_{4}\theta_{2}
%\\
-\bar{\alpha}^{(2)}_{4}\bar{\alpha}^{(0)}_{1}\alpha^{(0)}_{3}\theta_{2}
%\\
+\bar{\alpha}^{(2)}_{4}\alpha^{(2)}_{3}\alpha^{(0)}_{1}\theta_{2}
-\bar{\alpha}^{(2)}_{6}\alpha^{(0)}_{3}\theta_{2}
\\
-\bar{\alpha}^{(2)}_{4}\alpha^{(2)}_{7}\theta_{3}
%\\
-\bar{\alpha}^{(2)}_{6}\alpha^{(2)}_{3}\theta_{3}
+\bar{\alpha}^{(2)}_{11}\theta_{3}
%\\
-\alpha^{(2)}_{3}\bar{\alpha}^{(2)}_{6}\theta_{3}
-\alpha^{(2)}_{4}\alpha^{(2)}_{7}\theta_{3}
%\\
+\alpha^{(2)}_{4}\bar{\alpha}^{(2)}_{6}\theta_{3}
-\alpha^{(2)}_{7}\alpha^{(2)}_{3}\theta_{3}
%\\
+\alpha^{(2)}_{7}\bar{\alpha}^{(2)}_{3}\theta_{3}
%\\
-\alpha^{(2)}_{7}\bar{\alpha}^{(2)}_{4}\theta_{3}
-\bar{\alpha}^{(2)}_{4}\bar{\alpha}^{(2)}_{6}\theta_{3}
\\
-\bar{\alpha}^{(2)}_{4}\alpha^{(2)}_{7}\theta_{3}
-\bar{\alpha}^{(2)}_{6}\bar{\alpha}^{(2)}_{3}\theta_{3}
-\bar{\alpha}^{(2)}_{6}\alpha^{(2)}_{3}\theta_{3}
%\\
+\bar{\alpha}^{(0)}_{1}\alpha^{(0)}_{4}\theta_{4}
-\bar{\alpha}^{(0)}_{1}\bar{\alpha}^{(0)}_{1}\alpha^{(0)}_{3}\theta_{4}
+\bar{\alpha}^{(0)}_{1}\alpha^{(2)}_{3}\alpha^{(0)}_{1}\theta_{4}
%\\
+\bar{\alpha}^{(0)}_{1}\bar{\alpha}^{(2)}_{4}\alpha^{(0)}_{1}\theta_{4}
%\\
-\bar{\alpha}^{(0)}_{1}\alpha^{(0)}_{1}\alpha^{(0)}_{3}\theta_{4}
+\bar{\alpha}^{(0)}_{1}\alpha^{(0)}_{2}\alpha^{(0)}_{1}\theta_{4}
\\
-\bar{\alpha}^{(0)}_{1}\bar{\alpha}^{(2)}_{6}\theta_{5}
%\\
-\bar{\alpha}^{(0)}_{1}\alpha^{(2)}_{7}\theta_{5}
-\bar{\alpha}^{(0)}_{1}\alpha^{(0)}_{1}\bar{\alpha}^{(2)}_{3}\theta_{5}
%\\
-\bar{\alpha}^{(0)}_{1}\alpha^{(0)}_{1}\alpha^{(2)}_{3}\theta_{5}
-\bar{\alpha}^{(0)}_{1}\alpha^{(0)}_{1}\bar{\alpha}^{(2)}_{4}\theta_{5}
%\\
-\bar{\alpha}^{(0)}_{1}\alpha^{(0)}_{5}\theta_{5}
+\bar{\alpha}^{(2)}_{4}\alpha^{(2)}_{3}\theta_{6}
-\alpha^{(0)}_{1}\bar{\alpha}^{(2)}_{6}\theta_{6}
%\\
+\alpha^{(0)}_{2}\alpha^{(2)}_{3}\theta_{6}
\\
+\alpha^{(0)}_{2}\bar{\alpha}^{(2)}_{4}\theta_{6}
-\alpha^{(0)}_{3}\bar{\alpha}^{(2)}_{3}\theta_{6}
%\\
+\alpha^{(0)}_{6}\theta_{6}
-\bar{\alpha}^{(0)}_{1}\alpha^{(2)}_{7}\theta_{6}
+\bar{\alpha}^{(0)}_{1}\bar{\alpha}^{(2)}_{6}\theta_{6}
%\\
-\bar{\alpha}^{(0)}_{1}\alpha^{(0)}_{1}\alpha^{(2)}_{3}\theta_{6}
+\bar{\alpha}^{(0)}_{1}\alpha^{(0)}_{1}\bar{\alpha}^{(2)}_{3}\theta_{6}
%\\
-\bar{\alpha}^{(0)}_{1}\alpha^{(0)}_{1}\bar{\alpha}^{(2)}_{4}\theta_{6}
+\bar{\alpha}^{(0)}_{1}\alpha^{(0)}_{4}\theta_{6}
\\
-\bar{\alpha}^{(0)}_{1}\alpha^{(0)}_{5}\theta_{6}
%\\
+\alpha^{(2)}_{3}\alpha^{(2)}_{3}\theta_{6}
+\alpha^{(2)}_{3}\bar{\alpha}^{(2)}_{4}\theta_{6}
+\alpha^{(2)}_{3}\alpha^{(0)}_{2}\theta_{6}
%\\
-\alpha^{(2)}_{3}\bar{\alpha}^{(0)}_{1}\alpha^{(0)}_{1}\theta_{6}
%\\
+\alpha^{(2)}_{4}\bar{\alpha}^{(0)}_{1}\alpha^{(0)}_{1}\theta_{6}
%\\
+\alpha^{(2)}_{7}\alpha^{(0)}_{1}\theta_{6}
-\alpha^{(2)}_{7}\bar{\alpha}^{(0)}_{1}\theta_{6}
+\bar{\alpha}^{(2)}_{4}\alpha^{(2)}_{3}\theta_{6}
\\
+\bar{\alpha}^{(2)}_{4}\bar{\alpha}^{(2)}_{4}\theta_{6}
+\bar{\alpha}^{(2)}_{4}\alpha^{(0)}_{2}\theta_{6}
%\\
-\bar{\alpha}^{(2)}_{4}\bar{\alpha}^{(0)}_{1}\alpha^{(0)}_{1}\theta_{6}
+\bar{\alpha}^{(2)}_{4}\alpha^{(2)}_{3}\theta_{6}
%\end{align*}
%\begin{align*}
%\\
-\bar{\alpha}^{(2)}_{6}\alpha^{(0)}_{1}\theta_{6}
+\alpha^{(0)}_{7}\theta_{7}
+\alpha^{(2)}_{3}\alpha^{(0)}_{3}\theta_{7}
%\\
-\alpha^{(2)}_{3}\bar{\alpha}^{(0)}_{1}\alpha^{(0)}_{1}\theta_{7}
-\bar{\alpha}^{(2)}_{3}\alpha^{(0)}_{3}\theta_{7}
+\bar{\alpha}^{(2)}_{4}\alpha^{(0)}_{3}\theta_{7}
\\
-\bar{\alpha}^{(2)}_{4}\bar{\alpha}^{(0)}_{1}\alpha^{(0)}_{1}\theta_{7}
-\bar{\alpha}^{(2)}_{6}\alpha^{(0)}_{1}\theta_{7}
%%\end{align*}
%%\begin{align*}
%\\
-\alpha^{(2)}_{3}\alpha^{(0)}_{3}\theta_{7}
%\\
+\alpha^{(2)}_{4}\alpha^{(0)}_{3}\theta_{7}
-\alpha^{(2)}_{4}\bar{\alpha}^{(0)}_{1}\alpha^{(0)}_{1}\theta_{7}
-\alpha^{(2)}_{7}\alpha^{(0)}_{1}\theta_{7}
-\bar{\alpha}^{(2)}_{4}\alpha^{(0)}_{3}\theta_{7}
+\bar{\alpha}^{(0)}_{1}\alpha^{(2)}_{3}\theta_{8}
+\bar{\alpha}^{(0)}_{1}\bar{\alpha}^{(2)}_{4}\theta_{8}
\\
+\bar{\alpha}^{(0)}_{1}\alpha^{(0)}_{2}\theta_{8}
%\\
-\bar{\alpha}^{(0)}_{1}\bar{\alpha}^{(0)}_{1}\alpha^{(0)}_{1}\theta_{8}
+\bar{\alpha}^{(0)}_{1}\alpha^{(2)}_{3}\theta_{8}
%\\
+\bar{\alpha}^{(0)}_{1}\bar{\alpha}^{(2)}_{4}\theta_{8}
-\bar{\alpha}^{(0)}_{1}\alpha^{(0)}_{1}\alpha^{(0)}_{1}\theta_{8}
+\bar{\alpha}^{(0)}_{1}\alpha^{(0)}_{2}\theta_{8}
-\bar{\alpha}^{(0)}_{1}\alpha^{(0)}_{3}\theta_{9}
+\alpha^{(0)}_{4}\theta_{10}
%\\
-\bar{\alpha}^{(0)}_{1}\alpha^{(0)}_{3}\theta_{10}
\\
+\alpha^{(2)}_{3}\alpha^{(0)}_{1}\theta_{10}
+\bar{\alpha}^{(2)}_{4}\alpha^{(0)}_{1}\theta_{10}
-\alpha^{(0)}_{1}\alpha^{(0)}_{3}\theta_{10}
%\\
+\alpha^{(0)}_{2}\alpha^{(0)}_{1}\theta_{10}
%\\
+\bar{\alpha}^{(0)}_{1}\alpha^{(0)}_{3}\theta_{10}
-\bar{\alpha}^{(0)}_{1}\bar{\alpha}^{(0)}_{1}\alpha^{(0)}_{1}\theta_{10}
%\\
-\bar{\alpha}^{(0)}_{1}\alpha^{(0)}_{1}\alpha^{(0)}_{1}\theta_{10}
%\\
+\alpha^{(2)}_{3}\alpha^{(0)}_{1}\theta_{10}
+\bar{\alpha}^{(2)}_{4}\alpha^{(0)}_{1}\theta_{10}
\\
-\bar{\alpha}^{(2)}_{6}\theta_{11}
-\alpha^{(2)}_{7}\theta_{11}
%\\
-\alpha^{(0)}_{1}\bar{\alpha}^{(2)}_{3}\theta_{11}
%\\
-\alpha^{(0)}_{1}\alpha^{(2)}_{3}\theta_{11}
-\alpha^{(0)}_{1}\bar{\alpha}^{(2)}_{4}\theta_{11}
-\alpha^{(0)}_{5}\theta_{11}
%\\
-\bar{\alpha}^{(0)}_{1}\alpha^{(2)}_{3}\theta_{11}
+\bar{\alpha}^{(0)}_{1}\bar{\alpha}^{(2)}_{3}\theta_{11}
-\bar{\alpha}^{(0)}_{1}\bar{\alpha}^{(2)}_{4}\theta_{11}
+\bar{\alpha}^{(0)}_{1}\alpha^{(2)}_{3}\theta_{11}
\\
-\bar{\alpha}^{(0)}_{1}\alpha^{(2)}_{4}\theta_{11}
%\\
+\bar{\alpha}^{(0)}_{1}\bar{\alpha}^{(2)}_{4}\theta_{11}
%\\
+\bar{\alpha}^{(0)}_{1}\alpha^{(0)}_{2}\theta_{11}
%\\
-\alpha^{(2)}_{3}\bar{\alpha}^{(0)}_{1}\theta_{11}
+\alpha^{(2)}_{4}\bar{\alpha}^{(0)}_{1}\theta_{11}
%\\
+\alpha^{(2)}_{7}\theta_{11}
%\\
-\bar{\alpha}^{(2)}_{4}\bar{\alpha}^{(0)}_{1}\theta_{11}
-\bar{\alpha}^{(2)}_{6}\theta_{11}
+\bar{\alpha}^{(0)}_{1}\alpha^{(0)}_{1}\theta_{12}
-\bar{\alpha}^{(0)}_{1}\bar{\alpha}^{(0)}_{1}\theta_{13}
\\
-\bar{\alpha}^{(0)}_{1}\alpha^{(0)}_{1}\theta_{13}
%\\
+\alpha^{(2)}_{3}\theta_{14}
+\bar{\alpha}^{(2)}_{4}\theta_{14}
+\alpha^{(0)}_{2}\theta_{14}
-\bar{\alpha}^{(0)}_{1}\alpha^{(0)}_{1}\theta_{14}
+\alpha^{(2)}_{3}\theta_{14}
+\bar{\alpha}^{(2)}_{4}\theta_{14}
%\\
-\alpha^{(0)}_{1}\alpha^{(0)}_{1}\theta_{14}
+\alpha^{(0)}_{2}\theta_{14}
%\\
+\bar{\alpha}^{(0)}_{1}\alpha^{(0)}_{1}\theta_{14}
%\\
-\bar{\alpha}^{(0)}_{1}\bar{\alpha}^{(0)}_{1}\theta_{14}
\\
-\bar{\alpha}^{(0)}_{1}\alpha^{(0)}_{1}\theta_{14}
%\\
+\alpha^{(2)}_{3}\theta_{14}
%\\
+\bar{\alpha}^{(2)}_{4}\theta_{14}
-\alpha^{(0)}_{3}\theta_{15}
-\bar{\alpha}^{(0)}_{1}\alpha^{(0)}_{1}\theta_{15}
+\bar{\alpha}^{(0)}_{1}\theta_{16}
+\alpha^{(0)}_{1}\theta_{18}
+\theta_{20}
&= 0
,
\\
%\end{align*}
%\begin{align*}
\bar{\alpha}^{(3)}_{17}
-\alpha^{(2)}_{15}
+\bar{\alpha}^{(2)}_{11}\alpha^{(2)}_{3}
+\bar{\alpha}^{(2)}_{15}
+\alpha^{(2)}_{13}\theta_{1}
-\alpha^{(0)}_{1}\bar{\alpha}^{(2)}_{4}\alpha^{(2)}_{7}\theta_{1}
-\alpha^{(0)}_{1}\bar{\alpha}^{(2)}_{6}\alpha^{(2)}_{3}\theta_{1}
%\\
+\alpha^{(0)}_{1}\bar{\alpha}^{(2)}_{11}\theta_{1}
+\alpha^{(0)}_{2}\bar{\alpha}^{(2)}_{4}\alpha^{(2)}_{3}\theta_{1}
%\\
-\alpha^{(0)}_{3}\bar{\alpha}^{(2)}_{3}\alpha^{(2)}_{3}\theta_{1}
%\\
+\alpha^{(0)}_{3}\bar{\alpha}^{(2)}_{8}\theta_{1}
\\
-\alpha^{(0)}_{4}\bar{\alpha}^{(2)}_{6}\theta_{1}
%\\
-\alpha^{(0)}_{5}\alpha^{(2)}_{7}\theta_{1}
+\alpha^{(0)}_{5}\bar{\alpha}^{(2)}_{6}\theta_{1}
+\alpha^{(0)}_{6}\alpha^{(2)}_{3}\theta_{1}
+\alpha^{(0)}_{6}\bar{\alpha}^{(2)}_{4}\theta_{1}
-\alpha^{(0)}_{7}\bar{\alpha}^{(2)}_{3}\theta_{1}
%\\
+\bar{\alpha}^{(0)}_{1}\bar{\alpha}^{(2)}_{4}\alpha^{(2)}_{7}\theta_{1}
+\bar{\alpha}^{(0)}_{1}\bar{\alpha}^{(2)}_{6}\alpha^{(2)}_{3}\theta_{1}
+\bar{\alpha}^{(0)}_{1}\alpha^{(0)}_{1}\bar{\alpha}^{(2)}_{3}\alpha^{(2)}_{3}\theta_{1}
\\
-\bar{\alpha}^{(0)}_{1}\alpha^{(0)}_{1}\bar{\alpha}^{(2)}_{4}\alpha^{(2)}_{3}\theta_{1}
%\\
+\bar{\alpha}^{(0)}_{1}\alpha^{(0)}_{1}\bar{\alpha}^{(2)}_{4}\alpha^{(2)}_{4}\theta_{1}
+\bar{\alpha}^{(0)}_{1}\alpha^{(0)}_{2}\alpha^{(2)}_{7}\theta_{1}
%\\
+\bar{\alpha}^{(0)}_{1}\alpha^{(0)}_{4}\alpha^{(2)}_{3}\theta_{1}
+\bar{\alpha}^{(0)}_{1}\alpha^{(0)}_{4}\bar{\alpha}^{(2)}_{4}\theta_{1}
%\\
-\bar{\alpha}^{(0)}_{1}\alpha^{(0)}_{5}\alpha^{(2)}_{3}\theta_{1}
+\bar{\alpha}^{(0)}_{1}\alpha^{(0)}_{5}\alpha^{(2)}_{4}\theta_{1}
\\
-\bar{\alpha}^{(0)}_{1}\alpha^{(0)}_{5}\bar{\alpha}^{(2)}_{4}\theta_{1}
-\alpha^{(2)}_{3}\alpha^{(0)}_{6}\theta_{2}
%\\
-\alpha^{(2)}_{3}\bar{\alpha}^{(0)}_{1}\alpha^{(0)}_{4}\theta_{2}
+\alpha^{(2)}_{3}\bar{\alpha}^{(0)}_{1}\alpha^{(0)}_{5}\theta_{2}
-\alpha^{(2)}_{4}\bar{\alpha}^{(0)}_{1}\alpha^{(0)}_{5}\theta_{2}
-\alpha^{(2)}_{7}\alpha^{(0)}_{5}\theta_{2}
%\\
+\alpha^{(2)}_{7}\bar{\alpha}^{(0)}_{1}\alpha^{(0)}_{2}\theta_{2}
-\alpha^{(2)}_{13}\theta_{2}
%\\
+\bar{\alpha}^{(2)}_{3}\alpha^{(0)}_{7}\theta_{2}
\\
+\bar{\alpha}^{(2)}_{3}\alpha^{(2)}_{3}\alpha^{(0)}_{3}\theta_{2}
-\bar{\alpha}^{(2)}_{3}\alpha^{(2)}_{3}\bar{\alpha}^{(0)}_{1}\alpha^{(0)}_{1}\theta_{2}
-\bar{\alpha}^{(2)}_{4}\alpha^{(0)}_{6}\theta_{2}
-\bar{\alpha}^{(2)}_{4}\bar{\alpha}^{(0)}_{1}\alpha^{(0)}_{4}\theta_{2}
+\bar{\alpha}^{(2)}_{4}\bar{\alpha}^{(0)}_{1}\alpha^{(0)}_{5}\theta_{2}
%\\
-\bar{\alpha}^{(2)}_{4}\alpha^{(2)}_{3}\alpha^{(0)}_{2}\theta_{2}
%\\
+\bar{\alpha}^{(2)}_{4}\alpha^{(2)}_{3}\bar{\alpha}^{(0)}_{1}\alpha^{(0)}_{1}\theta_{2}
\\
-\bar{\alpha}^{(2)}_{4}\alpha^{(2)}_{4}\bar{\alpha}^{(0)}_{1}\alpha^{(0)}_{1}\theta_{2}
-\bar{\alpha}^{(2)}_{4}\alpha^{(2)}_{7}\alpha^{(0)}_{1}\theta_{2}
+\bar{\alpha}^{(2)}_{4}\alpha^{(2)}_{7}\bar{\alpha}^{(0)}_{1}\theta_{2}
%\\
-\bar{\alpha}^{(2)}_{6}\alpha^{(0)}_{4}\theta_{2}
+\bar{\alpha}^{(2)}_{6}\alpha^{(0)}_{5}\theta_{2}
%\\
-\bar{\alpha}^{(2)}_{6}\alpha^{(2)}_{3}\alpha^{(0)}_{1}\theta_{2}
+\bar{\alpha}^{(2)}_{6}\alpha^{(2)}_{3}\bar{\alpha}^{(0)}_{1}\theta_{2}
-\bar{\alpha}^{(2)}_{8}\alpha^{(0)}_{3}\theta_{2}
\\
+\bar{\alpha}^{(2)}_{11}\alpha^{(0)}_{1}\theta_{2}
-\alpha^{(2)}_{14}\theta_{3}
+\bar{\alpha}^{(2)}_{4}\alpha^{(2)}_{10}\theta_{3}
+\bar{\alpha}^{(2)}_{8}\alpha^{(2)}_{3}\theta_{3}
+\bar{\alpha}^{(2)}_{13}\theta_{3}
%\\
+\alpha^{(2)}_{3}\bar{\alpha}^{(2)}_{3}\alpha^{(2)}_{3}\theta_{3}
%\\
-\alpha^{(2)}_{3}\bar{\alpha}^{(2)}_{4}\alpha^{(2)}_{3}\theta_{3}
+\alpha^{(2)}_{3}\bar{\alpha}^{(2)}_{4}\alpha^{(2)}_{4}\theta_{3}
+\alpha^{(2)}_{4}\alpha^{(2)}_{10}\theta_{3}
\\
-\alpha^{(2)}_{4}\bar{\alpha}^{(2)}_{4}\alpha^{(2)}_{4}\theta_{3}
-\alpha^{(2)}_{10}\alpha^{(2)}_{4}\theta_{3}
%\\
+\alpha^{(2)}_{14}\theta_{3}
%\\
-\bar{\alpha}^{(2)}_{3}\bar{\alpha}^{(2)}_{3}\alpha^{(2)}_{3}\theta_{3}
+\bar{\alpha}^{(2)}_{3}\bar{\alpha}^{(2)}_{8}\theta_{3}
%\\
-\bar{\alpha}^{(2)}_{3}\alpha^{(2)}_{3}\alpha^{(2)}_{3}\theta_{3}
-\bar{\alpha}^{(2)}_{3}\bar{\alpha}^{(2)}_{3}\alpha^{(2)}_{3}\theta_{3}
%\\
+\bar{\alpha}^{(2)}_{3}\bar{\alpha}^{(2)}_{8}\theta_{3}
-\bar{\alpha}^{(2)}_{3}\alpha^{(2)}_{3}\alpha^{(2)}_{3}\theta_{3}
\\
+\bar{\alpha}^{(2)}_{3}\alpha^{(2)}_{3}\bar{\alpha}^{(2)}_{3}\theta_{3}
%\\
-\bar{\alpha}^{(2)}_{3}\alpha^{(2)}_{3}\bar{\alpha}^{(2)}_{4}\theta_{3}
+\bar{\alpha}^{(2)}_{4}\bar{\alpha}^{(2)}_{3}\alpha^{(2)}_{3}\theta_{3}
%\\
-\bar{\alpha}^{(2)}_{4}\bar{\alpha}^{(2)}_{4}\alpha^{(2)}_{3}\theta_{3}
%\\
+\bar{\alpha}^{(2)}_{4}\bar{\alpha}^{(2)}_{4}\alpha^{(2)}_{4}\theta_{3}
+\bar{\alpha}^{(2)}_{4}\alpha^{(2)}_{3}\alpha^{(2)}_{3}\theta_{3}
+\bar{\alpha}^{(2)}_{4}\alpha^{(2)}_{3}\bar{\alpha}^{(2)}_{4}\theta_{3}
%\\
%%\end{align*}
%%\begin{align*}
%%\\
-\bar{\alpha}^{(2)}_{4}\alpha^{(2)}_{4}\alpha^{(2)}_{3}\theta_{3}
\\
+\bar{\alpha}^{(2)}_{4}\alpha^{(2)}_{4}\alpha^{(2)}_{4}\theta_{3}
-\bar{\alpha}^{(2)}_{4}\alpha^{(2)}_{4}\bar{\alpha}^{(2)}_{4}\theta_{3}
%\\
-\bar{\alpha}^{(2)}_{4}\alpha^{(2)}_{10}\theta_{3}
-\bar{\alpha}^{(2)}_{8}\bar{\alpha}^{(2)}_{3}\theta_{3}
-\bar{\alpha}^{(2)}_{8}\alpha^{(2)}_{3}\theta_{3}
-\bar{\alpha}^{(2)}_{13}\theta_{3}
+\alpha^{(0)}_{8}\theta_{4}
%\\
+\bar{\alpha}^{(0)}_{1}\alpha^{(0)}_{7}\theta_{4}
%\\
+\alpha^{(2)}_{3}\alpha^{(0)}_{4}\theta_{4}
-\alpha^{(2)}_{3}\bar{\alpha}^{(0)}_{1}\alpha^{(0)}_{3}\theta_{4}
\\
+\alpha^{(2)}_{4}\bar{\alpha}^{(0)}_{1}\alpha^{(0)}_{3}\theta_{4}
%\\
+\alpha^{(2)}_{7}\alpha^{(0)}_{3}\theta_{4}
%\\
-\alpha^{(2)}_{7}\bar{\alpha}^{(0)}_{1}\alpha^{(0)}_{1}\theta_{4}
+\bar{\alpha}^{(2)}_{4}\alpha^{(0)}_{4}\theta_{4}
%\\
-\bar{\alpha}^{(2)}_{4}\bar{\alpha}^{(0)}_{1}\alpha^{(0)}_{3}\theta_{4}
+\bar{\alpha}^{(2)}_{4}\alpha^{(2)}_{3}\alpha^{(0)}_{1}\theta_{4}
%\\
%\end{align*}
%\begin{align*}
-\bar{\alpha}^{(2)}_{6}\alpha^{(0)}_{3}\theta_{4}
+\alpha^{(0)}_{1}\alpha^{(0)}_{7}\theta_{4}
\\
+\alpha^{(0)}_{1}\alpha^{(2)}_{3}\alpha^{(0)}_{3}\theta_{4}
%\\
-\alpha^{(0)}_{1}\alpha^{(2)}_{3}\bar{\alpha}^{(0)}_{1}\alpha^{(0)}_{1}\theta_{4}
%\\
%%\end{align*}
%%\begin{align*}
-\alpha^{(0)}_{1}\bar{\alpha}^{(2)}_{3}\alpha^{(0)}_{3}\theta_{4}
+\alpha^{(0)}_{1}\bar{\alpha}^{(2)}_{4}\alpha^{(0)}_{3}\theta_{4}
-\alpha^{(0)}_{1}\bar{\alpha}^{(2)}_{4}\bar{\alpha}^{(0)}_{1}\alpha^{(0)}_{1}\theta_{4}
-\alpha^{(0)}_{1}\bar{\alpha}^{(2)}_{6}\alpha^{(0)}_{1}\theta_{4}
%\\
+\alpha^{(0)}_{2}\alpha^{(0)}_{4}\theta_{4}
\\
-\alpha^{(0)}_{2}\bar{\alpha}^{(0)}_{1}\alpha^{(0)}_{3}\theta_{4}
%\\
+\alpha^{(0)}_{2}\alpha^{(2)}_{3}\alpha^{(0)}_{1}\theta_{4}
+\alpha^{(0)}_{2}\bar{\alpha}^{(2)}_{4}\alpha^{(0)}_{1}\theta_{4}
%\\
-\alpha^{(0)}_{3}\bar{\alpha}^{(2)}_{3}\alpha^{(0)}_{1}\theta_{4}
-\alpha^{(0)}_{4}\alpha^{(0)}_{3}\theta_{4}
%\\
+\alpha^{(0)}_{5}\alpha^{(0)}_{3}\theta_{4}
-\alpha^{(0)}_{5}\bar{\alpha}^{(0)}_{1}\alpha^{(0)}_{1}\theta_{4}
+\alpha^{(0)}_{6}\alpha^{(0)}_{1}\theta_{4}
\\
-\bar{\alpha}^{(0)}_{1}\alpha^{(0)}_{6}\theta_{4}
%\\
-\bar{\alpha}^{(0)}_{1}\bar{\alpha}^{(0)}_{1}\alpha^{(0)}_{4}\theta_{4}
+\bar{\alpha}^{(0)}_{1}\bar{\alpha}^{(0)}_{1}\alpha^{(0)}_{5}\theta_{4}
-\bar{\alpha}^{(0)}_{1}\alpha^{(2)}_{3}\alpha^{(0)}_{2}\theta_{4}
%\\
+\bar{\alpha}^{(0)}_{1}\alpha^{(2)}_{3}\bar{\alpha}^{(0)}_{1}\alpha^{(0)}_{1}\theta_{4}
-\bar{\alpha}^{(0)}_{1}\alpha^{(2)}_{4}\bar{\alpha}^{(0)}_{1}\alpha^{(0)}_{1}\theta_{4}
%\\
-\bar{\alpha}^{(0)}_{1}\alpha^{(2)}_{7}\alpha^{(0)}_{1}\theta_{4}
\\
+\bar{\alpha}^{(0)}_{1}\alpha^{(2)}_{7}\bar{\alpha}^{(0)}_{1}\theta_{4}
%\\
-\bar{\alpha}^{(0)}_{1}\bar{\alpha}^{(2)}_{4}\alpha^{(0)}_{2}\theta_{4}
+\bar{\alpha}^{(0)}_{1}\bar{\alpha}^{(2)}_{4}\bar{\alpha}^{(0)}_{1}\alpha^{(0)}_{1}\theta_{4}
-\bar{\alpha}^{(0)}_{1}\bar{\alpha}^{(2)}_{4}\alpha^{(2)}_{3}\theta_{4}
+\bar{\alpha}^{(0)}_{1}\bar{\alpha}^{(2)}_{6}\alpha^{(0)}_{1}\theta_{4}
%\\
-\bar{\alpha}^{(0)}_{1}\alpha^{(0)}_{1}\alpha^{(0)}_{4}\theta_{4}
+\bar{\alpha}^{(0)}_{1}\alpha^{(0)}_{1}\alpha^{(0)}_{5}\theta_{4}
\\
-\bar{\alpha}^{(0)}_{1}\alpha^{(0)}_{1}\alpha^{(2)}_{3}\alpha^{(0)}_{1}\theta_{4}
%\\
+\bar{\alpha}^{(0)}_{1}\alpha^{(0)}_{1}\alpha^{(2)}_{3}\bar{\alpha}^{(0)}_{1}\theta_{4}
%\\
+\bar{\alpha}^{(0)}_{1}\alpha^{(0)}_{1}\bar{\alpha}^{(2)}_{3}\alpha^{(0)}_{1}\theta_{4}
%\\
-\bar{\alpha}^{(0)}_{1}\alpha^{(0)}_{1}\bar{\alpha}^{(2)}_{4}\alpha^{(0)}_{1}\theta_{4}
+\bar{\alpha}^{(0)}_{1}\alpha^{(0)}_{1}\bar{\alpha}^{(2)}_{4}\bar{\alpha}^{(0)}_{1}\theta_{4}
%\\
+\bar{\alpha}^{(0)}_{1}\alpha^{(0)}_{1}\bar{\alpha}^{(2)}_{6}\theta_{4}
\\
-\bar{\alpha}^{(0)}_{1}\alpha^{(0)}_{2}\alpha^{(0)}_{2}\theta_{4}
%\\
+\bar{\alpha}^{(0)}_{1}\alpha^{(0)}_{2}\bar{\alpha}^{(0)}_{1}\alpha^{(0)}_{1}\theta_{4}
-\bar{\alpha}^{(0)}_{1}\alpha^{(0)}_{2}\alpha^{(2)}_{3}\theta_{4}
%\\
-\bar{\alpha}^{(0)}_{1}\alpha^{(0)}_{2}\bar{\alpha}^{(2)}_{4}\theta_{4}
+\bar{\alpha}^{(0)}_{1}\alpha^{(0)}_{3}\bar{\alpha}^{(2)}_{3}\theta_{4}
+\bar{\alpha}^{(0)}_{1}\alpha^{(0)}_{4}\alpha^{(0)}_{1}\theta_{4}
-\bar{\alpha}^{(0)}_{1}\alpha^{(0)}_{5}\alpha^{(0)}_{1}\theta_{4}
\\
+\bar{\alpha}^{(0)}_{1}\alpha^{(0)}_{5}\bar{\alpha}^{(0)}_{1}\theta_{4}
%\\
-\bar{\alpha}^{(0)}_{1}\alpha^{(0)}_{6}\theta_{4}
%\\
-\bar{\alpha}^{(2)}_{4}\alpha^{(2)}_{7}\theta_{5}
%\\
-\bar{\alpha}^{(2)}_{6}\alpha^{(2)}_{3}\theta_{5}
+\bar{\alpha}^{(2)}_{11}\theta_{5}
%\\
-\alpha^{(2)}_{3}\bar{\alpha}^{(2)}_{6}\theta_{5}
%\\
-\alpha^{(2)}_{4}\alpha^{(2)}_{7}\theta_{5}
+\alpha^{(2)}_{4}\bar{\alpha}^{(2)}_{6}\theta_{5}
%\\
%\end{align*}
%\begin{align*}
-\alpha^{(2)}_{7}\alpha^{(2)}_{3}\theta_{5}
+\alpha^{(2)}_{7}\bar{\alpha}^{(2)}_{3}\theta_{5}
\\
-\alpha^{(2)}_{7}\bar{\alpha}^{(2)}_{4}\theta_{5}
%\\
-\bar{\alpha}^{(2)}_{4}\bar{\alpha}^{(2)}_{6}\theta_{5}
%\\
-\bar{\alpha}^{(2)}_{4}\alpha^{(2)}_{7}\theta_{5}
-\bar{\alpha}^{(2)}_{6}\bar{\alpha}^{(2)}_{3}\theta_{5}
-\bar{\alpha}^{(2)}_{6}\alpha^{(2)}_{3}\theta_{5}
%\\
-\alpha^{(0)}_{1}\bar{\alpha}^{(2)}_{3}\alpha^{(2)}_{3}\theta_{5}
%\\
+\alpha^{(0)}_{1}\bar{\alpha}^{(2)}_{8}\theta_{5}
-\alpha^{(0)}_{1}\alpha^{(2)}_{3}\alpha^{(2)}_{3}\theta_{5}
+\alpha^{(0)}_{1}\alpha^{(2)}_{3}\bar{\alpha}^{(2)}_{3}\theta_{5}
\\
-\alpha^{(0)}_{1}\alpha^{(2)}_{3}\bar{\alpha}^{(2)}_{4}\theta_{5}
%\\
-\alpha^{(0)}_{1}\bar{\alpha}^{(2)}_{3}\bar{\alpha}^{(2)}_{3}\theta_{5}
-\alpha^{(0)}_{1}\bar{\alpha}^{(2)}_{3}\alpha^{(2)}_{3}\theta_{5}
-\alpha^{(0)}_{1}\bar{\alpha}^{(2)}_{4}\alpha^{(2)}_{3}\theta_{5}
%\\
+\alpha^{(0)}_{1}\bar{\alpha}^{(2)}_{4}\bar{\alpha}^{(2)}_{3}\theta_{5}
-\alpha^{(0)}_{1}\bar{\alpha}^{(2)}_{4}\bar{\alpha}^{(2)}_{4}\theta_{5}
%\\
+\alpha^{(0)}_{1}\bar{\alpha}^{(2)}_{4}\alpha^{(2)}_{3}\theta_{5}
%\\
-\alpha^{(0)}_{1}\bar{\alpha}^{(2)}_{4}\alpha^{(2)}_{4}\theta_{5}
\\
-\alpha^{(0)}_{2}\bar{\alpha}^{(2)}_{6}\theta_{5}
-\alpha^{(0)}_{2}\alpha^{(2)}_{7}\theta_{5}
-\alpha^{(0)}_{4}\bar{\alpha}^{(2)}_{3}\theta_{5}
%\\
-\alpha^{(0)}_{4}\alpha^{(2)}_{3}\theta_{5}
-\alpha^{(0)}_{4}\bar{\alpha}^{(2)}_{4}\theta_{5}
%\\
-\alpha^{(0)}_{5}\alpha^{(2)}_{3}\theta_{5}
%\\
+\alpha^{(0)}_{5}\bar{\alpha}^{(2)}_{3}\theta_{5}
%\\
-\alpha^{(0)}_{5}\bar{\alpha}^{(2)}_{4}\theta_{5}
%\\
+\alpha^{(0)}_{5}\alpha^{(2)}_{3}\theta_{5}
-\alpha^{(0)}_{5}\alpha^{(2)}_{4}\theta_{5}
\\
+\alpha^{(0)}_{5}\bar{\alpha}^{(2)}_{4}\theta_{5}
%\\
+\bar{\alpha}^{(0)}_{1}\bar{\alpha}^{(2)}_{3}\alpha^{(2)}_{3}\theta_{5}
-\bar{\alpha}^{(0)}_{1}\bar{\alpha}^{(2)}_{4}\alpha^{(2)}_{3}\theta_{5}
%\\
+\bar{\alpha}^{(0)}_{1}\bar{\alpha}^{(2)}_{4}\alpha^{(2)}_{4}\theta_{5}
%\\
+\bar{\alpha}^{(0)}_{1}\alpha^{(2)}_{3}\alpha^{(2)}_{3}\theta_{5}
%\\
+\bar{\alpha}^{(0)}_{1}\alpha^{(2)}_{3}\bar{\alpha}^{(2)}_{4}\theta_{5}
%\\
-\bar{\alpha}^{(0)}_{1}\alpha^{(2)}_{4}\alpha^{(2)}_{3}\theta_{5}
%\\
+\bar{\alpha}^{(0)}_{1}\alpha^{(2)}_{4}\alpha^{(2)}_{4}\theta_{5}
\\
-\bar{\alpha}^{(0)}_{1}\alpha^{(2)}_{4}\bar{\alpha}^{(2)}_{4}\theta_{5}%\\
%\\
%%\end{align*}
%%\begin{align*}
-\bar{\alpha}^{(0)}_{1}\alpha^{(2)}_{10}\theta_{5}
%\\
+\bar{\alpha}^{(0)}_{1}\bar{\alpha}^{(2)}_{4}\alpha^{(2)}_{3}\theta_{5}
+\bar{\alpha}^{(0)}_{1}\bar{\alpha}^{(2)}_{4}\bar{\alpha}^{(2)}_{4}\theta_{5}
+\bar{\alpha}^{(0)}_{1}\bar{\alpha}^{(2)}_{4}\alpha^{(2)}_{4}\theta_{5}
%\\
+\bar{\alpha}^{(0)}_{1}\alpha^{(0)}_{2}\alpha^{(2)}_{3}\theta_{5}
+\bar{\alpha}^{(0)}_{1}\alpha^{(0)}_{2}\bar{\alpha}^{(2)}_{4}\theta_{5}
%\\
%%\end{align*}
%%\begin{align*}
+\bar{\alpha}^{(0)}_{1}\alpha^{(0)}_{2}\alpha^{(2)}_{4}\theta_{5}
\\
+\bar{\alpha}^{(2)}_{4}\alpha^{(2)}_{7}\theta_{6}
+\bar{\alpha}^{(2)}_{6}\alpha^{(2)}_{3}\theta_{6}
%\\
+\alpha^{(0)}_{1}\bar{\alpha}^{(2)}_{3}\alpha^{(2)}_{3}\theta_{6}
-\alpha^{(0)}_{1}\bar{\alpha}^{(2)}_{4}\alpha^{(2)}_{3}\theta_{6}
+\alpha^{(0)}_{1}\bar{\alpha}^{(2)}_{4}\alpha^{(2)}_{4}\theta_{6}
%\\
+\alpha^{(0)}_{2}\alpha^{(2)}_{7}\theta_{6}
%\\
+\alpha^{(0)}_{4}\alpha^{(2)}_{3}\theta_{6}
+\alpha^{(0)}_{4}\bar{\alpha}^{(2)}_{4}\theta_{6}
%\\
-\alpha^{(0)}_{5}\alpha^{(2)}_{3}\theta_{6}
\\
+\alpha^{(0)}_{5}\alpha^{(2)}_{4}\theta_{6}
-\alpha^{(0)}_{5}\bar{\alpha}^{(2)}_{4}\theta_{6}
+\bar{\alpha}^{(0)}_{1}\alpha^{(2)}_{10}\theta_{6}
-\bar{\alpha}^{(0)}_{1}\bar{\alpha}^{(2)}_{4}\alpha^{(2)}_{4}\theta_{6}
-\bar{\alpha}^{(0)}_{1}\alpha^{(0)}_{2}\alpha^{(2)}_{4}\theta_{6}
%\\
%\end{align*}
%\begin{align*}
+\alpha^{(2)}_{3}\alpha^{(2)}_{7}\theta_{6}
+\alpha^{(2)}_{3}\alpha^{(0)}_{1}\alpha^{(2)}_{3}\theta_{6}
%\\
+\alpha^{(2)}_{3}\alpha^{(0)}_{1}\bar{\alpha}^{(2)}_{4}\theta_{6}
+\alpha^{(2)}_{3}\alpha^{(0)}_{5}\theta_{6}
\\
%\end{align*}
%\begin{align*}
-\alpha^{(2)}_{3}\bar{\alpha}^{(0)}_{1}\alpha^{(2)}_{3}\theta_{6}
+\alpha^{(2)}_{3}\bar{\alpha}^{(0)}_{1}\alpha^{(2)}_{4}\theta_{6}
%\\
-\alpha^{(2)}_{3}\bar{\alpha}^{(0)}_{1}\bar{\alpha}^{(2)}_{4}\theta_{6}
%\\
-\alpha^{(2)}_{3}\bar{\alpha}^{(0)}_{1}\alpha^{(0)}_{2}\theta_{6}
%\\
-\alpha^{(2)}_{4}\bar{\alpha}^{(0)}_{1}\alpha^{(2)}_{4}\theta_{6}
-\alpha^{(2)}_{7}\alpha^{(2)}_{4}\theta_{6}
%\\
-\bar{\alpha}^{(2)}_{3}\alpha^{(2)}_{7}\theta_{6}
%\\
+\bar{\alpha}^{(2)}_{3}\bar{\alpha}^{(2)}_{6}\theta_{6}
\\
-\bar{\alpha}^{(2)}_{3}\alpha^{(0)}_{1}\alpha^{(2)}_{3}\theta_{6}
+\bar{\alpha}^{(2)}_{3}\alpha^{(0)}_{1}\bar{\alpha}^{(2)}_{3}\theta_{6}
%\\
-\bar{\alpha}^{(2)}_{3}\alpha^{(0)}_{1}\bar{\alpha}^{(2)}_{4}\theta_{6}
+\bar{\alpha}^{(2)}_{3}\alpha^{(0)}_{4}\theta_{6}
%\\
-\bar{\alpha}^{(2)}_{3}\alpha^{(0)}_{5}\theta_{6}
+\bar{\alpha}^{(2)}_{3}\alpha^{(2)}_{3}\alpha^{(0)}_{1}\theta_{6}
%\\
-\bar{\alpha}^{(2)}_{3}\alpha^{(2)}_{3}\bar{\alpha}^{(0)}_{1}\theta_{6}
+\bar{\alpha}^{(2)}_{4}\alpha^{(2)}_{7}\theta_{6}
\\
+\bar{\alpha}^{(2)}_{4}\alpha^{(0)}_{1}\alpha^{(2)}_{3}\theta_{6}
%\\
+\bar{\alpha}^{(2)}_{4}\alpha^{(0)}_{1}\bar{\alpha}^{(2)}_{4}\theta_{6}
+\bar{\alpha}^{(2)}_{4}\alpha^{(0)}_{5}\theta_{6}
-\bar{\alpha}^{(2)}_{4}\bar{\alpha}^{(0)}_{1}\alpha^{(2)}_{3}\theta_{6}
%\\
+\bar{\alpha}^{(2)}_{4}\bar{\alpha}^{(0)}_{1}\alpha^{(2)}_{4}\theta_{6}
-\bar{\alpha}^{(2)}_{4}\bar{\alpha}^{(0)}_{1}\bar{\alpha}^{(2)}_{4}\theta_{6}
-\bar{\alpha}^{(2)}_{4}\bar{\alpha}^{(0)}_{1}\alpha^{(0)}_{2}\theta_{6}
%\\
+\bar{\alpha}^{(2)}_{4}\alpha^{(2)}_{3}\bar{\alpha}^{(0)}_{1}\theta_{6}
\\
-\bar{\alpha}^{(2)}_{4}\alpha^{(2)}_{4}\bar{\alpha}^{(0)}_{1}\theta_{6}
-\bar{\alpha}^{(2)}_{4}\alpha^{(2)}_{7}\theta_{6}
%\\
-\bar{\alpha}^{(2)}_{6}\alpha^{(2)}_{3}\theta_{6}
+\bar{\alpha}^{(2)}_{6}\alpha^{(2)}_{4}\theta_{6}
%\\
-\bar{\alpha}^{(2)}_{6}\bar{\alpha}^{(2)}_{4}\theta_{6}
-\bar{\alpha}^{(2)}_{6}\alpha^{(0)}_{2}\theta_{6}
-\bar{\alpha}^{(2)}_{6}\alpha^{(2)}_{3}\theta_{6}
%\\
-\bar{\alpha}^{(2)}_{8}\alpha^{(0)}_{1}\theta_{6}
+\bar{\alpha}^{(2)}_{11}\theta_{6}
-\alpha^{(2)}_{3}\alpha^{(0)}_{5}\theta_{7}
\\
+\alpha^{(2)}_{3}\bar{\alpha}^{(0)}_{1}\alpha^{(0)}_{2}\theta_{7}
-\bar{\alpha}^{(2)}_{3}\alpha^{(0)}_{4}\theta_{7}
%\\
+\bar{\alpha}^{(2)}_{3}\alpha^{(0)}_{5}\theta_{7}
%\\
-\bar{\alpha}^{(2)}_{3}\alpha^{(2)}_{3}\alpha^{(0)}_{1}\theta_{7}
+\bar{\alpha}^{(2)}_{3}\alpha^{(2)}_{3}\bar{\alpha}^{(0)}_{1}\theta_{7}
%\\
-\bar{\alpha}^{(2)}_{4}\alpha^{(0)}_{5}\theta_{7}
+\bar{\alpha}^{(2)}_{4}\bar{\alpha}^{(0)}_{1}\alpha^{(0)}_{2}\theta_{7}
-\bar{\alpha}^{(2)}_{4}\alpha^{(2)}_{3}\bar{\alpha}^{(0)}_{1}\theta_{7}
\\
+\bar{\alpha}^{(2)}_{4}\alpha^{(2)}_{4}\bar{\alpha}^{(0)}_{1}\theta_{7}
+\bar{\alpha}^{(2)}_{4}\alpha^{(2)}_{7}\theta_{7}
+\bar{\alpha}^{(2)}_{6}\alpha^{(0)}_{2}\theta_{7}
%\\
+\bar{\alpha}^{(2)}_{6}\alpha^{(2)}_{3}\theta_{7}
%\\
+\bar{\alpha}^{(2)}_{8}\alpha^{(0)}_{1}\theta_{7}
-\bar{\alpha}^{(2)}_{11}\theta_{7}
-\alpha^{(2)}_{3}\alpha^{(0)}_{4}\theta_{7}
+\alpha^{(2)}_{3}\alpha^{(0)}_{5}\theta_{7}
%\\
-\alpha^{(2)}_{3}\alpha^{(2)}_{3}\alpha^{(0)}_{1}\theta_{7}
\\
+\alpha^{(2)}_{3}\alpha^{(2)}_{3}\bar{\alpha}^{(0)}_{1}\theta_{7}
+\alpha^{(2)}_{3}\bar{\alpha}^{(2)}_{3}\alpha^{(0)}_{1}\theta_{7}
%\\
-\alpha^{(2)}_{3}\bar{\alpha}^{(2)}_{4}\alpha^{(0)}_{1}\theta_{7}
+\alpha^{(2)}_{3}\bar{\alpha}^{(2)}_{4}\bar{\alpha}^{(0)}_{1}\theta_{7}
+\alpha^{(2)}_{3}\bar{\alpha}^{(2)}_{6}\theta_{7}
-\alpha^{(2)}_{4}\alpha^{(0)}_{5}\theta_{7}
+\alpha^{(2)}_{4}\bar{\alpha}^{(0)}_{1}\alpha^{(0)}_{2}\theta_{7}
-\alpha^{(2)}_{4}\alpha^{(2)}_{3}\bar{\alpha}^{(0)}_{1}\theta_{7}
\\
+\alpha^{(2)}_{4}\alpha^{(2)}_{4}\bar{\alpha}^{(0)}_{1}\theta_{7}
%\\
+\alpha^{(2)}_{4}\alpha^{(2)}_{7}\theta_{7}
%\\
-\alpha^{(2)}_{4}\bar{\alpha}^{(2)}_{4}\bar{\alpha}^{(0)}_{1}\theta_{7}
-\alpha^{(2)}_{4}\bar{\alpha}^{(2)}_{6}\theta_{7}
+\alpha^{(2)}_{7}\alpha^{(0)}_{2}\theta_{7}
%\\
%\end{align*}
%\begin{align*}
+\alpha^{(2)}_{7}\alpha^{(2)}_{3}\theta_{7}
-\alpha^{(2)}_{7}\bar{\alpha}^{(2)}_{3}\theta_{7}
+\alpha^{(2)}_{7}\bar{\alpha}^{(2)}_{4}\theta_{7}
-\alpha^{(2)}_{10}\bar{\alpha}^{(0)}_{1}\theta_{7}
\\
-\bar{\alpha}^{(2)}_{3}\bar{\alpha}^{(2)}_{3}\alpha^{(0)}_{1}\theta_{7}
%\\
-\bar{\alpha}^{(2)}_{3}\alpha^{(2)}_{3}\alpha^{(0)}_{1}\theta_{7}
-\bar{\alpha}^{(2)}_{4}\alpha^{(0)}_{4}\theta_{7}
+\bar{\alpha}^{(2)}_{4}\alpha^{(0)}_{5}\theta_{7}
-\bar{\alpha}^{(2)}_{4}\alpha^{(2)}_{3}\alpha^{(0)}_{1}\theta_{7}
+\bar{\alpha}^{(2)}_{4}\alpha^{(2)}_{3}\bar{\alpha}^{(0)}_{1}\theta_{7}
%\\
+\bar{\alpha}^{(2)}_{4}\bar{\alpha}^{(2)}_{3}\alpha^{(0)}_{1}\theta_{7}
%\\
-\bar{\alpha}^{(2)}_{4}\bar{\alpha}^{(2)}_{4}\alpha^{(0)}_{1}\theta_{7}
\\
+\bar{\alpha}^{(2)}_{4}\bar{\alpha}^{(2)}_{4}\bar{\alpha}^{(0)}_{1}\theta_{7}
%\\
+\bar{\alpha}^{(2)}_{4}\bar{\alpha}^{(2)}_{6}\theta_{7}
+\bar{\alpha}^{(2)}_{4}\alpha^{(2)}_{3}\alpha^{(0)}_{1}\theta_{7}
-\bar{\alpha}^{(2)}_{4}\alpha^{(2)}_{4}\alpha^{(0)}_{1}\theta_{7}
+\bar{\alpha}^{(2)}_{4}\alpha^{(2)}_{4}\bar{\alpha}^{(0)}_{1}\theta_{7}
+\bar{\alpha}^{(2)}_{4}\alpha^{(2)}_{7}\theta_{7}
%\\
+\bar{\alpha}^{(2)}_{6}\bar{\alpha}^{(2)}_{3}\theta_{7}
+\bar{\alpha}^{(2)}_{6}\alpha^{(2)}_{3}\theta_{7}
+\bar{\alpha}^{(2)}_{4}\alpha^{(2)}_{3}\theta_{8}
\\
%\end{align*}
%\begin{align*}
-\alpha^{(0)}_{1}\bar{\alpha}^{(2)}_{6}\theta_{8}
%\\
+\alpha^{(0)}_{2}\alpha^{(2)}_{3}\theta_{8}
%\\
+\alpha^{(0)}_{2}\bar{\alpha}^{(2)}_{4}\theta_{8}
-\alpha^{(0)}_{3}\bar{\alpha}^{(2)}_{3}\theta_{8}
+\alpha^{(0)}_{6}\theta_{8}
-\bar{\alpha}^{(0)}_{1}\alpha^{(2)}_{7}\theta_{8}
%\\
+\bar{\alpha}^{(0)}_{1}\bar{\alpha}^{(2)}_{6}\theta_{8}
-\bar{\alpha}^{(0)}_{1}\alpha^{(0)}_{1}\alpha^{(2)}_{3}\theta_{8}
+\bar{\alpha}^{(0)}_{1}\alpha^{(0)}_{1}\bar{\alpha}^{(2)}_{3}\theta_{8}
\\
-\bar{\alpha}^{(0)}_{1}\alpha^{(0)}_{1}\bar{\alpha}^{(2)}_{4}\theta_{8}
+\bar{\alpha}^{(0)}_{1}\alpha^{(0)}_{4}\theta_{8}
%\\
-\bar{\alpha}^{(0)}_{1}\alpha^{(0)}_{5}\theta_{8}
%\\
%\end{align*}
%\begin{align*}
+\alpha^{(2)}_{3}\alpha^{(2)}_{3}\theta_{8}
%\\
+\alpha^{(2)}_{3}\bar{\alpha}^{(2)}_{4}\theta_{8}
+\alpha^{(2)}_{3}\alpha^{(0)}_{2}\theta_{8}
-\alpha^{(2)}_{3}\bar{\alpha}^{(0)}_{1}\alpha^{(0)}_{1}\theta_{8}
%\\
+\alpha^{(2)}_{4}\bar{\alpha}^{(0)}_{1}\alpha^{(0)}_{1}\theta_{8}
+\alpha^{(2)}_{7}\alpha^{(0)}_{1}\theta_{8}
\\
-\alpha^{(2)}_{7}\bar{\alpha}^{(0)}_{1}\theta_{8}
%\\
+\bar{\alpha}^{(2)}_{4}\alpha^{(2)}_{3}\theta_{8}
+\bar{\alpha}^{(2)}_{4}\bar{\alpha}^{(2)}_{4}\theta_{8}
%\\
%%\end{align*}
%%\begin{align*}
+\bar{\alpha}^{(2)}_{4}\alpha^{(0)}_{2}\theta_{8}
-\bar{\alpha}^{(2)}_{4}\bar{\alpha}^{(0)}_{1}\alpha^{(0)}_{1}\theta_{8}
+\bar{\alpha}^{(2)}_{4}\alpha^{(2)}_{3}\theta_{8}
-\bar{\alpha}^{(2)}_{6}\alpha^{(0)}_{1}\theta_{8}
-\alpha^{(0)}_{1}\alpha^{(2)}_{7}\theta_{8}
%\\
+\alpha^{(0)}_{1}\bar{\alpha}^{(2)}_{6}\theta_{8}
\\
-\alpha^{(0)}_{1}\alpha^{(0)}_{1}\alpha^{(2)}_{3}\theta_{8}
%\\
+\alpha^{(0)}_{1}\alpha^{(0)}_{1}\bar{\alpha}^{(2)}_{3}\theta_{8}
-\alpha^{(0)}_{1}\alpha^{(0)}_{1}\bar{\alpha}^{(2)}_{4}\theta_{8}
+\alpha^{(0)}_{1}\alpha^{(0)}_{4}\theta_{8}
-\alpha^{(0)}_{1}\alpha^{(0)}_{5}\theta_{8}
+\alpha^{(0)}_{1}\alpha^{(2)}_{3}\alpha^{(0)}_{1}\theta_{8}
-\alpha^{(0)}_{1}\alpha^{(2)}_{3}\bar{\alpha}^{(0)}_{1}\theta_{8}
-\alpha^{(0)}_{1}\bar{\alpha}^{(2)}_{3}\alpha^{(0)}_{1}\theta_{8}
\\
%\end{align*}
%\begin{align*}
+\alpha^{(0)}_{1}\bar{\alpha}^{(2)}_{4}\alpha^{(0)}_{1}\theta_{8}
-\alpha^{(0)}_{1}\bar{\alpha}^{(2)}_{4}\bar{\alpha}^{(0)}_{1}\theta_{8}
-\alpha^{(0)}_{1}\bar{\alpha}^{(2)}_{6}\theta_{8}
+\alpha^{(0)}_{2}\alpha^{(2)}_{3}\theta_{8}
+\alpha^{(0)}_{2}\bar{\alpha}^{(2)}_{4}\theta_{8}
+\alpha^{(0)}_{2}\alpha^{(0)}_{2}\theta_{8}
-\alpha^{(0)}_{2}\bar{\alpha}^{(0)}_{1}\alpha^{(0)}_{1}\theta_{8}
%\\
+\alpha^{(0)}_{2}\alpha^{(2)}_{3}\theta_{8}
+\alpha^{(0)}_{2}\bar{\alpha}^{(2)}_{4}\theta_{8}
\end{align*}
\begin{align*}%\\
-\alpha^{(0)}_{3}\bar{\alpha}^{(2)}_{3}\theta_{8}
-\alpha^{(0)}_{4}\alpha^{(0)}_{1}\theta_{8}
+\alpha^{(0)}_{5}\alpha^{(0)}_{1}\theta_{8}
%\\
%\end{align*}
%\begin{align*}
-\alpha^{(0)}_{5}\bar{\alpha}^{(0)}_{1}\theta_{8}
%\\
+\alpha^{(0)}_{6}\theta_{8}
+\bar{\alpha}^{(0)}_{1}\alpha^{(2)}_{7}\theta_{8}
%\\
+\bar{\alpha}^{(0)}_{1}\alpha^{(0)}_{1}\alpha^{(2)}_{3}\theta_{8}
+\bar{\alpha}^{(0)}_{1}\alpha^{(0)}_{1}\bar{\alpha}^{(2)}_{4}\theta_{8}
+\bar{\alpha}^{(0)}_{1}\alpha^{(0)}_{5}\theta_{8}
\\
-\bar{\alpha}^{(0)}_{1}\bar{\alpha}^{(0)}_{1}\alpha^{(2)}_{3}\theta_{8}
%\\
+\bar{\alpha}^{(0)}_{1}\bar{\alpha}^{(0)}_{1}\alpha^{(2)}_{4}\theta_{8}
-\bar{\alpha}^{(0)}_{1}\bar{\alpha}^{(0)}_{1}\bar{\alpha}^{(2)}_{4}\theta_{8}
%\\
-\bar{\alpha}^{(0)}_{1}\bar{\alpha}^{(0)}_{1}\alpha^{(0)}_{2}\theta_{8}
+\bar{\alpha}^{(0)}_{1}\alpha^{(2)}_{3}\bar{\alpha}^{(0)}_{1}\theta_{8}
-\bar{\alpha}^{(0)}_{1}\alpha^{(2)}_{4}\bar{\alpha}^{(0)}_{1}\theta_{8}
-\bar{\alpha}^{(0)}_{1}\alpha^{(2)}_{7}\theta_{8}
%\\
+\bar{\alpha}^{(0)}_{1}\bar{\alpha}^{(2)}_{4}\bar{\alpha}^{(0)}_{1}\theta_{8}
\\
+\bar{\alpha}^{(0)}_{1}\bar{\alpha}^{(2)}_{6}\theta_{8}
-\bar{\alpha}^{(0)}_{1}\alpha^{(0)}_{1}\alpha^{(2)}_{3}\theta_{8}
%\\
+\bar{\alpha}^{(0)}_{1}\alpha^{(0)}_{1}\alpha^{(2)}_{4}\theta_{8}
-\bar{\alpha}^{(0)}_{1}\alpha^{(0)}_{1}\bar{\alpha}^{(2)}_{4}\theta_{8}
-\bar{\alpha}^{(0)}_{1}\alpha^{(0)}_{1}\alpha^{(0)}_{2}\theta_{8}
-\bar{\alpha}^{(0)}_{1}\alpha^{(0)}_{1}\alpha^{(2)}_{3}\theta_{8}
%\\
+\bar{\alpha}^{(0)}_{1}\alpha^{(0)}_{1}\bar{\alpha}^{(2)}_{3}\theta_{8}
-\bar{\alpha}^{(0)}_{1}\alpha^{(0)}_{1}\bar{\alpha}^{(2)}_{4}\theta_{8}
\\
+\bar{\alpha}^{(0)}_{1}\alpha^{(0)}_{2}\bar{\alpha}^{(0)}_{1}\theta_{8}
+\bar{\alpha}^{(0)}_{1}\alpha^{(0)}_{4}\theta_{8}
-\bar{\alpha}^{(0)}_{1}\alpha^{(0)}_{5}\theta_{8}
+\alpha^{(0)}_{7}\theta_{9}
%\\
%\end{align*}
%\begin{align*}
+\alpha^{(2)}_{3}\alpha^{(0)}_{3}\theta_{9}
-\alpha^{(2)}_{3}\bar{\alpha}^{(0)}_{1}\alpha^{(0)}_{1}\theta_{9}
-\bar{\alpha}^{(2)}_{3}\alpha^{(0)}_{3}\theta_{9}
%\\
+\bar{\alpha}^{(2)}_{4}\alpha^{(0)}_{3}\theta_{9}
%\\
-\bar{\alpha}^{(2)}_{4}\bar{\alpha}^{(0)}_{1}\alpha^{(0)}_{1}\theta_{9}
\\
-\bar{\alpha}^{(2)}_{6}\alpha^{(0)}_{1}\theta_{9}
-\alpha^{(2)}_{3}\alpha^{(0)}_{3}\theta_{9}
+\alpha^{(2)}_{4}\alpha^{(0)}_{3}\theta_{9}
-\alpha^{(2)}_{4}\bar{\alpha}^{(0)}_{1}\alpha^{(0)}_{1}\theta_{9}
-\alpha^{(2)}_{7}\alpha^{(0)}_{1}\theta_{9}
-\bar{\alpha}^{(2)}_{4}\alpha^{(0)}_{3}\theta_{9}
%\\
-\alpha^{(0)}_{1}\bar{\alpha}^{(2)}_{3}\alpha^{(0)}_{1}\theta_{9}
%\\
-\alpha^{(0)}_{1}\alpha^{(2)}_{3}\alpha^{(0)}_{1}\theta_{9}
-\alpha^{(0)}_{1}\bar{\alpha}^{(2)}_{4}\alpha^{(0)}_{1}\theta_{9}
\\
%\end{align*}
%\begin{align*}
-\alpha^{(0)}_{2}\alpha^{(0)}_{3}\theta_{9}
-\alpha^{(0)}_{5}\alpha^{(0)}_{1}\theta_{9}
-\bar{\alpha}^{(0)}_{1}\alpha^{(0)}_{4}\theta_{9}
+\bar{\alpha}^{(0)}_{1}\alpha^{(0)}_{5}\theta_{9}
%\\
-\bar{\alpha}^{(0)}_{1}\alpha^{(2)}_{3}\alpha^{(0)}_{1}\theta_{9}
+\bar{\alpha}^{(0)}_{1}\alpha^{(2)}_{3}\bar{\alpha}^{(0)}_{1}\theta_{9}
%\\
+\bar{\alpha}^{(0)}_{1}\bar{\alpha}^{(2)}_{3}\alpha^{(0)}_{1}\theta_{9}
-\bar{\alpha}^{(0)}_{1}\bar{\alpha}^{(2)}_{4}\alpha^{(0)}_{1}\theta_{9}
\\
+\bar{\alpha}^{(0)}_{1}\bar{\alpha}^{(2)}_{4}\bar{\alpha}^{(0)}_{1}\theta_{9}
%\\
+\bar{\alpha}^{(0)}_{1}\bar{\alpha}^{(2)}_{6}\theta_{9}
%\\
+\bar{\alpha}^{(0)}_{1}\alpha^{(2)}_{3}\alpha^{(0)}_{1}\theta_{9}
-\bar{\alpha}^{(0)}_{1}\alpha^{(2)}_{4}\alpha^{(0)}_{1}\theta_{9}
+\bar{\alpha}^{(0)}_{1}\alpha^{(2)}_{4}\bar{\alpha}^{(0)}_{1}\theta_{9}
+\bar{\alpha}^{(0)}_{1}\alpha^{(2)}_{7}\theta_{9}
%\\
+\bar{\alpha}^{(0)}_{1}\bar{\alpha}^{(2)}_{4}\alpha^{(0)}_{1}\theta_{9}
+\bar{\alpha}^{(0)}_{1}\alpha^{(0)}_{1}\bar{\alpha}^{(2)}_{3}\theta_{9}
\\
+\bar{\alpha}^{(0)}_{1}\alpha^{(0)}_{1}\alpha^{(2)}_{3}\theta_{9}
+\bar{\alpha}^{(0)}_{1}\alpha^{(0)}_{1}\bar{\alpha}^{(2)}_{4}\theta_{9}
+\bar{\alpha}^{(0)}_{1}\alpha^{(0)}_{2}\alpha^{(0)}_{1}\theta_{9}
+\bar{\alpha}^{(0)}_{1}\alpha^{(0)}_{5}\theta_{9}
-\alpha^{(0)}_{6}\theta_{10}
-\bar{\alpha}^{(0)}_{1}\alpha^{(0)}_{4}\theta_{10}
%\\
%\end{align*}
%\begin{align*}
+\bar{\alpha}^{(0)}_{1}\alpha^{(0)}_{5}\theta_{10}
%\\
-\alpha^{(2)}_{3}\alpha^{(0)}_{2}\theta_{10}
+\alpha^{(2)}_{3}\bar{\alpha}^{(0)}_{1}\alpha^{(0)}_{1}\theta_{10}
\\
-\alpha^{(2)}_{4}\bar{\alpha}^{(0)}_{1}\alpha^{(0)}_{1}\theta_{10}
-\alpha^{(2)}_{7}\alpha^{(0)}_{1}\theta_{10}
%\\
+\alpha^{(2)}_{7}\bar{\alpha}^{(0)}_{1}\theta_{10}
-\bar{\alpha}^{(2)}_{4}\alpha^{(0)}_{2}\theta_{10}
+\bar{\alpha}^{(2)}_{4}\bar{\alpha}^{(0)}_{1}\alpha^{(0)}_{1}\theta_{10}
%\\
-\bar{\alpha}^{(2)}_{4}\alpha^{(2)}_{3}\theta_{10}
+\bar{\alpha}^{(2)}_{6}\alpha^{(0)}_{1}\theta_{10}
-\alpha^{(0)}_{1}\alpha^{(0)}_{4}\theta_{10}
+\alpha^{(0)}_{1}\alpha^{(0)}_{5}\theta_{10}
\\
-\alpha^{(0)}_{1}\alpha^{(2)}_{3}\alpha^{(0)}_{1}\theta_{10}
+\alpha^{(0)}_{1}\alpha^{(2)}_{3}\bar{\alpha}^{(0)}_{1}\theta_{10}
%\\
+\alpha^{(0)}_{1}\bar{\alpha}^{(2)}_{3}\alpha^{(0)}_{1}\theta_{10}
%\\
-\alpha^{(0)}_{1}\bar{\alpha}^{(2)}_{4}\alpha^{(0)}_{1}\theta_{10}
%\\
+\alpha^{(0)}_{1}\bar{\alpha}^{(2)}_{4}\bar{\alpha}^{(0)}_{1}\theta_{10}
+\alpha^{(0)}_{1}\bar{\alpha}^{(2)}_{6}\theta_{10}
-\alpha^{(0)}_{2}\alpha^{(0)}_{2}\theta_{10}
+\alpha^{(0)}_{2}\bar{\alpha}^{(0)}_{1}\alpha^{(0)}_{1}\theta_{10}
\\
-\alpha^{(0)}_{2}\alpha^{(2)}_{3}\theta_{10}
%\\
%\end{align*}
%\begin{align*}
-\alpha^{(0)}_{2}\bar{\alpha}^{(2)}_{4}\theta_{10}
%\\
+\alpha^{(0)}_{3}\bar{\alpha}^{(2)}_{3}\theta_{10}
+\alpha^{(0)}_{4}\alpha^{(0)}_{1}\theta_{10}
%\\
-\alpha^{(0)}_{5}\alpha^{(0)}_{1}\theta_{10}
+\alpha^{(0)}_{5}\bar{\alpha}^{(0)}_{1}\theta_{10}
-\alpha^{(0)}_{6}\theta_{10}
-\bar{\alpha}^{(0)}_{1}\alpha^{(0)}_{5}\theta_{10}
+\bar{\alpha}^{(0)}_{1}\bar{\alpha}^{(0)}_{1}\alpha^{(0)}_{2}\theta_{10}
\\
-\bar{\alpha}^{(0)}_{1}\alpha^{(2)}_{3}\bar{\alpha}^{(0)}_{1}\theta_{10}
+\bar{\alpha}^{(0)}_{1}\alpha^{(2)}_{4}\bar{\alpha}^{(0)}_{1}\theta_{10}
+\bar{\alpha}^{(0)}_{1}\alpha^{(2)}_{7}\theta_{10}
%\\
-\bar{\alpha}^{(0)}_{1}\bar{\alpha}^{(2)}_{4}\bar{\alpha}^{(0)}_{1}\theta_{10}
-\bar{\alpha}^{(0)}_{1}\bar{\alpha}^{(2)}_{6}\theta_{10}
+\bar{\alpha}^{(0)}_{1}\alpha^{(0)}_{1}\alpha^{(0)}_{2}\theta_{10}
%\\
%%\end{align*}
%%\begin{align*}
+\bar{\alpha}^{(0)}_{1}\alpha^{(0)}_{1}\alpha^{(2)}_{3}\theta_{10}
-\bar{\alpha}^{(0)}_{1}\alpha^{(0)}_{1}\bar{\alpha}^{(2)}_{3}\theta_{10}
\\
+\bar{\alpha}^{(0)}_{1}\alpha^{(0)}_{1}\bar{\alpha}^{(2)}_{4}\theta_{10}
-\bar{\alpha}^{(0)}_{1}\alpha^{(0)}_{2}\bar{\alpha}^{(0)}_{1}\theta_{10}
%\\
-\bar{\alpha}^{(0)}_{1}\alpha^{(0)}_{4}\theta_{10}
+\bar{\alpha}^{(0)}_{1}\alpha^{(0)}_{5}\theta_{10}
-\alpha^{(2)}_{3}\alpha^{(0)}_{2}\theta_{10}
%\\
+\alpha^{(2)}_{3}\bar{\alpha}^{(0)}_{1}\alpha^{(0)}_{1}\theta_{10}
-\alpha^{(2)}_{3}\alpha^{(2)}_{3}\theta_{10}
-\alpha^{(2)}_{3}\bar{\alpha}^{(2)}_{4}\theta_{10}
\\
+\alpha^{(2)}_{3}\alpha^{(0)}_{1}\alpha^{(0)}_{1}\theta_{10}
-\alpha^{(2)}_{3}\alpha^{(0)}_{2}\theta_{10}
%\\
-\alpha^{(2)}_{3}\bar{\alpha}^{(0)}_{1}\alpha^{(0)}_{1}\theta_{10}
+\alpha^{(2)}_{3}\bar{\alpha}^{(0)}_{1}\bar{\alpha}^{(0)}_{1}\theta_{10}
%\\
%%\end{align*}
%%\begin{align*}
+\alpha^{(2)}_{3}\bar{\alpha}^{(0)}_{1}\alpha^{(0)}_{1}\theta_{10}
-\alpha^{(2)}_{4}\bar{\alpha}^{(0)}_{1}\bar{\alpha}^{(0)}_{1}\theta_{10}
-\alpha^{(2)}_{4}\bar{\alpha}^{(0)}_{1}\alpha^{(0)}_{1}\theta_{10}
-\alpha^{(2)}_{7}\bar{\alpha}^{(0)}_{1}\theta_{10}
\\
-\alpha^{(2)}_{7}\alpha^{(0)}_{1}\theta_{10}
%\\
+\alpha^{(2)}_{7}\bar{\alpha}^{(0)}_{1}\theta_{10}
+\bar{\alpha}^{(2)}_{3}\alpha^{(0)}_{3}\theta_{10}
%\\
%%\end{align*}
%%\begin{align*}
-\bar{\alpha}^{(2)}_{3}\bar{\alpha}^{(0)}_{1}\alpha^{(0)}_{1}\theta_{10}
-\bar{\alpha}^{(2)}_{3}\alpha^{(0)}_{1}\alpha^{(0)}_{1}\theta_{10}
-\bar{\alpha}^{(2)}_{4}\alpha^{(0)}_{2}\theta_{10}
+\bar{\alpha}^{(2)}_{4}\bar{\alpha}^{(0)}_{1}\alpha^{(0)}_{1}\theta_{10}
-\bar{\alpha}^{(2)}_{4}\alpha^{(2)}_{3}\theta_{10}
\\
-\bar{\alpha}^{(2)}_{4}\bar{\alpha}^{(2)}_{4}\theta_{10}
%\\
+\bar{\alpha}^{(2)}_{4}\alpha^{(0)}_{1}\alpha^{(0)}_{1}\theta_{10}
%\\
-\bar{\alpha}^{(2)}_{4}\alpha^{(0)}_{2}\theta_{10}
-\bar{\alpha}^{(2)}_{4}\bar{\alpha}^{(0)}_{1}\alpha^{(0)}_{1}\theta_{10}
+\bar{\alpha}^{(2)}_{4}\bar{\alpha}^{(0)}_{1}\bar{\alpha}^{(0)}_{1}\theta_{10}
+\bar{\alpha}^{(2)}_{4}\bar{\alpha}^{(0)}_{1}\alpha^{(0)}_{1}\theta_{10}
-\bar{\alpha}^{(2)}_{4}\alpha^{(2)}_{3}\theta_{10}
-\bar{\alpha}^{(2)}_{6}\alpha^{(0)}_{1}\theta_{10}
\\
+\bar{\alpha}^{(2)}_{6}\bar{\alpha}^{(0)}_{1}\theta_{10}
%\\
+\bar{\alpha}^{(2)}_{6}\alpha^{(0)}_{1}\theta_{10}
+\bar{\alpha}^{(2)}_{3}\alpha^{(2)}_{3}\theta_{11}
-\bar{\alpha}^{(2)}_{4}\alpha^{(2)}_{3}\theta_{11}
+\bar{\alpha}^{(2)}_{4}\alpha^{(2)}_{4}\theta_{11}
+\alpha^{(2)}_{3}\alpha^{(2)}_{3}\theta_{11}
+\alpha^{(2)}_{3}\bar{\alpha}^{(2)}_{4}\theta_{11}
-\alpha^{(2)}_{4}\alpha^{(2)}_{3}\theta_{11}
+\alpha^{(2)}_{4}\alpha^{(2)}_{4}\theta_{11}
\\
-\alpha^{(2)}_{4}\bar{\alpha}^{(2)}_{4}\theta_{11}
%\\
-\alpha^{(2)}_{10}\theta_{11}
+\bar{\alpha}^{(2)}_{4}\alpha^{(2)}_{3}\theta_{11}
+\bar{\alpha}^{(2)}_{4}\bar{\alpha}^{(2)}_{4}\theta_{11}
+\bar{\alpha}^{(2)}_{4}\alpha^{(2)}_{4}\theta_{11}
+\alpha^{(0)}_{2}\alpha^{(2)}_{3}\theta_{11}
+\alpha^{(0)}_{2}\bar{\alpha}^{(2)}_{4}\theta_{11}
+\alpha^{(0)}_{2}\alpha^{(2)}_{4}\theta_{11}
%\\
+\alpha^{(2)}_{3}\alpha^{(2)}_{3}\theta_{11}
+\alpha^{(2)}_{3}\bar{\alpha}^{(2)}_{4}\theta_{11}
\\
+\alpha^{(2)}_{3}\alpha^{(2)}_{4}\theta_{11}
-\bar{\alpha}^{(2)}_{3}\alpha^{(2)}_{3}\theta_{11}
+\bar{\alpha}^{(2)}_{3}\bar{\alpha}^{(2)}_{3}\theta_{11}
-\bar{\alpha}^{(2)}_{3}\bar{\alpha}^{(2)}_{4}\theta_{11}
+\bar{\alpha}^{(2)}_{3}\alpha^{(2)}_{3}\theta_{11}
-\bar{\alpha}^{(2)}_{3}\alpha^{(2)}_{4}\theta_{11}
%\\
+\bar{\alpha}^{(2)}_{3}\bar{\alpha}^{(2)}_{4}\theta_{11}
+\bar{\alpha}^{(2)}_{3}\alpha^{(0)}_{2}\theta_{11}
+\bar{\alpha}^{(2)}_{3}\alpha^{(2)}_{3}\theta_{11}
\\
+\bar{\alpha}^{(2)}_{4}\alpha^{(2)}_{3}\theta_{11}
+\bar{\alpha}^{(2)}_{4}\bar{\alpha}^{(2)}_{4}\theta_{11}
%\\
+\bar{\alpha}^{(2)}_{4}\alpha^{(2)}_{4}\theta_{11}
-\bar{\alpha}^{(2)}_{8}\theta_{11}
+\alpha^{(0)}_{4}\theta_{12}
%\\
-\bar{\alpha}^{(0)}_{1}\alpha^{(0)}_{3}\theta_{12}
+\alpha^{(2)}_{3}\alpha^{(0)}_{1}\theta_{12}
+\bar{\alpha}^{(2)}_{4}\alpha^{(0)}_{1}\theta_{12}
-\alpha^{(0)}_{1}\alpha^{(0)}_{3}\theta_{12}
%\\
+\alpha^{(0)}_{2}\alpha^{(0)}_{1}\theta_{12}
\\
+\bar{\alpha}^{(0)}_{1}\alpha^{(0)}_{3}\theta_{12}
-\bar{\alpha}^{(0)}_{1}\bar{\alpha}^{(0)}_{1}\alpha^{(0)}_{1}\theta_{12}
%\\
-\bar{\alpha}^{(0)}_{1}\alpha^{(0)}_{1}\alpha^{(0)}_{1}\theta_{12}
+\alpha^{(2)}_{3}\alpha^{(0)}_{1}\theta_{12}
+\bar{\alpha}^{(2)}_{4}\alpha^{(0)}_{1}\theta_{12}
+\alpha^{(0)}_{1}\alpha^{(0)}_{3}\theta_{12}
-\alpha^{(0)}_{1}\bar{\alpha}^{(0)}_{1}\alpha^{(0)}_{1}\theta_{12}
%\\
-\alpha^{(0)}_{1}\alpha^{(0)}_{1}\alpha^{(0)}_{1}\theta_{12}
\\
+\alpha^{(0)}_{2}\alpha^{(0)}_{1}\theta_{12}
%\\
-\bar{\alpha}^{(0)}_{1}\alpha^{(0)}_{2}\theta_{12}
+\bar{\alpha}^{(0)}_{1}\bar{\alpha}^{(0)}_{1}\alpha^{(0)}_{1}\theta_{12}
-\bar{\alpha}^{(0)}_{1}\alpha^{(2)}_{3}\theta_{12}
-\bar{\alpha}^{(0)}_{1}\bar{\alpha}^{(2)}_{4}\theta_{12}
+\bar{\alpha}^{(0)}_{1}\alpha^{(0)}_{1}\alpha^{(0)}_{1}\theta_{12}
-\bar{\alpha}^{(0)}_{1}\alpha^{(0)}_{2}\theta_{12}
%\\
-\bar{\alpha}^{(0)}_{1}\bar{\alpha}^{(0)}_{1}\alpha^{(0)}_{1}\theta_{12}
\\
+\bar{\alpha}^{(0)}_{1}\bar{\alpha}^{(0)}_{1}\bar{\alpha}^{(0)}_{1}\theta_{12}
+\bar{\alpha}^{(0)}_{1}\bar{\alpha}^{(0)}_{1}\alpha^{(0)}_{1}\theta_{12}
-\bar{\alpha}^{(0)}_{1}\alpha^{(2)}_{3}\theta_{12}
-\bar{\alpha}^{(0)}_{1}\bar{\alpha}^{(2)}_{4}\theta_{12}
-\bar{\alpha}^{(0)}_{1}\alpha^{(0)}_{1}\alpha^{(0)}_{1}\theta_{12}
+\bar{\alpha}^{(0)}_{1}\alpha^{(0)}_{1}\bar{\alpha}^{(0)}_{1}\theta_{12}
%\\
%\end{align*}
%\begin{align*}
+\bar{\alpha}^{(0)}_{1}\alpha^{(0)}_{1}\alpha^{(0)}_{1}\theta_{12}
-\bar{\alpha}^{(0)}_{1}\alpha^{(0)}_{2}\theta_{12}
\\
-\bar{\alpha}^{(2)}_{6}\theta_{13}
-\alpha^{(2)}_{7}\theta_{13}
-\alpha^{(0)}_{1}\bar{\alpha}^{(2)}_{3}\theta_{13}
%\\
-\alpha^{(0)}_{1}\alpha^{(2)}_{3}\theta_{13}
-\alpha^{(0)}_{1}\bar{\alpha}^{(2)}_{4}\theta_{13}
-\alpha^{(0)}_{5}\theta_{13}
%\\
-\bar{\alpha}^{(0)}_{1}\alpha^{(2)}_{3}\theta_{13}
%\\
+\bar{\alpha}^{(0)}_{1}\bar{\alpha}^{(2)}_{3}\theta_{13}
-\bar{\alpha}^{(0)}_{1}\bar{\alpha}^{(2)}_{4}\theta_{13}
+\bar{\alpha}^{(0)}_{1}\alpha^{(2)}_{3}\theta_{13}
\\
-\bar{\alpha}^{(0)}_{1}\alpha^{(2)}_{4}\theta_{13}
+\bar{\alpha}^{(0)}_{1}\bar{\alpha}^{(2)}_{4}\theta_{13}
+\bar{\alpha}^{(0)}_{1}\alpha^{(0)}_{2}\theta_{13}
-\alpha^{(2)}_{3}\bar{\alpha}^{(0)}_{1}\theta_{13}
%\\
+\alpha^{(2)}_{4}\bar{\alpha}^{(0)}_{1}\theta_{13}
+\alpha^{(2)}_{7}\theta_{13}
%\\
-\bar{\alpha}^{(2)}_{4}\bar{\alpha}^{(0)}_{1}\theta_{13}
-\bar{\alpha}^{(2)}_{6}\theta_{13}
-\alpha^{(0)}_{1}\alpha^{(2)}_{3}\theta_{13}
+\alpha^{(0)}_{1}\bar{\alpha}^{(2)}_{3}\theta_{13}
\\
-\alpha^{(0)}_{1}\bar{\alpha}^{(2)}_{4}\theta_{13}
+\alpha^{(0)}_{1}\alpha^{(2)}_{3}\theta_{13}
%\\
-\alpha^{(0)}_{1}\alpha^{(2)}_{4}\theta_{13}
+\alpha^{(0)}_{1}\bar{\alpha}^{(2)}_{4}\theta_{13}
+\alpha^{(0)}_{1}\alpha^{(0)}_{2}\theta_{13}
%\\
+\alpha^{(0)}_{1}\alpha^{(2)}_{3}\theta_{13}
-\alpha^{(0)}_{1}\bar{\alpha}^{(2)}_{3}\theta_{13}
+\alpha^{(0)}_{1}\bar{\alpha}^{(2)}_{4}\theta_{13}
-\alpha^{(0)}_{2}\bar{\alpha}^{(0)}_{1}\theta_{13}
-\alpha^{(0)}_{4}\theta_{13}
\\
+\alpha^{(0)}_{5}\theta_{13}
+\bar{\alpha}^{(0)}_{1}\alpha^{(2)}_{3}\theta_{13}
+\bar{\alpha}^{(0)}_{1}\bar{\alpha}^{(2)}_{4}\theta_{13}
%\\
+\bar{\alpha}^{(0)}_{1}\alpha^{(2)}_{4}\theta_{13}
%\\
+\alpha^{(2)}_{7}\theta_{14}
+\alpha^{(0)}_{1}\alpha^{(2)}_{3}\theta_{14}
+\alpha^{(0)}_{1}\bar{\alpha}^{(2)}_{4}\theta_{14}
+\alpha^{(0)}_{5}\theta_{14}
%\\%%%%%%%%%%%%%%%
-\bar{\alpha}^{(0)}_{1}\alpha^{(2)}_{3}\theta_{14}
%\\
+\bar{\alpha}^{(0)}_{1}\alpha^{(2)}_{4}\theta_{14}
\\
-\bar{\alpha}^{(0)}_{1}\bar{\alpha}^{(2)}_{4}\theta_{14}
-\bar{\alpha}^{(0)}_{1}\alpha^{(0)}_{2}\theta_{14}
+\alpha^{(2)}_{3}\bar{\alpha}^{(0)}_{1}\theta_{14}
%\\
-\alpha^{(2)}_{4}\bar{\alpha}^{(0)}_{1}\theta_{14}
-\alpha^{(2)}_{7}\theta_{14}
+\bar{\alpha}^{(2)}_{4}\bar{\alpha}^{(0)}_{1}\theta_{14}
%\\
%\end{align*}
%\begin{align*}
+\bar{\alpha}^{(2)}_{6}\theta_{14}
%\\
-\alpha^{(0)}_{1}\alpha^{(2)}_{3}\theta_{14}
+\alpha^{(0)}_{1}\alpha^{(2)}_{4}\theta_{14}
-\alpha^{(0)}_{1}\bar{\alpha}^{(2)}_{4}\theta_{14}
\\
-\alpha^{(0)}_{1}\alpha^{(0)}_{2}\theta_{14}
-\alpha^{(0)}_{1}\alpha^{(2)}_{3}\theta_{14}
%\\
+\alpha^{(0)}_{1}\bar{\alpha}^{(2)}_{3}\theta_{14}
%\\
-\alpha^{(0)}_{1}\bar{\alpha}^{(2)}_{4}\theta_{14}
+\alpha^{(0)}_{2}\bar{\alpha}^{(0)}_{1}\theta_{14}
%\\
+\alpha^{(0)}_{4}\theta_{14}
-\alpha^{(0)}_{5}\theta_{14}
-\bar{\alpha}^{(0)}_{1}\alpha^{(2)}_{4}\theta_{14}
+\alpha^{(2)}_{3}\bar{\alpha}^{(0)}_{1}\theta_{14}
+\alpha^{(2)}_{3}\alpha^{(0)}_{1}\theta_{14}
\\
-\alpha^{(2)}_{3}\bar{\alpha}^{(0)}_{1}\theta_{14}
%\\
+\bar{\alpha}^{(2)}_{3}\alpha^{(0)}_{1}\theta_{14}
-\bar{\alpha}^{(2)}_{3}\bar{\alpha}^{(0)}_{1}\theta_{14}
%\\
%%\end{align*}
%%\begin{align*}
-\bar{\alpha}^{(2)}_{3}\alpha^{(0)}_{1}\theta_{14}
+\bar{\alpha}^{(2)}_{4}\bar{\alpha}^{(0)}_{1}\theta_{14}
+\bar{\alpha}^{(2)}_{4}\alpha^{(0)}_{1}\theta_{14}
%\\
%\end{align*}
%\begin{align*}
-\bar{\alpha}^{(2)}_{4}\bar{\alpha}^{(0)}_{1}\theta_{14}
-\bar{\alpha}^{(2)}_{6}\theta_{14}
%\\
-\alpha^{(0)}_{4}\theta_{15}
+\alpha^{(0)}_{5}\theta_{15}
\\
-\alpha^{(2)}_{3}\alpha^{(0)}_{1}\theta_{15}
%\\
+\alpha^{(2)}_{3}\bar{\alpha}^{(0)}_{1}\theta_{15}
%\\
+\bar{\alpha}^{(2)}_{3}\alpha^{(0)}_{1}\theta_{15}
%\\
-\bar{\alpha}^{(2)}_{4}\alpha^{(0)}_{1}\theta_{15}
+\bar{\alpha}^{(2)}_{4}\bar{\alpha}^{(0)}_{1}\theta_{15}
+\bar{\alpha}^{(2)}_{6}\theta_{15}
+\alpha^{(2)}_{3}\alpha^{(0)}_{1}\theta_{15}
-\alpha^{(2)}_{4}\alpha^{(0)}_{1}\theta_{15}
+\alpha^{(2)}_{4}\bar{\alpha}^{(0)}_{1}\theta_{15}
+\alpha^{(2)}_{7}\theta_{15}
\\
+\bar{\alpha}^{(2)}_{4}\alpha^{(0)}_{1}\theta_{15}
+\alpha^{(0)}_{1}\bar{\alpha}^{(2)}_{3}\theta_{15}
+\alpha^{(0)}_{1}\alpha^{(2)}_{3}\theta_{15}
%\end{align*}
%\begin{align*}
+\alpha^{(0)}_{1}\bar{\alpha}^{(2)}_{4}\theta_{15}
+\alpha^{(0)}_{2}\alpha^{(0)}_{1}\theta_{15}
+\alpha^{(0)}_{5}\theta_{15}
+\bar{\alpha}^{(0)}_{1}\alpha^{(0)}_{2}\theta_{15}
+\bar{\alpha}^{(0)}_{1}\alpha^{(2)}_{3}\theta_{15}
%\\
-\bar{\alpha}^{(0)}_{1}\bar{\alpha}^{(2)}_{3}\theta_{15}
%\\
+\bar{\alpha}^{(0)}_{1}\bar{\alpha}^{(2)}_{4}\theta_{15}
\\
-\bar{\alpha}^{(0)}_{1}\alpha^{(2)}_{3}\theta_{15}
+\bar{\alpha}^{(0)}_{1}\alpha^{(2)}_{4}\theta_{15}
-\bar{\alpha}^{(0)}_{1}\bar{\alpha}^{(2)}_{4}\theta_{15}
%\\
-\bar{\alpha}^{(0)}_{1}\alpha^{(0)}_{2}\theta_{15}
+\alpha^{(2)}_{3}\alpha^{(0)}_{1}\theta_{15}
+\alpha^{(2)}_{3}\bar{\alpha}^{(0)}_{1}\theta_{15}
%\\
-\alpha^{(2)}_{4}\bar{\alpha}^{(0)}_{1}\theta_{15}
-\alpha^{(2)}_{7}\theta_{15}
%\\
-\bar{\alpha}^{(2)}_{3}\alpha^{(0)}_{1}\theta_{15}
+\bar{\alpha}^{(2)}_{4}\alpha^{(0)}_{1}\theta_{15}
\\
+\bar{\alpha}^{(2)}_{4}\bar{\alpha}^{(0)}_{1}\theta_{15}
+\bar{\alpha}^{(2)}_{6}\theta_{15}
+\alpha^{(2)}_{3}\theta_{16}
+\bar{\alpha}^{(2)}_{4}\theta_{16}
%\\
+\alpha^{(0)}_{2}\theta_{16}
%\\
-\bar{\alpha}^{(0)}_{1}\alpha^{(0)}_{1}\theta_{16}
+\alpha^{(2)}_{3}\theta_{16}
+\bar{\alpha}^{(2)}_{4}\theta_{16}
%\\
-\alpha^{(0)}_{1}\alpha^{(0)}_{1}\theta_{16}
%\\
%\end{align*}
%\begin{align*}
+\alpha^{(0)}_{2}\theta_{16}
%\\
+\bar{\alpha}^{(0)}_{1}\alpha^{(0)}_{1}\theta_{16}
\\
-\bar{\alpha}^{(0)}_{1}\bar{\alpha}^{(0)}_{1}\theta_{16}
-\bar{\alpha}^{(0)}_{1}\alpha^{(0)}_{1}\theta_{16}
+\alpha^{(2)}_{3}\theta_{16}
%\\
+\bar{\alpha}^{(2)}_{4}\theta_{16}
+\alpha^{(0)}_{1}\alpha^{(0)}_{1}\theta_{16}
-\alpha^{(0)}_{1}\bar{\alpha}^{(0)}_{1}\theta_{16}
-\alpha^{(0)}_{1}\alpha^{(0)}_{1}\theta_{16}
%\\
+\alpha^{(0)}_{2}\theta_{16}
+\bar{\alpha}^{(0)}_{1}\bar{\alpha}^{(0)}_{1}\theta_{16}
%\\
+\bar{\alpha}^{(0)}_{1}\alpha^{(0)}_{1}\theta_{16}
\\
-\bar{\alpha}^{(0)}_{1}\bar{\alpha}^{(0)}_{1}\theta_{16}
%\\
-\bar{\alpha}^{(0)}_{1}\alpha^{(0)}_{1}\theta_{16}
-\alpha^{(0)}_{3}\theta_{17}
-\bar{\alpha}^{(0)}_{1}\alpha^{(0)}_{1}\theta_{17}
%\end{align*}
%\begin{align*}
-\alpha^{(0)}_{1}\alpha^{(0)}_{1}\theta_{17}
+\bar{\alpha}^{(0)}_{1}\alpha^{(0)}_{1}\theta_{17}
%\\
+\bar{\alpha}^{(0)}_{1}\bar{\alpha}^{(0)}_{1}\theta_{17}
+\bar{\alpha}^{(0)}_{1}\alpha^{(0)}_{1}\theta_{17}
-\alpha^{(0)}_{2}\theta_{18}
%\\
+\bar{\alpha}^{(0)}_{1}\alpha^{(0)}_{1}\theta_{18}
\\
-\alpha^{(2)}_{3}\theta_{18}
-\bar{\alpha}^{(2)}_{4}\theta_{18}
+\alpha^{(0)}_{1}\alpha^{(0)}_{1}\theta_{18}
-\alpha^{(0)}_{2}\theta_{18}
%\\
-\bar{\alpha}^{(0)}_{1}\alpha^{(0)}_{1}\theta_{18}
+\bar{\alpha}^{(0)}_{1}\bar{\alpha}^{(0)}_{1}\theta_{18}
%\\
+\bar{\alpha}^{(0)}_{1}\alpha^{(0)}_{1}\theta_{18}
%\\
-\alpha^{(2)}_{3}\theta_{18}
%\\
-\bar{\alpha}^{(2)}_{4}\theta_{18}
-\alpha^{(0)}_{1}\alpha^{(0)}_{1}\theta_{18}
+\alpha^{(0)}_{1}\bar{\alpha}^{(0)}_{1}\theta_{18}
\\
+\alpha^{(0)}_{1}\alpha^{(0)}_{1}\theta_{18}
-\alpha^{(0)}_{2}\theta_{18}
-\bar{\alpha}^{(0)}_{1}\bar{\alpha}^{(0)}_{1}\theta_{18}
%\\
-\bar{\alpha}^{(0)}_{1}\alpha^{(0)}_{1}\theta_{18}
%\\
%\end{align*}
%\begin{align*}
+\bar{\alpha}^{(0)}_{1}\bar{\alpha}^{(0)}_{1}\theta_{18}
%\\
+\bar{\alpha}^{(0)}_{1}\alpha^{(0)}_{1}\theta_{18}
-\alpha^{(2)}_{3}\theta_{18}
-\bar{\alpha}^{(2)}_{4}\theta_{18}
%\\
+\alpha^{(0)}_{1}\theta_{19}
%\\
%\\
%\end{align*}
%\begin{align*}
-\bar{\alpha}^{(0)}_{1}\theta_{19}
+\bar{\alpha}^{(0)}_{1}\theta_{20}
\\
+\alpha^{(0)}_{1}\theta_{20}
-\bar{\alpha}^{(0)}_{1}\theta_{20}
-\alpha^{(0)}_{1}\theta_{20}
+\theta_{21}
&= 0
,
\\
%\end{align*}
%\begin{align*}
\bar{\alpha}^{(0)}_{1}
+\alpha^{(0)}_{1}
&= 0
,
%%%%%%%%%%%%%%%%%%%%%%%%%%%%%%%%%%%%%%%%%%%
\\
\alpha^{(0)}_{1}\bar{\alpha}^{(0)}_{1}
+\alpha^{(0)}_{3}
&= 0
,\\
\alpha^{(0)}_{2}\bar{\alpha}^{(0)}_{1}
+\alpha^{(0)}_{4}
&= 0
,\\
\alpha^{(0)}_{4}\bar{\alpha}^{(0)}_{1}
-\alpha^{(0)}_{5}\bar{\alpha}^{(0)}_{1}
+\alpha^{(0)}_{7}
&= 0
,
\\
%\end{align*}
%\begin{align*}
 \alpha^{(0)}_{6}\bar{\alpha}^{(0)}_{1}
+\alpha^{(0)}_{8}
&= 0
,\\
\bar{\alpha}^{(2)}_{3}
+\alpha^{(2)}_{3}
&= 0
,\\
\bar{\alpha}^{(2)}_{4}
+\alpha^{(2)}_{4}
&= 0
,\\
-\alpha^{(1)}_{2}
+\bar{\alpha}^{(2)}_{6}
+\alpha^{(2)}_{7}
&= 0
,\\
\bar{\alpha}^{(2)}_{8}
+\alpha^{(2)}_{3}\bar{\alpha}^{(2)}_{3}
-\alpha^{(2)}_{4}\bar{\alpha}^{(2)}_{4}
-\alpha^{(2)}_{10}
&= 0
,\\
\alpha^{(2)}_{3}\bar{\alpha}^{(2)}_{4}
-\alpha^{(2)}_{4}\bar{\alpha}^{(2)}_{4}
-\alpha^{(2)}_{10}
&= 0
,
\\
%\end{align*}
%\begin{align*}
-\alpha^{(1)}_{4}
+\bar{\alpha}^{(2)}_{11}
-\alpha^{(2)}_{3}\bar{\alpha}^{(2)}_{6}
+\alpha^{(2)}_{4}\bar{\alpha}^{(2)}_{6}
+\alpha^{(2)}_{7}\bar{\alpha}^{(2)}_{3}
-\alpha^{(2)}_{7}\bar{\alpha}^{(2)}_{4}
&= 0
,\\
\bar{\alpha}^{(2)}_{13}
+\alpha^{(2)}_{14}
&= 0
,\\
\bar{\alpha}^{(2)}_{15}
+\alpha^{(2)}_{3}\bar{\alpha}^{(2)}_{11}
+\alpha^{(2)}_{15}
&= 0
,
\\
%\end{align*}
%\begin{align*}
\alpha^{(4)}_{1}
+\alpha^{(3)}_{1}
&= 0
,\\
\alpha^{(4)}_{1}
+\bar{\alpha}^{(3)}_{2}
&= 0
,\\
\bar{\alpha}^{(3)}_{3}
-\bar{\alpha}^{(3)}_{5}
&= 0
,\\
\bar{\alpha}^{(3)}_{2}\alpha^{(3)}_{1}
+\bar{\alpha}^{(3)}_{4}
-\bar{\alpha}^{(3)}_{5}
&= 0
,\\
\bar{\alpha}^{(3)}_{3}\alpha^{(3)}_{1}
-\bar{\alpha}^{(3)}_{5}\alpha^{(3)}_{1}
&= 0
,\\
-\bar{\alpha}^{(4)}_{4}
-\bar{\alpha}^{(3)}_{5}\alpha^{(3)}_{1}
+\bar{\alpha}^{(3)}_{7}
&= 0
,\\
\bar{\alpha}^{(4)}_{3}
-\bar{\alpha}^{(4)}_{4}
-\bar{\alpha}^{(3)}_{5}\alpha^{(3)}_{1}
+\bar{\alpha}^{(3)}_{8}
&= 0
,\\
\bar{\alpha}^{(4)}_{5}
-\bar{\alpha}^{(3)}_{7}\alpha^{(3)}_{1}
+\bar{\alpha}^{(3)}_{8}\alpha^{(3)}_{1}
+\bar{\alpha}^{(3)}_{9}
-\bar{\alpha}^{(3)}_{10}
&= 0
,\end{align*}
\begin{align*}%\\
\bar{\alpha}^{(4)}_{6}
+\alpha^{(4)}_{1}\bar{\alpha}^{(4)}_{3}
+\bar{\alpha}^{(3)}_{11}
&= 0
,\\
\bar{\alpha}^{(4)}_{7}
-\alpha^{(4)}_{1}\bar{\alpha}^{(4)}_{5}
+\bar{\alpha}^{(3)}_{9}\alpha^{(3)}_{1}
-\bar{\alpha}^{(3)}_{11}\alpha^{(3)}_{1}
+\bar{\alpha}^{(3)}_{12}
-\bar{\alpha}^{(3)}_{14}
&= 0
,\\
\bar{\alpha}^{(4)}_{7}
-\alpha^{(4)}_{1}\bar{\alpha}^{(4)}_{5}
-\bar{\alpha}^{(3)}_{11}\alpha^{(3)}_{1}
+\bar{\alpha}^{(3)}_{13}
-\bar{\alpha}^{(3)}_{14}
&= 0
,
\\
%\end{align*}
%\begin{align*}
-\bar{\alpha}^{(4)}_{8}
-\bar{\alpha}^{(3)}_{13}\alpha^{(3)}_{1}
-\bar{\alpha}^{(3)}_{16}
&= 0
,\\
-\bar{\alpha}^{(4)}_{9}
-\alpha^{(4)}_{1}\bar{\alpha}^{(4)}_{8}
+\bar{\alpha}^{(3)}_{17}
&= 0
,
\\
%\end{align*}
%\begin{align*}
-\alpha^{(5)}_{1}
+\bar{\alpha}^{(4)}_{3}
&= 0
,\\
-\bar{\alpha}^{(5)}_{1}
+\bar{\alpha}^{(4)}_{4}
&= 0
,\\
-\bar{\alpha}^{(5)}_{2}
-\bar{\alpha}^{(4)}_{4}\alpha^{(4)}_{1}
+\bar{\alpha}^{(4)}_{5}
-\bar{\alpha}^{(4)}_{6}
&= 0
,\\
\alpha^{(5)}_{1}\bar{\alpha}^{(5)}_{1}
+\bar{\alpha}^{(4)}_{8}
&= 0
,\\
\bar{\alpha}^{(5)}_{3}
+\alpha^{(5)}_{1}\bar{\alpha}^{(5)}_{2}
-\bar{\alpha}^{(4)}_{8}\alpha^{(4)}_{1}
+\bar{\alpha}^{(4)}_{9}
&= 0
,\\
\alpha^{(5)}_{1}
+\bar{\alpha}^{(5)}_{1}
&= 0
,\\
-\bar{\alpha}^{(5)}_{2}\alpha^{(5)}_{1}
+\bar{\alpha}^{(5)}_{3}
&= 0 .
\end{align*}%
%\end{small}%
\end{footnotesize}%
Further,
we apply 
formulas for 
$\theta_2$, $\theta_6$ and $\theta_{11}$
(obtained in \ref{secA:adm}), 
to the above equations 
and solve obtained system of equations 
for the remaining variables 
\begin{small}
\setlength{\jot}{2pt}
\begin{gather*}
\alpha^{(0)}_{1},
\alpha^{(0)}_{2},
\alpha^{(0)}_{3},
\alpha^{(0)}_{4},
\alpha^{(0)}_{5},
\alpha^{(0)}_{6},
\alpha^{(0)}_{7},
\alpha^{(0)}_{8},
\bar{\alpha}^{(0)}_{1},
\alpha^{(1)}_{2},
\alpha^{(1)}_{4},
%\\
\alpha^{(2)}_{3},
\alpha^{(2)}_{4},
\alpha^{(2)}_{7},
\alpha^{(2)}_{10},
\alpha^{(2)}_{13},
\alpha^{(2)}_{14},
\alpha^{(2)}_{15},
\bar{\alpha}^{(2)}_{3},
\bar{\alpha}^{(2)}_{4},
\bar{\alpha}^{(2)}_{6},
\bar{\alpha}^{(2)}_{8},
\bar{\alpha}^{(2)}_{11},
\bar{\alpha}^{(2)}_{13},
\bar{\alpha}^{(2)}_{15},
%\\
\alpha^{(3)}_{1},
\\
\bar{\alpha}^{(3)}_{2},
\bar{\alpha}^{(3)}_{3},
\bar{\alpha}^{(3)}_{4},
\bar{\alpha}^{(3)}_{5},
\bar{\alpha}^{(3)}_{7},
\bar{\alpha}^{(3)}_{8},
\bar{\alpha}^{(3)}_{9},
\bar{\alpha}^{(3)}_{10},
\bar{\alpha}^{(3)}_{11},
\bar{\alpha}^{(3)}_{12},
\bar{\alpha}^{(3)}_{13},
\bar{\alpha}^{(3)}_{14},
\bar{\alpha}^{(3)}_{16},
\bar{\alpha}^{(3)}_{17},
%\\
\alpha^{(4)}_{1},
\bar{\alpha}^{(4)}_{3},
\bar{\alpha}^{(4)}_{4},
\bar{\alpha}^{(4)}_{5},
\bar{\alpha}^{(4)}_{6},
\bar{\alpha}^{(4)}_{7},
\bar{\alpha}^{(4)}_{8},
\bar{\alpha}^{(4)}_{9},
\alpha^{(5)}_{1},
\bar{\alpha}^{(5)}_{1},
\bar{\alpha}^{(5)}_{2},
\bar{\alpha}^{(5)}_{3}.
\end{gather*}%
\end{small}%

\subsection{Coefficients for an exemplary homomorphism}
\label{secA:coeff}

Finally, we obtain the following 
solution of the above system of equation
%\begin{small}
\begin{footnotesize}
\setlength{\jot}{0pt}
\begin{align*}
\alpha^{(0)}_{1} &=  2 \theta_{3}
- \theta_{1}
,
\\
\alpha^{(0)}_{2} &= 
- \theta_{1}^{2} 
+ 2 \theta_{1} \theta_{3}
- 2 \theta_{3}^{2} 
- 2 \theta_{4}
+ \theta_{5}
+ \theta_{7}
,
\\
\alpha^{(0)}_{3} &=  \theta_{1}^{2} 
- 4 \theta_{1} \theta_{3}
+ 4 \theta_{3}^{2} 
,
\\
\alpha^{(0)}_{4} &=  \theta_{1}^{3} 
- 4 \theta_{1}^{2}  \theta_{3}
+ 6 \theta_{1} \theta_{3}^{2} 
- 4 \theta_{3}^{3} 
+ 2 \theta_{1} \theta_{4}
- \theta_{1} \theta_{5}
- \theta_{1} \theta_{7}
- 4 \theta_{3} \theta_{4}
+ 2 \theta_{3} \theta_{5}
%\\&\ \quad
+ 2 \theta_{3} \theta_{7}
,
\\
\alpha^{(0)}_{5} &= 
- 2 \theta_{1}^{3} 
+ 11 \theta_{1}^{2}  \theta_{3}
- 18 \theta_{1} \theta_{3}^{2} 
+ 10 \theta_{3}^{3} 
- \theta_{1} \theta_{4}
+ 2 \theta_{1} \theta_{7}
+ 3 \theta_{3} \theta_{4}
- \theta_{3} \theta_{5}
%\\&\ \quad
- 3 \theta_{3} \theta_{7}
+ \theta_{10}
- \theta_{9}
,
\\
\alpha^{(0)}_{6} &=  4 \theta_{7} \theta_{5}
+ \theta_{5}^{2} 
- 6 \theta_{4} \theta_{5}
- 6 \theta_{4} \theta_{7}
- 9 \theta_{1} \theta_{5} \theta_{3}
- 11 \theta_{1} \theta_{7} \theta_{3}
+ 12 \theta_{1} \theta_{3} \theta_{4}
%\\&\ \quad
- 26 \theta_{3}^{4} 
- 6 \theta_{1}^{4} 
- \tfrac{1}{2} \theta_{10} \theta_{1}
+ \tfrac{1}{2} \theta_{10} \theta_{3}
%\\&\ \quad
+ 33 \theta_{1}^{3}  \theta_{3}
- 70 \theta_{1}^{2}  \theta_{3}^{2} 
\\&\ \quad
+ 69 \theta_{1} \theta_{3}^{3} 
%\\&\ \quad
+ 3 \theta_{1}^{2}  \theta_{5}
+ \theta_{1}^{2}  \theta_{7}
+ 5 \theta_{3}^{2}  \theta_{5}
+ 11 \theta_{3}^{2}  \theta_{7}
- \theta_{1}^{2}  \theta_{4}
- 11 \theta_{3}^{2}  \theta_{4}
+ \theta_{9} \theta_{1}
- \theta_{9} \theta_{3}
%\\&\ \quad
- \tfrac{9}{2} \theta_{8} \theta_{1}
+ \tfrac{13}{2} \theta_{8} \theta_{3}
+ \theta_{7}^{2} 
+ 6 \theta_{4}^{2} 
- \theta_{12}
+ \theta_{13}
,
\\
%\end{align*}
%\begin{align*}
\alpha^{(0)}_{7} &= 
- 3 \theta_{1}^{4} 
+ 21 \theta_{1}^{3}  \theta_{3}
- 54 \theta_{1}^{2}  \theta_{3}^{2} 
+ 62 \theta_{1} \theta_{3}^{3} 
- 28 \theta_{3}^{4} 
- 3 \theta_{1}^{2}  \theta_{4}
+ \theta_{1}^{2}  \theta_{5}
+ 3 \theta_{1}^{2}  \theta_{7}
%\\&\ \quad
+ 13 \theta_{1} \theta_{3} \theta_{4}
- 5 \theta_{1} \theta_{3} \theta_{5}
- 11 \theta_{1} \theta_{3} \theta_{7}
%\\&\ \quad
- 14 \theta_{3}^{2}  \theta_{4}
\\&\ \quad
+ 6 \theta_{3}^{2}  \theta_{5}
+ 10 \theta_{3}^{2}  \theta_{7}
%\\&\ \quad
+ \theta_{1} \theta_{10}
- \theta_{1} \theta_{9}
- 2 \theta_{10} \theta_{3}
+ 2 \theta_{3} \theta_{9}
,
\\
\alpha^{(0)}_{8} &= 
- \theta_{1} \theta_{5}^{2} 
+ 2 \theta_{3} \theta_{5}^{2} 
- 4 \theta_{1} \theta_{7} \theta_{5}
+ 8 \theta_{3} \theta_{7} \theta_{5}
+ 13 \theta_{7} \theta_{1}^{2}  \theta_{3}
+ \theta_{1}^{3}  \theta_{4}
+ 35 \theta_{3}^{2}  \theta_{1} \theta_{4}
%\\&\ \quad
- 23 \theta_{3}^{2}  \theta_{1} \theta_{5}
- 33 \theta_{3}^{2}  \theta_{1} \theta_{7}
+ 15 \theta_{5} \theta_{1}^{2}  \theta_{3}
\\&\ \quad
- 52 \theta_{3}^{5} 
+ 136 \theta_{3}^{2}  \theta_{1}^{3} 
- 209 \theta_{3}^{3}  \theta_{1}^{2} 
%\\&\ \quad
+ 164 \theta_{3}^{4}  \theta_{1}
- 22 \theta_{3}^{3}  \theta_{4}
+ 10 \theta_{3}^{3}  \theta_{5}
+ 22 \theta_{3}^{3}  \theta_{7}
- 6 \theta_{1} \theta_{4}^{2} 
+ 12 \theta_{3} \theta_{4}^{2} 
- \theta_{1} \theta_{7}^{2} 
%\\&\ \quad
+ 2 \theta_{3} \theta_{7}^{2} 
\\&\ \quad
- \theta_{13} \theta_{1}
+ 2 \theta_{13} \theta_{3}
+ \theta_{12} \theta_{1}
- 2 \theta_{12} \theta_{3}
+ \tfrac{9}{2} \theta_{1}^{2}  \theta_{8}
+ 13 \theta_{3}^{2}  \theta_{8}
%\\&\ \quad
- \theta_{1}^{2}  \theta_{9}
- 2 \theta_{3}^{2}  \theta_{9}
+ \tfrac{1}{2} \theta_{1}^{2}  \theta_{10}
+ \theta_{10} \theta_{3}^{2} 
- \tfrac{31}{2} \theta_{1} \theta_{3} \theta_{8}
+ 3 \theta_{1} \theta_{3} \theta_{9}
\\&\ \quad
- \tfrac{3}{2} \theta_{1} \theta_{10} \theta_{3}
+ 6 \theta_{1} \theta_{4} \theta_{5}
+ 6 \theta_{1} \theta_{4} \theta_{7}
- 12 \theta_{3} \theta_{4} \theta_{5}
- 12 \theta_{3} \theta_{4} \theta_{7}
- 45 \theta_{1}^{4}  \theta_{3}
%\\&\ \quad
- 3 \theta_{1}^{3}  \theta_{5}
- \theta_{1}^{3}  \theta_{7}
+ 6 \theta_{1}^{5} 
- 14 \theta_{4} \theta_{1}^{2}  \theta_{3}
,
\\
%\end{align*}
%\begin{align*}
\bar{\alpha}^{(0)}_{1} &= 
- 2 \theta_{3}
+ \theta_{1}
,
\\
\alpha^{(1)}_{2} &=  \theta_{1}^{3} 
- 3 \theta_{1}^{2}  \theta_{3}
+ 2 \theta_{3}^{3} 
+ \theta_{1} \theta_{4}
- \theta_{1} \theta_{5}
- \theta_{1} \theta_{7}
- 3 \theta_{3} \theta_{4}
+ 3 \theta_{3} \theta_{5}
+ \theta_{3} \theta_{7}
%\\&\ \quad
- \theta_{10}
+ \theta_{9}
,
\\
%\end{align*}
%\begin{align*}
\alpha^{(1)}_{4} &=  \tfrac{5}{2} \theta_{10} \theta_{5}
+ \tfrac{13}{2} \theta_{8} \theta_{5}
- 11 \theta_{1} \theta_{5}^{2} 
+ 10 \theta_{3} \theta_{5}^{2} 
- 16 \theta_{1} \theta_{7} \theta_{5}
+ 18 \theta_{3} \theta_{7} \theta_{5}
- 3 \theta_{9} \theta_{5}
%\\&\ \quad
- 46 \theta_{7} \theta_{1}^{2}  \theta_{3}
- 31 \theta_{1}^{3}  \theta_{4}
- 70 \theta_{3}^{2}  \theta_{1} \theta_{4}
%\\&\ \quad
+ 36 \theta_{3}^{2}  \theta_{1} \theta_{5}
\\&\ \quad
+ 46 \theta_{3}^{2}  \theta_{1} \theta_{7}
- 42 \theta_{5} \theta_{1}^{2}  \theta_{3}
%\\&\ \quad
+ 25 \theta_{3}^{5} 
- 94 \theta_{3}^{2}  \theta_{1}^{3} 
+ 122 \theta_{3}^{3}  \theta_{1}^{2} 
- 86 \theta_{3}^{4}  \theta_{1}
+ 22 \theta_{3}^{3}  \theta_{4}
- 12 \theta_{3}^{3}  \theta_{5}
- 16 \theta_{3}^{3}  \theta_{7}
%\\&\ \quad
- 12 \theta_{4} \theta_{8}
- 4 \theta_{10} \theta_{4}
\\&\ \quad
- 24 \theta_{1} \theta_{4}^{2} 
+ 28 \theta_{3} \theta_{4}^{2} 
+ 4 \theta_{4} \theta_{9}
+ \tfrac{3}{2} \theta_{10} \theta_{7}
+ \tfrac{11}{2} \theta_{7} \theta_{8}
%\\&\ \quad
- \theta_{7} \theta_{9}
- 5 \theta_{1} \theta_{7}^{2} 
+ 8 \theta_{3} \theta_{7}^{2} 
- \theta_{13} \theta_{1}
%\\&\ \quad
+ \theta_{13} \theta_{3}
%\\&\ \quad
+ 2 \theta_{12} \theta_{1}
- 2 \theta_{12} \theta_{3}
+ 2 \theta_{14} \theta_{1}
\\&\ \quad
- 2 \theta_{14} \theta_{3}
- \theta_{15} \theta_{1}
+ \theta_{15} \theta_{3}
- \tfrac{5}{2} \theta_{1}^{2}  \theta_{8}
+ \tfrac{1}{2} \theta_{3}^{2}  \theta_{8}
- 7 \theta_{3}^{2}  \theta_{9}
- \tfrac{3}{2} \theta_{1}^{2}  \theta_{10}
+ \tfrac{9}{2} \theta_{10} \theta_{3}^{2} 
%\\
%\end{align*}
%\begin{align*}
%\\&\ \quad
+ 6 \theta_{1} \theta_{3} \theta_{9}
- 2 \theta_{1} \theta_{10} \theta_{3}
+ 34 \theta_{1} \theta_{4} \theta_{5}
\\&\ \quad
+ 22 \theta_{1} \theta_{4} \theta_{7}
%\\&\ \quad
- 34 \theta_{3} \theta_{4} \theta_{5}
- 30 \theta_{3} \theta_{4} \theta_{7}
%\\&\ \quad
+ 42 \theta_{1}^{4}  \theta_{3}
+ 18 \theta_{1}^{3}  \theta_{5}
+ 16 \theta_{1}^{3}  \theta_{7}
- 9 \theta_{1}^{5} 
%\\&\ \quad
+ 79 \theta_{4} \theta_{1}^{2}  \theta_{3}
- \theta_{16}
%\\&\ \quad
+ \theta_{17}
+ \theta_{18}
,
\\
%\end{align*}
%\begin{align*}
\alpha^{(2)}_{3} &=  \theta_{1}^{2} 
- 4 \theta_{1} \theta_{3}
+ 4 \theta_{3}^{2} 
+ \theta_{4}
- \theta_{7}
,
\\
\alpha^{(2)}_{4} &= 
- 2 \theta_{1} \theta_{3}
+ 2 \theta_{3}^{2} 
- \theta_{4}
+ \theta_{5}
,
\\
\alpha^{(2)}_{7} &=  3 \theta_{1}^{3} 
- 10 \theta_{1}^{2}  \theta_{3}
+ 11 \theta_{1} \theta_{3}^{2} 
- 5 \theta_{3}^{3} 
+ 4 \theta_{1} \theta_{4}
- 3 \theta_{1} \theta_{5}
- 2 \theta_{1} \theta_{7}
- 8 \theta_{3} \theta_{4}
%\\&\ \quad
+ 5 \theta_{3} \theta_{5}
+ 4 \theta_{3} \theta_{7}
+ \theta_{8}
,
\\
\alpha^{(2)}_{10} &=  2 \theta_{1}^{3}  \theta_{3}
- 6 \theta_{1}^{2}  \theta_{3}^{2} 
+ 8 \theta_{1} \theta_{3}^{3} 
- 4 \theta_{3}^{4} 
+ \theta_{1}^{2}  \theta_{4}
- \theta_{1}^{2}  \theta_{5}
+ 2 \theta_{1} \theta_{3} \theta_{4}
- 2 \theta_{1} \theta_{3} \theta_{7}
%\\&\ \quad
- 2 \theta_{3}^{2}  \theta_{4}
+ 2 \theta_{3}^{2}  \theta_{7}
+ 2 \theta_{4}^{2} 
- 3 \theta_{4} \theta_{5}
%\\&\ \quad
- \theta_{4} \theta_{7}
%\\&\ \quad
+ \theta_{5}^{2} 
+ \theta_{5} \theta_{7}
,
\\
\alpha^{(2)}_{13} &=  12 \theta_{4}^{3} 
- \tfrac{3}{2} \theta_{10}^{2} 
- 3 \theta_{4} \theta_{12}
- 2 \theta_{4} \theta_{14}
- 8 \theta_{7} \theta_{5}^{2} 
+ 201 \theta_{3}^{2}  \theta_{4} \theta_{5}
+ 221 \theta_{3}^{2}  \theta_{4} \theta_{7}
%\\&\ \quad
- 126 \theta_{3}^{2}  \theta_{5} \theta_{7}
+ 95 \theta_{1}^{2}  \theta_{3} \theta_{8}
- \tfrac{201}{2} \theta_{1} \theta_{3}^{2}  \theta_{8}
%\\&\ \quad
- 51 \theta_{1}^{2}  \theta_{3} \theta_{9}
\\&\ \quad
+ 61 \theta_{1}^{2}  \theta_{4} \theta_{5}
%\\&\ \quad
+ 30 \theta_{1}^{2}  \theta_{4} \theta_{7}
- 26 \theta_{1}^{2}  \theta_{5} \theta_{7}
+ 51 \theta_{1} \theta_{3}^{2}  \theta_{9}
+ 172 \theta_{1} \theta_{3} \theta_{4}^{2} 
+ 92 \theta_{1} \theta_{3} \theta_{5}^{2} 
%\\&\ \quad
+ 60 \theta_{1} \theta_{3} \theta_{7}^{2} 
%\\&\ \quad
+ 2 \theta_{5} \theta_{12}
%\\&\ \quad
+ \theta_{5} \theta_{14}
+ 2 \theta_{1}^{2}  \theta_{13}
\\&\ \quad
- 2 \theta_{7} \theta_{13}
+ 2 \theta_{3}^{2}  \theta_{13}
- 2 \theta_{5} \theta_{13}
%\\
%%\end{align*}
%%\begin{align*}
%&\ \quad
- 3 \theta_{8}^{2} 
+ 18 \theta_{1} \theta_{4} \theta_{9}
- 14 \theta_{1} \theta_{5} \theta_{9}
- 10 \theta_{1} \theta_{7} \theta_{9}
- 34 \theta_{3} \theta_{4} \theta_{9}
%\\&\ \quad
+ 24 \theta_{3} \theta_{5} \theta_{9}
+ 20 \theta_{3} \theta_{7} \theta_{9}
\\&\ \quad
- 36 \theta_{1} \theta_{4} \theta_{8}
%\\&\ \quad
+ \tfrac{55}{2} \theta_{1} \theta_{5} \theta_{8}
+ \tfrac{41}{2} \theta_{1} \theta_{7} \theta_{8}
+ 75 \theta_{3} \theta_{4} \theta_{8}
- 50 \theta_{3} \theta_{5} \theta_{8}
- 45 \theta_{3} \theta_{7} \theta_{8}
- 3 \theta_{1} \theta_{3} \theta_{13}
%\\&\ \quad
- 24 \theta_{3} \theta_{5} \theta_{10}
- 21 \theta_{3} \theta_{7} \theta_{10}
\\&\ \quad
+ \tfrac{29}{2} \theta_{1} \theta_{5} \theta_{10}
+ \tfrac{19}{2} \theta_{1} \theta_{7} \theta_{10}
+ 35 \theta_{3} \theta_{4} \theta_{10}
- 18 \theta_{1} \theta_{4} \theta_{10}
+ 28 \theta_{4} \theta_{5} \theta_{7}
%\\&\ \quad
+ 4 \theta_{14} \theta_{3}^{2} 
%\\&\ \quad
- \theta_{15} \theta_{3}^{2} 
%\\&\ \quad
- \theta_{1}^{2}  \theta_{12}
+ 3 \theta_{12} \theta_{3}^{2} 
+ \theta_{18} \theta_{1}
%\\&\ \quad
- 3 \theta_{18} \theta_{3}
\\&\ \quad
- \theta_{16} \theta_{1}
%\\&\ \quad
+ 3 \theta_{16} \theta_{3}
+ 8 \theta_{4} \theta_{7}^{2} 
- 20 \theta_{4}^{2}  \theta_{7}
+ 8 \theta_{4} \theta_{5}^{2} 
- 20 \theta_{4}^{2}  \theta_{5}
+ \theta_{1} \theta_{15} \theta_{3}
%\\&\ \quad
- 2 \theta_{1} \theta_{12} \theta_{3}
%\\&\ \quad
- 4 \theta_{1} \theta_{14} \theta_{3}
- \tfrac{113}{2} \theta_{1} \theta_{3}^{2}  \theta_{10}
+ 54 \theta_{1}^{2}  \theta_{3} \theta_{10}
\\&\ \quad
+ 2 \theta_{7} \theta_{12}
+ \theta_{7} \theta_{14}
- \tfrac{31}{2} \theta_{1}^{3}  \theta_{10}
%\\&\ \quad
+ 16 \theta_{3}^{3}  \theta_{10}
- \tfrac{9}{2} \theta_{8} \theta_{10}
%\\&\ \quad
+ \tfrac{7}{2} \theta_{8} \theta_{9}
%\\&\ \quad
+ 3 \theta_{4} \theta_{13}
- 244 \theta_{1} \theta_{3} \theta_{4} \theta_{5}
- 200 \theta_{1} \theta_{3} \theta_{4} \theta_{7}
%\\&\ \quad
+ 134 \theta_{1} \theta_{3} \theta_{5} \theta_{7}
\\&\ \quad
+ \tfrac{5}{2} \theta_{9} \theta_{10}
+ 421 \theta_{1}^{3}  \theta_{3} \theta_{4}
- 306 \theta_{1}^{3}  \theta_{3} \theta_{5}
- 228 \theta_{1}^{3}  \theta_{3} \theta_{7}
%\\
%%\end{align*}
%%\begin{align*}
%&\ \quad
- 839 \theta_{1}^{2}  \theta_{3}^{2}  \theta_{4}
+ 546 \theta_{1}^{2}  \theta_{3}^{2}  \theta_{5}
+ 487 \theta_{1}^{2}  \theta_{3}^{2}  \theta_{7}
+ 730 \theta_{1} \theta_{3}^{3}  \theta_{4}
- 436 \theta_{1} \theta_{3}^{3}  \theta_{5}
\\&\ \quad
- 444 \theta_{1} \theta_{3}^{3}  \theta_{7}
- \theta_{17} \theta_{3}
- \tfrac{55}{2} \theta_{1}^{3}  \theta_{8}
%\\&\ \quad
+ 35 \theta_{3}^{3}  \theta_{8}
%\\&\ \quad
+ 220 \theta_{1}^{5}  \theta_{3}
- 629 \theta_{1}^{4}  \theta_{3}^{2} 
%\\&\ \quad
+ 980 \theta_{1}^{3}  \theta_{3}^{3} 
- 889 \theta_{1}^{2}  \theta_{3}^{4} 
+ 445 \theta_{1} \theta_{3}^{5} 
- 74 \theta_{1}^{4}  \theta_{4}
%\\&\ \quad
+ 62 \theta_{1}^{4}  \theta_{5}
\\&\ \quad
+ 36 \theta_{1}^{4}  \theta_{7}
%\\&\ \quad
- 242 \theta_{3}^{4}  \theta_{4}
+ 136 \theta_{3}^{4}  \theta_{5}
%\\&\ \quad
+ 150 \theta_{3}^{4}  \theta_{7}
+ 15 \theta_{1}^{3}  \theta_{9}
- 32 \theta_{1}^{2}  \theta_{4}^{2} 
- 30 \theta_{1}^{2}  \theta_{5}^{2} 
%\\&\ \quad
- 8 \theta_{1}^{2}  \theta_{7}^{2} 
- 13 \theta_{3}^{3}  \theta_{9}
- 171 \theta_{3}^{2}  \theta_{4}^{2} 
- 62 \theta_{3}^{2}  \theta_{5}^{2} 
\\&\ \quad
- 72 \theta_{3}^{2}  \theta_{7}^{2} 
- 32 \theta_{1}^{6} 
- 95 \theta_{3}^{6} 
%\\&\ \quad
- \theta_{9}^{2} 
- \theta_{20}
- 8 \theta_{7}^{2}  \theta_{5}
,
\\
\alpha^{(2)}_{14} &= 
- 13 \theta_{4}^{3} 
- \tfrac{1}{8} \theta_{10}^{2} 
+ 2 \theta_{4} \theta_{12}
+ \theta_{4} \theta_{14}
- \tfrac{3}{2} \theta_{4} \theta_{15}
+ 4 \theta_{7} \theta_{5}^{2} 
+ \tfrac{105}{2} \theta_{3}^{2}  \theta_{4} \theta_{5}
%\\&\ \quad
+ \tfrac{93}{2} \theta_{3}^{2}  \theta_{4} \theta_{7}
- 35 \theta_{3}^{2}  \theta_{5} \theta_{7}
+ \theta_{5} \theta_{15}
%\\&\ \quad
+ \tfrac{13}{2} \theta_{1}^{2}  \theta_{3} \theta_{8}
%\\&\ \quad
+ \tfrac{3}{2} \theta_{1} \theta_{3}^{2}  \theta_{8}
\\&\ \quad
+ 3 \theta_{1}^{2}  \theta_{3} \theta_{9}
%\\&\ \quad
+ 28 \theta_{1}^{2}  \theta_{4} \theta_{5}
+ \tfrac{57}{2} \theta_{1}^{2}  \theta_{4} \theta_{7}
- \tfrac{35}{2} \theta_{1}^{2}  \theta_{5} \theta_{7}
- 12 \theta_{1} \theta_{3}^{2}  \theta_{9}
+ 50 \theta_{1} \theta_{3} \theta_{4}^{2} 
%\\&\ \quad
+ 27 \theta_{1} \theta_{3} \theta_{5}^{2} 
+ 21 \theta_{1} \theta_{3} \theta_{7}^{2} 
%\\&\ \quad
- \tfrac{1}{2} \theta_{5} \theta_{12}
- \tfrac{1}{2} \theta_{5} \theta_{14}
\\&\ \quad
- \tfrac{1}{2} \theta_{7} \theta_{13}
+ \tfrac{1}{2} \theta_{3}^{2}  \theta_{13}
%\\&\ \quad
- \tfrac{5}{8} \theta_{8}^{2} 
+ \tfrac{1}{2} \theta_{7} \theta_{15}
- 2 \theta_{1} \theta_{5} \theta_{9}
- \theta_{1} \theta_{7} \theta_{9}
+ 3 \theta_{3} \theta_{4} \theta_{9}
+ 2 \theta_{3} \theta_{5} \theta_{9}
%\\
%%\end{align*}
%%\begin{align*}
%&\ \quad
+ \tfrac{17}{4} \theta_{1} \theta_{5} \theta_{8}
+ \tfrac{15}{4} \theta_{1} \theta_{7} \theta_{8}
+ \theta_{3} \theta_{4} \theta_{8}
\\&\ \quad
- \tfrac{15}{2} \theta_{3} \theta_{5} \theta_{8}
- \tfrac{11}{2} \theta_{3} \theta_{7} \theta_{8}
%\\&\ \quad
- \tfrac{1}{2} \theta_{1} \theta_{3} \theta_{13}
- \tfrac{5}{2} \theta_{3} \theta_{5} \theta_{10}
- \tfrac{1}{2} \theta_{3} \theta_{7} \theta_{10}
+ \tfrac{7}{4} \theta_{1} \theta_{5} \theta_{10}
%\\&\ \quad
+ \tfrac{5}{4} \theta_{1} \theta_{7} \theta_{10}
%\\&\ \quad
- \theta_{3} \theta_{4} \theta_{10}
- \theta_{1} \theta_{4} \theta_{10}
- 18 \theta_{4} \theta_{5} \theta_{7}
\\&\ \quad
+ \tfrac{3}{2} \theta_{1}^{2}  \theta_{14}
+ \tfrac{5}{2} \theta_{14} \theta_{3}^{2} 
%\\&\ \quad
- \theta_{1}^{2}  \theta_{15}
- \tfrac{1}{2} \theta_{15} \theta_{3}^{2} 
+ \tfrac{3}{2} \theta_{1}^{2}  \theta_{12}
+ 3 \theta_{12} \theta_{3}^{2} 
%\\&\ \quad
+ \tfrac{1}{2} \theta_{18} \theta_{1}
- \theta_{18} \theta_{3}
- \tfrac{1}{2} \theta_{16} \theta_{1}
%\\&\ \quad
+ \theta_{16} \theta_{3}
- \tfrac{17}{2} \theta_{4} \theta_{7}^{2} 
+ \tfrac{37}{2} \theta_{4}^{2}  \theta_{7}
%\\&\ \quad
- \tfrac{17}{2} \theta_{4} \theta_{5}^{2} 
\\&\ \quad
+ \tfrac{37}{2} \theta_{4}^{2}  \theta_{5}
+ \tfrac{3}{2} \theta_{1} \theta_{15} \theta_{3}
%\\&\ \quad
- 4 \theta_{1} \theta_{12} \theta_{3}
%\\&\ \quad
- 4 \theta_{1} \theta_{14} \theta_{3}
+ \tfrac{21}{2} \theta_{1} \theta_{3}^{2}  \theta_{10}
- \tfrac{3}{2} \theta_{1}^{2}  \theta_{3} \theta_{10}
- \tfrac{1}{2} \theta_{7} \theta_{12}
%\\&\ \quad
- \tfrac{1}{2} \theta_{7} \theta_{14}
- \tfrac{3}{4} \theta_{1}^{3}  \theta_{10}
%\\&\ \quad
- \tfrac{41}{4} \theta_{3}^{3}  \theta_{10}
%\\&\ \quad
+ \tfrac{1}{4} \theta_{8} \theta_{10}
\end{align*}
\begin{align*}%\\
&\ \quad
+ \tfrac{1}{2} \theta_{8} \theta_{9}
- \tfrac{1}{2} \theta_{4} \theta_{13}
%\\&\ \quad
- 67 \theta_{1} \theta_{3} \theta_{4} \theta_{5}
- 65 \theta_{1} \theta_{3} \theta_{4} \theta_{7}
+ 44 \theta_{1} \theta_{3} \theta_{5} \theta_{7}
+ \tfrac{1}{2} \theta_{9} \theta_{10}
+ \tfrac{167}{2} \theta_{1}^{3}  \theta_{3} \theta_{4}
%\\&\ \quad
- 64 \theta_{1}^{3}  \theta_{3} \theta_{5}
%\\&\ \quad
- 72 \theta_{1}^{3}  \theta_{3} \theta_{7}
%\\&\ \quad
- 133 \theta_{1}^{2}  \theta_{3}^{2}  \theta_{4}
\\&\ \quad
+ \tfrac{193}{2} \theta_{1}^{2}  \theta_{3}^{2}  \theta_{5}
+ 125 \theta_{1}^{2}  \theta_{3}^{2}  \theta_{7}
%\\&\ \quad
+ 83 \theta_{1} \theta_{3}^{3}  \theta_{4}
- \tfrac{127}{2} \theta_{1} \theta_{3}^{3}  \theta_{5}
- \tfrac{173}{2} \theta_{1} \theta_{3}^{3}  \theta_{7}
+ \tfrac{3}{2} \theta_{7}^{3} 
- \tfrac{1}{2} \theta_{17} \theta_{1}
+ \tfrac{1}{2} \theta_{17} \theta_{3}
%\\&\ \quad
- \tfrac{11}{4} \theta_{1}^{3}  \theta_{8}
%\\&\ \quad
- \tfrac{25}{4} \theta_{3}^{3}  \theta_{8}
%\\&\ \quad
+ \tfrac{3}{2} \theta_{5}^{3} 
\\&\ \quad
+ 53 \theta_{1}^{5}  \theta_{3}
- \tfrac{301}{2} \theta_{1}^{4}  \theta_{3}^{2} 
+ \tfrac{471}{2} \theta_{1}^{3}  \theta_{3}^{3} 
%\\&\ \quad
- \tfrac{423}{2} \theta_{1}^{2}  \theta_{3}^{4} 
+ 99 \theta_{1} \theta_{3}^{5} 
- \tfrac{39}{2} \theta_{1}^{4}  \theta_{4}
+ \tfrac{31}{2} \theta_{1}^{4}  \theta_{5}
+ \tfrac{31}{2} \theta_{1}^{4}  \theta_{7}
- 16 \theta_{3}^{4}  \theta_{4}
%\\
%%\end{align*}
%%\begin{align*}
%&\ \quad
+ 18 \theta_{3}^{4}  \theta_{5}
%\\&\ \quad
+ 17 \theta_{3}^{4}  \theta_{7}
\\&\ \quad
+ \tfrac{1}{2} \theta_{1}^{3}  \theta_{9}
- 24 \theta_{1}^{2}  \theta_{4}^{2} 
- 11 \theta_{1}^{2}  \theta_{5}^{2} 
- 8 \theta_{1}^{2}  \theta_{7}^{2} 
%\\&\ \quad
+ \tfrac{21}{2} \theta_{3}^{3}  \theta_{9}
- 36 \theta_{3}^{2}  \theta_{4}^{2} 
- \tfrac{37}{2} \theta_{3}^{2}  \theta_{5}^{2} 
- \tfrac{31}{2} \theta_{3}^{2}  \theta_{7}^{2} 
- 8 \theta_{1}^{6} 
%\\&\ \quad
- 18 \theta_{3}^{6} 
%\\&\ \quad
- \tfrac{1}{2} \theta_{9}^{2} 
- \tfrac{1}{2} \theta_{19}
+ 4 \theta_{7}^{2}  \theta_{5}
,
\\
%\end{align*}
%\begin{align*}
\alpha^{(2)}_{15} &= 
- 13 \theta_{1} \theta_{5}^{3} 
- 3 \theta_{7} \theta_{9} \theta_{5}
- \theta_{1} \theta_{20}
+ \tfrac{3}{2} \theta_{4} \theta_{16}
- \tfrac{3}{2} \theta_{4} \theta_{17}
- \tfrac{3}{2} \theta_{4} \theta_{18}
%\\&\ \quad
- 17 \theta_{1} \theta_{7}^{2}  \theta_{5}
+ \theta_{9} \theta_{3} \theta_{10}
+ \tfrac{13}{2} \theta_{3} \theta_{5}^{3} 
- \tfrac{107}{4} \theta_{4} \theta_{5} \theta_{8}
%\\&\ \quad
- \tfrac{51}{4} \theta_{4} \theta_{7} \theta_{8}
\\&\ \quad
- 30 \theta_{3} \theta_{4} \theta_{5} \theta_{7}
+ 68 \theta_{1} \theta_{4} \theta_{5} \theta_{7}
- 73 \theta_{3} \theta_{1}^{3}  \theta_{9}
+ 64 \theta_{3} \theta_{1}^{2}  \theta_{4}^{2} 
+ \tfrac{155}{2} \theta_{3} \theta_{1}^{2}  \theta_{5}^{2} 
%\\&\ \quad
+ \tfrac{79}{2} \theta_{3} \theta_{1}^{2}  \theta_{7}^{2} 
+ 4 \theta_{1}^{3}  \theta_{7}^{2} 
+ \tfrac{9}{2} \theta_{3} \theta_{5} \theta_{13}
- 28 \theta_{1} \theta_{7} \theta_{5}^{2} 
\\&\ \quad
- \tfrac{25}{4} \theta_{1}^{2}  \theta_{4} \theta_{10}
%\\&\ \quad
- 28 \theta_{3} \theta_{4} \theta_{5}^{2} 
- \tfrac{59}{4} \theta_{3}^{2}  \theta_{4} \theta_{10}
+ \tfrac{11}{2} \theta_{1} \theta_{4} \theta_{13}
+ \theta_{3} \theta_{20}
+ \tfrac{9}{2} \theta_{3} \theta_{7} \theta_{13}
%\\&\ \quad
- \tfrac{37}{2} \theta_{3} \theta_{1} \theta_{5} \theta_{10}
- \tfrac{37}{2} \theta_{3} \theta_{1} \theta_{7} \theta_{10}
%\\&\ \quad
+ \tfrac{45}{2} \theta_{3} \theta_{1} \theta_{4} \theta_{10}
\\&\ \quad
+ \tfrac{11}{2} \theta_{3} \theta_{4} \theta_{15}
%\\&\ \quad
- \tfrac{37}{4} \theta_{8}^{2}  \theta_{1}
- \tfrac{1}{2} \theta_{8} \theta_{14}
+ \tfrac{31}{2} \theta_{1} \theta_{3}^{2}  \theta_{13}
+ \tfrac{87}{2} \theta_{3}^{7} 
+ \tfrac{1}{2} \theta_{10} \theta_{14}
+ \tfrac{41}{2} \theta_{3}^{2}  \theta_{5} \theta_{10}
%\\&\ \quad
+ 9 \theta_{3}^{2}  \theta_{7} \theta_{10}
- 22 \theta_{1}^{7} 
%\\&\ \quad
+ \tfrac{9}{2} \theta_{15} \theta_{1}^{2}  \theta_{3}
\\&\ \quad
- \tfrac{15}{2} \theta_{15} \theta_{1} \theta_{3}^{2} 
+ \tfrac{7}{2} \theta_{15} \theta_{3}^{3} 
%\\&\ \quad
- \tfrac{243}{2} \theta_{3} \theta_{1}^{2}  \theta_{4} \theta_{5}
- \tfrac{197}{2} \theta_{3} \theta_{1}^{2}  \theta_{4} \theta_{7}
+ 86 \theta_{3} \theta_{1}^{2}  \theta_{5} \theta_{7}
+ \tfrac{37}{2} \theta_{1} \theta_{4} \theta_{7}^{2} 
%\\&\ \quad
- \tfrac{1}{2} \theta_{7} \theta_{10} \theta_{5}
%\\&\ \quad
+ \tfrac{23}{2} \theta_{7} \theta_{8} \theta_{5}
+ \tfrac{35}{2} \theta_{3} \theta_{7} \theta_{5}^{2} 
\\&\ \quad
- 777 \theta_{1}^{3}  \theta_{3}^{2}  \theta_{4}
+ 725 \theta_{1}^{3}  \theta_{3}^{2}  \theta_{5}
%\\&\ \quad
+ 558 \theta_{1}^{3}  \theta_{3}^{2}  \theta_{7}
+ 1003 \theta_{1}^{2}  \theta_{3}^{3}  \theta_{4}
- 863 \theta_{1}^{2}  \theta_{3}^{3}  \theta_{5}
- 786 \theta_{1}^{2}  \theta_{3}^{3}  \theta_{7}
%\\&\ \quad
- 565 \theta_{1} \theta_{3}^{4}  \theta_{4}
%\\&\ \quad
+ 485 \theta_{1} \theta_{3}^{4}  \theta_{5}
\\&\ \quad
+ 496 \theta_{1} \theta_{3}^{4}  \theta_{7}
- 5 \theta_{9} \theta_{5}^{2} 
- \tfrac{495}{2} \theta_{3}^{3}  \theta_{4} \theta_{5}
- \tfrac{429}{2} \theta_{3}^{3}  \theta_{4} \theta_{7}
%\\&\ \quad
+ 160 \theta_{3}^{3}  \theta_{5} \theta_{7}
+ 115 \theta_{1}^{2}  \theta_{3}^{2}  \theta_{9}
%\\&\ \quad
- 59 \theta_{1} \theta_{3}^{3}  \theta_{9}
- 235 \theta_{1} \theta_{3}^{2}  \theta_{4}^{2} 
- 162 \theta_{1} \theta_{3}^{2}  \theta_{5}^{2} 
\\&\ \quad
- 115 \theta_{1} \theta_{3}^{2}  \theta_{7}^{2} 
+ \tfrac{29}{2} \theta_{1} \theta_{3}^{3}  \theta_{10}
- \tfrac{115}{2} \theta_{1}^{2}  \theta_{3}^{2}  \theta_{10}
+ \tfrac{75}{2} \theta_{3} \theta_{1}^{3}  \theta_{10}
%\\&\ \quad
+ \tfrac{747}{2} \theta_{1} \theta_{3}^{2}  \theta_{4} \theta_{5}
%\\&\ \quad
+ \tfrac{621}{2} \theta_{1} \theta_{3}^{2}  \theta_{4} \theta_{7}
- 240 \theta_{1} \theta_{3}^{2}  \theta_{5} \theta_{7}
- \tfrac{7}{2} \theta_{3} \theta_{4} \theta_{13}
\\&\ \quad
- \tfrac{5}{2} \theta_{3} \theta_{7} \theta_{15}
+ \tfrac{505}{2} \theta_{3} \theta_{1}^{4}  \theta_{4}
- \tfrac{553}{2} \theta_{3} \theta_{1}^{4}  \theta_{5}
- \tfrac{343}{2} \theta_{3} \theta_{1}^{4}  \theta_{7}
- 2 \theta_{3} \theta_{7} \theta_{12}
%\\&\ \quad
+ 35 \theta_{1}^{5}  \theta_{5}
+ 14 \theta_{1}^{5}  \theta_{7}
- 3 \theta_{1}^{3}  \theta_{14}
- \theta_{9} \theta_{13}
%\\&\ \quad
+ 13 \theta_{8}^{2}  \theta_{3}
\\&\ \quad
- \tfrac{65}{2} \theta_{1}^{3}  \theta_{4} \theta_{5}
- \tfrac{45}{2} \theta_{1}^{3}  \theta_{4} \theta_{7}
+ \tfrac{19}{2} \theta_{1}^{2}  \theta_{4} \theta_{9}
- \tfrac{65}{4} \theta_{1}^{2}  \theta_{4} \theta_{8}
- 4 \theta_{1} \theta_{4} \theta_{14}
%\\
%%\end{align*}
%%\begin{align*}
%&\ \quad
- \tfrac{21}{4} \theta_{10} \theta_{4} \theta_{5}
%\\&\ \quad
+ \tfrac{11}{4} \theta_{10} \theta_{4} \theta_{7}
+ \tfrac{115}{2} \theta_{1} \theta_{4} \theta_{5}^{2} 
+ 12 \theta_{4} \theta_{5} \theta_{9}
\\&\ \quad
+ 2 \theta_{4} \theta_{7} \theta_{9}
+ \tfrac{45}{2} \theta_{3} \theta_{4}^{2}  \theta_{5}
%\\&\ \quad
+ \tfrac{5}{2} \theta_{3} \theta_{4}^{2}  \theta_{7}
+ 3 \theta_{1} \theta_{7} \theta_{12}
- \tfrac{7}{2} \theta_{1} \theta_{7} \theta_{13}
%\\&\ \quad
+ \tfrac{1}{2} \theta_{1} \theta_{7} \theta_{15}
- \tfrac{5}{2} \theta_{3} \theta_{5} \theta_{15}
%\\&\ \quad
- \tfrac{145}{2} \theta_{1} \theta_{4}^{2}  \theta_{5}
- \tfrac{77}{2} \theta_{1} \theta_{4}^{2}  \theta_{7}
\\&\ \quad
+ 3 \theta_{10} \theta_{1}^{2}  \theta_{5}
%\\&\ \quad
+ 7 \theta_{10} \theta_{1}^{2}  \theta_{7}
+ 2 \theta_{3} \theta_{4} \theta_{14}
%\\&\ \quad
- 6 \theta_{14} \theta_{3}^{3} 
%\\&\ \quad
- 6 \theta_{12} \theta_{3}^{3} 
%\\&\ \quad
- 4 \theta_{1}^{3}  \theta_{12}
+ \tfrac{11}{2} \theta_{1}^{3}  \theta_{13}
- \tfrac{1}{2} \theta_{1}^{3}  \theta_{15}
+ \tfrac{7}{2} \theta_{18} \theta_{3}^{2} 
%\\&\ \quad
- \tfrac{7}{2} \theta_{16} \theta_{3}^{2} 
- \tfrac{29}{2} \theta_{1}^{2}  \theta_{3} \theta_{13}
\\&\ \quad
+ 4 \theta_{1} \theta_{12} \theta_{3}^{2} 
+ 7 \theta_{1} \theta_{14} \theta_{3}^{2} 
+ 2 \theta_{1} \theta_{7} \theta_{14}
%\\&\ \quad
+ 4 \theta_{1} \theta_{5} \theta_{14}
%\\&\ \quad
+ 12 \theta_{1}^{3}  \theta_{5} \theta_{7}
+ 5 \theta_{1} \theta_{5} \theta_{12}
- \tfrac{11}{2} \theta_{1} \theta_{5} \theta_{13}
+ \tfrac{1}{2} \theta_{1} \theta_{5} \theta_{15}
%\\&\ \quad
- \tfrac{149}{2} \theta_{1} \theta_{7} \theta_{3} \theta_{8}
\\&\ \quad
+ \tfrac{21}{2} \theta_{3} \theta_{7}^{2}  \theta_{5}
- \tfrac{179}{4} \theta_{3}^{4}  \theta_{8}
- 2 \theta_{9}^{2}  \theta_{1}
- \tfrac{1}{2} \theta_{8} \theta_{10} \theta_{1}
%\\
%%\end{align*}
%%\begin{align*}
%&\ \quad
+ \tfrac{49}{2} \theta_{8} \theta_{1}^{2}  \theta_{5}
+ \tfrac{33}{2} \theta_{8} \theta_{1}^{2}  \theta_{7}
+ \tfrac{31}{2} \theta_{9} \theta_{1}^{4} 
- 666 \theta_{1}^{5}  \theta_{3}^{2} 
+ \tfrac{2515}{2} \theta_{1}^{4}  \theta_{3}^{3} 
%\\&\ \quad
- 1409 \theta_{1}^{3}  \theta_{3}^{4} 
\\&\ \quad
+ \tfrac{1871}{2} \theta_{1}^{2}  \theta_{3}^{5} 
- 328 \theta_{1} \theta_{3}^{6} 
+ \tfrac{221}{2} \theta_{3}^{5}  \theta_{4}
%\\&\ \quad
- \tfrac{221}{2} \theta_{3}^{5}  \theta_{5}
%\\&\ \quad
- \tfrac{217}{2} \theta_{3}^{5}  \theta_{7}
- \tfrac{5}{2} \theta_{3}^{4}  \theta_{9}
+ 165 \theta_{3}^{3}  \theta_{4}^{2} 
+ \tfrac{179}{2} \theta_{3}^{3}  \theta_{5}^{2} 
+ \tfrac{157}{2} \theta_{3}^{3}  \theta_{7}^{2} 
+ \tfrac{381}{2} \theta_{3} \theta_{1}^{6} 
%\\&\ \quad
+ 2 \theta_{3} \theta_{9}^{2} 
\\&\ \quad
+ \tfrac{161}{2} \theta_{1} \theta_{3} \theta_{4} \theta_{8}
%\\&\ \quad
- \tfrac{177}{2} \theta_{1} \theta_{5} \theta_{3} \theta_{8}
+ \tfrac{73}{4} \theta_{3}^{4}  \theta_{10}
- \tfrac{29}{4} \theta_{10} \theta_{1}^{4} 
%\\
%%\end{align*}
%%\begin{align*}
%&\ \quad
- \tfrac{1}{4} \theta_{10}^{2}  \theta_{1}
- 4 \theta_{8} \theta_{3} \theta_{10}
- 4 \theta_{3} \theta_{4} \theta_{7}^{2} 
- 4 \theta_{1} \theta_{4} \theta_{12}
- \tfrac{3}{2} \theta_{1} \theta_{4} \theta_{15}
+ 2 \theta_{8} \theta_{13}
\\&\ \quad
- 4 \theta_{3} \theta_{5} \theta_{12}
+ \tfrac{35}{2} \theta_{4}^{2}  \theta_{8}
+ \tfrac{1}{2} \theta_{10} \theta_{4}^{2} 
- 6 \theta_{4}^{2}  \theta_{9}
+ 27 \theta_{1} \theta_{4}^{3} 
+ 3 \theta_{3} \theta_{4}^{3} 
%\\&\ \quad
- 2 \theta_{1} \theta_{7}^{3} 
- \tfrac{1}{2} \theta_{3} \theta_{7}^{3} 
+ \tfrac{7}{4} \theta_{7}^{2}  \theta_{8}
- \tfrac{3}{4} \theta_{10} \theta_{7}^{2} 
- \theta_{3} \theta_{10}^{2} 
%\\&\ \quad
- \theta_{8} \theta_{12}
\\&\ \quad
- \tfrac{1}{2} \theta_{8} \theta_{15}
%\\&\ \quad
- \tfrac{13}{2} \theta_{3}^{3}  \theta_{13}
+ \tfrac{35}{4} \theta_{8} \theta_{5}^{2} 
+ \tfrac{53}{2} \theta_{3}^{2}  \theta_{4} \theta_{9}
- \tfrac{63}{2} \theta_{3}^{2}  \theta_{5} \theta_{9}
- \tfrac{41}{2} \theta_{3}^{2}  \theta_{7} \theta_{9}
%\\&\ \quad
- 37 \theta_{3} \theta_{1} \theta_{4} \theta_{9}
+ 37 \theta_{3} \theta_{1} \theta_{5} \theta_{9}
%\\&\ \quad
+ 31 \theta_{3} \theta_{1} \theta_{7} \theta_{9}
\\&\ \quad
- \theta_{3} \theta_{7} \theta_{14}
%\\&\ \quad
- 3 \theta_{3} \theta_{5} \theta_{14}
%\\&\ \quad+
\tfrac{1}{2} \theta_{1}^{2}  \theta_{16}
- \tfrac{1}{2} \theta_{1}^{2}  \theta_{18}
- \tfrac{5}{2} \theta_{1}^{2}  \theta_{17}
- \tfrac{5}{2} \theta_{17} \theta_{3}^{2} 
- \theta_{19} \theta_{1}
+ 2 \theta_{19} \theta_{3}
%\\&\ \quad
+ 2 \theta_{3} \theta_{1}^{2}  \theta_{14}
%\\&\ \quad
+ 5 \theta_{3} \theta_{1}^{2}  \theta_{12}
- 2 \theta_{3} \theta_{18} \theta_{1}
\\&\ \quad
+ 2 \theta_{3} \theta_{16} \theta_{1}
+ 5 \theta_{1} \theta_{17} \theta_{3}
%\\&\ \quad
+ \tfrac{3}{2} \theta_{5} \theta_{17}
+ \tfrac{3}{2} \theta_{5} \theta_{18}
- \tfrac{3}{2} \theta_{5} \theta_{16}
- \tfrac{1}{2} \theta_{7} \theta_{16}
+ \tfrac{1}{2} \theta_{7} \theta_{17}
+ \tfrac{1}{2} \theta_{7} \theta_{18}
%\\&\ \quad
- 20 \theta_{1}^{5}  \theta_{4}
+ 29 \theta_{1}^{3}  \theta_{4}^{2} 
- \tfrac{133}{4} \theta_{8} \theta_{1}^{4} 
\\&\ \quad
+ \tfrac{319}{2} \theta_{1}^{3}  \theta_{3} \theta_{8}
- \tfrac{547}{2} \theta_{1}^{2}  \theta_{3}^{2}  \theta_{8}
%\\&\ \quad
+ \tfrac{385}{2} \theta_{1} \theta_{3}^{3}  \theta_{8}
+ 72 \theta_{3}^{2}  \theta_{5} \theta_{8}
+ \tfrac{133}{2} \theta_{3}^{2}  \theta_{7} \theta_{8}
- \tfrac{291}{4} \theta_{3}^{2}  \theta_{4} \theta_{8}
%\end{align*}
%\begin{align*}
%\\&\ \quad
%\\&\ \quad
- 7 \theta_{9} \theta_{3} \theta_{8}
%\\&\ \quad
+ \theta_{9} \theta_{10} \theta_{1}
- \tfrac{19}{2} \theta_{9} \theta_{1}^{2}  \theta_{5}
\\&\ \quad
- \tfrac{19}{2} \theta_{9} \theta_{1}^{2}  \theta_{7}
+ \tfrac{13}{4} \theta_{10} \theta_{5}^{2} 
+ \tfrac{1}{2} \theta_{21}
+ 7 \theta_{9} \theta_{8} \theta_{1}
%\\&\ \quad
+ \theta_{10} \theta_{12}
- \tfrac{1}{2} \theta_{10} \theta_{15}
,
\\
\bar{\alpha}^{(2)}_{3} &= 
- \theta_{1}^{2} 
+ 4 \theta_{1} \theta_{3}
- 4 \theta_{3}^{2} 
- \theta_{4}
+ \theta_{7}
,
\\
\bar{\alpha}^{(2)}_{4} &=  2 \theta_{1} \theta_{3}
- 2 \theta_{3}^{2} 
+ \theta_{4}
- \theta_{5}
,
\\
\bar{\alpha}^{(2)}_{6} &= 
- 2 \theta_{1}^{3} 
+ 7 \theta_{1}^{2}  \theta_{3}
- 11 \theta_{1} \theta_{3}^{2} 
+ 7 \theta_{3}^{3} 
- 3 \theta_{1} \theta_{4}
+ 2 \theta_{1} \theta_{5}
+ \theta_{1} \theta_{7}
+ 5 \theta_{3} \theta_{4}
%\\&\ \quad
- 2 \theta_{3} \theta_{5}
- 3 \theta_{3} \theta_{7}
- \theta_{10}
- \theta_{8}
+ \theta_{9}
,
\\
\bar{\alpha}^{(2)}_{8} &=  \theta_{1}^{4} 
- 6 \theta_{1}^{3}  \theta_{3}
+ 14 \theta_{1}^{2}  \theta_{3}^{2} 
- 16 \theta_{1} \theta_{3}^{3} 
+ 8 \theta_{3}^{4} 
+ 3 \theta_{1}^{2}  \theta_{4}
- \theta_{1}^{2}  \theta_{5}
- 2 \theta_{1}^{2}  \theta_{7}
%\\&\ \quad
- 10 \theta_{1} \theta_{3} \theta_{4}
+ 4 \theta_{1} \theta_{3} \theta_{5}
+ 6 \theta_{1} \theta_{3} \theta_{7}
+ 10 \theta_{3}^{2}  \theta_{4}
%\\&\ \quad
- 4 \theta_{3}^{2}  \theta_{5}
\\&\ \quad
- 6 \theta_{3}^{2}  \theta_{7}
%\\&\ \quad
+ 2 \theta_{4}^{2} 
- \theta_{4} \theta_{5}
- 3 \theta_{4} \theta_{7}
+ \theta_{5} \theta_{7}
+ \theta_{7}^{2} 
,
\\
%\end{align*}
%\begin{align*}
\bar{\alpha}^{(2)}_{11} &=  \tfrac{7}{2} \theta_{10} \theta_{5}
+ \tfrac{13}{2} \theta_{8} \theta_{5}
- 10 \theta_{1} \theta_{5}^{2} 
+ 7 \theta_{3} \theta_{5}^{2} 
- 14 \theta_{1} \theta_{7} \theta_{5}
+ 14 \theta_{3} \theta_{7} \theta_{5}
%\\&\ \quad
- 4 \theta_{9} \theta_{5}
- 40 \theta_{7} \theta_{1}^{2}  \theta_{3}
- 28 \theta_{1}^{3}  \theta_{4}
- 62 \theta_{3}^{2}  \theta_{1} \theta_{4}
%\\&\ \quad
+ 28 \theta_{3}^{2}  \theta_{1} \theta_{5}
\\&\ \quad
+ 42 \theta_{3}^{2}  \theta_{1} \theta_{7}
%\\&\ \quad
- 34 \theta_{5} \theta_{1}^{2}  \theta_{3}
+ 29 \theta_{3}^{5} 
- 86 \theta_{3}^{2}  \theta_{1}^{3} 
+ 118 \theta_{3}^{3}  \theta_{1}^{2} 
- 90 \theta_{3}^{4}  \theta_{1}
+ 20 \theta_{3}^{3}  \theta_{4}
%\\&\ \quad
- 8 \theta_{3}^{3}  \theta_{5}
- 16 \theta_{3}^{3}  \theta_{7}
%\\&\ \quad
- 12 \theta_{4} \theta_{8}
- 6 \theta_{10} \theta_{4}
\\&\ \quad
- 22 \theta_{1} \theta_{4}^{2} 
+ 22 \theta_{3} \theta_{4}^{2} 
+ 6 \theta_{4} \theta_{9}
%\\
%%\end{align*}
%%\begin{align*}
%&\ \quad
+ \tfrac{5}{2} \theta_{10} \theta_{7}
+ \tfrac{11}{2} \theta_{7} \theta_{8}
- 2 \theta_{7} \theta_{9}
- 4 \theta_{1} \theta_{7}^{2} 
+ 7 \theta_{3} \theta_{7}^{2} 
- \theta_{13} \theta_{1}
+ \theta_{13} \theta_{3}
%\\&\ \quad
+ 2 \theta_{12} \theta_{1}
- 2 \theta_{12} \theta_{3}
\\&\ \quad
+ 2 \theta_{14} \theta_{1}
- 2 \theta_{14} \theta_{3}
- \theta_{15} \theta_{1}
+ \theta_{15} \theta_{3}
- \tfrac{5}{2} \theta_{1}^{2}  \theta_{8}
%\\&\ \quad
+ \tfrac{1}{2} \theta_{3}^{2}  \theta_{8}
+ \theta_{1}^{2}  \theta_{9}
- 5 \theta_{3}^{2}  \theta_{9}
- \tfrac{5}{2} \theta_{1}^{2}  \theta_{10}
%\\&\ \quad
+ \tfrac{5}{2} \theta_{10} \theta_{3}^{2} 
%\\&\ \quad
+ 4 \theta_{1} \theta_{3} \theta_{9}
%\\&\ \quad
+ 31 \theta_{1} \theta_{4} \theta_{5}
\\&\ \quad
+ 19 \theta_{1} \theta_{4} \theta_{7}
%\\&\ \quad
- 25 \theta_{3} \theta_{4} \theta_{5}
- 25 \theta_{3} \theta_{4} \theta_{7}
+ 37 \theta_{1}^{4}  \theta_{3}
%\\&\ \quad
+ 16 \theta_{1}^{3}  \theta_{5}
+ 14 \theta_{1}^{3}  \theta_{7}
- 8 \theta_{1}^{5} 
+ 68 \theta_{4} \theta_{1}^{2}  \theta_{3}
%\\&\ \quad
- \theta_{16}
+ \theta_{17}
+ \theta_{18}
,
\\
\bar{\alpha}^{(2)}_{13} &=  13 \theta_{4}^{3} 
+ \tfrac{1}{8} \theta_{10}^{2} 
- 2 \theta_{4} \theta_{12}
- \theta_{4} \theta_{14}
+ \tfrac{3}{2} \theta_{4} \theta_{15}
- 4 \theta_{7} \theta_{5}^{2} 
- \tfrac{105}{2} \theta_{3}^{2}  \theta_{4} \theta_{5}
%\\&\ \quad
- \tfrac{93}{2} \theta_{3}^{2}  \theta_{4} \theta_{7}
+ 35 \theta_{3}^{2}  \theta_{5} \theta_{7}
- \theta_{5} \theta_{15}
- \tfrac{13}{2} \theta_{1}^{2}  \theta_{3} \theta_{8}
%\\&\ \quad
- \tfrac{3}{2} \theta_{1} \theta_{3}^{2}  \theta_{8}
\\&\ \quad
- 3 \theta_{1}^{2}  \theta_{3} \theta_{9}
- 28 \theta_{1}^{2}  \theta_{4} \theta_{5}
- \tfrac{57}{2} \theta_{1}^{2}  \theta_{4} \theta_{7}
+ \tfrac{35}{2} \theta_{1}^{2}  \theta_{5} \theta_{7}
+ 12 \theta_{1} \theta_{3}^{2}  \theta_{9}
- 50 \theta_{1} \theta_{3} \theta_{4}^{2} 
- 27 \theta_{1} \theta_{3} \theta_{5}^{2} 
- 21 \theta_{1} \theta_{3} \theta_{7}^{2} 
%\\&\ \quad
+ \tfrac{1}{2} \theta_{5} \theta_{12}
+ \tfrac{1}{2} \theta_{5} \theta_{14}
\\&\ \quad
+ \tfrac{1}{2} \theta_{7} \theta_{13}
- \tfrac{1}{2} \theta_{3}^{2}  \theta_{13}
%\\
%%\end{align*}
%%\begin{align*}
%&\ \quad
+ \tfrac{5}{8} \theta_{8}^{2} 
- \tfrac{1}{2} \theta_{7} \theta_{15}
+ 2 \theta_{1} \theta_{5} \theta_{9}
+ \theta_{1} \theta_{7} \theta_{9}
- 3 \theta_{3} \theta_{4} \theta_{9}
- 2 \theta_{3} \theta_{5} \theta_{9}
%\\&\ \quad
- \tfrac{17}{4} \theta_{1} \theta_{5} \theta_{8}
%\\&\ \quad
- \tfrac{15}{4} \theta_{1} \theta_{7} \theta_{8}
- \theta_{3} \theta_{4} \theta_{8}
\\&\ \quad
+ \tfrac{15}{2} \theta_{3} \theta_{5} \theta_{8}
+ \tfrac{11}{2} \theta_{3} \theta_{7} \theta_{8}
%\\&\ \quad
+ \tfrac{1}{2} \theta_{1} \theta_{3} \theta_{13}
+ \tfrac{5}{2} \theta_{3} \theta_{5} \theta_{10}
+ \tfrac{1}{2} \theta_{3} \theta_{7} \theta_{10}
- \tfrac{7}{4} \theta_{1} \theta_{5} \theta_{10}
- \tfrac{5}{4} \theta_{1} \theta_{7} \theta_{10}
%\\&\ \quad
+ \theta_{3} \theta_{4} \theta_{10}
+ \theta_{1} \theta_{4} \theta_{10}
+ 18 \theta_{4} \theta_{5} \theta_{7}
\\&\ \quad
- \tfrac{3}{2} \theta_{1}^{2}  \theta_{14}
- \tfrac{5}{2} \theta_{14} \theta_{3}^{2} 
+ \theta_{1}^{2}  \theta_{15}
%\\&\ \quad
+ \tfrac{1}{2} \theta_{15} \theta_{3}^{2} 
- \tfrac{3}{2} \theta_{1}^{2}  \theta_{12}
- 3 \theta_{12} \theta_{3}^{2} 
- \tfrac{1}{2} \theta_{18} \theta_{1}
+ \theta_{18} \theta_{3}
%\\&\ \quad
+ \tfrac{1}{2} \theta_{16} \theta_{1}
- \theta_{16} \theta_{3}
%\\&\ \quad
+ \tfrac{17}{2} \theta_{4} \theta_{7}^{2} 
- \tfrac{37}{2} \theta_{4}^{2}  \theta_{7}
%\\&\ \quad
+ \tfrac{17}{2} \theta_{4} \theta_{5}^{2} 
\\&\ \quad
- \tfrac{37}{2} \theta_{4}^{2}  \theta_{5}
- \tfrac{3}{2} \theta_{1} \theta_{15} \theta_{3}
+ 4 \theta_{1} \theta_{12} \theta_{3}
%\\&\ \quad
+ 4 \theta_{1} \theta_{14} \theta_{3}
- \tfrac{21}{2} \theta_{1} \theta_{3}^{2}  \theta_{10}
%\\&\ \quad
+ \tfrac{3}{2} \theta_{1}^{2}  \theta_{3} \theta_{10}
+ \tfrac{1}{2} \theta_{7} \theta_{12}
+ \tfrac{1}{2} \theta_{7} \theta_{14}
+ \tfrac{3}{4} \theta_{1}^{3}  \theta_{10}
%\\&\ \quad
+ \tfrac{41}{4} \theta_{3}^{3}  \theta_{10}
- \tfrac{1}{4} \theta_{8} \theta_{10}
\\&\ \quad
- \tfrac{1}{2} \theta_{8} \theta_{9}
+ \tfrac{1}{2} \theta_{4} \theta_{13}
+ 67 \theta_{1} \theta_{3} \theta_{4} \theta_{5}
+ 65 \theta_{1} \theta_{3} \theta_{4} \theta_{7}
%\\&\ \quad
- 44 \theta_{1} \theta_{3} \theta_{5} \theta_{7}
- \tfrac{1}{2} \theta_{9} \theta_{10}
- \tfrac{167}{2} \theta_{1}^{3}  \theta_{3} \theta_{4}
+ 64 \theta_{1}^{3}  \theta_{3} \theta_{5}
%\\&\ \quad
+ 72 \theta_{1}^{3}  \theta_{3} \theta_{7}
%\\&\ \quad
+ 133 \theta_{1}^{2}  \theta_{3}^{2}  \theta_{4}
\\&\ \quad
- \tfrac{193}{2} \theta_{1}^{2}  \theta_{3}^{2}  \theta_{5}
- 125 \theta_{1}^{2}  \theta_{3}^{2}  \theta_{7}
%\\&\ \quad
- 83 \theta_{1} \theta_{3}^{3}  \theta_{4}
+ \tfrac{127}{2} \theta_{1} \theta_{3}^{3}  \theta_{5}
%\\&\ \quad
+ \tfrac{173}{2} \theta_{1} \theta_{3}^{3}  \theta_{7}
- \tfrac{3}{2} \theta_{7}^{3} 
+ \tfrac{1}{2} \theta_{17} \theta_{1}
%\\&\ \quad
- \tfrac{1}{2} \theta_{17} \theta_{3}
%\\&\ \quad
+ \tfrac{11}{4} \theta_{1}^{3}  \theta_{8}
+ \tfrac{25}{4} \theta_{3}^{3}  \theta_{8}
%\\&\ \quad
- \tfrac{3}{2} \theta_{5}^{3} 
\\&\ \quad
- 53 \theta_{1}^{5}  \theta_{3}
%\\&\ \quad
+ \tfrac{301}{2} \theta_{1}^{4}  \theta_{3}^{2} 
%\\&\ \quad
- \tfrac{471}{2} \theta_{1}^{3}  \theta_{3}^{3} 
+ \tfrac{423}{2} \theta_{1}^{2}  \theta_{3}^{4} 
- 99 \theta_{1} \theta_{3}^{5} 
+ \tfrac{39}{2} \theta_{1}^{4}  \theta_{4}
%\\&\ \quad
- \tfrac{31}{2} \theta_{1}^{4}  \theta_{5}
- \tfrac{31}{2} \theta_{1}^{4}  \theta_{7}
+ 16 \theta_{3}^{4}  \theta_{4}
%\\&\ \quad
- 18 \theta_{3}^{4}  \theta_{5}
- 17 \theta_{3}^{4}  \theta_{7}
\\&\ \quad
- \tfrac{1}{2} \theta_{1}^{3}  \theta_{9}
%\\&\ \quad
+ 24 \theta_{1}^{2}  \theta_{4}^{2} 
+ 11 \theta_{1}^{2}  \theta_{5}^{2} 
+ 8 \theta_{1}^{2}  \theta_{7}^{2} 
- \tfrac{21}{2} \theta_{3}^{3}  \theta_{9}
+ 36 \theta_{3}^{2}  \theta_{4}^{2} 
+ \tfrac{37}{2} \theta_{3}^{2}  \theta_{5}^{2} 
%\\&\ \quad
+ \tfrac{31}{2} \theta_{3}^{2}  \theta_{7}^{2} 
+ 8 \theta_{1}^{6} 
+ 18 \theta_{3}^{6} 
%\\&\ \quad
+ \tfrac{1}{2} \theta_{9}^{2} 
+ \tfrac{1}{2} \theta_{19}
- 4 \theta_{7}^{2}  \theta_{5}
,
\\
%\end{align*}
%\begin{align*}
\bar{\alpha}^{(2)}_{15} &=  13 \theta_{1} \theta_{5}^{3} 
- \theta_{7} \theta_{9} \theta_{5}
+ \theta_{1} \theta_{20}
- \tfrac{1}{2} \theta_{4} \theta_{16}
+ \tfrac{1}{2} \theta_{4} \theta_{17}
+ \tfrac{1}{2} \theta_{4} \theta_{18}
+ 3 \theta_{1} \theta_{7}^{2}  \theta_{5}
%\\&\ \quad
- \theta_{9} \theta_{3} \theta_{10}
- \tfrac{13}{2} \theta_{3} \theta_{5}^{3} 
+ \tfrac{81}{4} \theta_{4} \theta_{5} \theta_{8}
- \tfrac{19}{4} \theta_{4} \theta_{7} \theta_{8}
\\&\ \quad
- 9 \theta_{3} \theta_{4} \theta_{5} \theta_{7}
%\\&\ \quad
- 23 \theta_{1} \theta_{4} \theta_{5} \theta_{7}
+ 73 \theta_{3} \theta_{1}^{3}  \theta_{9}
- 242 \theta_{3} \theta_{1}^{2}  \theta_{4}^{2} 
- \tfrac{249}{2} \theta_{3} \theta_{1}^{2}  \theta_{5}^{2} 
- \tfrac{205}{2} \theta_{3} \theta_{1}^{2}  \theta_{7}^{2} 
%\\&\ \quad
+ 10 \theta_{1}^{3}  \theta_{5}^{2} 
+ 14 \theta_{1}^{3}  \theta_{7}^{2} 
- \tfrac{9}{2} \theta_{3} \theta_{5} \theta_{13}
\\&\ \quad
+ 18 \theta_{1} \theta_{7} \theta_{5}^{2} 
+ \tfrac{59}{4} \theta_{1}^{2}  \theta_{4} \theta_{10}
+ 21 \theta_{3} \theta_{4} \theta_{5}^{2} 
%\\&\ \quad
+ \tfrac{145}{4} \theta_{3}^{2}  \theta_{4} \theta_{10}
- \tfrac{9}{2} \theta_{1} \theta_{4} \theta_{13}
- \theta_{3} \theta_{20}
- \tfrac{7}{2} \theta_{3} \theta_{7} \theta_{13}
+ \tfrac{65}{2} \theta_{3} \theta_{1} \theta_{5} \theta_{10}
%\\
%%\end{align*}
%%\begin{align*}
%&\ \quad
+ \tfrac{57}{2} \theta_{3} \theta_{1} \theta_{7} \theta_{10}
\\&\ \quad
- \tfrac{93}{2} \theta_{3} \theta_{1} \theta_{4} \theta_{10}
+ 2 \theta_{3} \theta_{4} \theta_{12}
- \tfrac{13}{2} \theta_{3} \theta_{4} \theta_{15}
+ \tfrac{37}{4} \theta_{8}^{2}  \theta_{1}
%\\&\ \quad
+ \tfrac{1}{2} \theta_{8} \theta_{14}
- \tfrac{15}{2} \theta_{1} \theta_{3}^{2}  \theta_{13}
- \tfrac{319}{2} \theta_{3}^{7} 
- \tfrac{1}{2} \theta_{10} \theta_{14}
%\\&\ \quad
- \tfrac{69}{2} \theta_{3}^{2}  \theta_{5} \theta_{10}
%\\&\ \quad
- \tfrac{33}{2} \theta_{3}^{2}  \theta_{7} \theta_{10}
\\&\ \quad
+ 30 \theta_{1}^{7} 
- \tfrac{19}{2} \theta_{15} \theta_{1}^{2}  \theta_{3}
+ \tfrac{31}{2} \theta_{15} \theta_{1} \theta_{3}^{2} 
- \tfrac{15}{2} \theta_{15} \theta_{3}^{3} 
%\\&\ \quad
+ \tfrac{609}{2} \theta_{3} \theta_{1}^{2}  \theta_{4} \theta_{5}
+ \tfrac{615}{2} \theta_{3} \theta_{1}^{2}  \theta_{4} \theta_{7}
%\\&\ \quad
- 190 \theta_{3} \theta_{1}^{2}  \theta_{5} \theta_{7}
+ \tfrac{9}{2} \theta_{1} \theta_{4} \theta_{7}^{2} 
+ 4 \theta_{7} \theta_{10} \theta_{5}
\\&\ \quad
- 5 \theta_{7} \theta_{8} \theta_{5}
- \tfrac{21}{2} \theta_{3} \theta_{7} \theta_{5}^{2} 
+ 1309 \theta_{1}^{3}  \theta_{3}^{2}  \theta_{4}
- 953 \theta_{1}^{3}  \theta_{3}^{2}  \theta_{5}
- 902 \theta_{1}^{3}  \theta_{3}^{2}  \theta_{7}
%\\&\ \quad
- 1661 \theta_{1}^{2}  \theta_{3}^{3}  \theta_{4}
+ 1119 \theta_{1}^{2}  \theta_{3}^{3}  \theta_{5}
+ 1248 \theta_{1}^{2}  \theta_{3}^{3}  \theta_{7}
\\&\ \quad
+ 983 \theta_{1} \theta_{3}^{4}  \theta_{4}
- 629 \theta_{1} \theta_{3}^{4}  \theta_{5}
%\\&\ \quad
- 818 \theta_{1} \theta_{3}^{4}  \theta_{7}
+ 5 \theta_{9} \theta_{5}^{2} 
+ \tfrac{711}{2} \theta_{3}^{3}  \theta_{4} \theta_{5}
%\\&\ \quad
+ \tfrac{701}{2} \theta_{3}^{3}  \theta_{4} \theta_{7}
- 224 \theta_{3}^{3}  \theta_{5} \theta_{7}
%\\&\ \quad
- 98 \theta_{1}^{2}  \theta_{3}^{2}  \theta_{9}
+ 23 \theta_{1} \theta_{3}^{3}  \theta_{9}
\\&\ \quad
+ 473 \theta_{1} \theta_{3}^{2}  \theta_{4}^{2} 
+ 230 \theta_{1} \theta_{3}^{2}  \theta_{5}^{2} 
+ 201 \theta_{1} \theta_{3}^{2}  \theta_{7}^{2} 
%\\&\ \quad
- \tfrac{9}{2} \theta_{1} \theta_{3}^{3}  \theta_{10}
%\\&\ \quad
+ 65 \theta_{1}^{2}  \theta_{3}^{2}  \theta_{10}
- \tfrac{95}{2} \theta_{3} \theta_{1}^{3}  \theta_{10}
- \tfrac{1251}{2} \theta_{1} \theta_{3}^{2}  \theta_{4} \theta_{5}
- \tfrac{1181}{2} \theta_{1} \theta_{3}^{2}  \theta_{4} \theta_{7}
\\&\ \quad
+ 380 \theta_{1} \theta_{3}^{2}  \theta_{5} \theta_{7}
+ \tfrac{5}{2} \theta_{3} \theta_{4} \theta_{13}
+ \tfrac{7}{2} \theta_{3} \theta_{7} \theta_{15}
%\\&\ \quad
- \tfrac{939}{2} \theta_{3} \theta_{1}^{4}  \theta_{4}
%\\&\ \quad
+ \tfrac{749}{2} \theta_{3} \theta_{1}^{4}  \theta_{5}
+ \tfrac{609}{2} \theta_{3} \theta_{1}^{4}  \theta_{7}
- 51 \theta_{1}^{5}  \theta_{5}
- 36 \theta_{1}^{5}  \theta_{7}
+ \theta_{1}^{3}  \theta_{14}
%\\&\ \quad
+ \theta_{9} \theta_{13}
\\&\ \quad
- 13 \theta_{8}^{2}  \theta_{3}
- \tfrac{29}{2} \theta_{1}^{3}  \theta_{4} \theta_{5}
- \tfrac{77}{2} \theta_{1}^{3}  \theta_{4} \theta_{7}
%\\&\ \quad
- \tfrac{33}{2} \theta_{1}^{2}  \theta_{4} \theta_{9}
%\\&\ \quad
+ \tfrac{123}{4} \theta_{1}^{2}  \theta_{4} \theta_{8}
%\\
%\end{align*}
%\begin{align*}
%&\ \quad
+ 2 \theta_{1} \theta_{4} \theta_{14}
+ \tfrac{7}{4} \theta_{10} \theta_{4} \theta_{5}
- \tfrac{45}{4} \theta_{10} \theta_{4} \theta_{7}
- \tfrac{95}{2} \theta_{1} \theta_{4} \theta_{5}^{2} 
- 8 \theta_{4} \theta_{5} \theta_{9}
\\&\ \quad
+ 6 \theta_{4} \theta_{7} \theta_{9}
+ \tfrac{5}{2} \theta_{3} \theta_{4}^{2}  \theta_{5}
+ \tfrac{89}{2} \theta_{3} \theta_{4}^{2}  \theta_{7}
%\\&\ \quad
- \theta_{1} \theta_{7} \theta_{12}
+ \tfrac{5}{2} \theta_{1} \theta_{7} \theta_{13}
- \tfrac{3}{2} \theta_{1} \theta_{7} \theta_{15}
+ \tfrac{5}{2} \theta_{3} \theta_{5} \theta_{15}
+ \tfrac{83}{2} \theta_{1} \theta_{4}^{2}  \theta_{5}
- \tfrac{5}{2} \theta_{1} \theta_{4}^{2}  \theta_{7}
- \tfrac{13}{2} \theta_{10} \theta_{1}^{2}  \theta_{5}
\\&\ \quad
- 12 \theta_{10} \theta_{1}^{2}  \theta_{7}
%\\&\ \quad
+ 14 \theta_{14} \theta_{3}^{3} 
%\\&\ \quad
+ 14 \theta_{12} \theta_{3}^{3} 
+ 2 \theta_{1}^{3}  \theta_{12}
- \tfrac{9}{2} \theta_{1}^{3}  \theta_{13}
+ \tfrac{3}{2} \theta_{1}^{3}  \theta_{15}
- \tfrac{15}{2} \theta_{18} \theta_{3}^{2} 
%\\&\ \quad
+ \tfrac{15}{2} \theta_{16} \theta_{3}^{2} 
+ \tfrac{19}{2} \theta_{1}^{2}  \theta_{3} \theta_{13}
- 20 \theta_{1} \theta_{12} \theta_{3}^{2} 
\\&\ \quad
- 23 \theta_{1} \theta_{14} \theta_{3}^{2} 
- 4 \theta_{1} \theta_{5} \theta_{14}
%\\&\ \quad
+ 18 \theta_{1}^{3}  \theta_{5} \theta_{7}
- 5 \theta_{1} \theta_{5} \theta_{12}
+ \tfrac{11}{2} \theta_{1} \theta_{5} \theta_{13}
- \tfrac{1}{2} \theta_{1} \theta_{5} \theta_{15}
+ \tfrac{193}{2} \theta_{1} \theta_{7} \theta_{3} \theta_{8}
%\\
%\end{align*}
%\begin{align*}
%&\ \quad
+ \tfrac{7}{2} \theta_{3} \theta_{7}^{2}  \theta_{5}
+ \tfrac{171}{4} \theta_{3}^{4}  \theta_{8}
+ 2 \theta_{9}^{2}  \theta_{1}
\\&\ \quad
+ \tfrac{1}{2} \theta_{8} \theta_{10} \theta_{1}
%\end{align*}
%\begin{align*}
%&\ \quad
%\\&\ \quad
- 31 \theta_{8} \theta_{1}^{2}  \theta_{5}
- \tfrac{49}{2} \theta_{8} \theta_{1}^{2}  \theta_{7}
- \tfrac{33}{2} \theta_{9} \theta_{1}^{4} 
+ 932 \theta_{1}^{5}  \theta_{3}^{2} 
- \tfrac{3735}{2} \theta_{1}^{4}  \theta_{3}^{3} 
+ 2315 \theta_{1}^{3}  \theta_{3}^{4} 
- \tfrac{3593}{2} \theta_{1}^{2}  \theta_{3}^{5} 
%\\&\ \quad
+ 804 \theta_{1} \theta_{3}^{6} 
- \tfrac{439}{2} \theta_{3}^{5}  \theta_{4}
%\end{align*}
%\begin{align*}
%&\ \quad
\\&\ \quad
+ \tfrac{285}{2} \theta_{3}^{5}  \theta_{5}
%\\&\ \quad
+ \tfrac{403}{2} \theta_{3}^{5}  \theta_{7}
+ \tfrac{45}{2} \theta_{3}^{4}  \theta_{9}
- 273 \theta_{3}^{3}  \theta_{4}^{2} 
- \tfrac{235}{2} \theta_{3}^{3}  \theta_{5}^{2} 
- \tfrac{245}{2} \theta_{3}^{3}  \theta_{7}^{2} 
- \tfrac{519}{2} \theta_{3} \theta_{1}^{6} 
- 2 \theta_{3} \theta_{9}^{2} 
%\\&\ \quad
- \tfrac{257}{2} \theta_{1} \theta_{3} \theta_{4} \theta_{8}
%\\&\ \quad
+ \tfrac{229}{2} \theta_{1} \theta_{5} \theta_{3} \theta_{8}
\\&\ \quad
- \tfrac{113}{4} \theta_{3}^{4}  \theta_{10}
+ \tfrac{39}{4} \theta_{10} \theta_{1}^{4} 
+ \tfrac{1}{4} \theta_{10}^{2}  \theta_{1}
%\\&\ \quad
+ 4 \theta_{8} \theta_{3} \theta_{10}
- 28 \theta_{3} \theta_{4} \theta_{7}^{2} 
+ 2 \theta_{1} \theta_{4} \theta_{12}
%\\&\ \quad
+ \tfrac{5}{2} \theta_{1} \theta_{4} \theta_{15}
- 2 \theta_{8} \theta_{13}
%\end{align*}
%\begin{align*}
%\\&\ \quad
+ 4 \theta_{3} \theta_{5} \theta_{12}
- \tfrac{11}{2} \theta_{4}^{2}  \theta_{8}
\\&\ \quad
+ \tfrac{11}{2} \theta_{10} \theta_{4}^{2} 
- 5 \theta_{1} \theta_{4}^{3} 
- 25 \theta_{3} \theta_{4}^{3} 
- 2 \theta_{1} \theta_{7}^{3} 
%\\&\ \quad
+ \tfrac{15}{2} \theta_{3} \theta_{7}^{3} 
- 2 \theta_{7}^{2}  \theta_{9}
+ \tfrac{15}{4} \theta_{7}^{2}  \theta_{8}
+ \tfrac{13}{4} \theta_{10} \theta_{7}^{2} 
+ \theta_{3} \theta_{10}^{2} 
%\\&\ \quad
+ \theta_{8} \theta_{12}
+ \tfrac{1}{2} \theta_{8} \theta_{15}
+ \tfrac{5}{2} \theta_{3}^{3}  \theta_{13}
\\
&\ \quad
- \tfrac{35}{4} \theta_{8} \theta_{5}^{2} 
- \tfrac{91}{2} \theta_{3}^{2}  \theta_{4} \theta_{9}
+ \tfrac{95}{2} \theta_{3}^{2}  \theta_{5} \theta_{9}
%\\&\ \quad
+ \tfrac{47}{2} \theta_{3}^{2}  \theta_{7} \theta_{9}
+ 57 \theta_{3} \theta_{1} \theta_{4} \theta_{9}
- 53 \theta_{3} \theta_{1} \theta_{5} \theta_{9}
%\\&\ \quad
- 35 \theta_{3} \theta_{1} \theta_{7} \theta_{9}
- \theta_{3} \theta_{7} \theta_{14}
+ 3 \theta_{3} \theta_{5} \theta_{14}
\end{align*}
\begin{align*}%\\
&\ \quad
+ \tfrac{1}{2} \theta_{1}^{2}  \theta_{16}
- \tfrac{1}{2} \theta_{1}^{2}  \theta_{18}
+ \tfrac{3}{2} \theta_{1}^{2}  \theta_{17}
- \tfrac{3}{2} \theta_{17} \theta_{3}^{2} 
+ \theta_{19} \theta_{1}
- 2 \theta_{19} \theta_{3}
+ 8 \theta_{3} \theta_{1}^{2}  \theta_{14}
%\\&\ \quad
+ 5 \theta_{3} \theta_{1}^{2}  \theta_{12}
+ 6 \theta_{3} \theta_{18} \theta_{1}
- 6 \theta_{3} \theta_{16} \theta_{1}
- \theta_{1} \theta_{17} \theta_{3}
\\&\ \quad
- \tfrac{3}{2} \theta_{5} \theta_{17}
%\\&\ \quad
- \tfrac{3}{2} \theta_{5} \theta_{18}
+ \tfrac{3}{2} \theta_{5} \theta_{16}
- \tfrac{1}{2} \theta_{7} \theta_{16}
+ \tfrac{1}{2} \theta_{7} \theta_{17}
%\\&\ \quad
+ \tfrac{1}{2} \theta_{7} \theta_{18}
%\\&\ \quad
+ 56 \theta_{1}^{5}  \theta_{4}
+ 21 \theta_{1}^{3}  \theta_{4}^{2} 
%\\&\ \quad
+ \tfrac{143}{4} \theta_{8} \theta_{1}^{4} 
- \tfrac{339}{2} \theta_{1}^{3}  \theta_{3} \theta_{8}
%\\&\ \quad
+ 283 \theta_{1}^{2}  \theta_{3}^{2}  \theta_{8}
\\&\ \quad
- \tfrac{381}{2} \theta_{1} \theta_{3}^{3}  \theta_{8}
- 98 \theta_{3}^{2}  \theta_{5} \theta_{8}
%\\&\ \quad
- 88 \theta_{3}^{2}  \theta_{7} \theta_{8}
+ \tfrac{481}{4} \theta_{3}^{2}  \theta_{4} \theta_{8}
%\\&\ \quad
+ 7 \theta_{9} \theta_{3} \theta_{8}
- \theta_{9} \theta_{10} \theta_{1}
+ \tfrac{27}{2} \theta_{9} \theta_{1}^{2}  \theta_{5}
+ \tfrac{25}{2} \theta_{9} \theta_{1}^{2}  \theta_{7}
- \tfrac{13}{4} \theta_{10} \theta_{5}^{2} 
- \tfrac{1}{2} \theta_{21}
\\&\ \quad
- 7 \theta_{9} \theta_{8} \theta_{1}
- \theta_{10} \theta_{12}
+ \tfrac{1}{2} \theta_{10} \theta_{15}
,
\\
%\end{align*}
%\begin{align*}
\alpha^{(3)}_{1} &=  \theta_{3}
,
\\
\bar{\alpha}^{(3)}_{2} &=  \theta_{3}
,
\\
\bar{\alpha}^{(3)}_{3} &= 
- \theta_{1}^{2} 
+ 2 \theta_{1} \theta_{3}
- \theta_{3}^{2} 
- \theta_{4}
+ \theta_{5}
+ \theta_{7}
,
\\
\bar{\alpha}^{(3)}_{4} &= 
- \theta_{1}^{2} 
+ 2 \theta_{1} \theta_{3}
- 2 \theta_{3}^{2} 
- \theta_{4}
+ \theta_{5}
+ \theta_{7}
,
\\
\bar{\alpha}^{(3)}_{5} &= 
- \theta_{1}^{2} 
+ 2 \theta_{1} \theta_{3}
- \theta_{3}^{2} 
- \theta_{4}
+ \theta_{5}
+ \theta_{7}
,
\\
\bar{\alpha}^{(3)}_{7} &= 
- 4 \theta_{1}^{2}  \theta_{3}
+ 6 \theta_{1} \theta_{3}^{2} 
- 4 \theta_{3}^{3} 
- 2 \theta_{3} \theta_{4}
+ 2 \theta_{3} \theta_{5}
+ 2 \theta_{3} \theta_{7}
+ \theta_{1}^{3} 
+ \theta_{1} \theta_{4}
%\\&\ \quad
- \theta_{1} \theta_{5}
- \theta_{1} \theta_{7}
- \tfrac{1}{2} \theta_{10}
+ \tfrac{1}{2} \theta_{8}
,
\\
\bar{\alpha}^{(3)}_{8} &=  2 \theta_{1}^{3} 
- 7 \theta_{1}^{2}  \theta_{3}
+ 10 \theta_{1} \theta_{3}^{2} 
- 7 \theta_{3}^{3} 
+ 2 \theta_{1} \theta_{4}
- 2 \theta_{1} \theta_{5}
- 2 \theta_{1} \theta_{7}
- 3 \theta_{3} \theta_{4}
%\\&\ \quad
+ 3 \theta_{3} \theta_{5}
+ 3 \theta_{3} \theta_{7}
- \theta_{10}
+ \theta_{8}
,
\\
%\end{align*}
%\begin{align*}
\bar{\alpha}^{(3)}_{9} &= 
- 2 \theta_{7} \theta_{5}
- 2 \theta_{5}^{2} 
+ 5 \theta_{4} \theta_{5}
+ 6 \theta_{4} \theta_{7}
- 16 \theta_{1} \theta_{5} \theta_{3}
- 14 \theta_{1} \theta_{7} \theta_{3}
+ 29 \theta_{1} \theta_{3} \theta_{4}
%\\
%%\end{align*}
%%\begin{align*}
%&\ \quad
- 5 \theta_{3}^{4} 
- 7 \theta_{1}^{4} 
- \tfrac{7}{2} \theta_{10} \theta_{1}
+ \tfrac{7}{2} \theta_{10} \theta_{3}
%\\&\ \quad
+ 26 \theta_{1}^{3}  \theta_{3}
%\\&\ \quad
- 39 \theta_{1}^{2}  \theta_{3}^{2} 
\\&\ \quad
+ 25 \theta_{1} \theta_{3}^{3} 
%\\&\ \quad
+ 9 \theta_{1}^{2}  \theta_{5}
+ 5 \theta_{1}^{2}  \theta_{7}
+ 9 \theta_{3}^{2}  \theta_{5}
+ 10 \theta_{3}^{2}  \theta_{7}
- 13 \theta_{1}^{2}  \theta_{4}
- 19 \theta_{3}^{2}  \theta_{4}
+ 3 \theta_{9} \theta_{1}
%\\&\ \quad
- 3 \theta_{9} \theta_{3}
- \tfrac{7}{2} \theta_{8} \theta_{1}
+ \tfrac{7}{2} \theta_{8} \theta_{3}
%\\&\ \quad
- 2 \theta_{7}^{2} 
- 5 \theta_{4}^{2} 
%\\&\ \quad
+ \theta_{13}
+ \theta_{14}
,
\\
\bar{\alpha}^{(3)}_{10} &=  3 \theta_{1}^{4} 
- 15 \theta_{1}^{3}  \theta_{3}
+ 28 \theta_{1}^{2}  \theta_{3}^{2} 
- 26 \theta_{1} \theta_{3}^{3} 
+ 10 \theta_{3}^{4} 
+ 3 \theta_{1}^{2}  \theta_{4}
- 3 \theta_{1}^{2}  \theta_{5}
- \theta_{1}^{2}  \theta_{7}
%\\&\ \quad
- 11 \theta_{1} \theta_{3} \theta_{4}
+ 9 \theta_{1} \theta_{3} \theta_{5}
+ 7 \theta_{1} \theta_{3} \theta_{7}
%\\&\ \quad
+ 9 \theta_{3}^{2}  \theta_{4}
%\\&\ \quad
- 5 \theta_{3}^{2}  \theta_{5}
\\&\ \quad
- 8 \theta_{3}^{2}  \theta_{7}
+ 2 \theta_{1} \theta_{8}
%\\&\ \quad
- 4 \theta_{3} \theta_{8}
+ \theta_{4} \theta_{7}
- \theta_{5} \theta_{7}
+ \theta_{15}
,
\\
\bar{\alpha}^{(3)}_{11} &=  \theta_{7} \theta_{5}
- \theta_{4} \theta_{5}
- 4 \theta_{1} \theta_{5} \theta_{3}
- 6 \theta_{1} \theta_{7} \theta_{3}
+ 6 \theta_{1} \theta_{3} \theta_{4}
- 3 \theta_{3}^{4} 
- 3 \theta_{1}^{4} 
- \theta_{10} \theta_{1}
%\\&\ \quad
- \tfrac{5}{2} \theta_{10} \theta_{3}
+ 10 \theta_{1}^{3}  \theta_{3}
- 14 \theta_{1}^{2}  \theta_{3}^{2} 
%\\&\ \quad
+ 8 \theta_{1} \theta_{3}^{3} 
+ 3 \theta_{1}^{2}  \theta_{5}
%\\&\ \quad
+ \theta_{1}^{2}  \theta_{7}
\\&\ \quad
+ 3 \theta_{3}^{2}  \theta_{5}
%\\&\ \quad
+ 4 \theta_{3}^{2}  \theta_{7}
- 3 \theta_{1}^{2}  \theta_{4}
- 4 \theta_{3}^{2}  \theta_{4}
+ 2 \theta_{9} \theta_{1}
- 3 \theta_{8} \theta_{1}
+ \tfrac{5}{2} \theta_{8} \theta_{3}
+ \theta_{13}
,
\\
\bar{\alpha}^{(3)}_{12} &=  4 \theta_{8} \theta_{5}
- 7 \theta_{1} \theta_{5}^{2} 
+ 9 \theta_{3} \theta_{5}^{2} 
- 10 \theta_{1} \theta_{7} \theta_{5}
+ 19 \theta_{3} \theta_{7} \theta_{5}
- \theta_{9} \theta_{5}
- 36 \theta_{7} \theta_{1}^{2}  \theta_{3}
%\\&\ \quad
- 20 \theta_{1}^{3}  \theta_{4}
- 38 \theta_{3}^{2}  \theta_{1} \theta_{4}
+ 4 \theta_{3}^{2}  \theta_{1} \theta_{5}
%\\&\ \quad
+ 27 \theta_{3}^{2}  \theta_{1} \theta_{7}
%\\&\ \quad
- 21 \theta_{5} \theta_{1}^{2}  \theta_{3}
\\&\ \quad
+ 3 \theta_{3}^{5} 
- 41 \theta_{3}^{2}  \theta_{1}^{3} 
+ 41 \theta_{3}^{3}  \theta_{1}^{2} 
- 21 \theta_{3}^{4}  \theta_{1}
+ \theta_{3}^{3}  \theta_{4}
+ 3 \theta_{3}^{3}  \theta_{5}
+ 2 \theta_{3}^{3}  \theta_{7}
- \tfrac{15}{2} \theta_{4} \theta_{8}
- \tfrac{1}{2} \theta_{10} \theta_{4}
%\\&\ \quad
- 15 \theta_{1} \theta_{4}^{2} 
%\\&\ \quad
+ 27 \theta_{3} \theta_{4}^{2} 
+ \theta_{4} \theta_{9}
- \tfrac{1}{2} \theta_{10} \theta_{7}
\\&\ \quad
+ \tfrac{5}{2} \theta_{7} \theta_{8}
+ \theta_{7} \theta_{9}
- 4 \theta_{1} \theta_{7}^{2} 
+ 8 \theta_{3} \theta_{7}^{2} 
+ \theta_{12} \theta_{1}
- 2 \theta_{12} \theta_{3}
+ \theta_{14} \theta_{1}
- 2 \theta_{14} \theta_{3}
%\\&\ \quad
- \theta_{15} \theta_{3}
%\\&\ \quad
- \tfrac{3}{2} \theta_{1}^{2}  \theta_{8}
+ 15 \theta_{3}^{2}  \theta_{8}
- \theta_{1}^{2}  \theta_{9}
- 10 \theta_{3}^{2}  \theta_{9}
\\&\ \quad
+ \tfrac{1}{2} \theta_{1}^{2}  \theta_{10}
+ 6 \theta_{10} \theta_{3}^{2} 
- 8 \theta_{1} \theta_{3} \theta_{8}
+ 8 \theta_{1} \theta_{3} \theta_{9}
- 4 \theta_{1} \theta_{10} \theta_{3}
%\\&\ \quad
+ 21 \theta_{1} \theta_{4} \theta_{5}
%\\&\ \quad
+ 15 \theta_{1} \theta_{4} \theta_{7}
- 31 \theta_{3} \theta_{4} \theta_{5}
- 32 \theta_{3} \theta_{4} \theta_{7}
+ 24 \theta_{1}^{4}  \theta_{3}
+ 11 \theta_{1}^{3}  \theta_{5}
\\&\ \quad
+ 11 \theta_{1}^{3}  \theta_{7}
- 6 \theta_{1}^{5} 
+ 55 \theta_{4} \theta_{1}^{2}  \theta_{3}
+ \theta_{17}
+ \theta_{18}
,
\\
\bar{\alpha}^{(3)}_{13} &=  4 \theta_{8} \theta_{5}
- 7 \theta_{1} \theta_{5}^{2} 
+ 7 \theta_{3} \theta_{5}^{2} 
- 10 \theta_{1} \theta_{7} \theta_{5}
+ 17 \theta_{3} \theta_{7} \theta_{5}
- \theta_{9} \theta_{5}
- 31 \theta_{7} \theta_{1}^{2}  \theta_{3}
- 20 \theta_{1}^{3}  \theta_{4}
- 9 \theta_{3}^{2}  \theta_{1} \theta_{4}
- 12 \theta_{3}^{2}  \theta_{1} \theta_{5}
\\&\ \quad
+ 13 \theta_{3}^{2}  \theta_{1} \theta_{7}
%\\&\ \quad
- 12 \theta_{5} \theta_{1}^{2}  \theta_{3}
- 2 \theta_{3}^{5} 
- 15 \theta_{3}^{2}  \theta_{1}^{3} 
+ 2 \theta_{3}^{3}  \theta_{1}^{2} 
+ 4 \theta_{3}^{4}  \theta_{1}
- 18 \theta_{3}^{3}  \theta_{4}
+ 12 \theta_{3}^{3}  \theta_{5}
+ 12 \theta_{3}^{3}  \theta_{7}
- \tfrac{15}{2} \theta_{4} \theta_{8}
- \tfrac{1}{2} \theta_{10} \theta_{4}
\\&\ \quad
- 15 \theta_{1} \theta_{4}^{2} 
%\\&\ \quad
+ 22 \theta_{3} \theta_{4}^{2} 
+ \theta_{4} \theta_{9}
- \tfrac{1}{2} \theta_{10} \theta_{7}
+ \tfrac{5}{2} \theta_{7} \theta_{8}
+ \theta_{7} \theta_{9}
- 4 \theta_{1} \theta_{7}^{2} 
+ 6 \theta_{3} \theta_{7}^{2} 
+ \theta_{13} \theta_{3}
+ \theta_{12} \theta_{1}
- 2 \theta_{12} \theta_{3}
+ \theta_{14} \theta_{1}
\\&\ \quad
- \theta_{14} \theta_{3}
- \theta_{15} \theta_{3}
%\\&\ \quad
- \tfrac{3}{2} \theta_{1}^{2}  \theta_{8}
+ \tfrac{37}{2} \theta_{3}^{2}  \theta_{8}
- \theta_{1}^{2}  \theta_{9}
- 13 \theta_{3}^{2}  \theta_{9}
+ \tfrac{1}{2} \theta_{1}^{2}  \theta_{10}
+ \tfrac{19}{2} \theta_{10} \theta_{3}^{2} 
- \tfrac{23}{2} \theta_{1} \theta_{3} \theta_{8}
+ 11 \theta_{1} \theta_{3} \theta_{9}
- \tfrac{15}{2} \theta_{1} \theta_{10} \theta_{3}
\\&\ \quad
+ 21 \theta_{1} \theta_{4} \theta_{5}
%\\&\ \quad
+ 15 \theta_{1} \theta_{4} \theta_{7}
- 26 \theta_{3} \theta_{4} \theta_{5}
- 26 \theta_{3} \theta_{4} \theta_{7}
+ 17 \theta_{1}^{4}  \theta_{3}
+ 11 \theta_{1}^{3}  \theta_{5}
+ 11 \theta_{1}^{3}  \theta_{7}
- 6 \theta_{1}^{5} 
+ 42 \theta_{4} \theta_{1}^{2}  \theta_{3}
+ \theta_{17}
+ \theta_{18}
,
\\
%\end{align*}
%\begin{align*}
\bar{\alpha}^{(3)}_{14} &= 
- \theta_{1}^{5} 
+ 7 \theta_{1}^{4}  \theta_{3}
- 29 \theta_{1}^{3}  \theta_{3}^{2} 
+ 60 \theta_{1}^{2}  \theta_{3}^{3} 
- 58 \theta_{1} \theta_{3}^{4} 
+ 22 \theta_{3}^{5} 
- 4 \theta_{1}^{3}  \theta_{4}
+ 2 \theta_{1}^{3}  \theta_{7}
%\\&\ \quad
+ 10 \theta_{1}^{2}  \theta_{3} \theta_{4}
+ 7 \theta_{1}^{2}  \theta_{3} \theta_{5}
- 11 \theta_{1}^{2}  \theta_{3} \theta_{7}
\\&\ \quad
- 13 \theta_{1} \theta_{3}^{2}  \theta_{4}
- 13 \theta_{1} \theta_{3}^{2}  \theta_{5}
+ 17 \theta_{1} \theta_{3}^{2}  \theta_{7}
%\\&\ \quad
+ 7 \theta_{3}^{3}  \theta_{4}
+ 5 \theta_{3}^{3}  \theta_{5}
- 8 \theta_{3}^{3}  \theta_{7}
+ \theta_{1}^{2}  \theta_{10}
- \theta_{1}^{2}  \theta_{9}
- 4 \theta_{1} \theta_{10} \theta_{3}
- 4 \theta_{1} \theta_{3} \theta_{8}
\\&\ \quad
+ 6 \theta_{1} \theta_{3} \theta_{9}
- 3 \theta_{1} \theta_{4}^{2} 
+ 4 \theta_{1} \theta_{4} \theta_{5}
+ 2 \theta_{1} \theta_{4} \theta_{7}
- \theta_{1} \theta_{5}^{2} 
- \theta_{1} \theta_{7}^{2} 
+ 4 \theta_{10} \theta_{3}^{2} 
%\\&\ \quad
+ 6 \theta_{3}^{2}  \theta_{8}
- 6 \theta_{3}^{2}  \theta_{9}
+ 3 \theta_{3} \theta_{4}^{2} 
- \theta_{3} \theta_{4} \theta_{5}
\\&\ \quad
- 6 \theta_{3} \theta_{4} \theta_{7}
+ \theta_{3} \theta_{5} \theta_{7}
+ 2 \theta_{3} \theta_{7}^{2} 
%\\&\ \quad
- \theta_{10} \theta_{7}
- \theta_{15} \theta_{3}
- 3 \theta_{4} \theta_{8}
+ \theta_{4} \theta_{9}
+ \theta_{5} \theta_{8}
- \theta_{5} \theta_{9}
+ \theta_{7} \theta_{9}
+ \theta_{17}
,
%\end{align*}
%\begin{align*}
\\
\bar{\alpha}^{(3)}_{16} &= 
- \tfrac{1}{4} \theta_{10}^{2} 
+ 28 \theta_{3}^{2}  \theta_{4} \theta_{5}
+ 28 \theta_{3}^{2}  \theta_{4} \theta_{7}
- 19 \theta_{3}^{2}  \theta_{5} \theta_{7}
+ \tfrac{9}{2} \theta_{1}^{2}  \theta_{3} \theta_{8}
+ \tfrac{15}{2} \theta_{1} \theta_{3}^{2}  \theta_{8}
%\\&\ \quad
+ \theta_{1}^{2}  \theta_{3} \theta_{9}
+ 2 \theta_{1}^{2}  \theta_{4} \theta_{5}
+ 2 \theta_{1}^{2}  \theta_{4} \theta_{7}
%\\&\ \quad
- 2 \theta_{1}^{2}  \theta_{5} \theta_{7}
%\\&\ \quad
- 11 \theta_{1} \theta_{3}^{2}  \theta_{9}
\\&\ \quad
+ 17 \theta_{1} \theta_{3} \theta_{4}^{2} 
%\\&\ \quad
+ 9 \theta_{1} \theta_{3} \theta_{5}^{2} 
+ 6 \theta_{1} \theta_{3} \theta_{7}^{2} 
- \theta_{3}^{2}  \theta_{13}
- \tfrac{1}{4} \theta_{8}^{2} 
- \theta_{3} \theta_{4} \theta_{9}
+ \theta_{3} \theta_{5} \theta_{9}
- \theta_{3} \theta_{7} \theta_{9}
%\\&\ \quad
- \theta_{1} \theta_{4} \theta_{8}
+ \theta_{1} \theta_{5} \theta_{8}
%\\&\ \quad
+ \theta_{1} \theta_{7} \theta_{8}
+ \tfrac{17}{2} \theta_{3} \theta_{4} \theta_{8}
\\&\ \quad
- 5 \theta_{3} \theta_{5} \theta_{8}
- \tfrac{7}{2} \theta_{3} \theta_{7} \theta_{8}
%\\&\ \quad
+ \theta_{3} \theta_{5} \theta_{10}
+ \tfrac{3}{2} \theta_{3} \theta_{7} \theta_{10}
- \theta_{1} \theta_{5} \theta_{10}
- \theta_{1} \theta_{7} \theta_{10}
- \tfrac{1}{2} \theta_{3} \theta_{4} \theta_{10}
+ \theta_{1} \theta_{4} \theta_{10}
%\\&\ \quad
+ \theta_{14} \theta_{3}^{2} 
+ \theta_{15} \theta_{3}^{2} 
+ 2 \theta_{12} \theta_{3}^{2} 
\\&\ \quad
- \theta_{18} \theta_{3}
- \theta_{1} \theta_{12} \theta_{3}
- \theta_{1} \theta_{14} \theta_{3}
+ \tfrac{23}{2} \theta_{1} \theta_{3}^{2}  \theta_{10}
%\\&\ \quad
- \tfrac{7}{2} \theta_{1}^{2}  \theta_{3} \theta_{10}
+ \theta_{1}^{3}  \theta_{10}
- \tfrac{25}{2} \theta_{3}^{3}  \theta_{10}
+ \tfrac{1}{2} \theta_{8} \theta_{10}
%\\&\ \quad
- 25 \theta_{1} \theta_{3} \theta_{4} \theta_{5}
- 19 \theta_{1} \theta_{3} \theta_{4} \theta_{7}
\\&\ \quad
+ 14 \theta_{1} \theta_{3} \theta_{5} \theta_{7}
+ 28 \theta_{1}^{3}  \theta_{3} \theta_{4}
%\\&\ \quad
- 19 \theta_{1}^{3}  \theta_{3} \theta_{5}
- 19 \theta_{1}^{3}  \theta_{3} \theta_{7}
- 56 \theta_{1}^{2}  \theta_{3}^{2}  \theta_{4}
%\\&\ \quad
+ 26 \theta_{1}^{2}  \theta_{3}^{2}  \theta_{5}
%\\&\ \quad
+ 45 \theta_{1}^{2}  \theta_{3}^{2}  \theta_{7}
+ 23 \theta_{1} \theta_{3}^{3}  \theta_{4}
- 2 \theta_{1} \theta_{3}^{3}  \theta_{5}
- 27 \theta_{1} \theta_{3}^{3}  \theta_{7}
\\&\ \quad
- \theta_{17} \theta_{3}
%\\&\ \quad
- \theta_{1}^{3}  \theta_{8}
- \tfrac{31}{2} \theta_{3}^{3}  \theta_{8}
+ 12 \theta_{1}^{5}  \theta_{3}
- 34 \theta_{1}^{4}  \theta_{3}^{2} 
+ 45 \theta_{1}^{3}  \theta_{3}^{3} 
%\\&\ \quad
- 36 \theta_{1}^{2}  \theta_{3}^{4} 
+ 20 \theta_{1} \theta_{3}^{5} 
%\\&\ \quad
- 2 \theta_{1}^{4}  \theta_{4}
+ 2 \theta_{1}^{4}  \theta_{5}
+ 2 \theta_{1}^{4}  \theta_{7}
+ 12 \theta_{3}^{4}  \theta_{4}
- 6 \theta_{3}^{4}  \theta_{5}
\\&\ \quad
- 6 \theta_{3}^{4}  \theta_{7}
- \theta_{1}^{2}  \theta_{4}^{2} 
- \theta_{1}^{2}  \theta_{5}^{2} 
%\\&\ \quad
- \theta_{1}^{2}  \theta_{7}^{2} 
+ 13 \theta_{3}^{3}  \theta_{9}
%\\&\ \quad
- 23 \theta_{3}^{2}  \theta_{4}^{2} 
- 8 \theta_{3}^{2}  \theta_{5}^{2} 
- 7 \theta_{3}^{2}  \theta_{7}^{2} 
- \theta_{1}^{6} 
- 7 \theta_{3}^{6} 
,
\\
\bar{\alpha}^{(3)}_{17} &= 
- 4 \theta_{1} \theta_{5}^{3} 
- 8 \theta_{1} \theta_{7}^{2}  \theta_{5}
- 3 \theta_{9} \theta_{3} \theta_{10}
+ 4 \theta_{3} \theta_{5}^{3} 
- 6 \theta_{4} \theta_{5} \theta_{8}
- 5 \theta_{4} \theta_{7} \theta_{8}
%\\&\ \quad
- 26 \theta_{3} \theta_{4} \theta_{5} \theta_{7}
+ 26 \theta_{1} \theta_{4} \theta_{5} \theta_{7}
+ 12 \theta_{3} \theta_{1}^{3}  \theta_{9}
%\\&\ \quad
- 124 \theta_{3} \theta_{1}^{2}  \theta_{4}^{2} 
- 72 \theta_{3} \theta_{1}^{2}  \theta_{5}^{2} 
\\&\ \quad
- 52 \theta_{3} \theta_{1}^{2}  \theta_{7}^{2} 
+ 22 \theta_{1}^{3}  \theta_{5}^{2} 
+ 14 \theta_{1}^{3}  \theta_{7}^{2} 
- 8 \theta_{1} \theta_{7} \theta_{5}^{2} 
- 8 \theta_{1}^{2}  \theta_{4} \theta_{10}
- 16 \theta_{3} \theta_{4} \theta_{5}^{2} 
%\\&\ \quad
- 12 \theta_{3}^{2}  \theta_{4} \theta_{10}
%\\&\ \quad
- 4 \theta_{3} \theta_{1} \theta_{5} \theta_{10}
+ 2 \theta_{3} \theta_{1} \theta_{7} \theta_{10}
+ 17 \theta_{3} \theta_{1} \theta_{4} \theta_{10}
\\&\ \quad
- 2 \theta_{3} \theta_{4} \theta_{15}
%\\&\ \quad
+ \tfrac{5}{2} \theta_{8}^{2}  \theta_{1}
- \theta_{8} \theta_{14}
- 72 \theta_{3}^{7} 
+ \theta_{10} \theta_{14}
- \theta_{3}^{2}  \theta_{5} \theta_{10}
%\\&\ \quad
+ 2 \theta_{3}^{2}  \theta_{7} \theta_{10}
+ 14 \theta_{1}^{7} 
%\\&\ \quad
- 6 \theta_{15} \theta_{1}^{2}  \theta_{3}
+ 8 \theta_{15} \theta_{1} \theta_{3}^{2} 
- 6 \theta_{15} \theta_{3}^{3} 
+ 190 \theta_{3} \theta_{1}^{2}  \theta_{4} \theta_{5}
\\&\ \quad
+ 164 \theta_{3} \theta_{1}^{2}  \theta_{4} \theta_{7}
%\\&\ \quad
- 112 \theta_{3} \theta_{1}^{2}  \theta_{5} \theta_{7}
%\\&\ \quad
+ 14 \theta_{1} \theta_{4} \theta_{7}^{2} 
- 2 \theta_{7} \theta_{10} \theta_{5}
+ 2 \theta_{7} \theta_{8} \theta_{5}
+ 8 \theta_{3} \theta_{7} \theta_{5}^{2} 
%\\&\ \quad
+ 524 \theta_{1}^{3}  \theta_{3}^{2}  \theta_{4}
- 388 \theta_{1}^{3}  \theta_{3}^{2}  \theta_{5}
- 326 \theta_{1}^{3}  \theta_{3}^{2}  \theta_{7}
\\&\ \quad
- 686 \theta_{1}^{2}  \theta_{3}^{3}  \theta_{4}
%\\&\ \quad
+ 480 \theta_{1}^{2}  \theta_{3}^{3}  \theta_{5}
%\\&\ \quad
+ 426 \theta_{1}^{2}  \theta_{3}^{3}  \theta_{7}
+ 506 \theta_{1} \theta_{3}^{4}  \theta_{4}
- 324 \theta_{1} \theta_{3}^{4}  \theta_{5}
- 312 \theta_{1} \theta_{3}^{4}  \theta_{7}
+ 106 \theta_{3}^{3}  \theta_{4} \theta_{5}
%\\&\ \quad
+ 106 \theta_{3}^{3}  \theta_{4} \theta_{7}
- 62 \theta_{3}^{3}  \theta_{5} \theta_{7}
\\&\ \quad
- 26 \theta_{1}^{2}  \theta_{3}^{2}  \theta_{9}
+ 30 \theta_{1} \theta_{3}^{3}  \theta_{9}
+ 156 \theta_{1} \theta_{3}^{2}  \theta_{4}^{2} 
%\\
+ 80 \theta_{1} \theta_{3}^{2}  \theta_{5}^{2} 
+ 74 \theta_{1} \theta_{3}^{2}  \theta_{7}^{2} 
- 20 \theta_{1} \theta_{3}^{3}  \theta_{10}
+ 3 \theta_{1}^{2}  \theta_{3}^{2}  \theta_{10}
+ 4 \theta_{3} \theta_{1}^{3}  \theta_{10}
%\end{align*}
%\begin{align*}
%\\&\ \quad
- 228 \theta_{1} \theta_{3}^{2}  \theta_{4} \theta_{5}
\\&\ \quad
- 214 \theta_{1} \theta_{3}^{2}  \theta_{4} \theta_{7}
+ 138 \theta_{1} \theta_{3}^{2}  \theta_{5} \theta_{7}
+ 2 \theta_{3} \theta_{7} \theta_{15}
%\\
%%\end{align*}
%%\begin{align*}
%&\ \quad
- 222 \theta_{3} \theta_{1}^{4}  \theta_{4}
+ 172 \theta_{3} \theta_{1}^{4}  \theta_{5}
+ 136 \theta_{3} \theta_{1}^{4}  \theta_{7}
%\\&\ \quad
- 32 \theta_{1}^{5}  \theta_{5}
- 24 \theta_{1}^{5}  \theta_{7}
- 2 \theta_{1}^{3}  \theta_{14}
%\\&\ \quad
- 5 \theta_{8}^{2}  \theta_{3}
\\&\ \quad
- 56 \theta_{1}^{3}  \theta_{4} \theta_{5}
- 46 \theta_{1}^{3}  \theta_{4} \theta_{7}
- 2 \theta_{1}^{2}  \theta_{4} \theta_{9}
+ 18 \theta_{1}^{2}  \theta_{4} \theta_{8}
- 2 \theta_{1} \theta_{4} \theta_{14}
%\\&\ \quad
+ 6 \theta_{10} \theta_{4} \theta_{5}
+ 5 \theta_{10} \theta_{4} \theta_{7}
+ 16 \theta_{1} \theta_{4} \theta_{5}^{2} 
+ 22 \theta_{3} \theta_{4}^{2}  \theta_{5}
+ 20 \theta_{3} \theta_{4}^{2}  \theta_{7}
\\&\ \quad
- 2 \theta_{1} \theta_{7} \theta_{15}
%\\&\ \quad
+ 2 \theta_{3} \theta_{5} \theta_{15}
- 22 \theta_{1} \theta_{4}^{2}  \theta_{5}
- 20 \theta_{1} \theta_{4}^{2}  \theta_{7}
%\\&\ \quad
+ 4 \theta_{10} \theta_{1}^{2}  \theta_{5}
+ 2 \theta_{3} \theta_{4} \theta_{14}
+ 6 \theta_{14} \theta_{3}^{3} 
%\\&\ \quad
+ 2 \theta_{1}^{3}  \theta_{15}
- 8 \theta_{1} \theta_{14} \theta_{3}^{2} 
+ 2 \theta_{1} \theta_{7} \theta_{14}
+ 2 \theta_{1} \theta_{5} \theta_{14}
\\&\ \quad
+ 32 \theta_{1}^{3}  \theta_{5} \theta_{7}
- 2 \theta_{1} \theta_{5} \theta_{15}
%\\&\ \quad
+ 30 \theta_{1} \theta_{7} \theta_{3} \theta_{8}
+ 8 \theta_{3} \theta_{7}^{2}  \theta_{5}
+ 42 \theta_{3}^{4}  \theta_{8}
- 14 \theta_{8} \theta_{1}^{2}  \theta_{5}
- 10 \theta_{8} \theta_{1}^{2}  \theta_{7}
- 2 \theta_{9} \theta_{1}^{4} 
%\\&\ \quad
+ 348 \theta_{1}^{5}  \theta_{3}^{2} 
- 694 \theta_{1}^{4}  \theta_{3}^{3} 
+ 892 \theta_{1}^{3}  \theta_{3}^{4} 
\\&\ \quad
- 734 \theta_{1}^{2}  \theta_{3}^{5} 
+ 354 \theta_{1} \theta_{3}^{6} 
- 168 \theta_{3}^{5}  \theta_{4}
%\\&\ \quad
+ 90 \theta_{3}^{5}  \theta_{5}
+ 108 \theta_{3}^{5}  \theta_{7}
- 18 \theta_{3}^{4}  \theta_{9}
- 78 \theta_{3}^{3}  \theta_{4}^{2} 
- 34 \theta_{3}^{3}  \theta_{5}^{2} 
- 40 \theta_{3}^{3}  \theta_{7}^{2} 
%\\
%%\end{align*}
%%\begin{align*}
%&\ \quad
- 104 \theta_{3} \theta_{1}^{6} 
%\\&\ \quad
- 49 \theta_{1} \theta_{3} \theta_{4} \theta_{8}
\\&\ \quad
+ 36 \theta_{1} \theta_{5} \theta_{3} \theta_{8}
+ 24 \theta_{3}^{4}  \theta_{10}
- 2 \theta_{10} \theta_{1}^{4} 
- \tfrac{5}{2} \theta_{10}^{2}  \theta_{1}
%\\&\ \quad
- \theta_{8} \theta_{3} \theta_{10}
- 14 \theta_{3} \theta_{4} \theta_{7}^{2} 
+ 2 \theta_{1} \theta_{4} \theta_{15}
+ 5 \theta_{4}^{2}  \theta_{8}
%\\&\ \quad
- 5 \theta_{10} \theta_{4}^{2} 
+ 10 \theta_{1} \theta_{4}^{3} 
%\\&\ \quad
- 10 \theta_{3} \theta_{4}^{3} 
- 4 \theta_{1} \theta_{7}^{3} 
\\&\ \quad
+ 4 \theta_{3} \theta_{7}^{3} 
+ 2 \theta_{7}^{2}  \theta_{8}
- 2 \theta_{10} \theta_{7}^{2} 
+ 6 \theta_{3} \theta_{10}^{2} 
+ \theta_{8} \theta_{15}
%\\&\ \quad
+ 2 \theta_{8} \theta_{5}^{2} 
- 6 \theta_{3}^{2}  \theta_{4} \theta_{9}
%\\&\ \quad
+ 6 \theta_{3}^{2}  \theta_{5} \theta_{9}
+ 6 \theta_{3}^{2}  \theta_{7} \theta_{9}
+ 8 \theta_{3} \theta_{1} \theta_{4} \theta_{9}
- 8 \theta_{3} \theta_{1} \theta_{5} \theta_{9}
%\\&\ \quad
- 8 \theta_{3} \theta_{1} \theta_{7} \theta_{9}
\\&\ \quad
- 2 \theta_{3} \theta_{7} \theta_{14}
- 2 \theta_{3} \theta_{5} \theta_{14}
+ 6 \theta_{3} \theta_{1}^{2}  \theta_{14}
%\\&\ \quad
+ 40 \theta_{1}^{5}  \theta_{4}
%\\&\ \quad
+ 36 \theta_{1}^{3}  \theta_{4}^{2} 
%\\&\ \quad
+ 12 \theta_{8} \theta_{1}^{4} 
- 56 \theta_{1}^{3}  \theta_{3} \theta_{8}
+ 103 \theta_{1}^{2}  \theta_{3}^{2}  \theta_{8}
- 98 \theta_{1} \theta_{3}^{3}  \theta_{8}
- 21 \theta_{3}^{2}  \theta_{5} \theta_{8}
\\&\ \quad
- 24 \theta_{3}^{2}  \theta_{7} \theta_{8}
+ 34 \theta_{3}^{2}  \theta_{4} \theta_{8}
+ 3 \theta_{9} \theta_{3} \theta_{8}
%\\&\ \quad
+ \theta_{9} \theta_{10} \theta_{1}
+ 2 \theta_{9} \theta_{1}^{2}  \theta_{5}
+ 2 \theta_{9} \theta_{1}^{2}  \theta_{7}
%\\&\ \quad
- 2 \theta_{10} \theta_{5}^{2} 
- \theta_{9} \theta_{8} \theta_{1}
- \theta_{10} \theta_{15}
,
\\
\alpha^{(4)}_{1} &= 
- \theta_{3}
,
\\
\bar{\alpha}^{(4)}_{3} &= 
- \theta_{1}^{3} 
+ 3 \theta_{1}^{2}  \theta_{3}
- 4 \theta_{1} \theta_{3}^{2} 
+ 3 \theta_{3}^{3} 
- \theta_{1} \theta_{4}
+ \theta_{1} \theta_{5}
+ \theta_{1} \theta_{7}
+ \theta_{3} \theta_{4}
- \theta_{3} \theta_{5}
%\\&\ \quad
- \theta_{3} \theta_{7}
+ \tfrac{1}{2} \theta_{10}
- \tfrac{1}{2} \theta_{8}
\\
\bar{\alpha}^{(4)}_{4} &=  \theta_{1}^{3} 
- 3 \theta_{1}^{2}  \theta_{3}
+ 4 \theta_{1} \theta_{3}^{2} 
- 3 \theta_{3}^{3} 
+ \theta_{1} \theta_{4}
- \theta_{1} \theta_{5}
- \theta_{1} \theta_{7}
- \theta_{3} \theta_{4}
+ \theta_{3} \theta_{5}
+ \theta_{3} \theta_{7}
%\\&\ \quad
- \tfrac{1}{2} \theta_{10}
+ \tfrac{1}{2} \theta_{8}
,
\\
\bar{\alpha}^{(4)}_{5} &=  \theta_{7} \theta_{5}
+ 2 \theta_{5}^{2} 
- 5 \theta_{4} \theta_{5}
- 5 \theta_{4} \theta_{7}
+ 26 \theta_{1} \theta_{5} \theta_{3}
+ 22 \theta_{1} \theta_{7} \theta_{3}
- 41 \theta_{1} \theta_{3} \theta_{4}
%\\&\ \quad
+ 18 \theta_{3}^{4} 
+ 10 \theta_{1}^{4} 
+ \tfrac{7}{2} \theta_{10} \theta_{1}
- 3 \theta_{10} \theta_{3}
%\\&\ \quad
- 42 \theta_{1}^{3}  \theta_{3}
+ 70 \theta_{1}^{2}  \theta_{3}^{2} 
\\&\ \quad
- 55 \theta_{1} \theta_{3}^{3} 
%\\&\ \quad
- 12 \theta_{1}^{2}  \theta_{5}
- 6 \theta_{1}^{2}  \theta_{7}
- 15 \theta_{3}^{2}  \theta_{5}
- 19 \theta_{3}^{2}  \theta_{7}
+ 16 \theta_{1}^{2}  \theta_{4}
+ 29 \theta_{3}^{2}  \theta_{4}
- 3 \theta_{9} \theta_{1}
%\\&\ \quad
+ 3 \theta_{9} \theta_{3}
%\\&\ \quad
+ \tfrac{11}{2} \theta_{8} \theta_{1}
- 8 \theta_{8} \theta_{3}
+ 2 \theta_{7}^{2} 
%\\&\ \quad
+ 5 \theta_{4}^{2} 
\\&\ \quad
- \theta_{13}
- \theta_{14}
+ \theta_{15}
,
\\
\bar{\alpha}^{(4)}_{6} &=  3 \theta_{1}^{4} 
- 11 \theta_{1}^{3}  \theta_{3}
+ 17 \theta_{1}^{2}  \theta_{3}^{2} 
- 12 \theta_{1} \theta_{3}^{3} 
+ 6 \theta_{3}^{4} 
+ 3 \theta_{1}^{2}  \theta_{4}
- 3 \theta_{1}^{2}  \theta_{5}
- \theta_{1}^{2}  \theta_{7}
%\\&\ \quad
- 7 \theta_{1} \theta_{3} \theta_{4}
+ 5 \theta_{1} \theta_{3} \theta_{5}
+ 7 \theta_{1} \theta_{3} \theta_{7}
+ 5 \theta_{3}^{2}  \theta_{4}
%\\&\ \quad
- 4 \theta_{3}^{2}  \theta_{5}
\\&\ \quad
- 5 \theta_{3}^{2}  \theta_{7}
+ \theta_{1} \theta_{10}
%\\&\ \quad
+ 3 \theta_{1} \theta_{8}
- 2 \theta_{1} \theta_{9}
+ 3 \theta_{10} \theta_{3}
- 3 \theta_{3} \theta_{8}
+ \theta_{4} \theta_{5}
- \theta_{5} \theta_{7}
- \theta_{13}
,
\\
\bar{\alpha}^{(4)}_{7} &= 
- 3 \theta_{8} \theta_{5}
+ 6 \theta_{1} \theta_{5}^{2} 
- 9 \theta_{3} \theta_{5}^{2} 
+ 10 \theta_{1} \theta_{7} \theta_{5}
- 16 \theta_{3} \theta_{7} \theta_{5}
+ 27 \theta_{7} \theta_{1}^{2}  \theta_{3}
+ 16 \theta_{1}^{3}  \theta_{4}
%\\&\ \quad
+ 43 \theta_{3}^{2}  \theta_{1} \theta_{4}
- 31 \theta_{3}^{2}  \theta_{1} \theta_{5}
- 24 \theta_{3}^{2}  \theta_{1} \theta_{7}
%\\&\ \quad
+ 34 \theta_{5} \theta_{1}^{2}  \theta_{3}
+ 3 \theta_{3}^{5} 
\\
&\ \quad
+ 38 \theta_{3}^{2}  \theta_{1}^{3} 
%&\ \quad
- 26 \theta_{3}^{3}  \theta_{1}^{2} 
+ \theta_{3}^{4}  \theta_{1}
- 8 \theta_{3}^{3}  \theta_{4}
+ 11 \theta_{3}^{3}  \theta_{5}
+ 3 \theta_{3}^{3}  \theta_{7}
+ \tfrac{9}{2} \theta_{4} \theta_{8}
+ \tfrac{1}{2} \theta_{10} \theta_{4}
%&\ \quad
+ 12 \theta_{1} \theta_{4}^{2} 
%\\
%\end{align*}
%\begin{align*}
%&\ \quad
%\\&\ \quad
- 24 \theta_{3} \theta_{4}^{2} 
- \tfrac{1}{2} \theta_{10} \theta_{7}
- \tfrac{5}{2} \theta_{7} \theta_{8}
+ 3 \theta_{1} \theta_{7}^{2} 
\\
&\ \quad
- 6 \theta_{3} \theta_{7}^{2} 
+ \theta_{13} \theta_{3}
%\\&\ \quad
- \theta_{12} \theta_{1}
+ 2 \theta_{12} \theta_{3}
- \theta_{14} \theta_{1}
+ 2 \theta_{14} \theta_{3}
- \theta_{15} \theta_{3}
+ \tfrac{3}{2} \theta_{1}^{2}  \theta_{8}
%\end{align*}
%\begin{align*}
%\\&\ \quad
%\\&\ \quad
- 2 \theta_{3}^{2}  \theta_{8}
+ 4 \theta_{3}^{2}  \theta_{9}
%\\&\ \quad
+ \tfrac{1}{2} \theta_{1}^{2}  \theta_{10}
- 5 \theta_{10} \theta_{3}^{2} 
- \theta_{1} \theta_{3} \theta_{8}
\end{align*}
\begin{align*}%\\
&\ \quad
- \theta_{1} \theta_{10} \theta_{3}
- 17 \theta_{1} \theta_{4} \theta_{5}
- 13 \theta_{1} \theta_{4} \theta_{7}
%\\&\ \quad
+ 29 \theta_{3} \theta_{4} \theta_{5}
+ 25 \theta_{3} \theta_{4} \theta_{7}
%\\&\ \quad
- 23 \theta_{1}^{4}  \theta_{3}
- 11 \theta_{1}^{3}  \theta_{5}
- 9 \theta_{1}^{3}  \theta_{7}
+ 5 \theta_{1}^{5} 
%\\&\ \quad
- 51 \theta_{4} \theta_{1}^{2}  \theta_{3}
- \theta_{18}
,
\\
\bar{\alpha}^{(4)}_{8} &=  \tfrac{1}{4}\big( 2 \theta_{1}^{3} 
- 6 \theta_{1}^{2}  \theta_{3}
+ 8 \theta_{1} \theta_{3}^{2} 
- 6 \theta_{3}^{3} 
+ 2 \theta_{1} \theta_{4}
- 2 \theta_{1} \theta_{5}
- 2 \theta_{1} \theta_{7}
- 2 \theta_{3} \theta_{4}
%\\&\ \quad\quad
+ 2 \theta_{3} \theta_{5}
+ 2 \theta_{3} \theta_{7}
- \theta_{10}
+ \theta_{8}\big)^{2} 
,
%\end{align*}
%\begin{align*}
\\
\bar{\alpha}^{(4)}_{9} &= 
- 4 \theta_{1} \theta_{5}^{3} 
- 8 \theta_{1} \theta_{7}^{2}  \theta_{5}
- 3 \theta_{9} \theta_{3} \theta_{10}
+ 4 \theta_{3} \theta_{5}^{3} 
- 6 \theta_{4} \theta_{5} \theta_{8}
- 5 \theta_{4} \theta_{7} \theta_{8}
%\\&\ \quad
- 26 \theta_{3} \theta_{4} \theta_{5} \theta_{7}
+ 26 \theta_{1} \theta_{4} \theta_{5} \theta_{7}
+ 12 \theta_{3} \theta_{1}^{3}  \theta_{9}
%\\&\ \quad
- 123 \theta_{3} \theta_{1}^{2}  \theta_{4}^{2} 
- 71 \theta_{3} \theta_{1}^{2}  \theta_{5}^{2} 
\\&\ \quad
- 51 \theta_{3} \theta_{1}^{2}  \theta_{7}^{2} 
+ 22 \theta_{1}^{3}  \theta_{5}^{2} 
+ 14 \theta_{1}^{3}  \theta_{7}^{2} 
- 8 \theta_{1} \theta_{7} \theta_{5}^{2} 
- 8 \theta_{1}^{2}  \theta_{4} \theta_{10}
- 16 \theta_{3} \theta_{4} \theta_{5}^{2} 
%\\&\ \quad
- 11 \theta_{3}^{2}  \theta_{4} \theta_{10}
%\\&\ \quad
- 3 \theta_{3} \theta_{1} \theta_{5} \theta_{10}
+ 3 \theta_{3} \theta_{1} \theta_{7} \theta_{10}
+ 16 \theta_{3} \theta_{1} \theta_{4} \theta_{10}
\\&\ \quad
- 2 \theta_{3} \theta_{4} \theta_{15}
%\\&\ \quad
+ \tfrac{5}{2} \theta_{8}^{2}  \theta_{1}
- \theta_{8} \theta_{14}
- 63 \theta_{3}^{7} 
+ \theta_{10} \theta_{14}
- 2 \theta_{3}^{2}  \theta_{5} \theta_{10}
%\\&\ \quad
+ \theta_{3}^{2}  \theta_{7} \theta_{10}
%\\&\ \quad
+ 14 \theta_{1}^{7} 
%\\&\ \quad
- 6 \theta_{15} \theta_{1}^{2}  \theta_{3}
+ 8 \theta_{15} \theta_{1} \theta_{3}^{2} 
- 6 \theta_{15} \theta_{3}^{3} 
+ 188 \theta_{3} \theta_{1}^{2}  \theta_{4} \theta_{5}
\\&\ \quad
+ 162 \theta_{3} \theta_{1}^{2}  \theta_{4} \theta_{7}
%\\&\ \quad
- 110 \theta_{3} \theta_{1}^{2}  \theta_{5} \theta_{7}
%\\&\ \quad
+ 14 \theta_{1} \theta_{4} \theta_{7}^{2} 
- 2 \theta_{7} \theta_{10} \theta_{5}
+ 2 \theta_{7} \theta_{8} \theta_{5}
+ 8 \theta_{3} \theta_{7} \theta_{5}^{2} 
%\\&\ \quad
+ 516 \theta_{1}^{3}  \theta_{3}^{2}  \theta_{4}
- 380 \theta_{1}^{3}  \theta_{3}^{2}  \theta_{5}
- 318 \theta_{1}^{3}  \theta_{3}^{2}  \theta_{7}
\\&\ \quad
- 672 \theta_{1}^{2}  \theta_{3}^{3}  \theta_{4}
%\\&\ \quad
+ 466 \theta_{1}^{2}  \theta_{3}^{3}  \theta_{5}
%\\&\ \quad
+ 412 \theta_{1}^{2}  \theta_{3}^{3}  \theta_{7}
+ 492 \theta_{1} \theta_{3}^{4}  \theta_{4}
- 310 \theta_{1} \theta_{3}^{4}  \theta_{5}
- 298 \theta_{1} \theta_{3}^{4}  \theta_{7}
+ 104 \theta_{3}^{3}  \theta_{4} \theta_{5}
%\\&\ \quad
+ 104 \theta_{3}^{3}  \theta_{4} \theta_{7}
- 60 \theta_{3}^{3}  \theta_{5} \theta_{7}
\\&\ \quad
- 26 \theta_{1}^{2}  \theta_{3}^{2}  \theta_{9}
+ 30 \theta_{1} \theta_{3}^{3}  \theta_{9}
+ 154 \theta_{1} \theta_{3}^{2}  \theta_{4}^{2} 
%\\&\ \quad
+ 78 \theta_{1} \theta_{3}^{2}  \theta_{5}^{2} 
+ 72 \theta_{1} \theta_{3}^{2}  \theta_{7}^{2} 
- 24 \theta_{1} \theta_{3}^{3}  \theta_{10}
+ 6 \theta_{1}^{2}  \theta_{3}^{2}  \theta_{10}
+ 3 \theta_{3} \theta_{1}^{3}  \theta_{10}
%\\&\ \quad
- 224 \theta_{1} \theta_{3}^{2}  \theta_{4} \theta_{5}
\\&\ \quad
- 210 \theta_{1} \theta_{3}^{2}  \theta_{4} \theta_{7}
+ 134 \theta_{1} \theta_{3}^{2}  \theta_{5} \theta_{7}
+ 2 \theta_{3} \theta_{7} \theta_{15}
%\\&\ \quad
- 220 \theta_{3} \theta_{1}^{4}  \theta_{4}
+ 170 \theta_{3} \theta_{1}^{4}  \theta_{5}
+ 134 \theta_{3} \theta_{1}^{4}  \theta_{7}
%\\&\ \quad
- 32 \theta_{1}^{5}  \theta_{5}
- 24 \theta_{1}^{5}  \theta_{7}
- 2 \theta_{1}^{3}  \theta_{14}
%\\&\ \quad
- \tfrac{19}{4} \theta_{8}^{2}  \theta_{3}
\\&\ \quad
- 56 \theta_{1}^{3}  \theta_{4} \theta_{5}
- 46 \theta_{1}^{3}  \theta_{4} \theta_{7}
- 2 \theta_{1}^{2}  \theta_{4} \theta_{9}
+ 18 \theta_{1}^{2}  \theta_{4} \theta_{8}
- 2 \theta_{1} \theta_{4} \theta_{14}
%\\&\ \quad
+ 6 \theta_{10} \theta_{4} \theta_{5}
+ 5 \theta_{10} \theta_{4} \theta_{7}
+ 16 \theta_{1} \theta_{4} \theta_{5}^{2} 
+ 22 \theta_{3} \theta_{4}^{2}  \theta_{5}
+ 20 \theta_{3} \theta_{4}^{2}  \theta_{7}
\\&\ \quad
- 2 \theta_{1} \theta_{7} \theta_{15}
%\\&\ \quad
+ 2 \theta_{3} \theta_{5} \theta_{15}
- 22 \theta_{1} \theta_{4}^{2}  \theta_{5}
- 20 \theta_{1} \theta_{4}^{2}  \theta_{7}
%\\&\ \quad
+ 4 \theta_{10} \theta_{1}^{2}  \theta_{5}
+ 2 \theta_{3} \theta_{4} \theta_{14}
+ 6 \theta_{14} \theta_{3}^{3} 
%\\&\ \quad
+ 2 \theta_{1}^{3}  \theta_{15}
- 8 \theta_{1} \theta_{14} \theta_{3}^{2} 
+ 2 \theta_{1} \theta_{7} \theta_{14}
+ 2 \theta_{1} \theta_{5} \theta_{14}
\\&\ \quad
+ 32 \theta_{1}^{3}  \theta_{5} \theta_{7}
- 2 \theta_{1} \theta_{5} \theta_{15}
%\\&\ \quad
+ 29 \theta_{1} \theta_{7} \theta_{3} \theta_{8}
+ 8 \theta_{3} \theta_{7}^{2}  \theta_{5}
+ 39 \theta_{3}^{4}  \theta_{8}
- 14 \theta_{8} \theta_{1}^{2}  \theta_{5}
- 10 \theta_{8} \theta_{1}^{2}  \theta_{7}
- 2 \theta_{9} \theta_{1}^{4} 
%\\&\ \quad
+ 342 \theta_{1}^{5}  \theta_{3}^{2} 
- 677 \theta_{1}^{4}  \theta_{3}^{3} 
+ 862 \theta_{1}^{3}  \theta_{3}^{4} 
\\&\ \quad
- 700 \theta_{1}^{2}  \theta_{3}^{5} 
+ 330 \theta_{1} \theta_{3}^{6} 
- 162 \theta_{3}^{5}  \theta_{4}
%\\&\ \quad
+ 84 \theta_{3}^{5}  \theta_{5}
+ 102 \theta_{3}^{5}  \theta_{7}
- 18 \theta_{3}^{4}  \theta_{9}
- 77 \theta_{3}^{3}  \theta_{4}^{2} 
- 33 \theta_{3}^{3}  \theta_{5}^{2} 
- 39 \theta_{3}^{3}  \theta_{7}^{2} 
- 103 \theta_{3} \theta_{1}^{6} 
%\\&\ \quad
- 48 \theta_{1} \theta_{3} \theta_{4} \theta_{8}
\\&\ \quad
+ 35 \theta_{1} \theta_{5} \theta_{3} \theta_{8}
+ 27 \theta_{3}^{4}  \theta_{10}
- 2 \theta_{10} \theta_{1}^{4} 
- \tfrac{5}{2} \theta_{10}^{2}  \theta_{1}
- \tfrac{3}{2} \theta_{8} \theta_{3} \theta_{10}
%\\&\ \quad
- 14 \theta_{3} \theta_{4} \theta_{7}^{2} 
+ 2 \theta_{1} \theta_{4} \theta_{15}
+ 5 \theta_{4}^{2}  \theta_{8}
%\\&\ \quad
- 5 \theta_{10} \theta_{4}^{2} 
+ 10 \theta_{1} \theta_{4}^{3} 
- 10 \theta_{3} \theta_{4}^{3} 
\\&\ \quad
- 4 \theta_{1} \theta_{7}^{3} 
%\\&\ \quad
+ 4 \theta_{3} \theta_{7}^{3} 
+ 2 \theta_{7}^{2}  \theta_{8}
- 2 \theta_{10} \theta_{7}^{2} 
+ \tfrac{25}{4} \theta_{3} \theta_{10}^{2} 
+ \theta_{8} \theta_{15}
+ 2 \theta_{8} \theta_{5}^{2} 
- 6 \theta_{3}^{2}  \theta_{4} \theta_{9}
%\\
%%\end{align*}
%%\begin{align*}
%&\ \quad
+ 6 \theta_{3}^{2}  \theta_{5} \theta_{9}
+ 6 \theta_{3}^{2}  \theta_{7} \theta_{9}
+ 8 \theta_{3} \theta_{1} \theta_{4} \theta_{9}
- 8 \theta_{3} \theta_{1} \theta_{5} \theta_{9}
\\&\ \quad
- 8 \theta_{3} \theta_{1} \theta_{7} \theta_{9}
- 2 \theta_{3} \theta_{7} \theta_{14}
%%\\
%\end{align*}
%\begin{align*}
%&\ \quad
- 2 \theta_{3} \theta_{5} \theta_{14}
+ 6 \theta_{3} \theta_{1}^{2}  \theta_{14}
+ 40 \theta_{1}^{5}  \theta_{4}
%\\&\ \quad
+ 36 \theta_{1}^{3}  \theta_{4}^{2} 
+ 12 \theta_{8} \theta_{1}^{4} 
- 55 \theta_{1}^{3}  \theta_{3} \theta_{8}
%\\&\ \quad
+ 100 \theta_{1}^{2}  \theta_{3}^{2}  \theta_{8}
- 94 \theta_{1} \theta_{3}^{3}  \theta_{8}
\\&\ \quad
- 20 \theta_{3}^{2}  \theta_{5} \theta_{8}
- 23 \theta_{3}^{2}  \theta_{7} \theta_{8}
+ 33 \theta_{3}^{2}  \theta_{4} \theta_{8}
%\\&\ \quad
+ 3 \theta_{9} \theta_{3} \theta_{8}
%\\&\ \quad
+ \theta_{9} \theta_{10} \theta_{1}
+ 2 \theta_{9} \theta_{1}^{2}  \theta_{5}
+ 2 \theta_{9} \theta_{1}^{2}  \theta_{7}
- 2 \theta_{10} \theta_{5}^{2} 
- \theta_{9} \theta_{8} \theta_{1}
- \theta_{10} \theta_{15}
,
\\
\alpha^{(5)}_{1} &= 
- \theta_{1}^{3} 
+ 3 \theta_{1}^{2}  \theta_{3}
- 4 \theta_{1} \theta_{3}^{2} 
+ 3 \theta_{3}^{3} 
- \theta_{1} \theta_{4}
+ \theta_{1} \theta_{5}
+ \theta_{1} \theta_{7}
+ \theta_{3} \theta_{4}
- \theta_{3} \theta_{5}
%\\&\ \quad
- \theta_{3} \theta_{7}
+ \tfrac{1}{2} \theta_{10}
- \tfrac{1}{2} \theta_{8}
,
\\
\bar{\alpha}^{(5)}_{1} &=  \theta_{1}^{3} 
- 3 \theta_{1}^{2}  \theta_{3}
+ 4 \theta_{1} \theta_{3}^{2} 
- 3 \theta_{3}^{3} 
+ \theta_{1} \theta_{4}
- \theta_{1} \theta_{5}
- \theta_{1} \theta_{7}
- \theta_{3} \theta_{4}
+ \theta_{3} \theta_{5}
+ \theta_{3} \theta_{7}
%\\&\ \quad
- \tfrac{1}{2} \theta_{10}
+ \tfrac{1}{2} \theta_{8}
,
%\end{align*}
%\begin{align*}
\\
\bar{\alpha}^{(5)}_{2} &=  2 \theta_{7} \theta_{5}
+ 2 \theta_{5}^{2} 
- 6 \theta_{4} \theta_{5}
- 5 \theta_{4} \theta_{7}
+ 20 \theta_{1} \theta_{5} \theta_{3}
+ 14 \theta_{1} \theta_{7} \theta_{3}
- 33 \theta_{1} \theta_{3} \theta_{4}
+ 9 \theta_{3}^{4} 
+ 7 \theta_{1}^{4} 
+ \tfrac{5}{2} \theta_{10} \theta_{1}
- \tfrac{13}{2} \theta_{10} \theta_{3}
- 30 \theta_{1}^{3}  \theta_{3}
+ 50 \theta_{1}^{2}  \theta_{3}^{2} 
\\&\ \quad
- 39 \theta_{1} \theta_{3}^{3} 
- 9 \theta_{1}^{2}  \theta_{5}
- 5 \theta_{1}^{2}  \theta_{7}
- 10 \theta_{3}^{2}  \theta_{5}
- 13 \theta_{3}^{2}  \theta_{7}
+ 13 \theta_{1}^{2}  \theta_{4}
+ 23 \theta_{3}^{2}  \theta_{4}
- \theta_{9} \theta_{1}
+ 3 \theta_{9} \theta_{3}
+ \tfrac{5}{2} \theta_{8} \theta_{1}
- \tfrac{9}{2} \theta_{8} \theta_{3}
+ 2 \theta_{7}^{2} 
+ 5 \theta_{4}^{2} 
%\\&\ \quad
- \theta_{14}
+ \theta_{15}
,
\\
%\end{align*}
%\begin{align*}
\bar{\alpha}^{(5)}_{3} &= 
- \tfrac{1}{4}\big( 2 \theta_{1}^{3} 
- 6 \theta_{1}^{2}  \theta_{3}
+ 8 \theta_{1} \theta_{3}^{2} 
- 6 \theta_{3}^{3} 
+ 2 \theta_{1} \theta_{4}
- 2 \theta_{1} \theta_{5}
- 2 \theta_{1} \theta_{7}
- 2 \theta_{3} \theta_{4}
%\\&\ \quad
%\\&\ \quad\qquad
+ 2 \theta_{3} \theta_{5}
+ 2 \theta_{3} \theta_{7}
- \theta_{10}
+ \theta_{8}\big)\big( 14 \theta_{1}^{4} 
%\\&\ \ \ \qquad
- 60 \theta_{1}^{3}  \theta_{3}
+ 100 \theta_{1}^{2}  \theta_{3}^{2} 
\\&\ \ \ \qquad
%\\&\ \quad\qquad
- 78 \theta_{1} \theta_{3}^{3} 
%\\&\ \quad
+ 18 \theta_{3}^{4} 
+ 26 \theta_{1}^{2}  \theta_{4}
- 18 \theta_{1}^{2}  \theta_{5}
- 10 \theta_{1}^{2}  \theta_{7}
- 66 \theta_{1} \theta_{3} \theta_{4}
%\\&\ \quad\qquad
+ 40 \theta_{1} \theta_{3} \theta_{5}
%\\&\ \quad
%\\&\ \ \ \qquad
+ 28 \theta_{1} \theta_{3} \theta_{7}
+ 46 \theta_{3}^{2}  \theta_{4}
%\\&\ \quad\qquad
- 20 \theta_{3}^{2}  \theta_{5}
- 26 \theta_{3}^{2}  \theta_{7}
\\&\ \ \ \qquad
+ 5 \theta_{1} \theta_{10}
%\\&\ \quad\qquad
+ 5 \theta_{1} \theta_{8}
- 2 \theta_{1} \theta_{9}
%\\&\ \quad
- 13 \theta_{10} \theta_{3}
- 9 \theta_{3} \theta_{8}
+ 6 \theta_{3} \theta_{9}
%\\&\ \ \ \qquad
+ 10 \theta_{4}^{2} 
- 12 \theta_{4} \theta_{5}
%\\&\ \quad\qquad
- 10 \theta_{4} \theta_{7}
+ 4 \theta_{5}^{2} 
+ 4 \theta_{5} \theta_{7}
%\\&\ \quad
+ 4 \theta_{7}^{2} 
- 2 \theta_{14}
+ 2 \theta_{15}\big)
.
\end{align*}%
%\end{small}%
\end{footnotesize}%
Therefore, 
by  Corollary~\ref{cor:hom2},
there exists a $K$-algebra  isomorphism
$\varphi : P^f(\mathbb{E}_7) \to P(\mathbb{E}_7)$
defined on arrows by formulas from
\ref{secA:hom},
with the coefficients 
${\alpha}^{(i)}_{j_i}, \bar{\alpha}^{(i)}_{j_i} \in K$,
for $i=0,\dots,5$, $j = 1, \dots,  j_i$, 
$j_0 = 11, j_1 = 5, j_2 = 15, j_3 = 17, j_4 = 9, j_5 = 3$,
taking values defined in
\ref{secA:simpl} 
and computed in
\ref{secA:coeff}.
Consequently, the algebras 
$P(\mathbb{E}_7)$
and
$P^f(\mathbb{E}_7)$
are isomorphic.

%%%%%%%%%%%%%%%%%%%%%%%%%%%%%%%%%%%%%%%%%%%%%%%%%%%%%%%%%%%%%%%%%%%%%%%%%%%%%%%%%%%%%%%%%%%
%\input{appB-small.tex}

\section{Calculations for type $\mathbb{E}_8$}
\label{app:E8}

In this appendix 
we present 
the sketch of the
construction of 
a $K$-algebra isomorphism 
$\varphi : P^f(\mathbb{E}_8) \to P(\mathbb{E}_8)$
for a given admissible deforming element $f$
according to the algorithm described in the main part of the article.
This appendix is organized as follows.
In \ref{secB:adm} we 
describe
the equations
for the coefficients of $f$ indicated by 
the admissibility condition of $f$.
In \ref{secB:hom} we present the general
form of the homomorphisms from
$P^f(\mathbb{E}_8)$ to $P(\mathbb{E}_8)$
(in particular we present the chosen basis elements
of $P(\mathbb{E}_8)$).
In \ref{secB:simpl} we describe the
chosen method of simplifying the system of equations
for the coefficients of an isomorphism from
$P^f(\mathbb{E}_8)$ to $P(\mathbb{E}_8)$.
\ref{secB:solutions} 
is devoted to 
considerations 
on solving the obtained system of equations
and analysing the obtained solution.

We recall that 
considered case
is much more complicated than the case of type $\mathbb{E}_7$ 
(described in \ref{app:E7})
and the full formulas obtained in the
calculations
are to long to be presented in the article.
Hence we only provide here suitable 
choices in the subsequent steps of the algorithm
and describe the general shape of obtained formulas
without presenting their full form 
(we refer for them to \cite{B:E8zip})
nor going into details.

We refer to 
the main part of the article
for description of
applied algorithms
as well as for theoretical background and more references.

\subsection{Equations derived from the admissibility condition}
\label{secB:adm}

We carry out the steps described in 
Section~\ref{sec:adm}.
First we compute a base of 
$R(\mathbb{E}_8)$
(see Algorithm~\ref{alg:base-REn} for details).
We 
take as a basis the set
$\{
1,
x, y, 
xy,
yx,
yy,
xyx,
xyy,
yxy,
yyx,
xyxy,
xyyx,
yxyx,
\linebreak
yxyy,
yyxy,
xyxyx,
xyxyy,
xyyxy,
%\linebreak
yxyxy,
yxyyx,
yyxyx,
xyxyxy,
xyxyyx,
xyyxyx,
yxyxyx,
yxyxyy,
yxyyxy,
xyxyxyx,
xyxyxyy,
\linebreak
xyxyyxy,
yxyxyxy,
%\linebreak
yxyxyyx,
%\linebreak
yxyyxyx,
xyxyxyxy,
xyxyxyyx,
xyxyyxyx,
yxyxyxyx,
yxyxyxyy,
yxyxyyxy,
xyxyxyxyx,
xyxyxyxyy,
\linebreak
xyxyxyyxy,
yxyxyxyxy,
%\linebreak
yxyxyxyyx,
yxyxyyxyx,
xyxyxyxyxy,
xyxyxyxyyx,
xyxyxyyxyx,
yxyxyxyxyx,
%\linebreak
yxyxyxyxyy,
\linebreak
xyxyxyxyxyx,
xyxyxyxyxyy,
%\linebreak
yxyxyxyxyxy,
yxyxyxyxyyx,
xyxyxyxyxyxy,
xyxyxyxyxyyx,
%\linebreak
yxyxyxyxyxyx,
%\linebreak
xyxyxyxyxyxyx,
\linebreak
yxyxyxyxyxyxy,
%\linebreak
xyxyxyxyxyxyxy
\}$. 
Hen\-ce we know that each element $f \in \rad R(\mathbb{E}_8)$ is of the form
\begin{align*}
f(x,&y) = 
\theta_{1} xy
+ \theta_{2} yx
+ \theta_{3} yy
+ \theta_{4} xyx
+ \theta_{5} xyy
+ \theta_{6} yxy
+ \theta_{7} yyx
+ \theta_{8} xyxy
+ \theta_{9} xyyx
+ \theta_{10} yxyx
+ \theta_{11} yxyy
\\&%\quad
+ \theta_{12} yyxy
%\\&\quad
+ \theta_{13} xyxyx
+ \theta_{14} xyxyy
+ \theta_{15} xyyxy
+ \theta_{16} yxyxy
+ \theta_{17} yxyyx
+ \theta_{18} yyxyx
+ \theta_{19} xyxyxy
\\&%\quad
+ \theta_{20} xyxyyx
%\\&\quad
+ \theta_{21} xyyxyx
+ \theta_{22} yxyxyx
+ \theta_{23} yxyxyy
+ \theta_{24} yxyyxy
+ \theta_{25} xyxyxyx
+ \theta_{26} xyxyxyy
\\&%\quad
+ \theta_{27} xyxyyxy
%\\&\quad
+ \theta_{28} yxyxyxy
+ \theta_{29} yxyxyyx
+ \theta_{30} yxyyxyx
+ \theta_{31} xyxyxyxy
+ \theta_{32} xyxyxyyx
\\&%\quad
+ \theta_{33} xyxyyxyx
+ \theta_{34} yxyxyxyx
%\\&\quad
+ \theta_{35} yxyxyxyy
+ \theta_{36} yxyxyyxy
+ \theta_{37} xyxyxyxyx
+ \theta_{38} xyxyxyxyy
\\&%\quad
+ \theta_{39} xyxyxyyxy
+ \theta_{40} yxyxyxyxy
%\\&\quad
+ \theta_{41} yxyxyxyyx
+ \theta_{42} yxyxyyxyx
+ \theta_{43} xyxyxyxyxy
\\&%\quad
+ \theta_{44} xyxyxyxyyx
+ \theta_{45} xyxyxyyxyx
+ \theta_{46} yxyxyxyxyx
%\\&\quad
+ \theta_{47} yxyxyxyxyy
+ \theta_{48} xyxyxyxyxyx
\\&%\quad
+ \theta_{49} xyxyxyxyxyy
+ \theta_{50} yxyxyxyxyxy
+ \theta_{51} yxyxyxyxyyx
%\\&\quad
+ \theta_{52} xyxyxyxyxyxy
+ \theta_{53} xyxyxyxyxyyx
\\&%\quad
+ \theta_{54} yxyxyxyxyxyx
+ \theta_{55} xyxyxyxyxyxyx
+ \theta_{56} yxyxyxyxyxyxy
%\\&\quad
+ \theta_{57} xyxyxyxyxyxyxy
\end{align*}
for some $\theta_1, \dots, \theta_{57} \in K$.
We fix these coefficients.
Further, we compute 
$(x+y+f(x,y))^5$ 
(see Sections \ref{sec:adm} and \ref{sec:baseEn} for details).
We obtain the formula of the form 
%{\setlength{\jot}{2pt}%
\begin{align*}
\big(x+y&+f(x,y)\big)^5 = 
(\theta_{1}+\theta_{2}-2\theta_{3})xyxyxy
+(\theta_{1}+\theta_{2}-2\theta_{3})xyxyyx
%\\&
+(\theta_{1}+\theta_{2}-2\theta_{3})xyyxyx
\\&
+(\theta_{1}+\theta_{2}-2\theta_{3})yxyxyx
%\\&
+(3\theta_4-2\theta_5+\theta_6-2\theta_7+2\theta_1^2+
\theta_1\theta_2-2\theta_1\theta_3+2\theta_2^2
-2\theta_2\theta_3-\theta_3^2) xyxyxyx
\\&
+(3\theta_4-2\theta_5+\theta_6-2\theta_7+2\theta_1^2
+\theta_1\theta_2-2\theta_1\theta_3+2\theta_2^2
-2\theta_2\theta_3-\theta_3^2) yxyxyxy
\\&
+(3\theta_8-2\theta_9+3\theta_{10}-2\theta_{11}-2\theta_{12}
+6\theta_1\theta_4-4\theta_1\theta_5+2\theta_1\theta_6
-2\theta_1\theta_7
+3\theta_2\theta_4
+\theta_2\theta_6
\\&\ \ \quad
%%\\
%\end{align*}
%\begin{align*}
%&\ \ \quad
-2\theta_2\theta_7
-2\theta_3\theta_5
-2\theta_3\theta_7
+3\theta_1^3+\theta_1^2\theta_2-2\theta_1^2\theta_3
+2\theta_1\theta_2^2
%\\&
%\\
%%\end{align*}
%%\begin{align*}
%\qquad
%&\ \ \quad
-4\theta_1\theta_2\theta_3
-\theta_1\theta_3^2+\theta_2^3) xyxyxyxy 
+ \dots
% \\&
%...
\end{align*}
%}
(where appear 30 basis elements of $R(\mathbb{E}_8)$).
See \cite[\texttt{e8-adm-power.txt}]{B:E8zip} for its full form.
Hence $f$ satisfy the admissibility condition if and only if 
the corresponding 30 
equalities 
(starting with ``$\theta_{1}+\theta_{2}-2\theta_{3} = 0$'')
are satisfied.
See \cite[\texttt{e8-adm-equations.txt}]{B:E8zip} for full list of these equations.

We recall that following 
\cite[Theorem]{B:soc}
there is a non-isomorphic deformation of $P(\mathbb{E}_8)$
in characteristic $2$.
Hence we may assume that $K$ is of characteristic different from $2$.
Then, applying Algorithm~\ref{alg:adm} to these equations
with the following chosen sequence of coefficients
\[
\theta_2,
\theta_6,
\theta_{12},
\theta_{17},
\theta_{18},
\theta_{19},
\theta_{28},
\theta_{36},
\theta_{40},
\theta_{45},
\]
we obtain the required set of 10 independent equations:
%{\setlength{\jot}{2pt}%
\begin{align*}
\theta_2&=2\theta_3-\theta_1,
\\
\theta_6&=-3\theta_1^2+6\theta_1\theta_3-3\theta_3^2-3\theta_4+2\theta_5+2\theta_7,
\\
\theta_{12}&=\theta_1^2\theta_3-2\theta_1\theta_3^2+\theta_3^3-\theta_1\theta_5
+\theta_1\theta_7+\theta_3\theta_5-\theta_3\theta_7
-\theta_{11}-\theta_9+\tfrac{3}{2}(\theta_{10}+\theta_8),
\\
\theta_{17}&=-9\theta_1^4+38\theta_1^3\theta_3-62\theta_1^2\theta_3^2
+46\theta_1\theta_3^3-13\theta_3^4-27\theta_1^2\theta_4+10\theta_1^2\theta_5
+14\theta_1^2\theta_7+54\theta_1\theta_3\theta_4
\\&\,\quad
-18\theta_1\theta_3\theta_5
-30\theta_1\theta_3\theta_7-27\theta_3^2\theta_4+8\theta_3^2\theta_5
+16\theta_3^2\theta_7-4\theta_1\theta_11+6\theta_1\theta_8-2\theta_1\theta_9
+4\theta_11\theta_3
\\&\,\quad
-6\theta_3\theta_8+2\theta_3\theta_9-18\theta_4^2
+18\theta_4\theta_5+18\theta_4\theta_7-5\theta_5^2-8\theta_5\theta_7-5\theta_7^2
+3\theta_{13}-2\theta_{15}+3\theta_{16},
\\&\ \ \vdots
\\
\theta_{45}&=4\theta_1\theta_4\theta_9^2-\theta_{10}\theta_{27}-3\theta_{10}\theta_5\theta_{14}
-2\theta_7\theta_9^2\theta_1+5\theta_{14}\theta_3\theta_4^2+\tfrac{15}{2}\theta_{10}\theta_{14}\theta_3^2
+2\theta_{14}\theta_5\theta_9+\theta_{20}\theta_4\theta_5
\\&\,\quad
-\theta_{22}\theta_5\theta_7
-\theta_{21}\theta_4\theta_7-\theta_{23}\theta_4^2-\theta_{20}\theta_4^2
+49\theta_3\theta_5\theta_7\theta_8\theta_1
-93\theta_1\theta_5\theta_7\theta_11\theta_3
\\&\,\quad
-\tfrac{1421}{2}\theta_1^2\theta_3\theta_4\theta_5\theta_7
+746\theta_1\theta_3^2\theta_4\theta_5\theta_7
+109\theta_1\theta_3\theta_4\theta_5\theta_9
+111\theta_1\theta_3\theta_4\theta_7\theta_9 
+ \dots
\end{align*}%
%}
See 
\cite[\texttt{e8-substitutions.txt}]{B:E8zip}
for list of these equations
(we note that only the last of them alone would take about 8--10 pages 
of the article, so it is not possible to show all them here in their full form).

\subsection{General form of homomorphism}
\label{secB:hom}

Let $f$ be a given admissible element 
of the structure described in 
\ref{secB:adm}.
In order to construct an isomorphism 
$\varphi : P^f(\mathbb{E}_8) \to P(\mathbb{E}_8)$
we want to  
find the coefficients satisfying the assumptions
of Corollary~\ref{cor:hom2}.
We start with 
computing the base of $P(\mathbb{E}_8)$,
according to Algorithm~\ref{alg:base-PEn}.
In particular, for each arrow $\alpha \in Q_1$ we
compute a basis of 
$e_{s(\alpha)} P(\mathbb{E}_8) e_{t(\alpha)}$.
Then we conclude that our constructed isomorphism
should be given by the following formulas
%\begin{small}
\begin{footnotesize}
\setlength{\jot}{0pt}
	\begin{align*}
	\varphi(a_{0}) &= 
	% \alpha^{(0)}_{0} 
	a_{0}
	+ \alpha^{(0)}_{1} a_{0} \bar{a}_{2} a_{2}
	+ \alpha^{(0)}_{2} a_{0} \bar{a}_{2} a_{2} \bar{a}_{0} a_{0}
	+ \alpha^{(0)}_{3} a_{0} \bar{a}_{2} a_{2} \bar{a}_{2} a_{2}
	+ \alpha^{(0)}_{4} a_{0} \bar{a}_{2} a_{2} \bar{a}_{0} a_{0} \bar{a}_{2} a_{2}
	+ \alpha^{(0)}_{5} a_{0} \bar{a}_{2} a_{2} \bar{a}_{2} a_{2} \bar{a}_{0} a_{0}
%	\\&\quad
	+ \alpha^{(0)}_{6} a_{0} \bar{a}_{2} a_{2} \bar{a}_{0} a_{0} \bar{a}_{2} a_{2} \bar{a}_{0} a_{0}
	\\&\quad
	+ \alpha^{(0)}_{7} a_{0} \bar{a}_{2} a_{2} \bar{a}_{0} a_{0} \bar{a}_{2} a_{2} \bar{a}_{2} a_{2}
	+ \alpha^{(0)}_{8} a_{0} \bar{a}_{2} a_{2} \bar{a}_{2} a_{2} \bar{a}_{0} a_{0} \bar{a}_{2} a_{2}
	+ \alpha^{(0)}_{9} a_{0} \bar{a}_{2} a_{2} \bar{a}_{0} a_{0} \bar{a}_{2} a_{2} \bar{a}_{0} a_{0} \bar{a}_{2} a_{2}
%	\\&\quad
	+ \alpha^{(0)}_{10} a_{0} \bar{a}_{2} a_{2} \bar{a}_{0} a_{0} \bar{a}_{2} a_{2} \bar{a}_{2} a_{2} \bar{a}_{0} a_{0}
	\\&\quad
	+ \alpha^{(0)}_{11} a_{0} \bar{a}_{2} a_{2} \bar{a}_{2} a_{2} \bar{a}_{0} a_{0} \bar{a}_{2} a_{2} \bar{a}_{0} a_{0}
	+ \alpha^{(0)}_{12} a_{0} \bar{a}_{2} a_{2} \bar{a}_{0} a_{0} \bar{a}_{2} a_{2} \bar{a}_{0} a_{0} \bar{a}_{2} a_{2} \bar{a}_{0} a_{0}
%	\\&\quad
	+ \alpha^{(0)}_{13} a_{0} \bar{a}_{2} a_{2} \bar{a}_{0} a_{0} \bar{a}_{2} a_{2} \bar{a}_{0} a_{0} \bar{a}_{2} a_{2} \bar{a}_{2} a_{2}
	\\&\quad
	+ \alpha^{(0)}_{14} a_{0} \bar{a}_{2} a_{2} \bar{a}_{0} a_{0} \bar{a}_{2} a_{2} \bar{a}_{2} a_{2} \bar{a}_{0} a_{0} \bar{a}_{2} a_{2}
	+ \alpha^{(0)}_{15} a_{0} \bar{a}_{2} a_{2} \bar{a}_{0} a_{0} \bar{a}_{2} a_{2} \bar{a}_{0} a_{0} \bar{a}_{2} a_{2} \bar{a}_{0} a_{0} \bar{a}_{2} a_{2}
%	\\&\quad
	+ \alpha^{(0)}_{16} a_{0} \bar{a}_{2} a_{2} \bar{a}_{0} a_{0} \bar{a}_{2} a_{2} \bar{a}_{0} a_{0} \bar{a}_{2} a_{2} \bar{a}_{2} a_{2} \bar{a}_{0} a_{0}
	\\&\quad
	+ \alpha^{(0)}_{17} a_{0} \bar{a}_{2} a_{2} \bar{a}_{0} a_{0} \bar{a}_{2} a_{2} \bar{a}_{2} a_{2} \bar{a}_{0} a_{0} \bar{a}_{2} a_{2} \bar{a}_{0} a_{0}
%	\\&\quad
	+ \alpha^{(0)}_{18} a_{0} \bar{a}_{2} a_{2} \bar{a}_{0} a_{0} \bar{a}_{2} a_{2} \bar{a}_{0} a_{0} \bar{a}_{2} a_{2} \bar{a}_{0} a_{0} \bar{a}_{2} a_{2} \bar{a}_{0} a_{0}
	\\&\quad
	+ \alpha^{(0)}_{19} a_{0} \bar{a}_{2} a_{2} \bar{a}_{0} a_{0} \bar{a}_{2} a_{2} \bar{a}_{0} a_{0} \bar{a}_{2} a_{2} \bar{a}_{0} a_{0} \bar{a}_{2} a_{2} \bar{a}_{2} a_{2}
%	\\&\quad
	+ \alpha^{(0)}_{20} a_{0} \bar{a}_{2} a_{2} \bar{a}_{0} a_{0} \bar{a}_{2} a_{2} \bar{a}_{0} a_{0} \bar{a}_{2} a_{2} \bar{a}_{2} a_{2} \bar{a}_{0} a_{0} \bar{a}_{2} a_{2}
	\\&\quad
	+ \alpha^{(0)}_{21} a_{0} \bar{a}_{2} a_{2} \bar{a}_{0} a_{0} \bar{a}_{2} a_{2} \bar{a}_{0} a_{0} \bar{a}_{2} a_{2} \bar{a}_{0} a_{0} \bar{a}_{2} a_{2} \bar{a}_{0} a_{0} \bar{a}_{2} a_{2}
%	\\&\quad
	+ \alpha^{(0)}_{22} a_{0} \bar{a}_{2} a_{2} \bar{a}_{0} a_{0} \bar{a}_{2} a_{2} \bar{a}_{0} a_{0} \bar{a}_{2} a_{2} \bar{a}_{0} a_{0} \bar{a}_{2} a_{2} \bar{a}_{2} a_{2} \bar{a}_{0} a_{0}
	\\&\quad
	+ \alpha^{(0)}_{23} a_{0} \bar{a}_{2} a_{2} \bar{a}_{0} a_{0} \bar{a}_{2} a_{2} \bar{a}_{0} a_{0} \bar{a}_{2} a_{2} \bar{a}_{2} a_{2} \bar{a}_{0} a_{0} \bar{a}_{2} a_{2} \bar{a}_{0} a_{0}
%	\\&\quad
	+ \alpha^{(0)}_{24} a_{0} \bar{a}_{2} a_{2} \bar{a}_{0} a_{0} \bar{a}_{2} a_{2} \bar{a}_{0} a_{0} \bar{a}_{2} a_{2} \bar{a}_{0} a_{0} \bar{a}_{2} a_{2} \bar{a}_{0} a_{0} \bar{a}_{2} a_{2} \bar{a}_{0} a_{0}
	\\&\quad
	+ \alpha^{(0)}_{25} a_{0} \bar{a}_{2} a_{2} \bar{a}_{0} a_{0} \bar{a}_{2} a_{2} \bar{a}_{0} a_{0} \bar{a}_{2} a_{2} \bar{a}_{0} a_{0} \bar{a}_{2} a_{2} \bar{a}_{0} a_{0} \bar{a}_{2} a_{2} \bar{a}_{2} a_{2}
%	\\&\quad
	+ \alpha^{(0)}_{26} a_{0} \bar{a}_{2} a_{2} \bar{a}_{0} a_{0} \bar{a}_{2} a_{2} \bar{a}_{0} a_{0} \bar{a}_{2} a_{2} \bar{a}_{0} a_{0} \bar{a}_{2} a_{2} \bar{a}_{0} a_{0} \bar{a}_{2} a_{2} \bar{a}_{0} a_{0} \bar{a}_{2} a_{2}
	\\&\quad
	+ \alpha^{(0)}_{27} a_{0} \bar{a}_{2} a_{2} \bar{a}_{0} a_{0} \bar{a}_{2} a_{2} \bar{a}_{0} a_{0} \bar{a}_{2} a_{2} \bar{a}_{0} a_{0} \bar{a}_{2} a_{2} \bar{a}_{0} a_{0} \bar{a}_{2} a_{2} \bar{a}_{2} a_{2} \bar{a}_{0} a_{0}
%	\\&\quad
	+ \alpha^{(0)}_{28} a_{0} \bar{a}_{2} a_{2} \bar{a}_{0} a_{0} \bar{a}_{2} a_{2} \bar{a}_{0} a_{0} \bar{a}_{2} a_{2} \bar{a}_{0} a_{0} \bar{a}_{2} a_{2} \bar{a}_{0} a_{0} \bar{a}_{2} a_{2} \bar{a}_{0} a_{0} \bar{a}_{2} a_{2} \bar{a}_{0} a_{0}
	\\&\quad
	+ \alpha^{(0)}_{29} a_{0} \bar{a}_{2} a_{2} \bar{a}_{0} a_{0} \bar{a}_{2} a_{2} \bar{a}_{0} a_{0} \bar{a}_{2} a_{2} \bar{a}_{0} a_{0} \bar{a}_{2} a_{2} \bar{a}_{0} a_{0} \bar{a}_{2} a_{2} \bar{a}_{0} a_{0} \bar{a}_{2} a_{2} \bar{a}_{0} a_{0} \bar{a}_{2} a_{2}
	\\
	\varphi(\bar{a}_{0}) &= 
	% \bar{\alpha}^{(0)}_{0} 
	\bar{a}_{0}
	+ \bar{\alpha}^{(0)}_{1} \bar{a}_{2} a_{2} \bar{a}_{0}
	+ \bar{\alpha}^{(0)}_{2} \bar{a}_{0} a_{0} \bar{a}_{2} a_{2} \bar{a}_{0}
	+ \bar{\alpha}^{(0)}_{3} \bar{a}_{2} a_{2} \bar{a}_{2} a_{2} \bar{a}_{0}
	+ \bar{\alpha}^{(0)}_{4} \bar{a}_{0} a_{0} \bar{a}_{2} a_{2} \bar{a}_{2} a_{2} \bar{a}_{0}
	%\\&\quad
	+ \bar{\alpha}^{(0)}_{5} \bar{a}_{2} a_{2} \bar{a}_{0} a_{0} \bar{a}_{2} a_{2} \bar{a}_{0}
%	\\&\quad
	+ \bar{\alpha}^{(0)}_{6} \bar{a}_{0} a_{0} \bar{a}_{2} a_{2} \bar{a}_{0} a_{0} \bar{a}_{2} a_{2} \bar{a}_{0}
	\\&\quad
	+ \bar{\alpha}^{(0)}_{7} \bar{a}_{2} a_{2} \bar{a}_{0} a_{0} \bar{a}_{2} a_{2} \bar{a}_{2} a_{2} \bar{a}_{0}
	%\\&\quad
	+ \bar{\alpha}^{(0)}_{8} \bar{a}_{2} a_{2} \bar{a}_{2} a_{2} \bar{a}_{0} a_{0} \bar{a}_{2} a_{2} \bar{a}_{0}
	%\\&\quad
	+ \bar{\alpha}^{(0)}_{9} \bar{a}_{0} a_{0} \bar{a}_{2} a_{2} \bar{a}_{0} a_{0} \bar{a}_{2} a_{2} \bar{a}_{2} a_{2} \bar{a}_{0}
%	\\&\quad
	+ \bar{\alpha}^{(0)}_{10} \bar{a}_{0} a_{0} \bar{a}_{2} a_{2} \bar{a}_{2} a_{2} \bar{a}_{0} a_{0} \bar{a}_{2} a_{2} \bar{a}_{0}
	\\&\quad
	+ \bar{\alpha}^{(0)}_{11} \bar{a}_{2} a_{2} \bar{a}_{0} a_{0} \bar{a}_{2} a_{2} \bar{a}_{0} a_{0} \bar{a}_{2} a_{2} \bar{a}_{0}
	%\\&\quad
	%\\
	%\end{align*}
	%\begin{align*}
	%&\quad
	+ \bar{\alpha}^{(0)}_{12} \bar{a}_{0} a_{0} \bar{a}_{2} a_{2} \bar{a}_{0} a_{0} \bar{a}_{2} a_{2} \bar{a}_{0} a_{0} \bar{a}_{2} a_{2} \bar{a}_{0}
%	\\&\quad
	+ \bar{\alpha}^{(0)}_{13} \bar{a}_{2} a_{2} \bar{a}_{0} a_{0} \bar{a}_{2} a_{2} \bar{a}_{0} a_{0} \bar{a}_{2} a_{2} \bar{a}_{2} a_{2} \bar{a}_{0}
	\\&\quad
	+ \bar{\alpha}^{(0)}_{14} \bar{a}_{2} a_{2} \bar{a}_{0} a_{0} \bar{a}_{2} a_{2} \bar{a}_{2} a_{2} \bar{a}_{0} a_{0} \bar{a}_{2} a_{2} \bar{a}_{0}
	+ \bar{\alpha}^{(0)}_{15} \bar{a}_{0} a_{0} \bar{a}_{2} a_{2} \bar{a}_{0} a_{0} \bar{a}_{2} a_{2} \bar{a}_{0} a_{0} \bar{a}_{2} a_{2} \bar{a}_{2} a_{2} \bar{a}_{0}
%	\\&\quad
	+ \bar{\alpha}^{(0)}_{16} \bar{a}_{0} a_{0} \bar{a}_{2} a_{2} \bar{a}_{0} a_{0} \bar{a}_{2} a_{2} \bar{a}_{2} a_{2} \bar{a}_{0} a_{0} \bar{a}_{2} a_{2} \bar{a}_{0}
	\\&\quad
	+ \bar{\alpha}^{(0)}_{17} \bar{a}_{2} a_{2} \bar{a}_{0} a_{0} \bar{a}_{2} a_{2} \bar{a}_{0} a_{0} \bar{a}_{2} a_{2} \bar{a}_{0} a_{0} \bar{a}_{2} a_{2} \bar{a}_{0}
%	\\&\quad
	+ \bar{\alpha}^{(0)}_{18} \bar{a}_{0} a_{0} \bar{a}_{2} a_{2} \bar{a}_{0} a_{0} \bar{a}_{2} a_{2} \bar{a}_{0} a_{0} \bar{a}_{2} a_{2} \bar{a}_{0} a_{0} \bar{a}_{2} a_{2} \bar{a}_{0}
	\\&\quad
	+ \bar{\alpha}^{(0)}_{19} \bar{a}_{2} a_{2} \bar{a}_{0} a_{0} \bar{a}_{2} a_{2} \bar{a}_{0} a_{0} \bar{a}_{2} a_{2} \bar{a}_{0} a_{0} \bar{a}_{2} a_{2} \bar{a}_{2} a_{2} \bar{a}_{0}
%	\\
%	%\end{align*}
%	%\begin{align*}
%	&\quad
	+ \bar{\alpha}^{(0)}_{20} \bar{a}_{2} a_{2} \bar{a}_{0} a_{0} \bar{a}_{2} a_{2} \bar{a}_{0} a_{0} \bar{a}_{2} a_{2} \bar{a}_{2} a_{2} \bar{a}_{0} a_{0} \bar{a}_{2} a_{2} \bar{a}_{0}
	\\&\quad
	+ \bar{\alpha}^{(0)}_{21} \bar{a}_{0} a_{0} \bar{a}_{2} a_{2} \bar{a}_{0} a_{0} \bar{a}_{2} a_{2} \bar{a}_{0} a_{0} \bar{a}_{2} a_{2} \bar{a}_{0} a_{0} \bar{a}_{2} a_{2} \bar{a}_{2} a_{2} \bar{a}_{0}
%	\\&\quad
	+ \bar{\alpha}^{(0)}_{22} \bar{a}_{0} a_{0} \bar{a}_{2} a_{2} \bar{a}_{0} a_{0} \bar{a}_{2} a_{2} \bar{a}_{0} a_{0} \bar{a}_{2} a_{2} \bar{a}_{2} a_{2} \bar{a}_{0} a_{0} \bar{a}_{2} a_{2} \bar{a}_{0}
	\\&\quad
	+ \bar{\alpha}^{(0)}_{23} \bar{a}_{2} a_{2} \bar{a}_{0} a_{0} \bar{a}_{2} a_{2} \bar{a}_{0} a_{0} \bar{a}_{2} a_{2} \bar{a}_{0} a_{0} \bar{a}_{2} a_{2} \bar{a}_{0} a_{0} \bar{a}_{2} a_{2} \bar{a}_{0}
%	\\&\quad
	+ \bar{\alpha}^{(0)}_{24} \bar{a}_{0} a_{0} \bar{a}_{2} a_{2} \bar{a}_{0} a_{0} \bar{a}_{2} a_{2} \bar{a}_{0} a_{0} \bar{a}_{2} a_{2} \bar{a}_{0} a_{0} \bar{a}_{2} a_{2} \bar{a}_{0} a_{0} \bar{a}_{2} a_{2} \bar{a}_{0}
	\\&\quad
	+ \bar{\alpha}^{(0)}_{25} \bar{a}_{2} a_{2} \bar{a}_{0} a_{0} \bar{a}_{2} a_{2} \bar{a}_{0} a_{0} \bar{a}_{2} a_{2} \bar{a}_{0} a_{0} \bar{a}_{2} a_{2} \bar{a}_{0} a_{0} \bar{a}_{2} a_{2} \bar{a}_{2} a_{2} \bar{a}_{0}
%	\\&\quad
	+ \bar{\alpha}^{(0)}_{26} \bar{a}_{0} a_{0} \bar{a}_{2} a_{2} \bar{a}_{0} a_{0} \bar{a}_{2} a_{2} \bar{a}_{0} a_{0} \bar{a}_{2} a_{2} \bar{a}_{0} a_{0} \bar{a}_{2} a_{2} \bar{a}_{0} a_{0} \bar{a}_{2} a_{2} \bar{a}_{2} a_{2} \bar{a}_{0}
\end{align*}
\begin{align*}
%	\\
	&\quad
	+ \bar{\alpha}^{(0)}_{27} \bar{a}_{2} a_{2} \bar{a}_{0} a_{0} \bar{a}_{2} a_{2} \bar{a}_{0} a_{0} \bar{a}_{2} a_{2} \bar{a}_{0} a_{0} \bar{a}_{2} a_{2} \bar{a}_{0} a_{0} \bar{a}_{2} a_{2} \bar{a}_{0} a_{0} \bar{a}_{2} a_{2} \bar{a}_{0}
%	\\&\quad
	+ \bar{\alpha}^{(0)}_{28} \bar{a}_{0} a_{0} \bar{a}_{2} a_{2} \bar{a}_{0} a_{0} \bar{a}_{2} a_{2} \bar{a}_{0} a_{0} \bar{a}_{2} a_{2} \bar{a}_{0} a_{0} \bar{a}_{2} a_{2} \bar{a}_{0} a_{0} \bar{a}_{2} a_{2} \bar{a}_{0} a_{0} \bar{a}_{2} a_{2} \bar{a}_{0}
	\\&\quad
	+ \bar{\alpha}^{(0)}_{29} \bar{a}_{2} a_{2} \bar{a}_{0} a_{0} \bar{a}_{2} a_{2} \bar{a}_{0} a_{0} \bar{a}_{2} a_{2} \bar{a}_{0} a_{0} \bar{a}_{2} a_{2} \bar{a}_{0} a_{0} \bar{a}_{2} a_{2} \bar{a}_{0} a_{0} \bar{a}_{2} a_{2} \bar{a}_{0} a_{0} \bar{a}_{2} a_{2} \bar{a}_{0}
	\\
	\varphi(a_{1}) &= 
	% \alpha^{(1)}_{0}
	a_{1}
	+ \alpha^{(1)}_{1} a_{1} a_{2} \bar{a}_{0} a_{0} \bar{a}_{2}
	+ \alpha^{(1)}_{2} a_{1} a_{2} \bar{a}_{0} a_{0} \bar{a}_{2} a_{2} \bar{a}_{2}
	+ \alpha^{(1)}_{3} a_{1} a_{2} \bar{a}_{0} a_{0} \bar{a}_{2} a_{2} \bar{a}_{0} a_{0} \bar{a}_{2}
	%\\&\quad
	+ \alpha^{(1)}_{4} a_{1} a_{2} \bar{a}_{0} a_{0} \bar{a}_{2} a_{2} \bar{a}_{0} a_{0} \bar{a}_{2} a_{2} \bar{a}_{2}
	\\&\quad
	+ \alpha^{(1)}_{5} a_{1} a_{2} \bar{a}_{0} a_{0} \bar{a}_{2} a_{2} \bar{a}_{2} a_{2} \bar{a}_{0} a_{0} \bar{a}_{2}
	%\\&\quad
	+ \alpha^{(1)}_{6} a_{1} a_{2} \bar{a}_{0} a_{0} \bar{a}_{2} a_{2} \bar{a}_{0} a_{0} \bar{a}_{2} a_{2} \bar{a}_{0} a_{0} \bar{a}_{2}
	%\\&\quad
	+ \alpha^{(1)}_{7} a_{1} a_{2} \bar{a}_{0} a_{0} \bar{a}_{2} a_{2} \bar{a}_{0} a_{0} \bar{a}_{2} a_{2} \bar{a}_{2} a_{2} \bar{a}_{0} a_{0} \bar{a}_{2}
	\\&\quad
	+ \alpha^{(1)}_{8} a_{1} a_{2} \bar{a}_{0} a_{0} \bar{a}_{2} a_{2} \bar{a}_{0} a_{0} \bar{a}_{2} a_{2} \bar{a}_{0} a_{0} \bar{a}_{2} a_{2} \bar{a}_{0} a_{0} \bar{a}_{2}
	%\\&\quad
	+ \alpha^{(1)}_{9} a_{1} a_{2} \bar{a}_{0} a_{0} \bar{a}_{2} a_{2} \bar{a}_{0} a_{0} \bar{a}_{2} a_{2} \bar{a}_{2} a_{2} \bar{a}_{0} a_{0} \bar{a}_{2} a_{2} \bar{a}_{2}
	\\&\quad
	+ \alpha^{(1)}_{10} a_{1} a_{2} \bar{a}_{0} a_{0} \bar{a}_{2} a_{2} \bar{a}_{0} a_{0} \bar{a}_{2} a_{2} \bar{a}_{0} a_{0} \bar{a}_{2} a_{2} \bar{a}_{0} a_{0} \bar{a}_{2} a_{2} \bar{a}_{2}
	%\\&\quad
	+ \alpha^{(1)}_{11} a_{1} a_{2} \bar{a}_{0} a_{0} \bar{a}_{2} a_{2} \bar{a}_{0} a_{0} \bar{a}_{2} a_{2} \bar{a}_{0} a_{0} \bar{a}_{2} a_{2} \bar{a}_{0} a_{0} \bar{a}_{2} a_{2} \bar{a}_{0} a_{0} \bar{a}_{2}
	\\&\quad
	+ \alpha^{(1)}_{12} a_{1} a_{2} \bar{a}_{0} a_{0} \bar{a}_{2} a_{2} \bar{a}_{0} a_{0} \bar{a}_{2} a_{2} \bar{a}_{0} a_{0} \bar{a}_{2} a_{2} \bar{a}_{0} a_{0} \bar{a}_{2} a_{2} \bar{a}_{0} a_{0} \bar{a}_{2} a_{2} \bar{a}_{2}
	\\&\quad
	+ \alpha^{(1)}_{13} a_{1} a_{2} \bar{a}_{0} a_{0} \bar{a}_{2} a_{2} \bar{a}_{0} a_{0} \bar{a}_{2} a_{2} \bar{a}_{0} a_{0} \bar{a}_{2} a_{2} \bar{a}_{0} a_{0} \bar{a}_{2} a_{2} \bar{a}_{0} a_{0} \bar{a}_{2} a_{2} \bar{a}_{2} a_{2} \bar{a}_{0} a_{0} \bar{a}_{2}
	\\
	\varphi(\bar{a}_{1}) &= 
	% \bar{\alpha}^{(1)}_{0}
	\bar{a}_{1}
	+ \bar{\alpha}^{(1)}_{1} a_{2} \bar{a}_{0} a_{0} \bar{a}_{2} \bar{a}_{1}
	+ \bar{\alpha}^{(1)}_{2} a_{2} \bar{a}_{2} a_{2} \bar{a}_{0} a_{0} \bar{a}_{2} \bar{a}_{1}
	+ \bar{\alpha}^{(1)}_{3} a_{2} \bar{a}_{0} a_{0} \bar{a}_{2} a_{2} \bar{a}_{0} a_{0} \bar{a}_{2} \bar{a}_{1}
	%\\&\quad
	+ \bar{\alpha}^{(1)}_{4} a_{2} \bar{a}_{0} a_{0} \bar{a}_{2} a_{2} \bar{a}_{2} a_{2} \bar{a}_{0} a_{0} \bar{a}_{2} \bar{a}_{1}
	\\&\quad
	+ \bar{\alpha}^{(1)}_{5} a_{2} \bar{a}_{2} a_{2} \bar{a}_{0} a_{0} \bar{a}_{2} a_{2} \bar{a}_{0} a_{0} \bar{a}_{2} \bar{a}_{1}
	%\\&\quad
	+ \bar{\alpha}^{(1)}_{6} a_{2} \bar{a}_{0} a_{0} \bar{a}_{2} a_{2} \bar{a}_{0} a_{0} \bar{a}_{2} a_{2} \bar{a}_{0} a_{0} \bar{a}_{2} \bar{a}_{1}
	%\\&\quad
	+ \bar{\alpha}^{(1)}_{7} a_{2} \bar{a}_{0} a_{0} \bar{a}_{2} a_{2} \bar{a}_{0} a_{0} \bar{a}_{2} a_{2} \bar{a}_{2} a_{2} \bar{a}_{0} a_{0} \bar{a}_{2} \bar{a}_{1}
	\\&\quad
	+ \bar{\alpha}^{(1)}_{8} a_{2} \bar{a}_{0} a_{0} \bar{a}_{2} a_{2} \bar{a}_{0} a_{0} \bar{a}_{2} a_{2} \bar{a}_{0} a_{0} \bar{a}_{2} a_{2} \bar{a}_{0} a_{0} \bar{a}_{2} \bar{a}_{1}
	%\\&\quad
	+ \bar{\alpha}^{(1)}_{9} a_{2} \bar{a}_{2} a_{2} \bar{a}_{0} a_{0} \bar{a}_{2} a_{2} \bar{a}_{0} a_{0} \bar{a}_{2} a_{2} \bar{a}_{2} a_{2} \bar{a}_{0} a_{0} \bar{a}_{2} \bar{a}_{1}
	\\&\quad
	+ \bar{\alpha}^{(1)}_{10} a_{2} \bar{a}_{0} a_{0} \bar{a}_{2} a_{2} \bar{a}_{0} a_{0} \bar{a}_{2} a_{2} \bar{a}_{0} a_{0} \bar{a}_{2} a_{2} \bar{a}_{2} a_{2} \bar{a}_{0} a_{0} \bar{a}_{2} \bar{a}_{1}
	%\\&\quad
	+ \bar{\alpha}^{(1)}_{11} a_{2} \bar{a}_{0} a_{0} \bar{a}_{2} a_{2} \bar{a}_{0} a_{0} \bar{a}_{2} a_{2} \bar{a}_{0} a_{0} \bar{a}_{2} a_{2} \bar{a}_{0} a_{0} \bar{a}_{2} a_{2} \bar{a}_{0} a_{0} \bar{a}_{2} \bar{a}_{1}
	\\&\quad
	+ \bar{\alpha}^{(1)}_{12} a_{2} \bar{a}_{0} a_{0} \bar{a}_{2} a_{2} \bar{a}_{0} a_{0} \bar{a}_{2} a_{2} \bar{a}_{0} a_{0} \bar{a}_{2} a_{2} \bar{a}_{0} a_{0} \bar{a}_{2} a_{2} \bar{a}_{2} a_{2} \bar{a}_{0} a_{0} \bar{a}_{2} \bar{a}_{1}
	\\&\quad
	+ \bar{\alpha}^{(1)}_{13} a_{2} \bar{a}_{0} a_{0} \bar{a}_{2} a_{2} \bar{a}_{0} a_{0} \bar{a}_{2} a_{2} \bar{a}_{0} a_{0} \bar{a}_{2} a_{2} \bar{a}_{0} a_{0} \bar{a}_{2} a_{2} \bar{a}_{0} a_{0} \bar{a}_{2} a_{2} \bar{a}_{2} a_{2} \bar{a}_{0} a_{0} \bar{a}_{2} \bar{a}_{1}
	\\
	\varphi(a_{2}) &= 
	% \alpha^{(2)}_{0}
	a_{2}
	+ \alpha^{(2)}_{1} a_{2} \bar{a}_{0} a_{0}
	+ \alpha^{(2)}_{2} a_{2} \bar{a}_{2} a_{2}
	+ \alpha^{(2)}_{3} a_{2} \bar{a}_{0} a_{0} \bar{a}_{2} a_{2}
	+ \alpha^{(2)}_{4} a_{2} \bar{a}_{2} a_{2} \bar{a}_{0} a_{0}
	+ \alpha^{(2)}_{5} a_{2} \bar{a}_{0} a_{0} \bar{a}_{2} a_{2} \bar{a}_{0} a_{0}
	%\\
	%%\end{align*}
	%%\begin{align*}
	%&\quad
	+ \alpha^{(2)}_{6} a_{2} \bar{a}_{0} a_{0} \bar{a}_{2} a_{2} \bar{a}_{2} a_{2}
	\\&\quad
	+ \alpha^{(2)}_{7} a_{2} \bar{a}_{2} a_{2} \bar{a}_{0} a_{0} \bar{a}_{2} a_{2}
	%\\&\quad
	+ \alpha^{(2)}_{8} a_{2} \bar{a}_{0} a_{0} \bar{a}_{2} a_{2} \bar{a}_{0} a_{0} \bar{a}_{2} a_{2}
	+ \alpha^{(2)}_{9} a_{2} \bar{a}_{0} a_{0} \bar{a}_{2} a_{2} \bar{a}_{2} a_{2} \bar{a}_{0} a_{0}
	%\\&\quad
	+ \alpha^{(2)}_{10} a_{2} \bar{a}_{2} a_{2} \bar{a}_{0} a_{0} \bar{a}_{2} a_{2} \bar{a}_{0} a_{0}
	\\&\quad
	+ \alpha^{(2)}_{11} a_{2} \bar{a}_{2} a_{2} \bar{a}_{0} a_{0} \bar{a}_{2} a_{2} \bar{a}_{2} a_{2}
	+ \alpha^{(2)}_{12} a_{2} \bar{a}_{0} a_{0} \bar{a}_{2} a_{2} \bar{a}_{0} a_{0} \bar{a}_{2} a_{2} \bar{a}_{0} a_{0}
	%\\&\quad
	+ \alpha^{(2)}_{13} a_{2} \bar{a}_{0} a_{0} \bar{a}_{2} a_{2} \bar{a}_{0} a_{0} \bar{a}_{2} a_{2} \bar{a}_{2} a_{2}
%	\\&\quad
	+ \alpha^{(2)}_{14} a_{2} \bar{a}_{0} a_{0} \bar{a}_{2} a_{2} \bar{a}_{2} a_{2} \bar{a}_{0} a_{0} \bar{a}_{2} a_{2}
	\\&\quad
	+ \alpha^{(2)}_{15} a_{2} \bar{a}_{2} a_{2} \bar{a}_{0} a_{0} \bar{a}_{2} a_{2} \bar{a}_{0} a_{0} \bar{a}_{2} a_{2}
	%\\&\quad
	+ \alpha^{(2)}_{16} a_{2} \bar{a}_{0} a_{0} \bar{a}_{2} a_{2} \bar{a}_{0} a_{0} \bar{a}_{2} a_{2} \bar{a}_{0} a_{0} \bar{a}_{2} a_{2}
%	\\&\quad
	+ \alpha^{(2)}_{17} a_{2} \bar{a}_{0} a_{0} \bar{a}_{2} a_{2} \bar{a}_{0} a_{0} \bar{a}_{2} a_{2} \bar{a}_{2} a_{2} \bar{a}_{0} a_{0}
	\\&\quad
	+ \alpha^{(2)}_{18} a_{2} \bar{a}_{0} a_{0} \bar{a}_{2} a_{2} \bar{a}_{2} a_{2} \bar{a}_{0} a_{0} \bar{a}_{2} a_{2} \bar{a}_{0} a_{0}
	%\\&\quad
	+ \alpha^{(2)}_{19} a_{2} \bar{a}_{2} a_{2} \bar{a}_{0} a_{0} \bar{a}_{2} a_{2} \bar{a}_{0} a_{0} \bar{a}_{2} a_{2} \bar{a}_{2} a_{2}
%	\\&\quad
	+ \alpha^{(2)}_{20} a_{2} \bar{a}_{0} a_{0} \bar{a}_{2} a_{2} \bar{a}_{0} a_{0} \bar{a}_{2} a_{2} \bar{a}_{0} a_{0} \bar{a}_{2} a_{2} \bar{a}_{0} a_{0}
	\\&\quad
	+ \alpha^{(2)}_{21} a_{2} \bar{a}_{0} a_{0} \bar{a}_{2} a_{2} \bar{a}_{0} a_{0} \bar{a}_{2} a_{2} \bar{a}_{0} a_{0} \bar{a}_{2} a_{2} \bar{a}_{2} a_{2}
%	\\&\quad
	+ \alpha^{(2)}_{22} a_{2} \bar{a}_{0} a_{0} \bar{a}_{2} a_{2} \bar{a}_{0} a_{0} \bar{a}_{2} a_{2} \bar{a}_{2} a_{2} \bar{a}_{0} a_{0} \bar{a}_{2} a_{2}
%	\\&\quad
	+ \alpha^{(2)}_{23} a_{2} \bar{a}_{2} a_{2} \bar{a}_{0} a_{0} \bar{a}_{2} a_{2} \bar{a}_{0} a_{0} \bar{a}_{2} a_{2} \bar{a}_{2} a_{2} \bar{a}_{0} a_{0}
	\\&\quad
	+ \alpha^{(2)}_{24} a_{2} \bar{a}_{0} a_{0} \bar{a}_{2} a_{2} \bar{a}_{0} a_{0} \bar{a}_{2} a_{2} \bar{a}_{0} a_{0} \bar{a}_{2} a_{2} \bar{a}_{0} a_{0} \bar{a}_{2} a_{2}
%	\\&\quad
	+ \alpha^{(2)}_{25} a_{2} \bar{a}_{0} a_{0} \bar{a}_{2} a_{2} \bar{a}_{0} a_{0} \bar{a}_{2} a_{2} \bar{a}_{0} a_{0} \bar{a}_{2} a_{2} \bar{a}_{2} a_{2} \bar{a}_{0} a_{0}
	\\&\quad
	+ \alpha^{(2)}_{26} a_{2} \bar{a}_{0} a_{0} \bar{a}_{2} a_{2} \bar{a}_{0} a_{0} \bar{a}_{2} a_{2} \bar{a}_{2} a_{2} \bar{a}_{0} a_{0} \bar{a}_{2} a_{2} \bar{a}_{0} a_{0}
%	\\&\quad
	+ \alpha^{(2)}_{27} a_{2} \bar{a}_{2} a_{2} \bar{a}_{0} a_{0} \bar{a}_{2} a_{2} \bar{a}_{0} a_{0} \bar{a}_{2} a_{2} \bar{a}_{2} a_{2} \bar{a}_{0} a_{0} \bar{a}_{2} a_{2}
	%\\&\quad
	\\
	%\end{align*}
	%\begin{align*}
	&\quad
	+ \alpha^{(2)}_{28} a_{2} \bar{a}_{0} a_{0} \bar{a}_{2} a_{2} \bar{a}_{0} a_{0} \bar{a}_{2} a_{2} \bar{a}_{0} a_{0} \bar{a}_{2} a_{2} \bar{a}_{0} a_{0} \bar{a}_{2} a_{2} \bar{a}_{0} a_{0}
	%\\&\quad
	+ \alpha^{(2)}_{29} a_{2} \bar{a}_{0} a_{0} \bar{a}_{2} a_{2} \bar{a}_{0} a_{0} \bar{a}_{2} a_{2} \bar{a}_{0} a_{0} \bar{a}_{2} a_{2} \bar{a}_{0} a_{0} \bar{a}_{2} a_{2} \bar{a}_{2} a_{2}
	\\&\quad
	+ \alpha^{(2)}_{30} a_{2} \bar{a}_{0} a_{0} \bar{a}_{2} a_{2} \bar{a}_{0} a_{0} \bar{a}_{2} a_{2} \bar{a}_{0} a_{0} \bar{a}_{2} a_{2} \bar{a}_{2} a_{2} \bar{a}_{0} a_{0} \bar{a}_{2} a_{2}
	%\\&\quad
	+ \alpha^{(2)}_{31} a_{2} \bar{a}_{2} a_{2} \bar{a}_{0} a_{0} \bar{a}_{2} a_{2} \bar{a}_{0} a_{0} \bar{a}_{2} a_{2} \bar{a}_{2} a_{2} \bar{a}_{0} a_{0} \bar{a}_{2} a_{2} \bar{a}_{0} a_{0}
	\\&\quad
	+ \alpha^{(2)}_{32} a_{2} \bar{a}_{0} a_{0} \bar{a}_{2} a_{2} \bar{a}_{0} a_{0} \bar{a}_{2} a_{2} \bar{a}_{0} a_{0} \bar{a}_{2} a_{2} \bar{a}_{0} a_{0} \bar{a}_{2} a_{2} \bar{a}_{0} a_{0} \bar{a}_{2} a_{2}
	%\\&\quad
	+ \alpha^{(2)}_{33} a_{2} \bar{a}_{0} a_{0} \bar{a}_{2} a_{2} \bar{a}_{0} a_{0} \bar{a}_{2} a_{2} \bar{a}_{0} a_{0} \bar{a}_{2} a_{2} \bar{a}_{0} a_{0} \bar{a}_{2} a_{2} \bar{a}_{2} a_{2} \bar{a}_{0} a_{0}
	\\&\quad
	+ \alpha^{(2)}_{34} a_{2} \bar{a}_{0} a_{0} \bar{a}_{2} a_{2} \bar{a}_{0} a_{0} \bar{a}_{2} a_{2} \bar{a}_{0} a_{0} \bar{a}_{2} a_{2} \bar{a}_{2} a_{2} \bar{a}_{0} a_{0} \bar{a}_{2} a_{2} \bar{a}_{0} a_{0}
	%\\&\quad
	+ \alpha^{(2)}_{35} a_{2} \bar{a}_{0} a_{0} \bar{a}_{2} a_{2} \bar{a}_{0} a_{0} \bar{a}_{2} a_{2} \bar{a}_{0} a_{0} \bar{a}_{2} a_{2} \bar{a}_{0} a_{0} \bar{a}_{2} a_{2} \bar{a}_{0} a_{0} \bar{a}_{2} a_{2} \bar{a}_{0} a_{0}
	\\&\quad
	+ \alpha^{(2)}_{36} a_{2} \bar{a}_{0} a_{0} \bar{a}_{2} a_{2} \bar{a}_{0} a_{0} \bar{a}_{2} a_{2} \bar{a}_{0} a_{0} \bar{a}_{2} a_{2} \bar{a}_{0} a_{0} \bar{a}_{2} a_{2} \bar{a}_{0} a_{0} \bar{a}_{2} a_{2} \bar{a}_{2} a_{2}
%	\\&\quad
	+ \alpha^{(2)}_{37} a_{2} \bar{a}_{0} a_{0} \bar{a}_{2} a_{2} \bar{a}_{0} a_{0} \bar{a}_{2} a_{2} \bar{a}_{0} a_{0} \bar{a}_{2} a_{2} \bar{a}_{0} a_{0} \bar{a}_{2} a_{2} \bar{a}_{0} a_{0} \bar{a}_{2} a_{2} \bar{a}_{0} a_{0} \bar{a}_{2} a_{2}
	\\&\quad
	+ \alpha^{(2)}_{38} a_{2} \bar{a}_{0} a_{0} \bar{a}_{2} a_{2} \bar{a}_{0} a_{0} \bar{a}_{2} a_{2} \bar{a}_{0} a_{0} \bar{a}_{2} a_{2} \bar{a}_{0} a_{0} \bar{a}_{2} a_{2} \bar{a}_{0} a_{0} \bar{a}_{2} a_{2} \bar{a}_{2} a_{2} \bar{a}_{0} a_{0}
	\\&\quad
	+ \alpha^{(2)}_{39} a_{2} \bar{a}_{0} a_{0} \bar{a}_{2} a_{2} \bar{a}_{0} a_{0} \bar{a}_{2} a_{2} \bar{a}_{0} a_{0} \bar{a}_{2} a_{2} \bar{a}_{0} a_{0} \bar{a}_{2} a_{2} \bar{a}_{0} a_{0} \bar{a}_{2} a_{2} \bar{a}_{0} a_{0} \bar{a}_{2} a_{2} \bar{a}_{0} a_{0}
	\\
	\varphi(\bar{a}_{2}) &= 
	% \bar{\alpha}^{(2)}_{0}
	\bar{a}_{2}
	+ \bar{\alpha}^{(2)}_{1} \bar{a}_{0} a_{0} \bar{a}_{2}
	+ \bar{\alpha}^{(2)}_{2} \bar{a}_{2} a_{2} \bar{a}_{2}
	+ \bar{\alpha}^{(2)}_{3} \bar{a}_{0} a_{0} \bar{a}_{2} a_{2} \bar{a}_{2}
	+ \bar{\alpha}^{(2)}_{4} \bar{a}_{2} a_{2} \bar{a}_{0} a_{0} \bar{a}_{2}
	+ \bar{\alpha}^{(2)}_{5} \bar{a}_{0} a_{0} \bar{a}_{2} a_{2} \bar{a}_{0} a_{0} \bar{a}_{2}
	%\\&\quad
	+ \bar{\alpha}^{(2)}_{6} \bar{a}_{2} a_{2} \bar{a}_{0} a_{0} \bar{a}_{2} a_{2} \bar{a}_{2}
	\\&\quad
	+ \bar{\alpha}^{(2)}_{7} \bar{a}_{2} a_{2} \bar{a}_{2} a_{2} \bar{a}_{0} a_{0} \bar{a}_{2}
	+ \bar{\alpha}^{(2)}_{8} \bar{a}_{0} a_{0} \bar{a}_{2} a_{2} \bar{a}_{0} a_{0} \bar{a}_{2} a_{2} \bar{a}_{2}
	+ \bar{\alpha}^{(2)}_{9} \bar{a}_{0} a_{0} \bar{a}_{2} a_{2} \bar{a}_{2} a_{2} \bar{a}_{0} a_{0} \bar{a}_{2}
	%\\&\quad
	+ \bar{\alpha}^{(2)}_{10} \bar{a}_{2} a_{2} \bar{a}_{0} a_{0} \bar{a}_{2} a_{2} \bar{a}_{0} a_{0} \bar{a}_{2}
	\\&\quad
	+ \bar{\alpha}^{(2)}_{11} \bar{a}_{2} a_{2} \bar{a}_{2} a_{2} \bar{a}_{0} a_{0} \bar{a}_{2} a_{2} \bar{a}_{2}
	+ \bar{\alpha}^{(2)}_{12} \bar{a}_{0} a_{0} \bar{a}_{2} a_{2} \bar{a}_{0} a_{0} \bar{a}_{2} a_{2} \bar{a}_{0} a_{0} \bar{a}_{2}
	%\\&\quad
	+ \bar{\alpha}^{(2)}_{13} \bar{a}_{0} a_{0} \bar{a}_{2} a_{2} \bar{a}_{2} a_{2} \bar{a}_{0} a_{0} \bar{a}_{2} a_{2} \bar{a}_{2}
%	\\&\quad
	+ \bar{\alpha}^{(2)}_{14} \bar{a}_{2} a_{2} \bar{a}_{0} a_{0} \bar{a}_{2} a_{2} \bar{a}_{0} a_{0} \bar{a}_{2} a_{2} \bar{a}_{2}
	\\&\quad
	+ \bar{\alpha}^{(2)}_{15} \bar{a}_{2} a_{2} \bar{a}_{0} a_{0} \bar{a}_{2} a_{2} \bar{a}_{2} a_{2} \bar{a}_{0} a_{0} \bar{a}_{2}
	%\\
	%%\end{align*}
	%%\begin{align*}
	%&\quad
	+ \bar{\alpha}^{(2)}_{16} \bar{a}_{0} a_{0} \bar{a}_{2} a_{2} \bar{a}_{0} a_{0} \bar{a}_{2} a_{2} \bar{a}_{0} a_{0} \bar{a}_{2} a_{2} \bar{a}_{2}
%	\\&\quad
	+ \bar{\alpha}^{(2)}_{17} \bar{a}_{0} a_{0} \bar{a}_{2} a_{2} \bar{a}_{0} a_{0} \bar{a}_{2} a_{2} \bar{a}_{2} a_{2} \bar{a}_{0} a_{0} \bar{a}_{2}
	\\&\quad
	+ \bar{\alpha}^{(2)}_{18} \bar{a}_{2} a_{2} \bar{a}_{0} a_{0} \bar{a}_{2} a_{2} \bar{a}_{0} a_{0} \bar{a}_{2} a_{2} \bar{a}_{0} a_{0} \bar{a}_{2}
	%\\&\quad
	+ \bar{\alpha}^{(2)}_{19} \bar{a}_{2} a_{2} \bar{a}_{0} a_{0} \bar{a}_{2} a_{2} \bar{a}_{2} a_{2} \bar{a}_{0} a_{0} \bar{a}_{2} a_{2} \bar{a}_{2}
%	\\&\quad
	+ \bar{\alpha}^{(2)}_{20} \bar{a}_{0} a_{0} \bar{a}_{2} a_{2} \bar{a}_{0} a_{0} \bar{a}_{2} a_{2} \bar{a}_{0} a_{0} \bar{a}_{2} a_{2} \bar{a}_{0} a_{0} \bar{a}_{2}
	\\&\quad
	+ \bar{\alpha}^{(2)}_{21} \bar{a}_{0} a_{0} \bar{a}_{2} a_{2} \bar{a}_{0} a_{0} \bar{a}_{2} a_{2} \bar{a}_{2} a_{2} \bar{a}_{0} a_{0} \bar{a}_{2} a_{2} \bar{a}_{2}
%	\\&\quad
	+ \bar{\alpha}^{(2)}_{22} \bar{a}_{2} a_{2} \bar{a}_{0} a_{0} \bar{a}_{2} a_{2} \bar{a}_{0} a_{0} \bar{a}_{2} a_{2} \bar{a}_{0} a_{0} \bar{a}_{2} a_{2} \bar{a}_{2}
	+ \bar{\alpha}^{(2)}_{23} \bar{a}_{2} a_{2} \bar{a}_{0} a_{0} \bar{a}_{2} a_{2} \bar{a}_{0} a_{0} \bar{a}_{2} a_{2} \bar{a}_{2} a_{2} \bar{a}_{0} a_{0} \bar{a}_{2}
	\\&\quad
	+ \bar{\alpha}^{(2)}_{24} \bar{a}_{0} a_{0} \bar{a}_{2} a_{2} \bar{a}_{0} a_{0} \bar{a}_{2} a_{2} \bar{a}_{0} a_{0} \bar{a}_{2} a_{2} \bar{a}_{0} a_{0} \bar{a}_{2} a_{2} \bar{a}_{2}
	+ \bar{\alpha}^{(2)}_{25} \bar{a}_{0} a_{0} \bar{a}_{2} a_{2} \bar{a}_{0} a_{0} \bar{a}_{2} a_{2} \bar{a}_{0} a_{0} \bar{a}_{2} a_{2} \bar{a}_{2} a_{2} \bar{a}_{0} a_{0} \bar{a}_{2}
	\\&\quad
	+ \bar{\alpha}^{(2)}_{26} \bar{a}_{2} a_{2} \bar{a}_{0} a_{0} \bar{a}_{2} a_{2} \bar{a}_{0} a_{0} \bar{a}_{2} a_{2} \bar{a}_{0} a_{0} \bar{a}_{2} a_{2} \bar{a}_{0} a_{0} \bar{a}_{2}
	+ \bar{\alpha}^{(2)}_{27} \bar{a}_{2} a_{2} \bar{a}_{0} a_{0} \bar{a}_{2} a_{2} \bar{a}_{0} a_{0} \bar{a}_{2} a_{2} \bar{a}_{2} a_{2} \bar{a}_{0} a_{0} \bar{a}_{2} a_{2} \bar{a}_{2}
	\\&\quad
	+ \bar{\alpha}^{(2)}_{28} \bar{a}_{0} a_{0} \bar{a}_{2} a_{2} \bar{a}_{0} a_{0} \bar{a}_{2} a_{2} \bar{a}_{0} a_{0} \bar{a}_{2} a_{2} \bar{a}_{0} a_{0} \bar{a}_{2} a_{2} \bar{a}_{0} a_{0} \bar{a}_{2}
	+ \bar{\alpha}^{(2)}_{29} \bar{a}_{0} a_{0} \bar{a}_{2} a_{2} \bar{a}_{0} a_{0} \bar{a}_{2} a_{2} \bar{a}_{0} a_{0} \bar{a}_{2} a_{2} \bar{a}_{2} a_{2} \bar{a}_{0} a_{0} \bar{a}_{2} a_{2} \bar{a}_{2}
	\\&\quad
	+ \bar{\alpha}^{(2)}_{30} \bar{a}_{2} a_{2} \bar{a}_{0} a_{0} \bar{a}_{2} a_{2} \bar{a}_{0} a_{0} \bar{a}_{2} a_{2} \bar{a}_{0} a_{0} \bar{a}_{2} a_{2} \bar{a}_{0} a_{0} \bar{a}_{2} a_{2} \bar{a}_{2}
	+ \bar{\alpha}^{(2)}_{31} \bar{a}_{2} a_{2} \bar{a}_{0} a_{0} \bar{a}_{2} a_{2} \bar{a}_{0} a_{0} \bar{a}_{2} a_{2} \bar{a}_{0} a_{0} \bar{a}_{2} a_{2} \bar{a}_{2} a_{2} \bar{a}_{0} a_{0} \bar{a}_{2}
	\\&\quad
	+ \bar{\alpha}^{(2)}_{32} \bar{a}_{0} a_{0} \bar{a}_{2} a_{2} \bar{a}_{0} a_{0} \bar{a}_{2} a_{2} \bar{a}_{0} a_{0} \bar{a}_{2} a_{2} \bar{a}_{0} a_{0} \bar{a}_{2} a_{2} \bar{a}_{0} a_{0} \bar{a}_{2} a_{2} \bar{a}_{2}
	%\\&\quad
	+ \bar{\alpha}^{(2)}_{33} \bar{a}_{0} a_{0} \bar{a}_{2} a_{2} \bar{a}_{0} a_{0} \bar{a}_{2} a_{2} \bar{a}_{0} a_{0} \bar{a}_{2} a_{2} \bar{a}_{0} a_{0} \bar{a}_{2} a_{2} \bar{a}_{2} a_{2} \bar{a}_{0} a_{0} \bar{a}_{2}
	\\&\quad
	+ \bar{\alpha}^{(2)}_{34} \bar{a}_{2} a_{2} \bar{a}_{0} a_{0} \bar{a}_{2} a_{2} \bar{a}_{0} a_{0} \bar{a}_{2} a_{2} \bar{a}_{0} a_{0} \bar{a}_{2} a_{2} \bar{a}_{0} a_{0} \bar{a}_{2} a_{2} \bar{a}_{0} a_{0} \bar{a}_{2}
	%\\&\quad
	+ \bar{\alpha}^{(2)}_{35} \bar{a}_{0} a_{0} \bar{a}_{2} a_{2} \bar{a}_{0} a_{0} \bar{a}_{2} a_{2} \bar{a}_{0} a_{0} \bar{a}_{2} a_{2} \bar{a}_{0} a_{0} \bar{a}_{2} a_{2} \bar{a}_{0} a_{0} \bar{a}_{2} a_{2} \bar{a}_{0} a_{0} \bar{a}_{2}
	\\&\quad
	+ \bar{\alpha}^{(2)}_{36} \bar{a}_{2} a_{2} \bar{a}_{0} a_{0} \bar{a}_{2} a_{2} \bar{a}_{0} a_{0} \bar{a}_{2} a_{2} \bar{a}_{0} a_{0} \bar{a}_{2} a_{2} \bar{a}_{0} a_{0} \bar{a}_{2} a_{2} \bar{a}_{2} a_{2} \bar{a}_{0} a_{0} \bar{a}_{2}
%	\\&\quad
	+ \bar{\alpha}^{(2)}_{37} \bar{a}_{0} a_{0} \bar{a}_{2} a_{2} \bar{a}_{0} a_{0} \bar{a}_{2} a_{2} \bar{a}_{0} a_{0} \bar{a}_{2} a_{2} \bar{a}_{0} a_{0} \bar{a}_{2} a_{2} \bar{a}_{0} a_{0} \bar{a}_{2} a_{2} \bar{a}_{2} a_{2} \bar{a}_{0} a_{0} \bar{a}_{2}
	\\&\quad
	+ \bar{\alpha}^{(2)}_{38} \bar{a}_{2} a_{2} \bar{a}_{0} a_{0} \bar{a}_{2} a_{2} \bar{a}_{0} a_{0} \bar{a}_{2} a_{2} \bar{a}_{0} a_{0} \bar{a}_{2} a_{2} \bar{a}_{0} a_{0} \bar{a}_{2} a_{2} \bar{a}_{0} a_{0} \bar{a}_{2} a_{2} \bar{a}_{0} a_{0} \bar{a}_{2}
	\\&\quad
	+ \bar{\alpha}^{(2)}_{39} \bar{a}_{0} a_{0} \bar{a}_{2} a_{2} \bar{a}_{0} a_{0} \bar{a}_{2} a_{2} \bar{a}_{0} a_{0} \bar{a}_{2} a_{2} \bar{a}_{0} a_{0} \bar{a}_{2} a_{2} \bar{a}_{0} a_{0} \bar{a}_{2} a_{2} \bar{a}_{0} a_{0} \bar{a}_{2} a_{2} \bar{a}_{0} a_{0} \bar{a}_{2}
	\\
	\varphi(a_{3}) &= 
	% \alpha^{(3)}_{0}
	a_{3}
	+ \alpha^{(3)}_{1} \bar{a}_{0} a_{0} a_{3}
	+ \alpha^{(3)}_{2} \bar{a}_{2} a_{2} a_{3}
	+ \alpha^{(3)}_{3} \bar{a}_{0} a_{0} \bar{a}_{2} a_{2} a_{3}
	+ \alpha^{(3)}_{4} \bar{a}_{2} a_{2} \bar{a}_{0} a_{0} a_{3}
	%\\&\quad
	+ \alpha^{(3)}_{5} \bar{a}_{2} a_{2} \bar{a}_{2} a_{2} a_{3}
	%\\&\quad
	+ \alpha^{(3)}_{6} \bar{a}_{0} a_{0} \bar{a}_{2} a_{2} \bar{a}_{0} a_{0} a_{3}
	\\&\quad
	+ \alpha^{(3)}_{7} \bar{a}_{0} a_{0} \bar{a}_{2} a_{2} \bar{a}_{2} a_{2} a_{3}
%	\\&\quad
	+ \alpha^{(3)}_{8} \bar{a}_{2} a_{2} \bar{a}_{0} a_{0} \bar{a}_{2} a_{2} a_{3}
	%\\&\quad
	+ \alpha^{(3)}_{9} \bar{a}_{2} a_{2} \bar{a}_{2} a_{2} \bar{a}_{0} a_{0} a_{3}
	%\\&\quad
	+ \alpha^{(3)}_{10} \bar{a}_{0} a_{0} \bar{a}_{2} a_{2} \bar{a}_{0} a_{0} \bar{a}_{2} a_{2} a_{3}
%	\\&\quad
	+ \alpha^{(3)}_{11} \bar{a}_{0} a_{0} \bar{a}_{2} a_{2} \bar{a}_{2} a_{2} \bar{a}_{0} a_{0} a_{3}
	\\&\quad
	+ \alpha^{(3)}_{12} \bar{a}_{2} a_{2} \bar{a}_{0} a_{0} \bar{a}_{2} a_{2} \bar{a}_{0} a_{0} a_{3}
	%\\&\quad
	+ \alpha^{(3)}_{13} \bar{a}_{2} a_{2} \bar{a}_{0} a_{0} \bar{a}_{2} a_{2} \bar{a}_{2} a_{2} a_{3}
	+ \alpha^{(3)}_{14} \bar{a}_{0} a_{0} \bar{a}_{2} a_{2} \bar{a}_{0} a_{0} \bar{a}_{2} a_{2} \bar{a}_{0} a_{0} a_{3}
%	\\&\quad
	+ \alpha^{(3)}_{15} \bar{a}_{0} a_{0} \bar{a}_{2} a_{2} \bar{a}_{0} a_{0} \bar{a}_{2} a_{2} \bar{a}_{2} a_{2} a_{3}
	\\&\quad
	+ \alpha^{(3)}_{16} \bar{a}_{2} a_{2} \bar{a}_{0} a_{0} \bar{a}_{2} a_{2} \bar{a}_{0} a_{0} \bar{a}_{2} a_{2} a_{3}
	%\\&\quad
	+ \alpha^{(3)}_{17} \bar{a}_{2} a_{2} \bar{a}_{0} a_{0} \bar{a}_{2} a_{2} \bar{a}_{2} a_{2} \bar{a}_{0} a_{0} a_{3}
%	\\&\quad
	+ \alpha^{(3)}_{18} \bar{a}_{2} a_{2} \bar{a}_{2} a_{2} \bar{a}_{0} a_{0} \bar{a}_{2} a_{2} \bar{a}_{0} a_{0} a_{3}
	\\&\quad
	+ \alpha^{(3)}_{19} \bar{a}_{0} a_{0} \bar{a}_{2} a_{2} \bar{a}_{0} a_{0} \bar{a}_{2} a_{2} \bar{a}_{0} a_{0} \bar{a}_{2} a_{2} a_{3}
	+ \alpha^{(3)}_{20} \bar{a}_{0} a_{0} \bar{a}_{2} a_{2} \bar{a}_{0} a_{0} \bar{a}_{2} a_{2} \bar{a}_{2} a_{2} \bar{a}_{0} a_{0} a_{3}
%	\\&\quad
	%%\\
	%\end{align*}
	%\begin{align*}
	%&\quad
	+ \alpha^{(3)}_{21} \bar{a}_{0} a_{0} \bar{a}_{2} a_{2} \bar{a}_{2} a_{2} \bar{a}_{0} a_{0} \bar{a}_{2} a_{2} \bar{a}_{0} a_{0} a_{3}
	\\&\quad
	+ \alpha^{(3)}_{22} \bar{a}_{2} a_{2} \bar{a}_{0} a_{0} \bar{a}_{2} a_{2} \bar{a}_{0} a_{0} \bar{a}_{2} a_{2} \bar{a}_{0} a_{0} a_{3}
	%\\&\quad
	+ \alpha^{(3)}_{23} \bar{a}_{2} a_{2} \bar{a}_{0} a_{0} \bar{a}_{2} a_{2} \bar{a}_{0} a_{0} \bar{a}_{2} a_{2} \bar{a}_{2} a_{2} a_{3}
%	\\&\quad
	+ \alpha^{(3)}_{24} \bar{a}_{0} a_{0} \bar{a}_{2} a_{2} \bar{a}_{0} a_{0} \bar{a}_{2} a_{2} \bar{a}_{0} a_{0} \bar{a}_{2} a_{2} \bar{a}_{0} a_{0} a_{3}
	%\\
	%%\end{align*}
	%%\begin{align*}
	%&\quad
	\\&\quad
	+ \alpha^{(3)}_{25} \bar{a}_{0} a_{0} \bar{a}_{2} a_{2} \bar{a}_{0} a_{0} \bar{a}_{2} a_{2} \bar{a}_{0} a_{0} \bar{a}_{2} a_{2} \bar{a}_{2} a_{2} a_{3}
%	\\&\quad
	+ \alpha^{(3)}_{26} \bar{a}_{2} a_{2} \bar{a}_{0} a_{0} \bar{a}_{2} a_{2} \bar{a}_{0} a_{0} \bar{a}_{2} a_{2} \bar{a}_{0} a_{0} \bar{a}_{2} a_{2} a_{3}
	%\\&\quad
	+ \alpha^{(3)}_{27} \bar{a}_{2} a_{2} \bar{a}_{0} a_{0} \bar{a}_{2} a_{2} \bar{a}_{0} a_{0} \bar{a}_{2} a_{2} \bar{a}_{2} a_{2} \bar{a}_{0} a_{0} a_{3}
	\\&\quad
	+ \alpha^{(3)}_{28} \bar{a}_{2} a_{2} \bar{a}_{0} a_{0} \bar{a}_{2} a_{2} \bar{a}_{2} a_{2} \bar{a}_{0} a_{0} \bar{a}_{2} a_{2} \bar{a}_{0} a_{0} a_{3}
	%\\&\quad
	+ \alpha^{(3)}_{29} \bar{a}_{0} a_{0} \bar{a}_{2} a_{2} \bar{a}_{0} a_{0} \bar{a}_{2} a_{2} \bar{a}_{0} a_{0} \bar{a}_{2} a_{2} \bar{a}_{0} a_{0} \bar{a}_{2} a_{2} a_{3}
	\\&\quad
	+ \alpha^{(3)}_{30} \bar{a}_{0} a_{0} \bar{a}_{2} a_{2} \bar{a}_{0} a_{0} \bar{a}_{2} a_{2} \bar{a}_{0} a_{0} \bar{a}_{2} a_{2} \bar{a}_{2} a_{2} \bar{a}_{0} a_{0} a_{3}
	%\\&\quad
	+ \alpha^{(3)}_{31} \bar{a}_{0} a_{0} \bar{a}_{2} a_{2} \bar{a}_{0} a_{0} \bar{a}_{2} a_{2} \bar{a}_{2} a_{2} \bar{a}_{0} a_{0} \bar{a}_{2} a_{2} \bar{a}_{0} a_{0} a_{3}
	\\&\quad
	+ \alpha^{(3)}_{32} \bar{a}_{2} a_{2} \bar{a}_{0} a_{0} \bar{a}_{2} a_{2} \bar{a}_{0} a_{0} \bar{a}_{2} a_{2} \bar{a}_{0} a_{0} \bar{a}_{2} a_{2} \bar{a}_{0} a_{0} a_{3}
	%\\&\quad
	+ \alpha^{(3)}_{33} \bar{a}_{2} a_{2} \bar{a}_{0} a_{0} \bar{a}_{2} a_{2} \bar{a}_{0} a_{0} \bar{a}_{2} a_{2} \bar{a}_{0} a_{0} \bar{a}_{2} a_{2} \bar{a}_{2} a_{2} a_{3}
	\\&\quad
	+ \alpha^{(3)}_{34} \bar{a}_{0} a_{0} \bar{a}_{2} a_{2} \bar{a}_{0} a_{0} \bar{a}_{2} a_{2} \bar{a}_{0} a_{0} \bar{a}_{2} a_{2} \bar{a}_{0} a_{0} \bar{a}_{2} a_{2} \bar{a}_{0} a_{0} a_{3}
	%\\&\quad
	+ \alpha^{(3)}_{35} \bar{a}_{0} a_{0} \bar{a}_{2} a_{2} \bar{a}_{0} a_{0} \bar{a}_{2} a_{2} \bar{a}_{0} a_{0} \bar{a}_{2} a_{2} \bar{a}_{0} a_{0} \bar{a}_{2} a_{2} \bar{a}_{2} a_{2} a_{3}
	\\&\quad
	+ \alpha^{(3)}_{36} \bar{a}_{2} a_{2} \bar{a}_{0} a_{0} \bar{a}_{2} a_{2} \bar{a}_{0} a_{0} \bar{a}_{2} a_{2} \bar{a}_{0} a_{0} \bar{a}_{2} a_{2} \bar{a}_{0} a_{0} \bar{a}_{2} a_{2} a_{3}
	%\\&\quad
	+ \alpha^{(3)}_{37} \bar{a}_{2} a_{2} \bar{a}_{0} a_{0} \bar{a}_{2} a_{2} \bar{a}_{0} a_{0} \bar{a}_{2} a_{2} \bar{a}_{2} a_{2} \bar{a}_{0} a_{0} \bar{a}_{2} a_{2} \bar{a}_{0} a_{0} a_{3}
	\\&\quad
	+ \alpha^{(3)}_{38} \bar{a}_{0} a_{0} \bar{a}_{2} a_{2} \bar{a}_{0} a_{0} \bar{a}_{2} a_{2} \bar{a}_{0} a_{0} \bar{a}_{2} a_{2} \bar{a}_{0} a_{0} \bar{a}_{2} a_{2} \bar{a}_{0} a_{0} \bar{a}_{2} a_{2} a_{3}
	%\\&\quad
	+ \alpha^{(3)}_{39} \bar{a}_{0} a_{0} \bar{a}_{2} a_{2} \bar{a}_{0} a_{0} \bar{a}_{2} a_{2} \bar{a}_{0} a_{0} \bar{a}_{2} a_{2} \bar{a}_{2} a_{2} \bar{a}_{0} a_{0} \bar{a}_{2} a_{2} \bar{a}_{0} a_{0} a_{3}
	\\&\quad
	+ \alpha^{(3)}_{40} \bar{a}_{2} a_{2} \bar{a}_{0} a_{0} \bar{a}_{2} a_{2} \bar{a}_{0} a_{0} \bar{a}_{2} a_{2} \bar{a}_{0} a_{0} \bar{a}_{2} a_{2} \bar{a}_{0} a_{0} \bar{a}_{2} a_{2} \bar{a}_{0} a_{0} a_{3}
	%\\&\quad
	+ \alpha^{(3)}_{41} \bar{a}_{2} a_{2} \bar{a}_{0} a_{0} \bar{a}_{2} a_{2} \bar{a}_{0} a_{0} \bar{a}_{2} a_{2} \bar{a}_{0} a_{0} \bar{a}_{2} a_{2} \bar{a}_{0} a_{0} \bar{a}_{2} a_{2} \bar{a}_{2} a_{2} a_{3}
	\\&\quad
	+ \alpha^{(3)}_{42} \bar{a}_{0} a_{0} \bar{a}_{2} a_{2} \bar{a}_{0} a_{0} \bar{a}_{2} a_{2} \bar{a}_{0} a_{0} \bar{a}_{2} a_{2} \bar{a}_{0} a_{0} \bar{a}_{2} a_{2} \bar{a}_{0} a_{0} \bar{a}_{2} a_{2} \bar{a}_{0} a_{0} a_{3}
	%\\&\quad
	+ \alpha^{(3)}_{43} \bar{a}_{0} a_{0} \bar{a}_{2} a_{2} \bar{a}_{0} a_{0} \bar{a}_{2} a_{2} \bar{a}_{0} a_{0} \bar{a}_{2} a_{2} \bar{a}_{0} a_{0} \bar{a}_{2} a_{2} \bar{a}_{0} a_{0} \bar{a}_{2} a_{2} \bar{a}_{2} a_{2} a_{3}
	\\&\quad
	+ \alpha^{(3)}_{44} \bar{a}_{2} a_{2} \bar{a}_{0} a_{0} \bar{a}_{2} a_{2} \bar{a}_{0} a_{0} \bar{a}_{2} a_{2} \bar{a}_{0} a_{0} \bar{a}_{2} a_{2} \bar{a}_{0} a_{0} \bar{a}_{2} a_{2} \bar{a}_{0} a_{0} \bar{a}_{2} a_{2} a_{3}
%	\\&\quad
	+ \alpha^{(3)}_{45} \bar{a}_{0} a_{0} \bar{a}_{2} a_{2} \bar{a}_{0} a_{0} \bar{a}_{2} a_{2} \bar{a}_{0} a_{0} \bar{a}_{2} a_{2} \bar{a}_{0} a_{0} \bar{a}_{2} a_{2} \bar{a}_{0} a_{0} \bar{a}_{2} a_{2} \bar{a}_{0} a_{0} \bar{a}_{2} a_{2} a_{3}
\end{align*}
\begin{align*}
%	\\
&\quad
	+ \alpha^{(3)}_{46} \bar{a}_{2} a_{2} \bar{a}_{0} a_{0} \bar{a}_{2} a_{2} \bar{a}_{0} a_{0} \bar{a}_{2} a_{2} \bar{a}_{0} a_{0} \bar{a}_{2} a_{2} \bar{a}_{0} a_{0} \bar{a}_{2} a_{2} \bar{a}_{0} a_{0} \bar{a}_{2} a_{2} \bar{a}_{0} a_{0} a_{3}
	\\&\quad
	+ \alpha^{(3)}_{47} \bar{a}_{0} a_{0} \bar{a}_{2} a_{2} \bar{a}_{0} a_{0} \bar{a}_{2} a_{2} \bar{a}_{0} a_{0} \bar{a}_{2} a_{2} \bar{a}_{0} a_{0} \bar{a}_{2} a_{2} \bar{a}_{0} a_{0} \bar{a}_{2} a_{2} \bar{a}_{0} a_{0} \bar{a}_{2} a_{2} \bar{a}_{0} a_{0} a_{3}
	\\
	\varphi(\bar{a}_{3}) &= 
	% \bar{\alpha}^{(3)}_{0}
	\bar{a}_{3}
	+ \bar{\alpha}^{(3)}_{1} \bar{a}_{3} \bar{a}_{0} a_{0}
	+ \bar{\alpha}^{(3)}_{2} \bar{a}_{3} \bar{a}_{2} a_{2}
	+ \bar{\alpha}^{(3)}_{3} \bar{a}_{3} \bar{a}_{0} a_{0} \bar{a}_{2} a_{2}
	+ \bar{\alpha}^{(3)}_{4} \bar{a}_{3} \bar{a}_{2} a_{2} \bar{a}_{0} a_{0}
	+ \bar{\alpha}^{(3)}_{5} \bar{a}_{3} \bar{a}_{2} a_{2} \bar{a}_{2} a_{2}
	%\\&\quad
	+ \bar{\alpha}^{(3)}_{6} \bar{a}_{3} \bar{a}_{0} a_{0} \bar{a}_{2} a_{2} \bar{a}_{0} a_{0}
	\\&\quad
	+ \bar{\alpha}^{(3)}_{7} \bar{a}_{3} \bar{a}_{0} a_{0} \bar{a}_{2} a_{2} \bar{a}_{2} a_{2}
	+ \bar{\alpha}^{(3)}_{8} \bar{a}_{3} \bar{a}_{2} a_{2} \bar{a}_{0} a_{0} \bar{a}_{2} a_{2}
	+ \bar{\alpha}^{(3)}_{9} \bar{a}_{3} \bar{a}_{2} a_{2} \bar{a}_{2} a_{2} \bar{a}_{0} a_{0}
	%\\&\quad
	+ \bar{\alpha}^{(3)}_{10} \bar{a}_{3} \bar{a}_{0} a_{0} \bar{a}_{2} a_{2} \bar{a}_{0} a_{0} \bar{a}_{2} a_{2}
%	\\&\quad
	+ \bar{\alpha}^{(3)}_{11} \bar{a}_{3} \bar{a}_{0} a_{0} \bar{a}_{2} a_{2} \bar{a}_{2} a_{2} \bar{a}_{0} a_{0}
	\\&\quad
	+ \bar{\alpha}^{(3)}_{12} \bar{a}_{3} \bar{a}_{2} a_{2} \bar{a}_{0} a_{0} \bar{a}_{2} a_{2} \bar{a}_{0} a_{0}
	%\\&\quad
	+ \bar{\alpha}^{(3)}_{13} \bar{a}_{3} \bar{a}_{2} a_{2} \bar{a}_{0} a_{0} \bar{a}_{2} a_{2} \bar{a}_{2} a_{2}
	+ \bar{\alpha}^{(3)}_{14} \bar{a}_{3} \bar{a}_{0} a_{0} \bar{a}_{2} a_{2} \bar{a}_{0} a_{0} \bar{a}_{2} a_{2} \bar{a}_{0} a_{0}
%	\\&\quad
	+ \bar{\alpha}^{(3)}_{15} \bar{a}_{3} \bar{a}_{0} a_{0} \bar{a}_{2} a_{2} \bar{a}_{0} a_{0} \bar{a}_{2} a_{2} \bar{a}_{2} a_{2}
	\\&\quad
	+ \bar{\alpha}^{(3)}_{16} \bar{a}_{3} \bar{a}_{0} a_{0} \bar{a}_{2} a_{2} \bar{a}_{2} a_{2} \bar{a}_{0} a_{0} \bar{a}_{2} a_{2}
	+ \bar{\alpha}^{(3)}_{17} \bar{a}_{3} \bar{a}_{2} a_{2} \bar{a}_{0} a_{0} \bar{a}_{2} a_{2} \bar{a}_{0} a_{0} \bar{a}_{2} a_{2}
%	\\&\quad
	+ \bar{\alpha}^{(3)}_{18} \bar{a}_{3} \bar{a}_{2} a_{2} \bar{a}_{0} a_{0} \bar{a}_{2} a_{2} \bar{a}_{2} a_{2} \bar{a}_{0} a_{0}
	\\&\quad
	+ \bar{\alpha}^{(3)}_{19} \bar{a}_{3} \bar{a}_{0} a_{0} \bar{a}_{2} a_{2} \bar{a}_{0} a_{0} \bar{a}_{2} a_{2} \bar{a}_{0} a_{0} \bar{a}_{2} a_{2}
	+ \bar{\alpha}^{(3)}_{20} \bar{a}_{3} \bar{a}_{0} a_{0} \bar{a}_{2} a_{2} \bar{a}_{0} a_{0} \bar{a}_{2} a_{2} \bar{a}_{2} a_{2} \bar{a}_{0} a_{0}
%	\\&\quad
	+ \bar{\alpha}^{(3)}_{21} \bar{a}_{3} \bar{a}_{0} a_{0} \bar{a}_{2} a_{2} \bar{a}_{2} a_{2} \bar{a}_{0} a_{0} \bar{a}_{2} a_{2} \bar{a}_{0} a_{0}
	\\&\quad
	+ \bar{\alpha}^{(3)}_{22} \bar{a}_{3} \bar{a}_{2} a_{2} \bar{a}_{0} a_{0} \bar{a}_{2} a_{2} \bar{a}_{0} a_{0} \bar{a}_{2} a_{2} \bar{a}_{0} a_{0}
%	\\
%	%\end{align*}
%	%\begin{align*}
%	&\quad
	+ \bar{\alpha}^{(3)}_{23} \bar{a}_{3} \bar{a}_{2} a_{2} \bar{a}_{0} a_{0} \bar{a}_{2} a_{2} \bar{a}_{2} a_{2} \bar{a}_{0} a_{0} \bar{a}_{2} a_{2}
%	\\&\quad
	+ \bar{\alpha}^{(3)}_{24} \bar{a}_{3} \bar{a}_{0} a_{0} \bar{a}_{2} a_{2} \bar{a}_{0} a_{0} \bar{a}_{2} a_{2} \bar{a}_{0} a_{0} \bar{a}_{2} a_{2} \bar{a}_{0} a_{0}
	\\&\quad
	+ \bar{\alpha}^{(3)}_{25} \bar{a}_{3} \bar{a}_{0} a_{0} \bar{a}_{2} a_{2} \bar{a}_{0} a_{0} \bar{a}_{2} a_{2} \bar{a}_{0} a_{0} \bar{a}_{2} a_{2} \bar{a}_{2} a_{2}
%	\\&\quad
	+ \bar{\alpha}^{(3)}_{26} \bar{a}_{3} \bar{a}_{0} a_{0} \bar{a}_{2} a_{2} \bar{a}_{0} a_{0} \bar{a}_{2} a_{2} \bar{a}_{2} a_{2} \bar{a}_{0} a_{0} \bar{a}_{2} a_{2}
	%\\&\quad
	+ \bar{\alpha}^{(3)}_{27} \bar{a}_{3} \bar{a}_{2} a_{2} \bar{a}_{0} a_{0} \bar{a}_{2} a_{2} \bar{a}_{0} a_{0} \bar{a}_{2} a_{2} \bar{a}_{0} a_{0} \bar{a}_{2} a_{2}
	\\&\quad
	+ \bar{\alpha}^{(3)}_{28} \bar{a}_{3} \bar{a}_{2} a_{2} \bar{a}_{0} a_{0} \bar{a}_{2} a_{2} \bar{a}_{2} a_{2} \bar{a}_{0} a_{0} \bar{a}_{2} a_{2} \bar{a}_{0} a_{0}
	%\\&\quad
	+ \bar{\alpha}^{(3)}_{29} \bar{a}_{3} \bar{a}_{0} a_{0} \bar{a}_{2} a_{2} \bar{a}_{0} a_{0} \bar{a}_{2} a_{2} \bar{a}_{0} a_{0} \bar{a}_{2} a_{2} \bar{a}_{0} a_{0} \bar{a}_{2} a_{2}
	\\&\quad
	+ \bar{\alpha}^{(3)}_{30} \bar{a}_{3} \bar{a}_{0} a_{0} \bar{a}_{2} a_{2} \bar{a}_{0} a_{0} \bar{a}_{2} a_{2} \bar{a}_{0} a_{0} \bar{a}_{2} a_{2} \bar{a}_{2} a_{2} \bar{a}_{0} a_{0}
	%\\&\quad
	+ \bar{\alpha}^{(3)}_{31} \bar{a}_{3} \bar{a}_{0} a_{0} \bar{a}_{2} a_{2} \bar{a}_{0} a_{0} \bar{a}_{2} a_{2} \bar{a}_{2} a_{2} \bar{a}_{0} a_{0} \bar{a}_{2} a_{2} \bar{a}_{0} a_{0}
	\\&\quad
	+ \bar{\alpha}^{(3)}_{32} \bar{a}_{3} \bar{a}_{2} a_{2} \bar{a}_{0} a_{0} \bar{a}_{2} a_{2} \bar{a}_{0} a_{0} \bar{a}_{2} a_{2} \bar{a}_{0} a_{0} \bar{a}_{2} a_{2} \bar{a}_{0} a_{0}
	%\\&\quad
	+ \bar{\alpha}^{(3)}_{33} \bar{a}_{3} \bar{a}_{2} a_{2} \bar{a}_{0} a_{0} \bar{a}_{2} a_{2} \bar{a}_{0} a_{0} \bar{a}_{2} a_{2} \bar{a}_{0} a_{0} \bar{a}_{2} a_{2} \bar{a}_{2} a_{2}
	\\&\quad
	+ \bar{\alpha}^{(3)}_{34} \bar{a}_{3} \bar{a}_{0} a_{0} \bar{a}_{2} a_{2} \bar{a}_{0} a_{0} \bar{a}_{2} a_{2} \bar{a}_{0} a_{0} \bar{a}_{2} a_{2} \bar{a}_{0} a_{0} \bar{a}_{2} a_{2} \bar{a}_{0} a_{0}
	%\\&\quad
	+ \bar{\alpha}^{(3)}_{35} \bar{a}_{3} \bar{a}_{0} a_{0} \bar{a}_{2} a_{2} \bar{a}_{0} a_{0} \bar{a}_{2} a_{2} \bar{a}_{0} a_{0} \bar{a}_{2} a_{2} \bar{a}_{0} a_{0} \bar{a}_{2} a_{2} \bar{a}_{2} a_{2}
	\\&\quad
	+ \bar{\alpha}^{(3)}_{36} \bar{a}_{3} \bar{a}_{0} a_{0} \bar{a}_{2} a_{2} \bar{a}_{0} a_{0} \bar{a}_{2} a_{2} \bar{a}_{0} a_{0} \bar{a}_{2} a_{2} \bar{a}_{2} a_{2} \bar{a}_{0} a_{0} \bar{a}_{2} a_{2}
	%\\&\quad
	+ \bar{\alpha}^{(3)}_{37} \bar{a}_{3} \bar{a}_{2} a_{2} \bar{a}_{0} a_{0} \bar{a}_{2} a_{2} \bar{a}_{0} a_{0} \bar{a}_{2} a_{2} \bar{a}_{0} a_{0} \bar{a}_{2} a_{2} \bar{a}_{0} a_{0} \bar{a}_{2} a_{2}
	\\&\quad
	+ \bar{\alpha}^{(3)}_{38} \bar{a}_{3} \bar{a}_{0} a_{0} \bar{a}_{2} a_{2} \bar{a}_{0} a_{0} \bar{a}_{2} a_{2} \bar{a}_{0} a_{0} \bar{a}_{2} a_{2} \bar{a}_{0} a_{0} \bar{a}_{2} a_{2} \bar{a}_{0} a_{0} \bar{a}_{2} a_{2}
	%\\&\quad
	+ \bar{\alpha}^{(3)}_{39} \bar{a}_{3} \bar{a}_{0} a_{0} \bar{a}_{2} a_{2} \bar{a}_{0} a_{0} \bar{a}_{2} a_{2} \bar{a}_{0} a_{0} \bar{a}_{2} a_{2} \bar{a}_{0} a_{0} \bar{a}_{2} a_{2} \bar{a}_{2} a_{2} \bar{a}_{0} a_{0}
	\\&\quad
	%%\\
	%\end{align*}
	%\begin{align*}
	%&\quad
	+ \bar{\alpha}^{(3)}_{40} \bar{a}_{3} \bar{a}_{0} a_{0} \bar{a}_{2} a_{2} \bar{a}_{0} a_{0} \bar{a}_{2} a_{2} \bar{a}_{0} a_{0} \bar{a}_{2} a_{2} \bar{a}_{2} a_{2} \bar{a}_{0} a_{0} \bar{a}_{2} a_{2} \bar{a}_{0} a_{0}
	%\\&\quad
	+ \bar{\alpha}^{(3)}_{41} \bar{a}_{3} \bar{a}_{2} a_{2} \bar{a}_{0} a_{0} \bar{a}_{2} a_{2} \bar{a}_{0} a_{0} \bar{a}_{2} a_{2} \bar{a}_{0} a_{0} \bar{a}_{2} a_{2} \bar{a}_{0} a_{0} \bar{a}_{2} a_{2} \bar{a}_{2} a_{2}
	\\&\quad
	+ \bar{\alpha}^{(3)}_{42} \bar{a}_{3} \bar{a}_{0} a_{0} \bar{a}_{2} a_{2} \bar{a}_{0} a_{0} \bar{a}_{2} a_{2} \bar{a}_{0} a_{0} \bar{a}_{2} a_{2} \bar{a}_{0} a_{0} \bar{a}_{2} a_{2} \bar{a}_{0} a_{0} \bar{a}_{2} a_{2} \bar{a}_{0} a_{0}
	%\\&\quad
	+ \bar{\alpha}^{(3)}_{43} \bar{a}_{3} \bar{a}_{0} a_{0} \bar{a}_{2} a_{2} \bar{a}_{0} a_{0} \bar{a}_{2} a_{2} \bar{a}_{0} a_{0} \bar{a}_{2} a_{2} \bar{a}_{0} a_{0} \bar{a}_{2} a_{2} \bar{a}_{0} a_{0} \bar{a}_{2} a_{2} \bar{a}_{2} a_{2}
	\\&\quad
	+ \bar{\alpha}^{(3)}_{44} \bar{a}_{3} \bar{a}_{2} a_{2} \bar{a}_{0} a_{0} \bar{a}_{2} a_{2} \bar{a}_{0} a_{0} \bar{a}_{2} a_{2} \bar{a}_{0} a_{0} \bar{a}_{2} a_{2} \bar{a}_{0} a_{0} \bar{a}_{2} a_{2} \bar{a}_{2} a_{2} \bar{a}_{0} a_{0}
%	\\&\quad
	+ \bar{\alpha}^{(3)}_{45} \bar{a}_{3} \bar{a}_{0} a_{0} \bar{a}_{2} a_{2} \bar{a}_{0} a_{0} \bar{a}_{2} a_{2} \bar{a}_{0} a_{0} \bar{a}_{2} a_{2} \bar{a}_{0} a_{0} \bar{a}_{2} a_{2} \bar{a}_{0} a_{0} \bar{a}_{2} a_{2} \bar{a}_{0} a_{0} \bar{a}_{2} a_{2}
	\\&\quad
	+ \bar{\alpha}^{(3)}_{46} \bar{a}_{3} \bar{a}_{0} a_{0} \bar{a}_{2} a_{2} \bar{a}_{0} a_{0} \bar{a}_{2} a_{2} \bar{a}_{0} a_{0} \bar{a}_{2} a_{2} \bar{a}_{0} a_{0} \bar{a}_{2} a_{2} \bar{a}_{0} a_{0} \bar{a}_{2} a_{2} \bar{a}_{2} a_{2} \bar{a}_{0} a_{0}
	\\&\quad
	+ \bar{\alpha}^{(3)}_{47} \bar{a}_{3} \bar{a}_{0} a_{0} \bar{a}_{2} a_{2} \bar{a}_{0} a_{0} \bar{a}_{2} a_{2} \bar{a}_{0} a_{0} \bar{a}_{2} a_{2} \bar{a}_{0} a_{0} \bar{a}_{2} a_{2} \bar{a}_{0} a_{0} \bar{a}_{2} a_{2} \bar{a}_{0} a_{0} \bar{a}_{2} a_{2} \bar{a}_{0} a_{0}
	\\
	\varphi(a_{4}) &= 
	% \alpha^{(4)}_{0}
	a_{4}
	+ \alpha^{(4)}_{1} \bar{a}_{3} a_{3} a_{4}
	+ \alpha^{(4)}_{2} \bar{a}_{3} \bar{a}_{0} a_{0} a_{3} a_{4}
	+ \alpha^{(4)}_{3} \bar{a}_{3} \bar{a}_{2} a_{2} a_{3} a_{4}
	+ \alpha^{(4)}_{4} \bar{a}_{3} \bar{a}_{0} a_{0} \bar{a}_{2} a_{2} a_{3} a_{4}
	%\\&\quad
	+ \alpha^{(4)}_{5} \bar{a}_{3} \bar{a}_{2} a_{2} \bar{a}_{0} a_{0} a_{3} a_{4}
%	\\&\quad
	+ \alpha^{(4)}_{6} \bar{a}_{3} \bar{a}_{0} a_{0} \bar{a}_{2} a_{2} \bar{a}_{0} a_{0} a_{3} a_{4}
	\\&\quad
	+ \alpha^{(4)}_{7} \bar{a}_{3} \bar{a}_{0} a_{0} \bar{a}_{2} a_{2} \bar{a}_{2} a_{2} a_{3} a_{4}
	%\\&\quad
	+ \alpha^{(4)}_{8} \bar{a}_{3} \bar{a}_{2} a_{2} \bar{a}_{0} a_{0} \bar{a}_{2} a_{2} a_{3} a_{4}
	%\\&\quad
	+ \alpha^{(4)}_{9} \bar{a}_{3} \bar{a}_{0} a_{0} \bar{a}_{2} a_{2} \bar{a}_{0} a_{0} \bar{a}_{2} a_{2} a_{3} a_{4}
%	\\&\quad
	+ \alpha^{(4)}_{10} \bar{a}_{3} \bar{a}_{2} a_{2} \bar{a}_{0} a_{0} \bar{a}_{2} a_{2} \bar{a}_{0} a_{0} a_{3} a_{4}
	\\&\quad
	+ \alpha^{(4)}_{11} \bar{a}_{3} \bar{a}_{2} a_{2} \bar{a}_{0} a_{0} \bar{a}_{2} a_{2} \bar{a}_{2} a_{2} a_{3} a_{4}
	%\\&\quad
	+ \alpha^{(4)}_{12} \bar{a}_{3} \bar{a}_{0} a_{0} \bar{a}_{2} a_{2} \bar{a}_{0} a_{0} \bar{a}_{2} a_{2} \bar{a}_{0} a_{0} a_{3} a_{4}
%	\\&\quad
	+ \alpha^{(4)}_{13} \bar{a}_{3} \bar{a}_{0} a_{0} \bar{a}_{2} a_{2} \bar{a}_{0} a_{0} \bar{a}_{2} a_{2} \bar{a}_{2} a_{2} a_{3} a_{4}
	\\&\quad
	+ \alpha^{(4)}_{14} \bar{a}_{3} \bar{a}_{2} a_{2} \bar{a}_{0} a_{0} \bar{a}_{2} a_{2} \bar{a}_{0} a_{0} \bar{a}_{2} a_{2} a_{3} a_{4}
	+ \alpha^{(4)}_{15} \bar{a}_{3} \bar{a}_{0} a_{0} \bar{a}_{2} a_{2} \bar{a}_{0} a_{0} \bar{a}_{2} a_{2} \bar{a}_{0} a_{0} \bar{a}_{2} a_{2} a_{3} a_{4}
%	\\&\quad
	+ \alpha^{(4)}_{16} \bar{a}_{3} \bar{a}_{0} a_{0} \bar{a}_{2} a_{2} \bar{a}_{2} a_{2} \bar{a}_{0} a_{0} \bar{a}_{2} a_{2} \bar{a}_{0} a_{0} a_{3} a_{4}
	\\&\quad
	+ \alpha^{(4)}_{17} \bar{a}_{3} \bar{a}_{2} a_{2} \bar{a}_{0} a_{0} \bar{a}_{2} a_{2} \bar{a}_{0} a_{0} \bar{a}_{2} a_{2} \bar{a}_{0} a_{0} a_{3} a_{4}
%	\\
%	%\end{align*}
%	%\begin{align*}
%	&\quad
	+ \alpha^{(4)}_{18} \bar{a}_{3} \bar{a}_{0} a_{0} \bar{a}_{2} a_{2} \bar{a}_{0} a_{0} \bar{a}_{2} a_{2} \bar{a}_{0} a_{0} \bar{a}_{2} a_{2} \bar{a}_{0} a_{0} a_{3} a_{4}
	\\&\quad
	+ \alpha^{(4)}_{19} \bar{a}_{3} \bar{a}_{0} a_{0} \bar{a}_{2} a_{2} \bar{a}_{0} a_{0} \bar{a}_{2} a_{2} \bar{a}_{0} a_{0} \bar{a}_{2} a_{2} \bar{a}_{2} a_{2} a_{3} a_{4}
%	\\&\quad
	+ \alpha^{(4)}_{20} \bar{a}_{3} \bar{a}_{2} a_{2} \bar{a}_{0} a_{0} \bar{a}_{2} a_{2} \bar{a}_{2} a_{2} \bar{a}_{0} a_{0} \bar{a}_{2} a_{2} \bar{a}_{0} a_{0} a_{3} a_{4}
	\\&\quad
	+ \alpha^{(4)}_{21} \bar{a}_{3} \bar{a}_{0} a_{0} \bar{a}_{2} a_{2} \bar{a}_{0} a_{0} \bar{a}_{2} a_{2} \bar{a}_{0} a_{0} \bar{a}_{2} a_{2} \bar{a}_{0} a_{0} \bar{a}_{2} a_{2} a_{3} a_{4}
%	\\&\quad
	+ \alpha^{(4)}_{22} \bar{a}_{3} \bar{a}_{0} a_{0} \bar{a}_{2} a_{2} \bar{a}_{0} a_{0} \bar{a}_{2} a_{2} \bar{a}_{2} a_{2} \bar{a}_{0} a_{0} \bar{a}_{2} a_{2} \bar{a}_{0} a_{0} a_{3} a_{4}
	\\&\quad
	+ \alpha^{(4)}_{23} \bar{a}_{3} \bar{a}_{2} a_{2} \bar{a}_{0} a_{0} \bar{a}_{2} a_{2} \bar{a}_{0} a_{0} \bar{a}_{2} a_{2} \bar{a}_{0} a_{0} \bar{a}_{2} a_{2} \bar{a}_{0} a_{0} a_{3} a_{4}
%	\\&\quad
	+ \alpha^{(4)}_{24} \bar{a}_{3} \bar{a}_{0} a_{0} \bar{a}_{2} a_{2} \bar{a}_{0} a_{0} \bar{a}_{2} a_{2} \bar{a}_{0} a_{0} \bar{a}_{2} a_{2} \bar{a}_{0} a_{0} \bar{a}_{2} a_{2} \bar{a}_{0} a_{0} a_{3} a_{4}
	\\&\quad
	+ \alpha^{(4)}_{25} \bar{a}_{3} \bar{a}_{2} a_{2} \bar{a}_{0} a_{0} \bar{a}_{2} a_{2} \bar{a}_{0} a_{0} \bar{a}_{2} a_{2} \bar{a}_{0} a_{0} \bar{a}_{2} a_{2} \bar{a}_{0} a_{0} \bar{a}_{2} a_{2} a_{3} a_{4}
%	\\&\quad
	+ \alpha^{(4)}_{26} \bar{a}_{3} \bar{a}_{0} a_{0} \bar{a}_{2} a_{2} \bar{a}_{0} a_{0} \bar{a}_{2} a_{2} \bar{a}_{0} a_{0} \bar{a}_{2} a_{2} \bar{a}_{0} a_{0} \bar{a}_{2} a_{2} \bar{a}_{0} a_{0} \bar{a}_{2} a_{2} a_{3} a_{4}
	\\&\quad
	+ \alpha^{(4)}_{27} \bar{a}_{3} \bar{a}_{0} a_{0} \bar{a}_{2} a_{2} \bar{a}_{0} a_{0} \bar{a}_{2} a_{2} \bar{a}_{0} a_{0} \bar{a}_{2} a_{2} \bar{a}_{2} a_{2} \bar{a}_{0} a_{0} \bar{a}_{2} a_{2} \bar{a}_{0} a_{0} a_{3} a_{4}
%	\\&\quad
	+ \alpha^{(4)}_{28} \bar{a}_{3} \bar{a}_{0} a_{0} \bar{a}_{2} a_{2} \bar{a}_{0} a_{0} \bar{a}_{2} a_{2} \bar{a}_{0} a_{0} \bar{a}_{2} a_{2} \bar{a}_{0} a_{0} \bar{a}_{2} a_{2} \bar{a}_{0} a_{0} \bar{a}_{2} a_{2} \bar{a}_{0} a_{0} a_{3} a_{4}
	\\&\quad
	+ \alpha^{(4)}_{29} \bar{a}_{3} \bar{a}_{0} a_{0} \bar{a}_{2} a_{2} \bar{a}_{0} a_{0} \bar{a}_{2} a_{2} \bar{a}_{0} a_{0} \bar{a}_{2} a_{2} \bar{a}_{0} a_{0} \bar{a}_{2} a_{2} \bar{a}_{0} a_{0} \bar{a}_{2} a_{2} \bar{a}_{0} a_{0} \bar{a}_{2} a_{2} a_{3} a_{4}
	\\
	\varphi(\bar{a}_{4}) &= 
	% \bar{\alpha}^{(4)}_{0}
	\bar{a}_{4}
	+ \bar{\alpha}^{(4)}_{1} \bar{a}_{4} \bar{a}_{3} a_{3}
	+ \bar{\alpha}^{(4)}_{2} \bar{a}_{4} \bar{a}_{3} \bar{a}_{0} a_{0} a_{3}
	+ \bar{\alpha}^{(4)}_{3} \bar{a}_{4} \bar{a}_{3} \bar{a}_{2} a_{2} a_{3}
	+ \bar{\alpha}^{(4)}_{4} \bar{a}_{4} \bar{a}_{3} \bar{a}_{0} a_{0} \bar{a}_{2} a_{2} a_{3}
	+ \bar{\alpha}^{(4)}_{5} \bar{a}_{4} \bar{a}_{3} \bar{a}_{2} a_{2} \bar{a}_{0} a_{0} a_{3}
%	\\&\quad
	+ \bar{\alpha}^{(4)}_{6} \bar{a}_{4} \bar{a}_{3} \bar{a}_{0} a_{0} \bar{a}_{2} a_{2} \bar{a}_{0} a_{0} a_{3}
	\\&\quad
	+ \bar{\alpha}^{(4)}_{7} \bar{a}_{4} \bar{a}_{3} \bar{a}_{0} a_{0} \bar{a}_{2} a_{2} \bar{a}_{2} a_{2} a_{3}
	+ \bar{\alpha}^{(4)}_{8} \bar{a}_{4} \bar{a}_{3} \bar{a}_{2} a_{2} \bar{a}_{0} a_{0} \bar{a}_{2} a_{2} a_{3}
	%\\&\quad
	+ \bar{\alpha}^{(4)}_{9} \bar{a}_{4} \bar{a}_{3} \bar{a}_{0} a_{0} \bar{a}_{2} a_{2} \bar{a}_{0} a_{0} \bar{a}_{2} a_{2} a_{3}
%	\\&\quad
	+ \bar{\alpha}^{(4)}_{10} \bar{a}_{4} \bar{a}_{3} \bar{a}_{0} a_{0} \bar{a}_{2} a_{2} \bar{a}_{2} a_{2} \bar{a}_{0} a_{0} a_{3}
	\\&\quad
	+ \bar{\alpha}^{(4)}_{11} \bar{a}_{4} \bar{a}_{3} \bar{a}_{2} a_{2} \bar{a}_{0} a_{0} \bar{a}_{2} a_{2} \bar{a}_{2} a_{2} a_{3}
	%\\&\quad
	+ \bar{\alpha}^{(4)}_{12} \bar{a}_{4} \bar{a}_{3} \bar{a}_{0} a_{0} \bar{a}_{2} a_{2} \bar{a}_{0} a_{0} \bar{a}_{2} a_{2} \bar{a}_{0} a_{0} a_{3}
%	\\&\quad
	+ \bar{\alpha}^{(4)}_{13} \bar{a}_{4} \bar{a}_{3} \bar{a}_{0} a_{0} \bar{a}_{2} a_{2} \bar{a}_{0} a_{0} \bar{a}_{2} a_{2} \bar{a}_{2} a_{2} a_{3}
	\\&\quad
	+ \bar{\alpha}^{(4)}_{14} \bar{a}_{4} \bar{a}_{3} \bar{a}_{2} a_{2} \bar{a}_{0} a_{0} \bar{a}_{2} a_{2} \bar{a}_{2} a_{2} \bar{a}_{0} a_{0} a_{3}
	+ \bar{\alpha}^{(4)}_{15} \bar{a}_{4} \bar{a}_{3} \bar{a}_{0} a_{0} \bar{a}_{2} a_{2} \bar{a}_{0} a_{0} \bar{a}_{2} a_{2} \bar{a}_{0} a_{0} \bar{a}_{2} a_{2} a_{3}
%	\\&\quad
	+ \bar{\alpha}^{(4)}_{16} \bar{a}_{4} \bar{a}_{3} \bar{a}_{0} a_{0} \bar{a}_{2} a_{2} \bar{a}_{0} a_{0} \bar{a}_{2} a_{2} \bar{a}_{2} a_{2} \bar{a}_{0} a_{0} a_{3}
	\\&\quad
	+ \bar{\alpha}^{(4)}_{17} \bar{a}_{4} \bar{a}_{3} \bar{a}_{0} a_{0} \bar{a}_{2} a_{2} \bar{a}_{2} a_{2} \bar{a}_{0} a_{0} \bar{a}_{2} a_{2} \bar{a}_{0} a_{0} a_{3}
%	\\&\quad
	+ \bar{\alpha}^{(4)}_{18} \bar{a}_{4} \bar{a}_{3} \bar{a}_{0} a_{0} \bar{a}_{2} a_{2} \bar{a}_{0} a_{0} \bar{a}_{2} a_{2} \bar{a}_{0} a_{0} \bar{a}_{2} a_{2} \bar{a}_{0} a_{0} a_{3}
	\\&\quad
	+ \bar{\alpha}^{(4)}_{19} \bar{a}_{4} \bar{a}_{3} \bar{a}_{0} a_{0} \bar{a}_{2} a_{2} \bar{a}_{0} a_{0} \bar{a}_{2} a_{2} \bar{a}_{0} a_{0} \bar{a}_{2} a_{2} \bar{a}_{2} a_{2} a_{3}
%	\\&\quad
	+ \bar{\alpha}^{(4)}_{20} \bar{a}_{4} \bar{a}_{3} \bar{a}_{2} a_{2} \bar{a}_{0} a_{0} \bar{a}_{2} a_{2} \bar{a}_{2} a_{2} \bar{a}_{0} a_{0} \bar{a}_{2} a_{2} \bar{a}_{0} a_{0} a_{3}
	\\&\quad
	+ \bar{\alpha}^{(4)}_{21} \bar{a}_{4} \bar{a}_{3} \bar{a}_{0} a_{0} \bar{a}_{2} a_{2} \bar{a}_{0} a_{0} \bar{a}_{2} a_{2} \bar{a}_{0} a_{0} \bar{a}_{2} a_{2} \bar{a}_{0} a_{0} \bar{a}_{2} a_{2} a_{3}
%	\\&\quad
	+ \bar{\alpha}^{(4)}_{22} \bar{a}_{4} \bar{a}_{3} \bar{a}_{0} a_{0} \bar{a}_{2} a_{2} \bar{a}_{0} a_{0} \bar{a}_{2} a_{2} \bar{a}_{0} a_{0} \bar{a}_{2} a_{2} \bar{a}_{2} a_{2} \bar{a}_{0} a_{0} a_{3}
	\\&\quad
	+ \bar{\alpha}^{(4)}_{23} \bar{a}_{4} \bar{a}_{3} \bar{a}_{0} a_{0} \bar{a}_{2} a_{2} \bar{a}_{0} a_{0} \bar{a}_{2} a_{2} \bar{a}_{2} a_{2} \bar{a}_{0} a_{0} \bar{a}_{2} a_{2} \bar{a}_{0} a_{0} a_{3}
%	\\&\quad
	+ \bar{\alpha}^{(4)}_{24} \bar{a}_{4} \bar{a}_{3} \bar{a}_{0} a_{0} \bar{a}_{2} a_{2} \bar{a}_{0} a_{0} \bar{a}_{2} a_{2} \bar{a}_{0} a_{0} \bar{a}_{2} a_{2} \bar{a}_{0} a_{0} \bar{a}_{2} a_{2} \bar{a}_{0} a_{0} a_{3}
	\\&\quad
	+ \bar{\alpha}^{(4)}_{25} \bar{a}_{4} \bar{a}_{3} \bar{a}_{0} a_{0} \bar{a}_{2} a_{2} \bar{a}_{0} a_{0} \bar{a}_{2} a_{2} \bar{a}_{0} a_{0} \bar{a}_{2} a_{2} \bar{a}_{0} a_{0} \bar{a}_{2} a_{2} \bar{a}_{2} a_{2} a_{3}
%	\\&\quad
	+ \bar{\alpha}^{(4)}_{26} \bar{a}_{4} \bar{a}_{3} \bar{a}_{0} a_{0} \bar{a}_{2} a_{2} \bar{a}_{0} a_{0} \bar{a}_{2} a_{2} \bar{a}_{0} a_{0} \bar{a}_{2} a_{2} \bar{a}_{0} a_{0} \bar{a}_{2} a_{2} \bar{a}_{0} a_{0} \bar{a}_{2} a_{2} a_{3}
	\\&\quad
	+ \bar{\alpha}^{(4)}_{27} \bar{a}_{4} \bar{a}_{3} \bar{a}_{0} a_{0} \bar{a}_{2} a_{2} \bar{a}_{0} a_{0} \bar{a}_{2} a_{2} \bar{a}_{0} a_{0} \bar{a}_{2} a_{2} \bar{a}_{2} a_{2} \bar{a}_{0} a_{0} \bar{a}_{2} a_{2} \bar{a}_{0} a_{0} a_{3}
%	%\\
%	\end{align*}
%	\begin{align*}
%	&\quad
%	\\&\quad
	+ \bar{\alpha}^{(4)}_{28} \bar{a}_{4} \bar{a}_{3} \bar{a}_{0} a_{0} \bar{a}_{2} a_{2} \bar{a}_{0} a_{0} \bar{a}_{2} a_{2} \bar{a}_{0} a_{0} \bar{a}_{2} a_{2} \bar{a}_{0} a_{0} \bar{a}_{2} a_{2} \bar{a}_{0} a_{0} \bar{a}_{2} a_{2} \bar{a}_{0} a_{0} a_{3}
	\\&\quad
	+ \bar{\alpha}^{(4)}_{29} \bar{a}_{4} \bar{a}_{3} \bar{a}_{0} a_{0} \bar{a}_{2} a_{2} \bar{a}_{0} a_{0} \bar{a}_{2} a_{2} \bar{a}_{0} a_{0} \bar{a}_{2} a_{2} \bar{a}_{0} a_{0} \bar{a}_{2} a_{2} \bar{a}_{0} a_{0} \bar{a}_{2} a_{2} \bar{a}_{0} a_{0} \bar{a}_{2} a_{2} a_{3}
	\\
	\varphi(a_{5}) &= 
	% \alpha^{(5)}_{0}
	a_{5}
	+ \alpha^{(5)}_{1} \bar{a}_{4} a_{4} a_{5}
	+ \alpha^{(5)}_{2} \bar{a}_{4} \bar{a}_{3} \bar{a}_{0} a_{0} a_{3} a_{4} a_{5}
	+ \alpha^{(5)}_{3} \bar{a}_{4} \bar{a}_{3} \bar{a}_{0} a_{0} \bar{a}_{2} a_{2} a_{3} a_{4} a_{5}
	%\\&\quad
	+ \alpha^{(5)}_{4} \bar{a}_{4} \bar{a}_{3} \bar{a}_{2} a_{2} \bar{a}_{0} a_{0} a_{3} a_{4} a_{5}
%	\\&\quad
	+ \alpha^{(5)}_{5} \bar{a}_{4} \bar{a}_{3} \bar{a}_{0} a_{0} \bar{a}_{2} a_{2} \bar{a}_{0} a_{0} a_{3} a_{4} a_{5}
	\\&\quad
	+ \alpha^{(5)}_{6} \bar{a}_{4} \bar{a}_{3} \bar{a}_{2} a_{2} \bar{a}_{0} a_{0} \bar{a}_{2} a_{2} a_{3} a_{4} a_{5}
	+ \alpha^{(5)}_{7} \bar{a}_{4} \bar{a}_{3} \bar{a}_{0} a_{0} \bar{a}_{2} a_{2} \bar{a}_{0} a_{0} \bar{a}_{2} a_{2} a_{3} a_{4} a_{5}
%	\\&\quad
	+ \alpha^{(5)}_{8} \bar{a}_{4} \bar{a}_{3} \bar{a}_{0} a_{0} \bar{a}_{2} a_{2} \bar{a}_{0} a_{0} \bar{a}_{2} a_{2} \bar{a}_{0} a_{0} a_{3} a_{4} a_{5}
	\\&\quad
	+ \alpha^{(5)}_{9} \bar{a}_{4} \bar{a}_{3} \bar{a}_{0} a_{0} \bar{a}_{2} a_{2} \bar{a}_{0} a_{0} \bar{a}_{2} a_{2} \bar{a}_{0} a_{0} \bar{a}_{2} a_{2} a_{3} a_{4} a_{5}
%	\\&\quad
	+ \alpha^{(5)}_{10} \bar{a}_{4} \bar{a}_{3} \bar{a}_{0} a_{0} \bar{a}_{2} a_{2} \bar{a}_{2} a_{2} \bar{a}_{0} a_{0} \bar{a}_{2} a_{2} \bar{a}_{0} a_{0} a_{3} a_{4} a_{5}
	\\&\quad
	+ \alpha^{(5)}_{11} \bar{a}_{4} \bar{a}_{3} \bar{a}_{0} a_{0} \bar{a}_{2} a_{2} \bar{a}_{0} a_{0} \bar{a}_{2} a_{2} \bar{a}_{0} a_{0} \bar{a}_{2} a_{2} \bar{a}_{0} a_{0} a_{3} a_{4} a_{5}
%	\\&\quad
	+ \alpha^{(5)}_{12} \bar{a}_{4} \bar{a}_{3} \bar{a}_{2} a_{2} \bar{a}_{0} a_{0} \bar{a}_{2} a_{2} \bar{a}_{2} a_{2} \bar{a}_{0} a_{0} \bar{a}_{2} a_{2} \bar{a}_{0} a_{0} a_{3} a_{4} a_{5}
	\\&\quad
	+ \alpha^{(5)}_{13} \bar{a}_{4} \bar{a}_{3} \bar{a}_{0} a_{0} \bar{a}_{2} a_{2} \bar{a}_{0} a_{0} \bar{a}_{2} a_{2} \bar{a}_{2} a_{2} \bar{a}_{0} a_{0} \bar{a}_{2} a_{2} \bar{a}_{0} a_{0} a_{3} a_{4} a_{5}
%	\\&\quad
	+ \alpha^{(5)}_{14} \bar{a}_{4} \bar{a}_{3} \bar{a}_{0} a_{0} \bar{a}_{2} a_{2} \bar{a}_{0} a_{0} \bar{a}_{2} a_{2} \bar{a}_{0} a_{0} \bar{a}_{2} a_{2} \bar{a}_{2} a_{2} \bar{a}_{0} a_{0} \bar{a}_{2} a_{2} \bar{a}_{0} a_{0} a_{3} a_{4} a_{5}
	\\&\quad
	+ \alpha^{(5)}_{15} \bar{a}_{4} \bar{a}_{3} \bar{a}_{0} a_{0} \bar{a}_{2} a_{2} \bar{a}_{0} a_{0} \bar{a}_{2} a_{2} \bar{a}_{0} a_{0} \bar{a}_{2} a_{2} \bar{a}_{0} a_{0} \bar{a}_{2} a_{2} \bar{a}_{0} a_{0} \bar{a}_{2} a_{2} \bar{a}_{0} a_{0} a_{3} a_{4} a_{5}
	\\
	\varphi(\bar{a}_{5}) &= 
	% \bar{\alpha}^{(5)}_{0}
	\bar{a}_{5}
	+ \bar{\alpha}^{(5)}_{1} \bar{a}_{5} \bar{a}_{4} a_{4}
	+ \bar{\alpha}^{(5)}_{2} \bar{a}_{5} \bar{a}_{4} \bar{a}_{3} \bar{a}_{0} a_{0} a_{3} a_{4}
	+ \bar{\alpha}^{(5)}_{3} \bar{a}_{5} \bar{a}_{4} \bar{a}_{3} \bar{a}_{0} a_{0} \bar{a}_{2} a_{2} a_{3} a_{4}
	+ \bar{\alpha}^{(5)}_{4} \bar{a}_{5} \bar{a}_{4} \bar{a}_{3} \bar{a}_{2} a_{2} \bar{a}_{0} a_{0} a_{3} a_{4}
%	\\&\quad
	+ \bar{\alpha}^{(5)}_{5} \bar{a}_{5} \bar{a}_{4} \bar{a}_{3} \bar{a}_{0} a_{0} \bar{a}_{2} a_{2} \bar{a}_{0} a_{0} a_{3} a_{4}
	\\&\quad
	+ \bar{\alpha}^{(5)}_{6} \bar{a}_{5} \bar{a}_{4} \bar{a}_{3} \bar{a}_{0} a_{0} \bar{a}_{2} a_{2} \bar{a}_{2} a_{2} a_{3} a_{4}
	+ \bar{\alpha}^{(5)}_{7} \bar{a}_{5} \bar{a}_{4} \bar{a}_{3} \bar{a}_{0} a_{0} \bar{a}_{2} a_{2} \bar{a}_{0} a_{0} \bar{a}_{2} a_{2} a_{3} a_{4}
%	\\&\quad
	+ \bar{\alpha}^{(5)}_{8} \bar{a}_{5} \bar{a}_{4} \bar{a}_{3} \bar{a}_{0} a_{0} \bar{a}_{2} a_{2} \bar{a}_{0} a_{0} \bar{a}_{2} a_{2} \bar{a}_{0} a_{0} a_{3} a_{4}
	\\&\quad
	+ \bar{\alpha}^{(5)}_{9} \bar{a}_{5} \bar{a}_{4} \bar{a}_{3} \bar{a}_{0} a_{0} \bar{a}_{2} a_{2} \bar{a}_{0} a_{0} \bar{a}_{2} a_{2} \bar{a}_{0} a_{0} \bar{a}_{2} a_{2} a_{3} a_{4}
%	\\&\quad
	+ \bar{\alpha}^{(5)}_{10} \bar{a}_{5} \bar{a}_{4} \bar{a}_{3} \bar{a}_{0} a_{0} \bar{a}_{2} a_{2} \bar{a}_{2} a_{2} \bar{a}_{0} a_{0} \bar{a}_{2} a_{2} \bar{a}_{0} a_{0} a_{3} a_{4}
	\\&\quad
	+ \bar{\alpha}^{(5)}_{11} \bar{a}_{5} \bar{a}_{4} \bar{a}_{3} \bar{a}_{0} a_{0} \bar{a}_{2} a_{2} \bar{a}_{0} a_{0} \bar{a}_{2} a_{2} \bar{a}_{0} a_{0} \bar{a}_{2} a_{2} \bar{a}_{0} a_{0} a_{3} a_{4}
%	\\&\quad
	+ \bar{\alpha}^{(5)}_{12} \bar{a}_{5} \bar{a}_{4} \bar{a}_{3} \bar{a}_{0} a_{0} \bar{a}_{2} a_{2} \bar{a}_{0} a_{0} \bar{a}_{2} a_{2} \bar{a}_{0} a_{0} \bar{a}_{2} a_{2} \bar{a}_{2} a_{2} a_{3} a_{4}
	\\&\quad
	+ \bar{\alpha}^{(5)}_{13} \bar{a}_{5} \bar{a}_{4} \bar{a}_{3} \bar{a}_{0} a_{0} \bar{a}_{2} a_{2} \bar{a}_{0} a_{0} \bar{a}_{2} a_{2} \bar{a}_{0} a_{0} \bar{a}_{2} a_{2} \bar{a}_{0} a_{0} \bar{a}_{2} a_{2} a_{3} a_{4}
%	\\&\quad
	+ \bar{\alpha}^{(5)}_{14} \bar{a}_{5} \bar{a}_{4} \bar{a}_{3} \bar{a}_{0} a_{0} \bar{a}_{2} a_{2} \bar{a}_{0} a_{0} \bar{a}_{2} a_{2} \bar{a}_{0} a_{0} \bar{a}_{2} a_{2} \bar{a}_{2} a_{2} \bar{a}_{0} a_{0} \bar{a}_{2} a_{2} \bar{a}_{0} a_{0} a_{3} a_{4}
	\\&\quad
	+ \bar{\alpha}^{(5)}_{15} \bar{a}_{5} \bar{a}_{4} \bar{a}_{3} \bar{a}_{0} a_{0} \bar{a}_{2} a_{2} \bar{a}_{0} a_{0} \bar{a}_{2} a_{2} \bar{a}_{0} a_{0} \bar{a}_{2} a_{2} \bar{a}_{0} a_{0} \bar{a}_{2} a_{2} \bar{a}_{0} a_{0} \bar{a}_{2} a_{2} \bar{a}_{0} a_{0} a_{3} a_{4}
	\\
	\varphi(a_{6}) &= 
	% \alpha^{(6)}_{0}
	a_{6}
	+ \alpha^{(6)}_{1} \bar{a}_{5} \bar{a}_{4} \bar{a}_{3} \bar{a}_{0} a_{0} a_{3} a_{4} a_{5} a_{6}
	+ \alpha^{(6)}_{2} \bar{a}_{5} \bar{a}_{4} \bar{a}_{3} \bar{a}_{2} a_{2} \bar{a}_{0} a_{0} a_{3} a_{4} a_{5} a_{6}
	%\\&\quad
	+ \alpha^{(6)}_{3} \bar{a}_{5} \bar{a}_{4} \bar{a}_{3} \bar{a}_{0} a_{0} \bar{a}_{2} a_{2} \bar{a}_{0} a_{0} \bar{a}_{2} a_{2} \bar{a}_{0} a_{0} a_{3} a_{4} a_{5} a_{6}
	\\&\quad
	+ \alpha^{(6)}_{4} \bar{a}_{5} \bar{a}_{4} \bar{a}_{3} \bar{a}_{0} a_{0} \bar{a}_{2} a_{2} \bar{a}_{2} a_{2} \bar{a}_{0} a_{0} \bar{a}_{2} a_{2} \bar{a}_{0} a_{0} a_{3} a_{4} a_{5} a_{6}
	%\\&\quad
	+ \alpha^{(6)}_{5} \bar{a}_{5} \bar{a}_{4} \bar{a}_{3} \bar{a}_{0} a_{0} \bar{a}_{2} a_{2} \bar{a}_{0} a_{0} \bar{a}_{2} a_{2} \bar{a}_{0} a_{0} \bar{a}_{2} a_{2} \bar{a}_{2} a_{2} \bar{a}_{0} a_{0} \bar{a}_{2} a_{2} \bar{a}_{0} a_{0} a_{3} a_{4} a_{5} a_{6}
	\\
	\varphi(\bar{a}_{6}) &= 
	% \bar{\alpha}^{(6)}_{0}
	\bar{a}_{6}
	+ \bar{\alpha}^{(6)}_{1} \bar{a}_{6} \bar{a}_{5} \bar{a}_{4} \bar{a}_{3} \bar{a}_{0} a_{0} a_{3} a_{4} a_{5}
	+ \bar{\alpha}^{(6)}_{2} \bar{a}_{6} \bar{a}_{5} \bar{a}_{4} \bar{a}_{3} \bar{a}_{0} a_{0} \bar{a}_{2} a_{2} a_{3} a_{4} a_{5}
	%\\&\quad
	+ \bar{\alpha}^{(6)}_{3} \bar{a}_{6} \bar{a}_{5} \bar{a}_{4} \bar{a}_{3} \bar{a}_{0} a_{0} \bar{a}_{2} a_{2} \bar{a}_{0} a_{0} \bar{a}_{2} a_{2} \bar{a}_{0} a_{0} a_{3} a_{4} a_{5}
	\\&\quad
	+ \bar{\alpha}^{(6)}_{4} \bar{a}_{6} \bar{a}_{5} \bar{a}_{4} \bar{a}_{3} \bar{a}_{0} a_{0} \bar{a}_{2} a_{2} \bar{a}_{0} a_{0} \bar{a}_{2} a_{2} \bar{a}_{0} a_{0} \bar{a}_{2} a_{2} a_{3} a_{4} a_{5}
%	\\
%	%\end{align*}
%	%\begin{align*}
%	&\quad
	+ \bar{\alpha}^{(6)}_{5} \bar{a}_{6} \bar{a}_{5} \bar{a}_{4} \bar{a}_{3} \bar{a}_{0} a_{0} \bar{a}_{2} a_{2} \bar{a}_{0} a_{0} \bar{a}_{2} a_{2} \bar{a}_{0} a_{0} \bar{a}_{2} a_{2} \bar{a}_{2} a_{2} \bar{a}_{0} a_{0} \bar{a}_{2} a_{2} \bar{a}_{0} a_{0} a_{3} a_{4} a_{5}
	\end{align*}%
%\end{small}%
\end{footnotesize}%
with coefficients ${\alpha}^{(i)}_{j_i}, \bar{\alpha}^{(i)}_{j_i} \in K$,
for $i=0,\dots,6$, $j = 1, \dots,  j_i$, 
$j_0 = 29, j_1 = 13, j_2 = 39, j_3 = 47, j_4 = 29, j_5 = 15, j_6 = 5$.

\subsection{Simplified system of equations}
\label{secB:simpl}

In order to obtain the simplified system of equations
(see Section~\ref{sec:solv} for details)
we will assume that in the above definition of $\varphi$ 
we have $\alpha^{(2)}_{1} = 1$ and that the following
coefficients are equal zero:
\begin{small}
\setlength{\jot}{2pt}
\begin{gather*}
\alpha^{(0)}_{2},
\alpha^{(0)}_{6},
\alpha^{(0)}_{10},
\alpha^{(0)}_{11},
\alpha^{(0)}_{16},
\alpha^{(0)}_{22},
\alpha^{(0)}_{24},
\alpha^{(0)}_{26},
\alpha^{(0)}_{27},
\alpha^{(0)}_{28},
\alpha^{(0)}_{29},
\bar{\alpha}^{(0)}_{4},
\bar{\alpha}^{(0)}_{12},
\bar{\alpha}^{(0)}_{13},
\bar{\alpha}^{(0)}_{16},
\bar{\alpha}^{(0)}_{18},
\bar{\alpha}^{(0)}_{21},
%\\
\bar{\alpha}^{(0)}_{22},
\bar{\alpha}^{(0)}_{23},
\bar{\alpha}^{(0)}_{25},
\alpha^{(1)}_{6},
\alpha^{(1)}_{8},
\bar{\alpha}^{(1)}_{2},
\bar{\alpha}^{(1)}_{3},
\\
\bar{\alpha}^{(1)}_{4},
\bar{\alpha}^{(1)}_{5},
\bar{\alpha}^{(1)}_{6},
\bar{\alpha}^{(1)}_{7},
\bar{\alpha}^{(1)}_{8},
\bar{\alpha}^{(1)}_{9},
\bar{\alpha}^{(1)}_{10},
\bar{\alpha}^{(1)}_{11},
\bar{\alpha}^{(1)}_{12},
\alpha^{(2)}_{15},
%\\
\alpha^{(2)}_{16},
\alpha^{(2)}_{29},
\alpha^{(2)}_{30},
\alpha^{(2)}_{31},
\alpha^{(2)}_{32},
\alpha^{(2)}_{35},
\alpha^{(2)}_{36},
\alpha^{(2)}_{39},
\bar{\alpha}^{(2)}_{3},
\bar{\alpha}^{(2)}_{4},
\bar{\alpha}^{(2)}_{5},
\bar{\alpha}^{(2)}_{6},
\bar{\alpha}^{(2)}_{7},
\\
\bar{\alpha}^{(2)}_{8},
\bar{\alpha}^{(2)}_{9},
\bar{\alpha}^{(2)}_{10},
\bar{\alpha}^{(2)}_{11},
%\\
\bar{\alpha}^{(2)}_{12},
\bar{\alpha}^{(2)}_{13},
\bar{\alpha}^{(2)}_{14},
\bar{\alpha}^{(2)}_{15},
\bar{\alpha}^{(2)}_{16},
\bar{\alpha}^{(2)}_{17},
\bar{\alpha}^{(2)}_{18},
\bar{\alpha}^{(2)}_{19},
\bar{\alpha}^{(2)}_{20},
\bar{\alpha}^{(2)}_{21},
\bar{\alpha}^{(2)}_{22},
\bar{\alpha}^{(2)}_{23},
\bar{\alpha}^{(2)}_{24},
\bar{\alpha}^{(2)}_{25},
\bar{\alpha}^{(2)}_{26},
\bar{\alpha}^{(2)}_{27},
\bar{\alpha}^{(2)}_{28},
%\\
\bar{\alpha}^{(2)}_{29},
\bar{\alpha}^{(2)}_{30},
\\
\bar{\alpha}^{(2)}_{31},
\bar{\alpha}^{(2)}_{32},
\bar{\alpha}^{(2)}_{33},
\bar{\alpha}^{(2)}_{34},
\bar{\alpha}^{(2)}_{35},
\bar{\alpha}^{(2)}_{36},
\bar{\alpha}^{(2)}_{37},
\bar{\alpha}^{(2)}_{38},
\alpha^{(3)}_{2},
\alpha^{(3)}_{3},
\alpha^{(3)}_{4},
\alpha^{(3)}_{5},
\alpha^{(3)}_{6},
\alpha^{(3)}_{7},
\alpha^{(3)}_{8},
%\\
\alpha^{(3)}_{9},
\alpha^{(3)}_{10},
\alpha^{(3)}_{11},
\alpha^{(3)}_{12},
\alpha^{(3)}_{13},
\alpha^{(3)}_{14},
\alpha^{(3)}_{15},
\alpha^{(3)}_{16},
\\
\alpha^{(3)}_{17},
\alpha^{(3)}_{18},
\alpha^{(3)}_{19},
\alpha^{(3)}_{20},
\alpha^{(3)}_{21},
\alpha^{(3)}_{22},
\alpha^{(3)}_{23},
\alpha^{(3)}_{24},
\alpha^{(3)}_{25},
%\\
\alpha^{(3)}_{26},
\alpha^{(3)}_{27},
\alpha^{(3)}_{28},
\alpha^{(3)}_{29},
\alpha^{(3)}_{30},
\alpha^{(3)}_{31},
\alpha^{(3)}_{32},
\alpha^{(3)}_{33},
\alpha^{(3)}_{34},
\alpha^{(3)}_{35},
\alpha^{(3)}_{36},
\alpha^{(3)}_{37},
\alpha^{(3)}_{38},
\alpha^{(3)}_{39},
\\
\alpha^{(3)}_{40},
\alpha^{(3)}_{41},
\alpha^{(3)}_{42},
%\\
\alpha^{(3)}_{43},
\alpha^{(3)}_{44},
\alpha^{(3)}_{45},
\alpha^{(3)}_{46},
\alpha^{(3)}_{47},
\bar{\alpha}^{(3)}_{1},
\bar{\alpha}^{(3)}_{10},
\bar{\alpha}^{(3)}_{17},
\bar{\alpha}^{(3)}_{45},
\bar{\alpha}^{(3)}_{47},
\alpha^{(4)}_{2},
\alpha^{(4)}_{3},
\alpha^{(4)}_{4},
\alpha^{(4)}_{5},
\alpha^{(4)}_{6},
\alpha^{(4)}_{7},
\alpha^{(4)}_{8},
%\\
\alpha^{(4)}_{9},
\alpha^{(4)}_{10},
\alpha^{(4)}_{11},
\\
\alpha^{(4)}_{12},
\alpha^{(4)}_{13},
\alpha^{(4)}_{14},
\alpha^{(4)}_{15},
\alpha^{(4)}_{16},
\alpha^{(4)}_{17},
\alpha^{(4)}_{18},
\alpha^{(4)}_{19},
\alpha^{(4)}_{20},
\alpha^{(4)}_{21},
\alpha^{(4)}_{22},
\alpha^{(4)}_{23},
\alpha^{(4)}_{24},
\alpha^{(4)}_{25},
%\\
\alpha^{(4)}_{26},
\alpha^{(4)}_{27},
\alpha^{(4)}_{28},
\bar{\alpha}^{(4)}_{1},
\bar{\alpha}^{(4)}_{4},
\bar{\alpha}^{(4)}_{5},
\bar{\alpha}^{(4)}_{6},
\bar{\alpha}^{(4)}_{10},
\bar{\alpha}^{(4)}_{17},
%\end{gather*}
%\begin{gather*}
\\
\bar{\alpha}^{(4)}_{19},
\bar{\alpha}^{(4)}_{27},
\alpha^{(5)}_{2},
\alpha^{(5)}_{3},
\alpha^{(5)}_{4},
\alpha^{(5)}_{5},
\alpha^{(5)}_{6},
\alpha^{(5)}_{7},
%\\
\alpha^{(5)}_{8},
\alpha^{(5)}_{9},
\alpha^{(5)}_{10},
\alpha^{(5)}_{11},
\alpha^{(5)}_{12},
\alpha^{(5)}_{13},
\alpha^{(5)}_{14},
\bar{\alpha}^{(5)}_{1},
\bar{\alpha}^{(5)}_{2},
\bar{\alpha}^{(5)}_{8},
\bar{\alpha}^{(5)}_{15},
\alpha^{(6)}_{2},
\alpha^{(6)}_{3},
\alpha^{(6)}_{4},
\bar{\alpha}^{(6)}_{5}.
\end{gather*}
\end{small}%
Further,
%Then,
%applying 
from 
the relations of $P(\mathbb{E}_8)$
(see Section~\ref{sec:eq})
we obtain 
the set 
of 170 equations 
\cite[\texttt{e8-equations-reduced.txt}]{B:E8zip}.
Then,
we apply to it the set of 10 
formulas 
for
$
\theta_2,
\theta_6,
\theta_{12}$, $
\theta_{17}$, $
\theta_{18}$, $
\theta_{19}$, $
\theta_{28},
\theta_{36},
\theta_{40},
\theta_{45}
$
described
in \ref{secB:adm}
and
solve it
(see the next section for details)
for the remaining variables 
\begin{small}
\setlength{\jot}{2pt}
\begin{gather*}
\alpha^{(0)}_{1},
\alpha^{(0)}_{3},
\alpha^{(0)}_{4},
\alpha^{(0)}_{5},
\alpha^{(0)}_{7},
\alpha^{(0)}_{8},
\alpha^{(0)}_{9},
\alpha^{(0)}_{12},
\alpha^{(0)}_{13},
\alpha^{(0)}_{14},
\alpha^{(0)}_{15},
\alpha^{(0)}_{17},
\alpha^{(0)}_{18},
\alpha^{(0)}_{19},
\alpha^{(0)}_{20},
\alpha^{(0)}_{21},
\alpha^{(0)}_{23},
%\\
\alpha^{(0)}_{25},
\bar{\alpha}^{(0)}_{1},
\bar{\alpha}^{(0)}_{2},
\bar{\alpha}^{(0)}_{3},
\bar{\alpha}^{(0)}_{5},
%\\
\bar{\alpha}^{(0)}_{6},
\bar{\alpha}^{(0)}_{7},
\\
\bar{\alpha}^{(0)}_{8},
\bar{\alpha}^{(0)}_{9},
\bar{\alpha}^{(0)}_{10},
\bar{\alpha}^{(0)}_{11},
\bar{\alpha}^{(0)}_{14},
\bar{\alpha}^{(0)}_{15},
\bar{\alpha}^{(0)}_{17},
\bar{\alpha}^{(0)}_{19},
\bar{\alpha}^{(0)}_{20},
\bar{\alpha}^{(0)}_{24},
%\\
\bar{\alpha}^{(0)}_{26},
\bar{\alpha}^{(0)}_{27},
\bar{\alpha}^{(0)}_{28},
\bar{\alpha}^{(0)}_{29},
\alpha^{(1)}_{1},
\alpha^{(1)}_{2},
\alpha^{(1)}_{3},
\alpha^{(1)}_{4},
\alpha^{(1)}_{5},
\alpha^{(1)}_{7},
%\\
\alpha^{(1)}_{9},
\alpha^{(1)}_{10},
\alpha^{(1)}_{11},
\alpha^{(1)}_{12},
\\
\alpha^{(1)}_{13},
\bar{\alpha}^{(1)}_{1},
\bar{\alpha}^{(1)}_{13},
%\\
%%% note that ``\alpha^{(2)}_{1}'' is set to 1 !!!
\alpha^{(2)}_{2},
\alpha^{(2)}_{3},
\alpha^{(2)}_{4},
\alpha^{(2)}_{5},
\alpha^{(2)}_{6},
\alpha^{(2)}_{7},
\alpha^{(2)}_{8},
\alpha^{(2)}_{9},
\alpha^{(2)}_{10},
\alpha^{(2)}_{11},
\alpha^{(2)}_{12},
\alpha^{(2)}_{13},
\alpha^{(2)}_{14},
\alpha^{(2)}_{17},
\alpha^{(2)}_{18},
%\\
\alpha^{(2)}_{19},
\alpha^{(2)}_{20},
%\\
\alpha^{(2)}_{21},
\alpha^{(2)}_{22},
\alpha^{(2)}_{23},
\alpha^{(2)}_{24},
\\
\alpha^{(2)}_{25},
\alpha^{(2)}_{26},
\alpha^{(2)}_{27},
\alpha^{(2)}_{28},
\alpha^{(2)}_{33},
\alpha^{(2)}_{34},
\alpha^{(2)}_{37},
\alpha^{(2)}_{38},
\bar{\alpha}^{(2)}_{1},
\bar{\alpha}^{(2)}_{2},
\bar{\alpha}^{(2)}_{39},
\alpha^{(3)}_{1},
\bar{\alpha}^{(3)}_{2},
%\end{gather*}
%\begin{gather*}
%\\
\bar{\alpha}^{(3)}_{3},
\bar{\alpha}^{(3)}_{4},
\bar{\alpha}^{(3)}_{5},
%\\
\bar{\alpha}^{(3)}_{6},
\bar{\alpha}^{(3)}_{7},
\bar{\alpha}^{(3)}_{8},
\bar{\alpha}^{(3)}_{9},
\bar{\alpha}^{(3)}_{11},
\bar{\alpha}^{(3)}_{12},
\bar{\alpha}^{(3)}_{13},
%\\
\bar{\alpha}^{(3)}_{14},
\\
\bar{\alpha}^{(3)}_{15},
\bar{\alpha}^{(3)}_{16},
\bar{\alpha}^{(3)}_{18},
\bar{\alpha}^{(3)}_{19},
%\\
\bar{\alpha}^{(3)}_{20},
\bar{\alpha}^{(3)}_{21},
%\\
\bar{\alpha}^{(3)}_{22},
\bar{\alpha}^{(3)}_{23},
\bar{\alpha}^{(3)}_{24},
\bar{\alpha}^{(3)}_{25},
\bar{\alpha}^{(3)}_{26},
\bar{\alpha}^{(3)}_{27},
\bar{\alpha}^{(3)}_{28},
\bar{\alpha}^{(3)}_{29},
%\\
\bar{\alpha}^{(3)}_{30},
\bar{\alpha}^{(3)}_{31},
\bar{\alpha}^{(3)}_{32},
\bar{\alpha}^{(3)}_{33},
\bar{\alpha}^{(3)}_{34},
\bar{\alpha}^{(3)}_{35},
\bar{\alpha}^{(3)}_{36},
\bar{\alpha}^{(3)}_{37},
\bar{\alpha}^{(3)}_{38},
%\\
\bar{\alpha}^{(3)}_{39},
\\
\bar{\alpha}^{(3)}_{40},
\bar{\alpha}^{(3)}_{41},
\bar{\alpha}^{(3)}_{42},
\bar{\alpha}^{(3)}_{43},
\bar{\alpha}^{(3)}_{44},
%\\
\bar{\alpha}^{(3)}_{46},
\alpha^{(4)}_{1},
\alpha^{(4)}_{29},
\bar{\alpha}^{(4)}_{2},
\bar{\alpha}^{(4)}_{3},
\bar{\alpha}^{(4)}_{7},
\bar{\alpha}^{(4)}_{8},
%\\
\bar{\alpha}^{(4)}_{9},
\bar{\alpha}^{(4)}_{11},
\bar{\alpha}^{(4)}_{12},
\bar{\alpha}^{(4)}_{13},
%\\
\bar{\alpha}^{(4)}_{14},
\bar{\alpha}^{(4)}_{15},
\bar{\alpha}^{(4)}_{16},
\bar{\alpha}^{(4)}_{18},
\bar{\alpha}^{(4)}_{20},
\bar{\alpha}^{(4)}_{21},
\bar{\alpha}^{(4)}_{22},
\bar{\alpha}^{(4)}_{23},
\\
\bar{\alpha}^{(4)}_{24},
\bar{\alpha}^{(4)}_{25},
\bar{\alpha}^{(4)}_{26},
\bar{\alpha}^{(4)}_{28},
\bar{\alpha}^{(4)}_{29},
\alpha^{(5)}_{1},
%\\
\alpha^{(5)}_{15},
\bar{\alpha}^{(5)}_{3},
\bar{\alpha}^{(5)}_{4},
%\\
\bar{\alpha}^{(5)}_{5},
%\\
\bar{\alpha}^{(5)}_{6},
\bar{\alpha}^{(5)}_{7},
\bar{\alpha}^{(5)}_{9},
\bar{\alpha}^{(5)}_{10},
\bar{\alpha}^{(5)}_{11},
\bar{\alpha}^{(5)}_{12},
\bar{\alpha}^{(5)}_{13},
\bar{\alpha}^{(5)}_{14},
\alpha^{(6)}_{1},
\alpha^{(6)}_{5},
\bar{\alpha}^{(6)}_{1},
\bar{\alpha}^{(6)}_{2},
\bar{\alpha}^{(6)}_{3},
\bar{\alpha}^{(6)}_{4}.
\end{gather*}
\end{small}%

We note that 
we need to perform 
above simplification 
in order
to be able to
solve the obtained system of equation 
within an ``acceptable'' period of time.
It let us reduce 
the number of equations 
from 178 to 170,
the complexity of these equations,
and
the number of variables 
from 354 to 168 
(see \cite[\texttt{e8-equations-full.txt} and \texttt{e8-equations-reduced.txt}]{B:E8zip}
for details).
Unfortunately, the obtained set of equations
is still to large to be presented here
(we note that its size is about one thousand times bigger than
size of substitutions considered in 
\ref{secB:adm}).

\subsection{On solving the obtained set of equations} 
\label{secB:solutions}

It was already mentioned in 
Section~\ref{sec:solv},
that system of equations obtained 
in \ref{secB:simpl}
is to complicated to be solved manually,
and natural approach is to use
some dedicated software package
to solve it.
In our case we
decided to use
Maple~2016 (which was later replaced by Maple~2019).

We may divide the process into following three steps:
\begin{enumerate}
	\setlength{\itemsep}{0pt}
	\item
	solving set of solutions;
	\item
	evaluate (expand) the obtained result;
	\item
	verify if it satisfy the assumptions. 
\end{enumerate}
Invocation of procedures for these steps are 
listed in the Maple Text file \cite[\texttt{commands.txt}]{B:E8zip}.
Below we briefly describe these steps.

We start with defining 
the sequence of substitutions,
set of variables and 
system of equations. 
Then we invoke procedure
\texttt{solve}().
In the result we obtain
sequence of formulas describing
the solutions. 
In particular it contains 
369930 polynomials which roots
are used 
(in the form of the construction ``\texttt{RootOf}()'') 
in formulas 
for values of
the required coefficients ${\alpha}^{(i)}_{j_i}$.
Moreover, 
all denominators from the quotients appearing 
in these formulas belong to the set $\{2,4,8,16,32,64\}$.
We recall that from the general assumption 
the 
computations are performed over
an algebraically closed field 
of characteristic different from $2$.
Hence we know that 
we perform the calculations in right characteristic and
that we can find roots of all these polynomials.

It suffices to find only one solution, 
so in the next step
we invoke the procedure 
\texttt{allvalues}()
on the result from the above step
and take the first element of the obtained sequence
of solutions.
Chosen solution contain
the formulas for all variables.
These formulas contain 369930 square roots 
of expressions build from fixed invariables $\theta_i$.
All denominators from the quotients appearing 
in these formulas again belong to the set $\{2,4,8,16,32,64\}$.
We note that
in order to list the denominators we may use some
simple recursive procedure 
(see \cite[\texttt{denominators.txt}]{B:E8zip} for an example).
We may also use some 
external
tools to parse and analyse the 
formulas saved in ``raw'' form to the text files.
Summing up, we conclude that
this system of equations has a solution
(for arbitrary algebraically closed field $K$
of characteristic different from $2$).
Hence,
following Corollary~\ref{cor:hom2},
there exist an isomorphism
$\varphi : P^f(\mathbb{E}_8) \to P(\mathbb{E}_8)$.
defined on arrows by formulas from
\ref{secB:hom},
with the coefficients 
${\alpha}^{(i)}_{j_i}, \bar{\alpha}^{(i)}_{j_i} \in K$
of the solution of this set of equations.
Hence algebras 
$P(\mathbb{E}_8)$
and
$P^f(\mathbb{E}_8)$
are isomorphic.

\medskip

We end this section with some notes regarding 
%computational 
complexity
of the problem of solving the obtained 
set of equations.

We note that solving 
the simplified system of equations, 
described in \ref{secB:simpl},
has a relatively low memory and computational
complexity.
Maple 2016 on 
a computer equipped with an Intel Xeon processor E5-1620 v2 (4 cores, 3.70 GHz)
and 128 GB RAM memory
solves it in about 12 hours and these computations 
utilize
about
10.2 GB RAM.
The size of the file with a single solution
saved in a ``raw'' Maple Text format is about 260 MB,
%so also is 
so it is also
``relatively'' small.

On the other hand, 
the process of selecting the right variables 
to be fixed with zero values
has higher memory and time requirements.
Verifying 
(on the above mentioned machine)
if the set of equations with
150--160 coefficients fixed with 0 value
has a solution,
usually takes
4--5 weeks, and utilize about 64GB RAM.
Moreover, in the case of  positive answer,
the data obtained to the analysis are significantly larger
(above 1 GB) and requires external tools
for further analysis
(just parsing the obtained 
term  
(representing the obtained solution)
to be printed in text mode 
in Maple text console could take several days
(about two days in the case of 1.7 GB file)
and showing the solution in graphical interface
would require to increase the limit of the terms
of the output above reasonable value).

We note that 
the choices presented in 
\ref{secB:simpl}  
are expected to not be optimal.
Clearly, it can result in
obtaining a 
more complicated 
solution 
then necessary. 
Solving the set of equations
without any simplification could give us
some hits on choosing right variables, but
it is a very complicated problem
(see note below).
We also note that we fix the coefficient
$\alpha^{(2)}_{1}$ with value $1$
in order to avoid 
considering cases 
and avoid 
denominators different from positive integers
(containing expressions with $\theta_i$, for $i \in \{1,\dots,57\}$).
This construction also seems to be a little artificial
and it is expected to not be necessary for a better
choice of variables.

Finally, we note that
solving the non-simplified set of equations
(without fixing any coefficients)
has even higher requirements.
Such a computation was also 
launched
on the above mentioned computer.
But the computation was interrupted after 2 month 
because of 
rapidly increasing number of reallocated sectors
on the hard disc designated for swap.
It was also started on a more powerful
computer -- with double Intel Xeon processors E5-2650L v2 (20 cores, 1.90 GHz).
Unfortunately, 
the calculations were accidentally terminated 
after 5 months
(the Maple process has allocated 420 GB RAM in that moment)
due to the license server upgrade.
These calculation were started again and are in progress
(for about half a year)
at the moment of preparing this article.
\end{appendix}

\section*{Conclusion}

This article presents an approach to the problem of
the existence of non-trivial deformed preprojective 
algebras of Dynkin types $\EE_6$, $\EE_7$, $\EE_8$, 
in which the computer calculations play an essential role.
Its application leads to the results 
on periodicity and classification of the considered algebras. 
In particular, we confirm Conjecture~\ref{conj:1.6}
in the case of Dynkin types $\EE_6$, $\EE_7$, $\EE_8$.
Among others it asserts that each of the deformed preprojective 
algebras of Dynkin types $\EE_6$, $\EE_7$, $\EE_8$ is  periodic.

Unfortunately, the technique we apply in Sections 3-8 is of  very high computational complexity.
However, it is not clear  for us if the conjecture can be proved 
without   computer computation. 
Note, that
in   our algorithms described  in Sections 3-8, there are several steps  where 
we could make some choices that have  a significant impact 
on the overall computational complexity. 

As we mentioned in the introduction, 
we strongly believe that our  technique  presented in the paper 
can also be useful in  solving  representation theory problems of a similar type.

\section*{Acknowledgements}

I would like to 
express a deep gratitude to
Professor Andrzej Skowro\'nski,
who was the supervisor of my PhD dissertation in 2003, 
for the inspiration and encouragement 
to deal with the subject of this article, 
as well as 
for many years of his support 
and stimulative cooperation.


\begin{thebibliography}{10}

\bibitem{ASS}
I.~Assem, D.~Simson, and A.~Skowro\'{n}ski.
\newblock {\em Elements of the representation theory of associative algebras.
  {V}ol. 1}, volume~65 of {\em London Mathematical Society Student Texts}.
\newblock Cambridge University Press, Cambridge, 2006.

\bibitem{B:Dn}
J.~Bia\l{}kowski.
\newblock Deformed preprojective algebras of {D}ynkin type $\mathbb{D}_n$.
\newblock in preparations.

\bibitem{B:E6}
J.~Bia\l{}kowski.
\newblock Deformed preprojective algebras of {D}ynkin type $\mathbb{E}_6$.
\newblock {\em Comm. Algebra}, 47(4):1568--1577, 2019.

\bibitem{B:E8zip}
J.~Bia\l{}kowski.
\newblock On solving equations for deformed preprojective algebras of {D}ynkin
  type $\mathbb{E}_8$, 2019.
\newblock Maple packages,
  http://www.mat.umk.pl/\~{}jb/computations/OnSolvingEquationsE8.zip.

\bibitem{B:soc}
J.~Bia\l{}kowski.
\newblock Socle deformed preprojective algebras of generalized {D}ynkin type.
\newblock {\em Colloq. Math.}, 156(2):165--197, 2019.

\bibitem{B:F4}
J.~Bia\l{}kowski.
\newblock Deformed mesh algebras of {D}ynkin type $\mathbb{F}_4$.
\newblock {\em J. Algebra Appl.}, 19(3):2050049, 9, 2020.
%\newblock in press, DOI 10.1142/S0219498820500498.

\bibitem{ESk}
K.~Erdmann and A.~Skowro\'{n}ski.
\newblock Periodic algebras.
\newblock In {\em Trends in representation theory of algebras and related
  topics}, EMS Ser. Congr. Rep., pages 201--251. Eur. Math. Soc., Z\"{u}rich,
  2008.

\bibitem{S2}
D.~Simson.
\newblock Mesh algorithms for solving principal {D}iophantine equations,
  sand-glass tubes and tori of roots.
\newblock {\em Fund. Inform.}, 109(4):425--462, 2011.

\bibitem{S1}
D.~Simson.
\newblock Mesh geometries of root orbits of integral quadratic forms.
\newblock {\em J. Pure Appl. Algebra}, 215(1):13--34, 2011.

\bibitem{S4}
D.~Simson.
\newblock Tame-wild dichotomy of {B}irkhoff type problems for nilpotent linear
  operators.
\newblock {\em J. Algebra}, 424:254--293, 2015.

\bibitem{S5}
D.~Simson.
\newblock Representation-finite {B}irkhoff type problems for nilpotent linear
  operators.
\newblock {\em J. Pure Appl. Algebra}, 222(8):2181--2198, 2018.

\bibitem{S3}
D.~Simson.
\newblock A computational technique in {C}oxeter spectral study of
  symmetrizable integer {C}artan matrices.
\newblock {\em Linear Algebra Appl.}, 586:190--238, 2020.

\bibitem{SS1}
D.~Simson and A.~Skowro\'{n}ski.
\newblock {\em Elements of the representation theory of associative algebras.
  {V}ol. 2}, volume~71 of {\em London Mathematical Society Student Texts}.
\newblock Cambridge University Press, Cambridge, 2007.

\bibitem{SS2}
D.~Simson and A.~Skowro\'{n}ski.
\newblock {\em Elements of the representation theory of associative algebras.
  {V}ol. 3}, volume~72 of {\em London Mathematical Society Student Texts}.
\newblock Cambridge University Press, Cambridge, 2007.

\bibitem{SY}
A.~Skowro\'{n}ski and K.~Yamagata.
\newblock {\em Frobenius algebras. {I}}.
\newblock EMS Textbooks in Mathematics. European Mathematical Society (EMS),
  Z\"{u}rich, 2011.

\end{thebibliography}
\end{document}